\newcommand{\subsetsim}{\mathrel{\ooalign{\raise.4ex\hbox{$\subset$}\cr$\raise-.9ex\hbox{$\sim$}$}}}
\numberwithin{equation}{section}
\DeclareMathOperator{\calt}{\mathcal T}
\DeclareMathOperator{\Q}{\mathcal{Q}}
\DeclareMathOperator{\T}{\mathbb{T}}
\DeclareMathOperator{\Z}{\mathcal{Z}}
\DeclareMathOperator{\J}{\mathcal{J}}
\DeclareMathOperator{\cal}{\mathcal{L}}
\DeclareMathOperator{\W}{\mathcal{W}}
\DeclareMathOperator{\A}{\mathcal{A}}
\DeclareMathOperator\supp{supp }
\DeclareMathOperator{\wid}{wid}
\newcommand{\dd}[1]{\mathrm{d}{#1}}
\DeclareMathOperator{\R}{\mathbb{R}}
\DeclareMathOperator{\B}{\mathbb{B}}
\newcommand{\U}{U_{\alpha,\beta}^{\mathrm{br}}}
\newtheorem{theorem}{Theorem}[section]
\newtheorem{lemma}[theorem]{Lemma}
\newtheorem{prop}[theorem]{Proposition}
\newtheorem{corollary}[theorem]{Corollary}
\newtheorem{remark}[theorem]{Remark}
\theoremstyle{definition}
\newtheorem{definition}[theorem]{Definition}
\theoremstyle{definition}
\newtheorem{example}[theorem]{Example}
\newtheorem{notation}[theorem]{Notation}
\newtheorem{ltheorem}{Theorem}
\DeclareMathOperator{\len}{len}
\title{Uniform Decoupling for Convex Curves}
\author{Hrit Roy}
\date{May 2025}
\begin{document}
\begin{abstract}
    Using a high/low argument, we prove a universal $\ell^2L^6$ decoupling estimate with constant $C_\epsilon R^{\epsilon}$ for general convex curves in the plane. These curves have no additional regularity assumptions, and the constant $C_\epsilon$ is uniform across all such curves.
\end{abstract}
\maketitle
\section{Introduction}\label{sec: intro}
\subsection{Background}
Let $R\geq 1$ be a large parameter, and let $f\in\mathcal{S}(\R^2)$ be any Schwarz function with Fourier support in $\mathcal{N}_{R^{-1}}(\Gamma_0)$, the $R^{-1}$-neighbourhood of the parabola $\Gamma_0:=\big\{(t,t^2):t\in[0,1]\big\}$. Let $\{\theta\}$ be the canonical partition of $\mathcal{N}_{R^{-1}}(\Gamma_0)$ into rectangular boxes of size roughly $R^{-1/2}\times R^{-1}$, and let $f_\theta:=\widecheck{\chi}_\theta*f$ denote the Fourier projection of $f$ onto $\theta$. For $p\geq 2$ let $D_{p,\Gamma_0}(R)$ denote the smallest constant such that 
\begin{equation}
    \label{eqn: parabola decoupling constant}
    \|f\|_{L^p(\R^2)}\leq D_{p,\Gamma_0}(R)\big(\sum_{\theta}\|f_\theta\|_{L^p(\R^2)}^2\big)^{1/2}
\end{equation}
holds for any $f\in\mathcal{S}(\R^2)$ with $\supp{\widehat{f}}\subseteq\mathcal{N}_{R^{-1}}(\Gamma_0)$.

An application of Cauchy--Schwarz shows that $D_{p,\Gamma_0}(R)\lesssim R^{1/4}$ for all $2\leq p\leq \infty$. While this is sharp for $p=\infty$, for other values of $p$ we can do much better. Plancherel's theorem shows that $D_{2,\Gamma_0}(R)\lesssim 1$, which uses the disjointedness of the boxes $\{\theta\}$. Using special scaling properties and the curvature of the parabola, Bourgain and Demeter \cite{BD} established the following stronger result.
\begin{ltheorem}(\cite[Theorem 1.1]{BD})
\label{thm: BD}
    For all $2\leq p\leq 6$ and all $\epsilon>0$, there exists a constant $C_\epsilon\geq 1$ such that 
    \begin{equation}
        \label{eqn: BD bound}
        D_{p,\Gamma_0}(R)\leq C_\epsilon R^\epsilon.
    \end{equation}
\end{ltheorem}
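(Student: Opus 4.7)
By Plancherel one has $D_{2,\Gamma_0}(R)\lesssim 1$, so Riesz--Thorin interpolation applied to the linear operator $f\mapsto(f_\theta)_\theta$ yields $D_{p,\Gamma_0}(R)\le D_{2,\Gamma_0}(R)^{2/p}D_{6,\Gamma_0}(R)^{1-2/p}$ for $2\le p\le 6$, reducing the claim to the endpoint $p=6$. I would then set up a Bourgain--Guth broad/narrow decomposition: fix a moderate scale $K=K(\epsilon)$, partition $[0,1]$ into $\sim K^{1/2}$ arcs $\{\alpha\}$ of length $K^{-1/2}$, and write $f=\sum_\alpha f_\alpha$. At any point $x\in B_R$ one of two alternatives holds: either $|f(x)|^6\lesssim K^{O(1)}\sum_\alpha|f_\alpha(x)|^6$ (the \emph{narrow} case) or there exist two $K^{-1/2}$-separated, hence transversal, arcs $\alpha_1,\alpha_2$ with $|f(x)|^6\lesssim K^{O(1)}|f_{\alpha_1}(x)f_{\alpha_2}(x)|^3$ (the \emph{broad} case).

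\textbf{Transversal bilinear estimate.} The core of the proof is a transversal bilinear $L^6$ inequality roughly of the shape
\[
\|f_{\alpha_1}f_{\alpha_2}\|_{L^6(B_R)}^{1/2}\lesssim R^\delta\,D_{6,\Gamma_0}(R/K)\prod_{j=1,2}\Big(\sum_{\theta\subseteq\alpha_j}\|f_\theta\|_{L^6}^2\Big)^{1/4}.
\]
The input is a C\'ordoba--Fefferman $L^4$-orthogonality: transversality of $\alpha_1,\alpha_2$ and the curvature of $\Gamma_0$ force the Fourier supports of the products $f_{\theta_1}f_{\theta_2}$, $\theta_j\subseteq\alpha_j$, to be essentially disjoint, giving $\|f_{\alpha_1}f_{\alpha_2}\|_{L^4(w_{B_R})}^2\lesssim\sum_{\theta_1,\theta_2}\|f_{\theta_1}f_{\theta_2}\|_{L^2}^2$. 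I would lift this $L^4$ bilinear bound to $L^6$ by running local $\ell^2L^2$ decoupling on balls of radius $K^2$ (the uncertainty scale of $\alpha_j$) and then invoking the induction hypothesis on each such ball, after rescaling the arc back to a full copy of $\Gamma_0$ at scale $R/K$.

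\textbf{Parabolic rescaling and closing the induction.} For the narrow case, the affine map $(\xi,\eta)\mapsto\bigl(K^{1/2}(\xi-\xi_\alpha),K(\eta-\eta_\alpha-2\xi_\alpha(\xi-\xi_\alpha))\bigr)$ sends each arc $\alpha$ to a full copy of $\Gamma_0$ at scale $R/K$, so parabolic rescaling yields $\|f_\alpha\|_{L^6}\le D_{6,\Gamma_0}(R/K)(\sum_{\theta\subseteq\alpha}\|f_\theta\|_{L^6}^2)^{1/2}$. Assembling the narrow and broad contributions produces a recursive inequality $D_{6,\Gamma_0}(R)\le C_KR^\delta D_{6,\Gamma_0}(R/K)$, and iterating this $\sim\log R/\log K$ times while choosing $\delta$ small relative to $\epsilon$ closes the $\epsilon$-loss induction and gives $D_{6,\Gamma_0}(R)\le C_\epsilon R^\epsilon$. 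The main obstacle is arranging the bilinear step so that a genuine gain of the form $K^{-\eta}$ is present to offset the polynomial-in-$K$ losses accumulated over the $O(\epsilon^{-1})$ iterations: this requires exploiting transversality beyond the crude disjointness of Fourier supports, and tracking Schwartz tails between the nested scales $K^2\ll R/K\ll R$ without spoiling the $R^\epsilon$ budget.
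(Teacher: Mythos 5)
You are attempting a statement the paper does not actually reprove: Theorem \ref{thm: BD} is quoted from \cite{BD}, and inside this paper it is only recovered a posteriori as the special case $\Gamma=\Gamma_0$ of Theorem \ref{thm: convex decoupling}, whose proof runs through the Guth--Maldague--Wang high/low machinery (wave-packet pruning, the high/low split of the square functions $g_k$, the $\Omega_k$ decomposition of $B_R$, the Seeger--Ziesler bilinear square function estimate, and a broad/narrow reduction), not through the Bourgain--Guth/Bourgain--Demeter induction on scales that you outline. So your route is genuinely different from anything in the paper --- but as written it has a real gap at its core.

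The transversal bilinear $L^6$ estimate with only an $R^\delta$ loss is precisely the hard content of the theorem, and the route you sketch for it --- C\'ordoba--Fefferman $L^4$ biorthogonality, local $\ell^2L^2$ decoupling at the uncertainty scale $K^2$, and parabolic rescaling --- does not produce it; if it did, $\ell^2L^6$ decoupling would already follow from 1970s technology. Your final paragraph concedes that a genuine gain in $K$ still has to be ``arranged'', i.e.\ the main difficulty is assumed rather than resolved. In addition, the recursion you close with, $D_{6,\Gamma_0}(R)\le C_K R^\delta D_{6,\Gamma_0}(R/K)$, does not yield $R^\epsilon$: iterating $n\approx \log R/\log K$ times accumulates $C_K^{\,n}\prod_{j=0}^{n-1}(R/K^j)^\delta\approx R^{\log C_K/\log K}\,\exp\bigl(c\,\delta(\log R)^2/\log K\bigr)$, which outgrows every fixed power of $R$ no matter how small the fixed $\delta$ is (you cannot let $\delta$ shrink with $R$). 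In the genuine Bourgain--Guth scheme the narrow term re-enters the induction with only an absolute, $K$-independent constant loss, while the $R^\delta$-loss broad term is controlled by a stand-alone bilinear theorem and is never multiplied against $D_{6,\Gamma_0}(R/K)$ again; alternatively one must run the full multilinear bookkeeping of \cite{BD}, or a different scheme altogether such as the high/low argument of \cite{gmw} used here. A minor further point: Riesz--Thorin does not literally apply to the inequality $\|f\|_{L^p}\le D\,(\sum_\theta\|f_\theta\|_{L^p}^2)^{1/2}$, since the right-hand side is not an $L^p$ norm of a fixed vector-valued function; the reduction to $p=6$ is standard but requires its own short interpolation argument.
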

Interpolating \eqref{eqn: BD bound} with the sharp $L^\infty$ bound, Theorem \ref{thm: BD} also gives us $D_{p,\Gamma_0}(R)\lesssim_\epsilon R^{1/4-3/2p+\epsilon}$ for $6< p\leq\infty$. Various subsequent results improved the rate of blow-up (see \cite{li1,li2,gmw}), and provided simpler proofs of Theorem \ref{thm: BD} (see \cite{guo}). We want to focus on the result of Guth--Maldague--Wang \cite{gmw}, who improved the rate of blowup to a log-power.
\begin{ltheorem}(\cite[Theorem 1.1]{gmw})
    \label{thm: gmw}
    For all $2\leq p\leq 6$, there exists constants $c,C>0$ such that 
    \begin{equation}
        \label{eqn: GMW bound}
        D_{p,\Gamma_0}(R)\leq C(\log R)^{c}.
    \end{equation}
\end{ltheorem}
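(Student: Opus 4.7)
The plan is to follow the high/low frequency scheme of \cite{gmw}, which improves the $R^\epsilon$ loss of Theorem \ref{thm: BD} down to $(\log R)^c$ by replacing a Bourgain-style induction on scales (which compounds a constant factor across $\log_\delta R$ stages of induction) with a tighter analysis losing only $O(1)$ at each of $O(\log R)$ dyadic stages. By interpolating with the trivial estimate $D_{2,\Gamma_0}(R)\lesssim 1$, it suffices to prove the bound at the endpoint $p=6$.

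The central object is the square function $g(x) = \sum_{\theta}|f_\theta(x)|^2$. A key observation is that $g$ has Fourier support contained in $B(0,R^{-1/2})$, since each $|f_\theta|^2$ has Fourier support in the difference set $\theta-\theta\subset B(0,R^{-1/2})$. By contrast, $|f|^2$ has Fourier support in the much larger set $\mathcal{N}_{R^{-1}}(\Gamma_0)-\mathcal{N}_{R^{-1}}(\Gamma_0)$. Choosing a smooth cutoff $\eta$ adapted to $B(0,R^{-1/2})$, I would split $|f|^2 = \eta^\vee * |f|^2 + (1-\eta)^\vee * |f|^2$ and use this to rewrite $\|f\|_6^6 = \int |f|^4\cdot|f|^2$ as a sum of low- and high-frequency contributions.

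For the low-frequency piece, the locally constant property of the $f_\theta$ on $R^{1/2}$-tubes lets one show $\eta^\vee * |f|^2 \lesssim g$ pointwise (modulo rapidly decaying tails), reducing the main term to an integral $\int |f|^4 g$ that can be attacked by Cauchy--Schwarz together with decoupling at a smaller scale. For the high-frequency piece, the Fourier support is covered by thickenings of differences $\theta_1-\theta_2$ with $|\theta_1-\theta_2|\gtrsim R^{-1/2}$; because the parabola has nonvanishing curvature these thickenings have bounded overlap, so Plancherel yields $\|(1-\eta)^\vee * |f|^2\|_2 \lesssim \|g\|_2$, and the resulting error can be absorbed into the main estimate. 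Iterating this dichotomy over the $O(\log R)$ dyadic scales between $R^{-1/2}$ and $R^{-1}$, with each stage losing only a harmless absolute constant, yields the target bound $(\log R)^c$.

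The principal obstacle is making the scale-by-scale iteration coherent. At each intermediate scale $r\in[R^{-1},R^{-1/2}]$ one must simultaneously pigeonhole over the amplitudes of the wave packets and the sizes of the caps, relate the high-frequency error at scale $r$ to the corresponding square function at the next finer scale $r'<r$, and control the lower-order terms produced by the smooth cutoffs without accumulating any polynomial loss. This requires careful bookkeeping of wave-packet decompositions at many scales, together with a refined locally-constant property to pass between frequency-space and physical-space statements; without the curvature of $\Gamma_0$ to supply the bounded overlap of difference sets, the high-frequency error would not be controllable at all.
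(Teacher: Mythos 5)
This statement is not proved in the paper: Theorem \ref{thm: gmw} is quoted from Guth--Maldague--Wang, and what the paper does is adapt their high/low machinery (wave-packet pruning, the High and Low Lemmas for the square functions $g_k$, the stopping-time sets $\Omega_k$, and a broad/narrow reduction) to general convex curves at the price of an $R^\epsilon$ loss. So your sketch has to be measured against the GMW argument itself, and as written it has genuine gaps rather than being a compressed version of it.

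First, the loss bookkeeping does not yield $(\log R)^c$. Losing a multiplicative constant at each of $O(\log R)$ dyadic stages compounds to $R^{O(1)}$, which is exactly the defect of induction on scales that GMW avoid. Their scales are separated by factors $(\log R)^c$, so there are only about $\log R/\log\log R$ of them, and, more importantly, the contributions of the different scales are \emph{added}, not multiplied: the ball $B_R$ is partitioned into sets $\Omega_k$ according to the first scale at which the (pruned, averaged) square function $g_k$ exceeds a threshold, each $\Omega_k$ is handled by the High Lemma at that single scale, and the remaining set is handled by a pointwise square-function bound coming from iterating the Low Lemma. This stopping-time structure, together with the amplitude pruning of wave packets at threshold $\sim\beta/\alpha$ on the level sets $U_{\alpha,\beta}$, is the mechanism that prevents compounding; your sketch does not contain it. Second, the high/low splitting in GMW is applied to the intermediate square functions $g_k=\sum_{\tau_k}|f_{\tau_k}|^2*\omega_{\tau_k}$, whose Fourier supports form a bush of tubes through the origin that are almost disjoint outside a small ball (this is the High Lemma, an $\ell^4L^4$-type input), and the low part is dominated pointwise by the next finer square function (Low Lemma). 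Your proposed splitting of $|f|^2$ in $\int|f|^4\,|f|^2$ and the subsequent step ``Cauchy--Schwarz together with decoupling at a smaller scale'' is where the argument fails to close: H\"older on $\int |f|^4 g$ either reintroduces a multi-scale induction with compounding constants or requires a reverse square function estimate at $L^6$ for the parabola, which is not available (the C\'ordoba biorthogonality you invoke for the high-frequency $L^2$ bound only gives the $L^4$ statement). Converting the $\ell^4L^4$ information from the High Lemma into $\ell^2L^6$ decoupling is exactly what the level-set formulation, the pruning (which caps $\|f_{k,\tau_k}\|_{L^\infty}$ by $\sim\beta/\alpha$), and the bilinear broad/narrow reduction accomplish in GMW (and in \S3--4 of this paper); none of these appear in your outline. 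As it stands the proposal is a plausible description of the general strategy, not a proof.
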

Besides providing the best known bound for $D_{p,\Gamma_0}(R)$ to date, Theorem \ref{thm: gmw} provides a strong proof technique for decoupling that is extremely versatile, which has many applications beyond decoupling (see \cite{GWZ,smallcapdec,improved_discrete_restriction,GGGHMW,gan_wu,gsw}). Here we observe that the technique, known as the \textit{high/low} argument, provides a way to adapt the proof of decoupling for the parabola to general convex curves in a way that, to our knowledge, the methods in \cite{BD,guo,li1,li2} are unable to. Specifically, we adapt the high/low argument used in Theorem \ref{thm: gmw} in order to extend the result of Theorem \ref{thm: BD} to obtain a universal decoupling estimate for all convex curves $\Gamma$ in $\R^2$. In \cite{gmw}, the authors discuss how to extend Theorem \ref{thm: gmw} to $C^2$ curves of bounded, positive curvature. Although the curvature of a general convex curve is defined almost everywhere, the set where the curvature does not exist can have large Hausdorff dimension. Our result applies to such low-regularity curves.
\subsection{Main result}
Let $\Gamma:=\big\{(t,\gamma(t)):t\in[0,1]\big\}$ be the graph of a convex function $\gamma:[0,1]\to\R$ satisfying 
\begin{equation}
    \label{eqn: gamma normalisation}
    0\leq\gamma_R'(0)\leq\gamma_L'(1)\leq 1.\footnote{Recall that convex functions are left and right-differentiable everywhere.}
\end{equation}
Before stating the main theorem, we introduce some notation and definitions.
\begin{definition}[Affine dimension]
    \label{def: caps}
    For $R\geq 1$, we define an $R^{-1}$-cap to be a collection of points on $\Gamma$ of the form $$B(\ell,R^{-1}):=\{\xi\in\Gamma: \text{dist}(\xi,\ell)<R^{-1}\},$$ where $\ell$ is any supporting line for $\Gamma$. We use $N(\Gamma,R^{-1})$ to denote the minimum number of $R^{-1}$-caps required to cover $\Gamma$. The \textit{affine Minkowski dimension} (or affine dimension, for short) of $\Gamma$ is defined as $$\kappa_\Gamma:=\limsup_{R\to\infty}\frac{\log N(\Gamma,R^{-1})}{\log R}.$$ This quantity was introduced by Seeger and Ziesler in \cite{SZ}, which was in turn motivated by a work of Bruna, Nagel, and Wainger \cite{BNW}.
\end{definition}
\begin{definition}
    [Vertical neighbourhoods]
    \label{def: vertical nbhds}
     For a subinterval $J\subseteq [0,1]$, define the vertical neighbourhood $$\mathcal{N}_{R^{-1}}(\Gamma,J):=\big\{\xi\in\R^2:\xi_1\in J,\,|\xi_2-\gamma(\xi_1)|\leq R^{-1}\big\}.$$ When $J=[0,1]$, we denote this by simply $\mathcal{N}_{R^{-1}}(\Gamma)$.
\end{definition}
For all $f\in\mathcal{S}(\R^2)$ satisfying $\supp(\widehat{f})\subseteq\mathcal{N}_{R^{-1}}(\Gamma)$, we define $f_J$ by 
 \begin{equation}
     \label{eqn: strip Fourier projection}
     \widehat{f}_J(\xi):=\widehat{f}(\xi)\cdot\chi_{J}(\xi_1).
 \end{equation}
 Note that $f_J$ is Fourier-supported in $\mathcal{N}_{R^{-1}}(\Gamma,J)$.
\begin{remark}
    \label{remark: neighbourhoods}
    The vertical neighbourhood $\mathcal{N}_{R^{-1}}(\Gamma)$ is comparable to the usual $R^{-1}$-neighbourhood of $\Gamma$.
\end{remark}
 \begin{definition}
     [Ideal partition]
     \label{def: ideal partition}
     Let $\Gamma$ be a convex curve as above, and let $\epsilon>0$ and $R\geq 1$. Let $\mathcal{J}$ be a collection of subintervals covering $[0,1]=\bigcup_{J\in\mathcal{J}}J$. We call $\mathcal{J}$ an ideal partition of $[0,1]$ (associated with $\Gamma$) if the following conditions hold.
     \begin{enumerate}[label=(J\arabic*)]
        \item For all $[a,b]\in\mathcal{J}$, we have $$(b-a)(\gamma_L'(b)-\gamma_R'(a))\leq 2R^{-1}.$$
        \item The number of intervals is bounded by $$\#\J\leq c_\epsilon R^{\epsilon}N(\Gamma,R^{-1}),$$ for some constant $c_\epsilon$ depending only on $\epsilon$.
        \item The interval lengths satisfy $$|J|\geq R^{-1}\quad \text{for all}\quad J\in\J.$$
    \end{enumerate}
 \end{definition}
 When $\Gamma=\Gamma_0$ is the parabola, an ideal partition of $[0,1]$ would be its decomposition into congruent subintervals of length $R^{-1/2}$. For each decomposing subinterval $[a,b]$, we would have $$(b-a)(\gamma_0'(b)-\gamma_0'(a))=2(b-a)^2=2R^{-1}.$$ The number of decomposing intervals is clearly $\#\J=R^{1/2}$, and it is easy to verify that the parabola has affine dimension $1/2$.
 
 Condition (J1) is analogous to \cite[(2.8.1)]{SZ}, as well as the `$P(r)$ property' appearing in \cite[\S 1.3]{yang}. It is a natural condition, and a consequence is that each vertical neighbourhood $\mathcal{N}_{R^{-1}}(\Gamma,J)$ is comparable to a $|J|\times R^{-1}$ rectangular box. These are colloquially known as \textit{canonical boxes}. These boxes tile $\mathcal{N}_{R^{-1}}(\Gamma)$, and condition (J2) says that the covering is essentially optimal in the number of rectangles. Unlike the parabolic case, in general these rectangles may have varying lengths. However, they lie in the range dictated by (J3), and so the number of `dyadically distinct' lengths is still $\log R$.
Our result is the following generalization of Bourgain--Demeter in the plane.
\begin{theorem}
    \label{thm: convex decoupling}
    For all $\epsilon>0$ and $R\geq 1$, there exists an ideal partition $\J$ (depending on $\epsilon,R$) and a constant $C_\epsilon\geq 1$, such that 
     \begin{equation}
         \label{eqn: main bound}
          \|f\|_{L^6(\R^2)}\leq C_\epsilon R^\epsilon\big(\sum_{J\in\mathcal{J}}\|f_J\|_{L^6(\R^2)}^2\big)^{1/2},
     \end{equation}
    for all $f\in\mathcal{S}(\R^2)$ satisfying $\supp(\widehat{f})\subseteq\mathcal{N}_{R^{-1}}(\Gamma)$.
\end{theorem}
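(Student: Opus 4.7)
The plan is to adapt the high/low iteration of Guth--Maldague--Wang (Theorem~\ref{thm: gmw}) so that it operates intrinsically on the ideal partition $\J$, without appealing to any affine scaling or change-of-variables specific to the parabola.

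First, I would construct $\J$ by a greedy procedure: starting at $t=0$, at each step take the largest interval $[a,b]$ beginning at the current endpoint for which condition (J1) holds, i.e.\ $(b-a)(\gamma'_L(b)-\gamma'_R(a)) \leq 2R^{-1}$, subject to $b-a \geq R^{-1}$. Since $\gamma'_L(b)-\gamma'_R(a) \leq 1$ by \eqref{eqn: gamma normalisation}, intervals of length exactly $R^{-1}$ always satisfy (J1), so the algorithm can always advance, and (J1), (J3) hold by construction. For (J2), I would compare each interval with a canonical $R^{-1}$-cap from Definition~\ref{def: caps}: by (J1) the vertical strip $\N_{R^{-1}}(\Gamma,J)$ is comparable to a thickened cap, and the maximality of the greedy choices prevents wasteful oversampling, yielding $\#\J \lesssim_\epsilon R^\epsilon N(\Gamma,R^{-1})$ after a dyadic pigeonholing over the lengths $|J|$.

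Second, I would set up a bootstrap on the constant. Let $D_\Gamma(R)$ denote the smallest constant for which \eqref{eqn: main bound} holds with some ideal partition, and aim to prove $D_\Gamma(R) \leq C_\epsilon R^\epsilon$. Introduce the square function $g := \sum_{J \in \J} |f_J|^2$. Since $\|f\|_{L^6}^6 = \||f|^2\|_{L^3}^3$ and $|f|^2 = g + \sum_{J \neq J'} f_J\overline{f_{J'}}$, controlling $\|f\|_{L^6}^6$ reduces to controlling $\|g\|_{L^3}^3$ together with the Fourier-localisable cross terms. Then I would carry out the high/low split: fix an intermediate scale $K$ (to be optimised) and decompose each cross term $f_J \overline{f_{J'}}$ by the Fourier-support size of the product, which by convexity tracks the angular separation between supporting lines at $J$ and $J'$. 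Pairs meeting at angle $\lesssim K^{-1}$ form the low part, handled by a pointwise bound since the relevant frequency set is thin; pairs with larger angular separation form the high part, whose $L^2$-mass is controlled via Plancherel by $\|g\|_{L^2}$ restricted to frequencies $\gtrsim K^{-1}$. Iterating over an appropriate sequence of scales $K$ and invoking the inductive hypothesis at smaller scales on suitably chosen subcollections closes the bootstrap.

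The main obstacle, in my view, is that the intervals of $\J$ have widely varying lengths and $\Gamma$ admits no affine symmetry reducing the argument to a single common scale, as one has for the parabola. This should force the high/low iteration to be performed at each dyadic length scale of the partition, with a pigeonholing step identifying a dominant scale before invoking induction. A secondary subtlety is that for intervals of length $\sim R^{-1}$ condition (J1) degenerates and the corresponding caps are essentially flat, so the induction must be supplemented at these boundary scales by a direct $L^2$-orthogonality bound. The $R^\epsilon$ loss absorbed into (J2) is tolerable since the theorem itself allows such a factor, but one has to be careful that the bootstrap does not compound $\log R$ losses across the $O(\log R)$ scales into a polynomial loss.
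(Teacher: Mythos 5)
Your construction of $\J$ is broadly in the spirit of the paper's single-scale (Seeger--Ziesler type) algorithm, but the core of your argument has a genuine gap: the ``bootstrap on $D_\Gamma(R)$'' with an ``inductive hypothesis at smaller scales on suitably chosen subcollections'' is exactly the step that fails for general convex curves, and your proposal does not supply a substitute. For the parabola, induction on scales closes because parabolic rescaling maps the sub-boxes of a cap at scale $r$ onto the full canonical decomposition at scale $R/r$ of the \emph{same} curve; for a general convex $\Gamma$ the restriction of your greedy partition to a subinterval, after any affine renormalisation, is a partition adapted to a \emph{different} convex curve, and there is no reason it coincides with (or is comparable to) the partition your algorithm would produce for that curve at the corresponding scale. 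So $D_\Gamma(R)$ cannot be fed back into itself; one needs a statement uniform over a class of curves together with a multi-scale family of partitions that is \emph{compatible} with rescaling. This is precisely what the paper builds: the decomposition of $\Gamma$ into admissible pieces with angular spread $\A\leq R^{-2\epsilon}$, the multi-scale algorithm producing $\calt_k(\Gamma)$ at the coarse scales $R^{-k\epsilon}$, the verification that each rescaled arc $\Gamma_{\tau_k}=\cal_{\tau_k}(\Gamma\vert_{\tau_k})$ is again admissible, and the Rescaling Lemma showing the pigeonholed box structure is preserved under $\cal_{\tau_k}$. Your claim that the argument can avoid ``any affine scaling'' is in tension with your own reliance on induction at smaller scales; the paper in fact uses similarity transformations heavily (just not parabolic rescaling), and its point is rather that the high/low method needs only \emph{weak} information linking scales (the length comparison $R^{-\epsilon}\len(\tau_k)\leq\len(\tau_{k+1})\leq\len(\tau_k)$), not that it needs none.

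The high/low step as you describe it also does not reproduce a working mechanism. Splitting the cross terms $f_J\overline{f_{J'}}$ by angular separation, handling the low part ``by a pointwise bound since the relevant frequency set is thin,'' and controlling the high part by $\|g\|_{L^2}$ at high frequencies is too coarse: a reduction of $\|f\|_{L^6}^6$ to $\|g\|_{L^3}^3$ would amount to a reverse square function estimate at $L^6$, which is false even for the parabola, and the cross terms are the entire difficulty. In the actual argument the high/low decomposition is applied to multi-scale averaged square functions $g_k=\sum_{\tau_k}|f_{k+1,\tau_k}|^2*\omega_{\tau_k}$ built from \emph{pruned} wave packets (the pruning threshold $G$ is what later converts the $\ell^4L^4$ output of the High Lemma into the $\ell^2$-type right-hand side), combined with a level-set reduction to the broad set $U^{\mathrm{br}}_{\alpha,\beta}$, a spatial partition into sets $\Omega_k$ where $g_k$ is first high-dominated, a Low Lemma ($|g_k^{\mathrm{lo}}|\lesssim g_{k+1}$ via local $L^2$-orthogonality), and a bilinear (broad/narrow) reduction using the Seeger--Ziesler biorthogonality. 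None of these appear in your sketch, and without the preliminary decomposition into admissible pieces with small angular spread you also lose the eccentricity $\len(\tau_k)\geq R^{2\epsilon}\wid(\tau_k)$ that makes the high-frequency tubes have small overlap, which is what the Plancherel step for the high part actually rests on. Finally, your worry about pigeonholing lengths at every scale is well placed, but the paper resolves it by working with only $1/\epsilon$ coarse scales so that the simultaneous dyadic pigeonholing costs $(\log R)^{1/\epsilon}$; with $O(\log R)$ scales, as in your sketch, that loss would indeed become super-polynomial unless controlled differently.
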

When $\Gamma=\Gamma_0$ is the parabola, Theorem \ref{thm: convex decoupling} recovers Theorem \ref{thm: BD}. We remark that the constant $C_\epsilon$ depends only on $\epsilon$. Thus, \eqref{eqn: main bound} provides a uniform decoupling estimate for all $\Gamma$ satisfying \eqref{eqn: gamma normalisation}. There is an obvious extension of Theorem \ref{thm: convex decoupling} to piecewise convex curves, by applying \eqref{eqn: main bound} to each convex component. However, the resulting decoupling constant would not be uniform, but rather depend on the number of such components. For example, if $\gamma$ is a $d$-degree polynomial, then $\gamma''$ has at most $d-2$ zeros. Then $\Gamma$ can be partitioned into at most $d-1$ convex components. Applying Theorem \ref{thm: convex decoupling} to each such component recovers a result of Yang \cite[Theorem 1.4]{yang} which provides an estimate analogous to \eqref{eqn: main bound} for polynomial curves with constant $C_{\epsilon,d}$.
\begin{remark}
    Although the method used to prove Theorem \ref{thm: convex decoupling} follows the same scheme used to prove Theorem \ref{thm: gmw}, there are a few technical issues that prevent us from obtaining a $(\log R)^{c}$ bound similar to \eqref{eqn: GMW bound} for all convex curves $\Gamma$. As a result of this, our aim is to establish the weaker bound \eqref{eqn: main bound}. An upside to this is that a lot of the technical work done in \cite{gmw} to get the better dependence can be avoided here.
\end{remark}
\begin{remark}
    Unlike most results on decoupling, our partition $\J$ will be chosen depending on $\epsilon$. This is done as a matter of convenience to simplify the argument, and it is likely that this dependence can be removed by following a more careful analysis. We note, however, that this dependence poses no issue for practical applications of decoupling.
\end{remark}
\subsection{Motivation}
The question of extending Bourgain--Demeter's result \cite{BD} to general convex curves was posed by Christoph Thiele in a recent Oberwolfach meeting. Our result is also motivated by \cite{SZ}, where the authors generalized the classical square function estimate of C\'ordoba--Fefferman in the plane to all convex curves. We explain this presently. For the parabola $\Gamma=\Gamma_0$, a classical result of C\'ordoba and Fefferman states that $$\|f\|_{L^4(\R^2)}\lesssim\big\|\big(\sum_{J\in\J}|f_J|^2\big)^{1/2}\big\|_{L^4(\R^2)},$$ for all $f\in\mathcal{S}(\R^2)$ with $\supp{\widehat{f}}\subseteq \mathcal{N}_{R^{-1}}(\Gamma_0)$, where $\J$ is the partition of $[0,1]$ into congruent subintervals of length $R^{-1/2}$. An estimate of this form is known as a \textit{square function estimate}, and is related to decoupling via Minkowski's integral inequality. In \cite{SZ}, the authors extended the parabolic square function bound to general convex curves, proving $$\|f\|_{L^4(\R^2)}\lesssim(\log R)^{1/4}\big\|\big(\sum_{J\in\J}|f_J|^2\big)^{1/2}\big\|_{L^4(\R^2)},$$ for an ideal partition $\J$ of $[0,1]$. In a similar vein, our estimate \eqref{eqn: main bound} extends the classical parabolic decoupling bound \eqref{eqn: BD bound} to all convex curves.

\subsection{Features of the low-regularity problem} Decoupling theory for the parabola utilizes special properties of the curve such as non-vanishing curvature, homogeneity, and self-similarity. The lack of these desirable features in the general setting makes the problem harder to study at the level of arbitrary convex curves. At the same time, some low-regularity `fractals' satisfy decoupling bounds much stronger than the parabola. 
\subsubsection*{Regularity}
The curvature of a general convex curve can be very irregular, which we demonstrate by the following example.
\begin{example}\label{example: devil's staircase}
    Let $d\in(0,1)$, and let $\mu$ be a $d$-Ahlfors regular measure supported on a set of zero Lebesgue measure in $[0,1]$. Define the associated `devil's staircase' function $f_\mu$ by $$f_\mu(x):=\mu([0,x]),\quad\text{for all}\quad x\in [0,1].$$ The function $f_\mu$ is non-decreasing, and as such, it is the derivative of some convex function $\gamma_\mu$. Let us consider the curvature of the graph of $\gamma_\mu$ over $[0,1]$. Since the support of $\mu$ has zero Lebesgue measure, it follows that $\gamma_\mu''=0$ almost everywhere. Furthermore, \cite[Corollary 2]{falconer} asserts that the Hausdorff dimension of the set of points of non-differentiability of $f_\mu$ is $d^2$, so that $\gamma_\mu''$ does not exist in a set of Hausdorff dimension $d^2$, which can be arbitrarily close to $1$. 
\end{example}
The irregularity of the curvature means that the low-regularity problem cannot be treated using parabolic decoupling; and while all convex curves admit smooth approximations by strictly convex curves, the approximation is not uniform in the magnitude of the second derivatives.
\subsubsection*{Self-similarity} An essential feature of decoupling estimates of the form \eqref{eqn: main bound} is its self-similarity, where the $L^p$ norm appears on both sides of the inequality. On the left we have the $L^p$ norm of the original function $f$, and on the right we have the $L^p$ norms of the Fourier localised pieces $f_J$. This suggests that one could iterate these estimates. The affine self-similarity of the parabola plays an important role here. For $R>r$, let $\Theta(r)$ and $\Theta(R)$ respectively denote the collections of canonical boxes of width $r^{-1}$ and $R^{-1}$. If $\tau\in\Theta(r)$, then using \textit{parabolic rescaling}, we can roughly map $$\{\theta\in\Theta(R): \theta\subset\tau\}\mapsto\Theta(R/r).$$ In other words, the relative behaviour of the boxes between two scales $r^{-1}$ and $R^{-1}$ is the same as the behaviour of the full collection of boxes at scale $(R/r)^{-1}$. Most proofs of parabolic decoupling \cite{BD,guo,li2} leverage this fact in a multi-scale inductive argument. An analogous statement holds for curves of the form $(t,\gamma_\mu(t))$ associated to self-similar fractal measures $\mu$. Using a similar inductive argument, \cite[Theorem 1.1]{chang} obtains estimates that provide decoupling for these self-similar fractal curves. However, their work does not address curves that lack self-similarity e.g., when $\mu$ exhibits different behaviour at different scales.

A feature of the high/low argument is that it does not rely too heavily on induction on scales, and very minimal information between different scales is needed in order for the argument to work. 
\subsubsection*{Homogeneity}
The homogeneity of the parabola implies that all decomposing boxes have similar geometry, and that adjacent boxes are comparable, in the sense that if $\theta_1$ and $\theta_2$ are neighbouring boxes, then $\theta_1\subseteq c\cdot\theta_2$ and $\theta_2\subseteq c\cdot \theta_1$, for some suitable constant $c\lesssim 1$. These properties do not hold for general curves. Following is such an example.
\begin{example}
   Consider Example \ref{example: devil's staircase} when $\mu$ is the natural measure on the middle-third Cantor set, in which case $f_\mu=\gamma_\mu'$ is the Cantor function. It can be shown that for $R=3^{2K}$, the natural decomposition of the curve $(t,\gamma_\mu(t))$ consists of $R^{-1}$-rectangles of lengths $3^{-1},3^{-2},\dots,3^{-K+1},$ and $3^{-K}$ (cf. \cite[\S 3.2]{roy}). Adjacent rectangles $\theta_1$ and $\theta_2$ of length $3^{-K}$ will have a direction separation of $2^{-K}$. Consequently, we have $\theta_1\subseteq c\cdot\theta_2$ only if $c\gtrsim (3/2)^K=R^{\frac{1}{2}(1-\frac{\ln 2}{\ln 3})}$. Furthermore, the number of such boxes violating the comparability condition is about $2^K=R^{\frac{1}{2}\frac{\ln 2}{\ln 3}}$.
\end{example}
As a result, the analysis in our general setting is quite sensitive to the choice of the canonical boxes, which we define in a particular way (see \S\ref{sec: box definition}). To fix the issue with varying side lengths, one can dyadically pigeonhole. However, due to the multi-scale nature of the argument, we need to pigeonhole at every scale. By working with a coarse sequence of scales (see \S\ref{sec: intermediate scales}), we can pigeonhole in the lengths at every scale simultaneously, without incurring significant losses.
\subsubsection*{$\Lambda(p)$ phenomenon} Certain low-regularity curves exhibit stronger orthogonality, which lead to better bounds for decoupling over the parabola and our universal result. Bourgain \cite{Bourgain_lambdap} famously showed that a generic sparse subset $A\subset\{1,2,\dots,n\}$ of size $|A|=[n^{2/p}]$ satisfies the discrete restriction bound $$\|\sum_{k\in A}a_ke^{2\pi ikx}\|_{L^p([0,1])}\leq c_p\big(\sum_{k\in A}|a_k|^2\big)^{1/2}.$$
In a similar vein, one can show that a generic self-similar fractal curve of affine dimension $1/p$ satisfies a decoupling bound of the form $$\|f\|_{L^{3p}(\R^2)}\leq c_{p,\epsilon} R^\epsilon\big(\sum_{J\in\J}\|f_J\|_{L^{3p}(\R^2)}^2\big)^{1/2}.$$ These examples were obtained in our previous work \cite{roy}, and uses a decoupling theorem of Chang et al. \cite{chang} for self-similar fractals. We give a more detailed discussion in \S\ref{sec: applications and examples}.
\subsection{Notation} Unless the base is specified, all logarithms are taken base $2$. For $X,Y\in\mathbb{R}$, if there exists an absolute constant $C>0$ such that $X\leq CY$, we shall write $X\lesssim Y$ or equivalently $Y\gtrsim X$. If this constant $C$ is not absolute, but depends on some parameters of our problem, say $\alpha,\beta,\gamma$, we shall specify this by writing $X\lesssim_{\alpha,\beta,\gamma}Y$ or equivalently $Y\gtrsim_{\alpha,\beta,\gamma} X$. We also write $X\approx Y$ when both $X\lesssim Y$ and $X\gtrsim Y$ hold. Similarly, we write $X\approx_{\alpha,\beta,\gamma}Y$ when both $X\lesssim_{\alpha,\beta,\gamma} Y$ and $X\gtrsim_{\alpha,\beta,\gamma} Y$ hold. Furthermore, $R\gg 1$ shall stand for `$R$ sufficiently large'. For example, the statement $$``P(R)\quad\text{holds for all}\quad R\gg 1"$$ should be interpreted as $$``\exists R_0>0\quad\text{such that}\quad P(R)\quad\text{holds for all}\quad R\geq R_0".$$ Similarly, $R\gg_\epsilon 1$ shall stand for `$R$ sufficiently large depending on $\epsilon$'. For example, the phrase $$``P(R)\quad\text{holds for all}\quad R\gg_\epsilon 1"$$ should be interpreted as $$``\exists R_\epsilon>0\quad\text{such that}\quad P(R)\quad \text{holds for all}\quad R\geq R_\epsilon".$$ 
Let $\B$ denote the unit ball $B(0,1)$ in $\R^2$. For a convex set $D$, we denote its centroid by $c(D)$. For $s>0$, we let $s\cdot D$ denote the $s$-dilate of $D$ about its centroid. That is, $$s\cdot D:=c(D)+s(D-c(D)).$$ For instance, $2\cdot\B=B(0,2)$.
\subsection*{Acknowledgments} I would like to thank my advisor Jonathan Hickman for providing helpful feedback on the project.
\section{Preliminaries}\label{sec: prelim}
 Let $\Gamma$ be the graph of a convex function $\gamma$ satisfying \eqref{eqn: gamma normalisation}, and $R\geq 1$ be a large parameter. Since the quantity $\epsilon$ appearing in the statement of Theorem \ref{thm: convex decoupling} is arbitrary, we fix an $\epsilon\in (0,1/2)$, and construct an ideal partition $\mathcal{J}$ such that \eqref{eqn: main bound} holds with $100\epsilon$. Without loss of generality, we also assume that $1/\epsilon\in\mathbb{N}$.

Due to a technical limitation of the high/low argument, we must partition our original curve (depending on $R$ and $\epsilon$) into components that are more amenable to the method. We describe these curves below. First we introduce the following functions. For the graph $\Gamma$ of a convex function $\gamma$ over $I:=[a,b]$, we define $$\A(\Gamma,I):=\frac{\gamma_L'(b)-\gamma_R'(a)}{1+\gamma_L'(b)\gamma_R'(a)},\quad\cal(\Gamma,I):=|\Gamma(b)-\Gamma(a)|,\quad\text{and}\qquad\W(\Gamma,I):=\cal(\Gamma,I)\A(\Gamma,I).$$
With this notation, we define the following.
\begin{definition}
    [Admissible curves]
    \label{def: admissible pair}
    Let $\Gamma$ be the graph of a convex function $\gamma$ over $I=[a,b]$. We call $(\gamma,I)$ an \textit{admissible pair} and $\Gamma$ an \textit{admissible curve} if the following hold:
    \begin{enumerate}[label=(C\arabic*)]
        \item $\A(\Gamma,I)\leq R^{-2\epsilon}$;
        \item $\cal(\Gamma,I)\geq R^{2\epsilon}$;
        \item $\gamma(a)=\gamma_R'(a)=0$;
        \item $\W(\Gamma,I)\leq 1$.
    \end{enumerate}
   By a slight abuse of notation, we shall frequently denote $\Gamma(t):=(t,\gamma(t))$. We record the following consequence of (C1) and (C3): 
    \begin{equation}
        \label{eqn: admissible derivative bound}
        \|\gamma_L'\|_{L^\infty(I)},\|\gamma_R'\|_{L^\infty(I)}\leq R^{-2\epsilon}.
    \end{equation}
\end{definition}
    Recall that an affine transformation $\mathcal{L}:\R^2\to\R^2$ is called a similarity transformation of ratio $r\geq 0$ (or similarity, for short), if $$\mathcal{L}(\xi)=p+rO\xi\quad\text{for all}\quad \xi\in\R^2,$$ for some point $p\in\R^2$ and some orthogonal matrix $O$. Two sets $A\subseteq\R^2$ and $B\subseteq\R^2$ are called similar if $B=\cal(A)$ for some similarity $\cal$.

\subsection{A partition of the original curve}\label{sec: partition of curve}  Let $\Gamma$ be the graph of a convex function $\gamma:[0,1]\to \R$, satisfying \eqref{eqn: gamma normalisation}. We describe a partition of $\Gamma$ below.

Begin by choosing a finite sequence of points $\{s_p\}_{p=0}^P\subset[0,1]$ as follows. Set $s_0:=0$. For each $s_p$, note that the function $\A(\Gamma,[s_p,s])$ is increasing in $s\geq s_p$ due to the convexity of $\Gamma$. We iteratively define $$s_{p+1}:=\max\big\{s_p+R^{-1},\sup\{s\geq s_p:\A(\Gamma,[s_p,s])\leq R^{-2\epsilon}\}\big\}.$$
The process terminates after finitely many steps (see \eqref{eqn: P bound}), and we set $s_P:=1$. This gives the initial partition
\begin{equation}
    \label{eqn: [0,1] partition}
    [0,1]=\bigsqcup_{p=0}^{P-1}I_p\quad\text{and}\quad \Gamma=\bigsqcup_{p=0}^{P-1}\Gamma\vert_{I_p},
\end{equation}
where $I_p:=[s_p,s_{p+1}]$, and $\Gamma\vert_{I_p}:=\{\Gamma(t):t\in I_p\}$. Let us separate the indices into the sets $$\mathcal{P}_1:=\{p:s_{p+1}=s_p+R^{-1}\},\quad\text{and}\quad \mathcal{P}_2:=\{p:s_{p+1}>s_p+R^{-1}\}.$$
By construction, we have 
\begin{equation}
    \label{eqn: I_p length bound} |I_p|\geq R^{-1}\quad\text{for all}\quad p\neq P.
\end{equation}
Furthermore, we have
\begin{align}
    \A(\Gamma,I_p)\leq R^{-2\epsilon}\quad&\,\text{for all}\quad p\in\mathcal{P}_2\label{eqn: I_p total curvature control}\\
    \A(\Gamma,[s_p,s_{p+1}+\eta])>R^{-2\epsilon}\quad&\text{for all}\quad \eta>0\quad\text{and all}\quad p<P-1.\label{eqn: plus eta}
\end{align}
Indeed, \eqref{eqn: I_p total curvature control} follows from the left-continuity of $\gamma_L'$, and \eqref{eqn: plus eta} follows from the definition of suprema. We observe that the number of partitioning intervals satisfies $P\leq 2R^{2\epsilon}$. Indeed, by \eqref{eqn: plus eta} and \eqref{eqn: gamma normalisation} we have 
\begin{align*}
    (P-1)R^{-2\epsilon}<\sum_{p=0}^{P-2}\gamma_L'(s_{p+1}+\eta)-\gamma_R'(s_p)=\sum_{p=0}^{P-2}\gamma_L'(s_{p+1}+\eta)-\gamma_R'(s_{p+1})+\sum_{p=0}^{P-2}\gamma_R'(s_{p+1})-\gamma_R'(s_p).
\end{align*}
The second term on the right-hand side above is a telescoping series, and using the convexity of $\gamma$ and \eqref{eqn: gamma normalisation}, we find  $$\sum_{p=0}^{P-2}\gamma_R'(s_{p+1})-\gamma_R'(s_p)=\gamma_R'(s_{P-1})-\gamma_R'(0)\leq \gamma_L'(1)-\gamma_R'(0)\leq 1.$$ 
For the remaining series, we observe the following consequence of convexity $$\gamma_R'(s_{p+1})\leq\gamma_L'(s_{p+1}+\eta)\leq\gamma_R'(s_{p+1}+\eta).$$
Since the right-derivative of convex functions are right-continuous, it follows that $\gamma_L'(s_{p+1}+\eta)\to \gamma_R'(s_{p+1})$ as $\eta\to 0^+$. Thus, by choosing $\eta>0$ sufficiently small, we get 
\begin{equation}
    \label{eqn: P bound}
    (P-1)R^{-2\epsilon}\leq 2\quad\text{so that}\quad P\leq 3R^{2\epsilon},\quad\text{for all}\quad R\geq 1.
\end{equation} 
For each $p\in\mathcal{P}_2$, define the similarity transformation $\cal_p$ by 
\begin{equation}
    \label{eqn: cal p def}
    \cal_p(\xi):=\frac{r_p}{\sqrt{1+\gamma_R'(s_p)^2}}\begin{pmatrix}
   1&\gamma_R'(s_p)\\-\gamma_R'(s_p)&1
\end{pmatrix}(\xi-\Gamma(s_p)),
\end{equation}
where $r_p:=R^{2\epsilon}\cal(\Gamma,I_p)^{-1}$.
Under $\cal_p$, the arc $\Gamma\vert_{I_p}$ is mapped onto a new convex curve $\Gamma_p$, which is the graph of a convex function $\gamma_p$ over an interval $J_p$ with left end-point at $0$.

\medskip\noindent\textbf{Claim:} $\Gamma_p$ is admissible for all $p\in\mathcal{P}_2$, and satisfies (C2) with an equality.

\medskip\noindent\textit{Verifying} (C1):  As $\cal_p$ preserves angles, it follows that $$\A(\Gamma_p,J_p)=\A(\Gamma,I_p)\leq R^{-2\epsilon},$$ where the last inequality follows from \eqref{eqn: I_p total curvature control}. 

\medskip\noindent\textit{Verifying} (C2): As $\cal_p$ scales uniformly by $r_p$, we have $$\cal(\Gamma_p,J_p)=r_p\cal(\Gamma,I_p)=R^{2\epsilon},$$ by the choice of $r_p$. Thus, (C2) holds with equality.

\medskip\noindent\textit{Verifying} (C3): Recall that $\Gamma_p$ is the graph of a convex function $\gamma_p$ over the interval $J_p$ with left end-point at $0$. Clearly $$\Gamma_p(0)=\cal_p(\Gamma(s_p))=(0,0),\quad\text{so that}\quad \gamma_p(0)=0.$$
Also, $\cal_p$ maps lines with direction $(1,\gamma_R'(s_p))$ onto lines with direction $(1,0)$. Since derivatives commute with linear transformations, it follows that $\gamma_{p,R}'(0)=0$.

\medskip\noindent\textit{Verifying} (C4): We have $$\W(\Gamma_p,J_p)=\cal(\Gamma_p,J_p)\A(\Gamma_p,J_p)\leq 1,$$ by combining (C1) and (C2) from above.

For the curve appearing in Theorem \ref{thm: convex decoupling}, \eqref{eqn: [0,1] partition} gives an initial partition of $[0,1]$. We obtain the ideal partition $\mathcal{J}$ by refining \eqref{eqn: [0,1] partition}. This is done through a multi-scale algorithm, that we present below.

\subsection{A multi-scale algorithm for admissible curves}\label{sec: box definition} 
Let $\Gamma$ be an admissible curve in the sense of Definition \ref{def: admissible pair} associated with the pair $(\gamma,I)$. 
We provide an algorithm that simultaneously produces a fine-scale partition of $I=[a,b]$, as well as a family of canonical boxes. This is done iteratively, first using a single-scale algorithm to produce a coarse-scale partition of $I$, along with an associated family of wide boxes. The partition is gradually refined, producing thinner boxes at each step using a multi-scale algorithm. This involves working with many intermediate scales, that we introduce below.

In light of the decomposition \eqref{eqn: [0,1] partition}, applying the present algorithm to each admissible curve $\Gamma_p$ followed by applying $\cal_p^{-1}$ will produce the ideal partition $\mathcal{J}$ as well as the canonical boxes appearing in Theorem \ref{thm: convex decoupling}.
\subsubsection{Introducing intermediate scales}\label{sec: intermediate scales}
Similar to previous results on decoupling mentioned in \S\ref{sec: intro}, our argument involves working with a sequence of intermediate scales between $R^{-1}$ and $1$.
 The authors of \cite{gmw} worked with a fine set of scales $$R^{-1}=(\log R)^{-cN}<(\log R)^{-c(N-1)}<\dots<(\log R)^{-c}<1,\quad\text{where}\quad N=c^{-1}\log R/\log\log R,$$ which is essential in order to achieve the improved bounds. As mentioned in \S\ref{sec: intro}, we are aiming for the weaker sub-polynomial loss, and as such choose to work with the following coarse set of scales
 \begin{equation}
     \label{eqn: intermediate scales}
     R^{-1}=R^{-N\epsilon}<R^{-(N-1)\epsilon}<\dots<R^{-\epsilon}<1,\quad\text{where}\quad N=1/\epsilon.
 \end{equation}
  This reduces the number of steps to $1/\epsilon$, making it completely independent of the parameter $R$. 
\begin{notation}\label{not: lambda 0 and R_k}
Denote $\lambda_0:=R^{2\epsilon}$, and for $k\in\{0,1,\dots,N\}$, denote $R_k:=R^{k\epsilon}$. For $t\in I$, we define the directions $$\mathbf{t}(t):=\frac{\big(1,\gamma_R'(t)\big)}{\|\big(1,\gamma_R'(t)\big)\|},\quad\text{and}\quad \mathbf{n}(t):=\frac{\big(\gamma_R'(t),-1\big)}{\|\big(\gamma_R'(t),-1\big)\|}.$$
\end{notation}
\subsubsection{Single-scale algorithm}\label{sec: single-scale algorithm} The single-scale algorithm is similar to the scheme from \cite[\S2]{SZ}. Let $\Gamma$ be an admissible curve associated to the pair $(\gamma, I)$ for $I=[a,b]$. We construct a family of subintervals $[s^\nu,u^\nu]\subseteq[a,b]$ as follows. 

\medskip\noindent\underline{Step 0:} Set $s^0:=a$. 

\medskip\noindent\underline{Case I:} If $\W(\Gamma,[s^0,b])\leq R^{-\epsilon}$, we set $u^0:=b$, and the algorithm terminates.

\medskip\noindent\underline{Case II:} If $\W(\Gamma,[s^0,b])> R^{-\epsilon}$, we set $$u^0:=\sup\{s\geq s^0:\W(\Gamma,[s^0,s])\leq R^{-\epsilon}\},$$ and proceed to Step 1.

\medskip\noindent\underline{Step $\nu$:} At the end of step $\nu-1$, we have defined the points $s^{\nu-1},u^{\nu-1}$. 

\medskip\noindent\underline{Case I:} If $\W(\Gamma, [u^{\nu-1},b])\leq R^{-\epsilon}$, we set 
\begin{equation}
    \label{eqn: right box def}
    s^{\nu}:=\inf\{s\leq u^{\nu-1}:\W(\Gamma,[s,b])\leq R^{-\epsilon}\} \quad\text{and}\quad u^{\nu}:=b,
\end{equation}
and the algorithm terminates.

\medskip\noindent\underline{Case II:} If $\W(\Gamma, [u^{\nu-1},b])> R^{-\epsilon}$, we set, and 
\begin{equation}
    \label{eqn: left box def}
    s^{\nu}:=u^{\nu-1}\quad\text{and}\quad u^{\nu}:=\sup\{s\geq s^{\nu}:\W(\Gamma,[s^{\nu},s])\leq R^{-\epsilon}\},
\end{equation}
 and proceed to step $\nu+1$.

Our selection process terminates after finitely many steps (see Lemma \ref{lemma: number of boxes}) with the last pair of points $s^{\nu_1},u^{\nu_1}$. Note that $s^0=a$ and $u^{\nu_1}=b$.
For each $0\leq\nu\leq\nu_1$, we define the box 
\begin{equation}
\label{eqn: sigma def}
\sigma^\nu:=\Gamma(s^{\nu})+\big(
    \mathbf{t}(s^{\nu})\;\,\vert\;\,\mathbf{n}(s^{\nu})
\big)\begin{pmatrix}
    \len(\sigma^\nu)&0\\0&\wid(\sigma^\nu)
\end{pmatrix}\big([0,1]\times[-1,1]\big),
\end{equation}
where \begin{equation}
    \label{eqn: len wid sigma def}
    \len(\sigma^\nu):=\cal(\Gamma,[s^\nu,u^\nu]),\quad\text{and}\quad \wid(\sigma^\nu):=R^{-\epsilon}.
\end{equation}
Each $\sigma^\nu$ is a rectangle of length $\len(\sigma^\nu)$ and width $2\wid(\sigma^\nu)$. Denote $\calt_1(\Gamma):=\{\sigma^\nu\}_{\nu=0}^{\nu_1}$. If $\nu_1=0$ (that is, the algorithm terminates at Step 0), we call $\sigma^0$ an \textit{exceptional box}. If $\nu_1>0$ (that is, the algorithm proceeds to Step 1), we call every $\sigma^\nu$ a \textit{typical box}. Among the typical boxes, we call $\sigma^{\nu_1}$ a \textit{right box}, and call every other $\sigma^\nu$ a \textit{left box}.\footnote{These terms reflect how the intervals $[s^\nu,u^\nu]$ are defined.} We denote $I\vert_{\sigma^\nu}:=[s^\nu,u^\nu]$ and call it the \textit{interval associated with} $\sigma^\nu$.
For each $\sigma\in\calt_1(\Gamma)$, we also define $\Gamma\vert_{\sigma}$ as the arc of $\Gamma$ given by $$\Gamma\vert_{\sigma}:=\{(t,\gamma(t)):t\in I\vert_{\sigma}\}.$$
We also define the similarity transformation 
\begin{equation}
    \label{eqn: cal sigma def}
    \cal_{\sigma}:=(\wid\sigma)^{-1}(\mathbf{t}(s)\;\vert\;\mathbf{n}(s))^{-1}(\xi-\Gamma(s)),
\end{equation}
 where $s$ denotes the left endpoint of $I\vert_{\sigma}$. This definition will be useful for the multi-scale algorithm. 
\subsubsection{Properties of the single-scale decomposition}\label{sec: single scale properties}
The following properties are satisfied by the collection $\calt_1(\Gamma)$ constructed above.
\begin{lemma}\label{lemma: wid/len sigma}
    For all $\sigma\in\calt_1(\Gamma)$, we have
    \begin{equation}
        \label{eqn: wid/len formula}
        \W(\Gamma,I\vert_\sigma)\leq R^{-\epsilon}.
    \end{equation}
    Furthermore, if $I\vert_\sigma=[s,u]$, then for all $\eta>0$ sufficiently small, we have
    \begin{align}
    \label{eqn: sigma angle sep}
    \nonumber \W(\Gamma,[s,u+\eta])> R^{-\epsilon}\quad&\text{if $\sigma$ is a left box};\\ 
   \W(\Gamma,[s-\eta,u])>R^{-\epsilon}\quad&\text{if $\sigma$ is a right box}.
\end{align}
\end{lemma}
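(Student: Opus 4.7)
The proof rests on two one-sided continuity facts that follow directly from the convexity of $\gamma$: the function $s\mapsto F(s):=\W(\Gamma,[s^\nu,s])$ is left-continuous, and the function $s\mapsto G(s):=\W(\Gamma,[s,b])$ is right-continuous. Indeed, each is an arclength factor (continuous in the moving endpoint) times an $\A$-factor; the $\A$-factor in $F$ involves $\gamma_L'(s)$ (left-continuous by convexity), while the $\A$-factor in $G$ involves $\gamma_R'(s)$ (right-continuous). By \eqref{eqn: admissible derivative bound} the denominators $1+\gamma_L'(s)\gamma_R'(s^\nu)$ and $1+\gamma_L'(b)\gamma_R'(s)$ are bounded below by, say, $1/2$, so the factorization is well-defined and the one-sided continuity passes through.

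With this in hand, I would split into the three box types. For an exceptional box (the algorithm terminates at Step $0$ in Case I), the inequality $\W(\Gamma,I\vert_\sigma)\leq R^{-\epsilon}$ is just the defining hypothesis of Case I at Step $0$, and the ``furthermore'' statement does not apply.

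For a left box $\sigma$ (arising from Case II at some step $\nu$), we have $u^\nu=\sup\{s\geq s^\nu:F(s)\leq R^{-\epsilon}\}$. Pick $s_n\uparrow u^\nu$ inside the sup-set; left-continuity of $F$ then gives $F(u^\nu)\leq R^{-\epsilon}$, which is \eqref{eqn: wid/len formula}. Because $\sigma$ came from Case II we have $\W(\Gamma,[s^\nu,b])>R^{-\epsilon}$, so $u^\nu<b$; hence for $\eta>0$ small, $u^\nu+\eta\in(u^\nu,b]$ lies strictly above the sup and therefore $F(u^\nu+\eta)>R^{-\epsilon}$, which is the first half of \eqref{eqn: sigma angle sep}.

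For a right box (arising from Case I at some step $\nu\geq 1$), $s^\nu=\inf\{s\leq u^{\nu-1}:G(s)\leq R^{-\epsilon}\}$. Taking $s_n\downarrow s^\nu$ inside the inf-set and invoking right-continuity of $G$ yields $G(s^\nu)\leq R^{-\epsilon}$. For the strict inequality, I would first check that $s^\nu>a$: since the algorithm reached step $\nu\geq 1$, Step $0$ was in Case II, so $G(a)=\W(\Gamma,[a,b])>R^{-\epsilon}$; then right-continuity of $G$ at $a$ rules out $s^\nu=a$. For $0<\eta<s^\nu-a$, the point $s^\nu-\eta$ lies below the infimum, so $G(s^\nu-\eta)>R^{-\epsilon}$, giving the second half of \eqref{eqn: sigma angle sep}. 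The only mildly delicate step is verifying the one-sided continuity of $F$ and $G$ cleanly, since $\A$ is a nonlinear quotient of one-sided derivatives that can genuinely jump; but the factored form makes this routine once the one-sided continuity of $\gamma_L',\gamma_R'$ is invoked.
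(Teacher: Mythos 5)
Your proposal is correct and follows essentially the same route as the paper: one-sided continuity of $\gamma_L'$ and $\gamma_R'$ (hence left/right continuity of $s\mapsto\W(\Gamma,[s^\nu,s])$ and $s\mapsto\W(\Gamma,[s,b])$) yields \eqref{eqn: wid/len formula} at the supremum/infimum, and the definition of supremum/infimum yields the strict inequalities \eqref{eqn: sigma angle sep}. Your extra checks that $u^\nu<b$ for a left box and $s^\nu>a$ for the right box are a slightly more careful treatment of endpoint issues that the paper leaves implicit, but the argument is the same.
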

\begin{proof}
  For the proof of this lemma, we shall use the elementary fact that the left-derivative of a convex function is left-continuous and similarly the right-derivative is right-continuous.
  
  We start with the top inequality.  First suppose the algorithm terminates at Step 0. Then $\sigma=\sigma^0$ is the only box, and we have $\W(\Gamma,[s^0,u^0])\leq R^{-\epsilon}$, which is the desired relation. Now suppose the algorithm proceeds to Step 1.  Let $\sigma=\sigma^\nu$. If $\sigma^\nu$ is a left box, we consider all $s\geq s^\nu$ satisfying $\W(\Gamma,[s^\nu,s])\leq R^{-\epsilon}$. By \eqref{eqn: left box def}, $u^\nu$ is the supremum of all such $s$, so by taking limit as $s$ approaches $u^\nu$ from below and using the left-continuity of $\gamma_L'$, we get $$R^{-\epsilon}\geq \lim_{s\to {u^{\nu}}^-}\W(\Gamma,[s^\nu,s])=\W(\Gamma,[s^\nu,u^\nu])=\W(\Gamma,I\vert_\sigma).$$ If $\sigma$ is a right box, we consider all $s\leq u^{\nu-1}$ satisfying $\W(\Gamma,[s,u^\nu])\leq R^{-\epsilon}$. By \eqref{eqn: right box def}, $s^\nu$ is the infimum of all such $s$, so by taking limit as $s$ approaches $s^\nu$ from above and using the right-continuity of $\gamma_R',$ we get $$R^{-\epsilon}\geq \lim_{s\to {s^\nu}^+}\W(\Gamma,[s,u^\nu])=\W(\Gamma,[s^\nu,u^\nu])=\W(\Gamma,I\vert_\sigma).$$
 For the first inequality in \eqref{eqn: sigma angle sep} we use \eqref{eqn: left box def} and the definition of supremum. For the second inequality we use \eqref{eqn: right box def} and the definition of infimum. 
\end{proof}
\begin{lemma}\label{lemma: sigma length}
For all $\sigma\in\calt_1(\Gamma)$, we have
\begin{equation}
    \label{eqn: tau 1 length relative range}
     R^{-\epsilon}\cal(\Gamma,I)\leq\cal(\Gamma,I\vert_{\sigma})\leq \cal(\Gamma,I). 
\end{equation}
\end{lemma}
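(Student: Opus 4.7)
My plan is to handle the two inequalities separately. The upper bound $\cal(\Gamma, I\vert_\sigma) \leq \cal(\Gamma, I)$ should be essentially immediate: the admissibility bound \eqref{eqn: admissible derivative bound} makes $\gamma$ non-decreasing on $I$, so both coordinates of $t \mapsto \Gamma(t) = (t, \gamma(t))$ are monotone. Writing $I\vert_\sigma = [s,u] \subseteq [a,b] = I$, both $u - s \leq b - a$ and $\gamma(u) - \gamma(s) \leq \gamma(b) - \gamma(a)$ hold, whence $|\Gamma(u)-\Gamma(s)| \leq |\Gamma(b)-\Gamma(a)|$.

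For the lower bound, I would first dispose of the exceptional case $\nu_1 = 0$: then $I\vert_\sigma = I$ and the inequality is trivial. For a typical box, say a left box with $I\vert_\sigma = [s,u]$, the key input is \eqref{eqn: sigma angle sep} from Lemma \ref{lemma: wid/len sigma}: $\W(\Gamma, [s, u+\eta]) > R^{-\epsilon}$ for all sufficiently small $\eta > 0$. Expanding $\W = \cal \cdot \A$ gives
\begin{equation*}
\cal(\Gamma, [s, u+\eta]) > R^{-\epsilon}/\A(\Gamma, [s, u+\eta]).
\end{equation*}
The main technical step is then an upper bound on $\A(\Gamma, [s,u+\eta])$ in terms of $\A(\Gamma, I)$. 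Since $\gamma_L', \gamma_R'$ are monotone by convexity and bounded by $R^{-2\epsilon}$ by \eqref{eqn: admissible derivative bound}, I expect the comparison $\A(\Gamma, [s,u+\eta]) \leq (1+R^{-4\epsilon}) \A(\Gamma, I)$; indeed, the numerator $\gamma_L'(u+\eta) - \gamma_R'(s)$ is dominated by $\gamma_L'(b) - \gamma_R'(a) = (1 + \gamma_L'(b)\gamma_R'(a))\A(\Gamma, I)$, while the denominator of the sub-interval $\A$ is bounded below by $1$. Combining this with (C4) in the form $\A(\Gamma, I) \leq 1/\cal(\Gamma, I)$ yields
\begin{equation*}
\cal(\Gamma, [s,u+\eta]) \geq R^{-\epsilon} \cal(\Gamma, I)/(1 + R^{-4\epsilon}).
\end{equation*}
Sending $\eta \to 0^+$ and using the continuity of $\gamma$ to pass $\cal(\Gamma, [s, u+\eta]) \to \cal(\Gamma, I\vert_\sigma)$ then gives the claimed bound up to the harmless factor $(1+R^{-4\epsilon})^{-1}$. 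The right box case is symmetric, extending by $-\eta$ on the left and using the right-continuity of $\gamma_R'$.

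The main obstacle, in my view, is the comparability $\A(\Gamma, [s,u+\eta]) \lesssim \A(\Gamma, I)$: the angle spread on a sub-interval is not automatically smaller than on the whole interval, since the denominator $1 + \gamma_L'\gamma_R'$ appearing in $\A$ is larger for the whole. The saving grace is the bound \eqref{eqn: admissible derivative bound}, which keeps the distortion to a factor of $1 + R^{-4\epsilon}$; without (C1) and (C3) the argument would lose an arbitrary constant. The other subtlety to navigate carefully is taking $\eta \to 0^+$ at the level of one-sided derivatives, which is clean given that $\gamma_R'$ is right-continuous and $\gamma$ itself is continuous.
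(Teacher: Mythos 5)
Your argument is essentially the paper's: dispose of the exceptional case, obtain the upper bound from monotonicity of $\cal(\Gamma,\cdot)$ under inclusion, and for the lower bound feed \eqref{eqn: sigma angle sep} into $\W=\A\cdot\cal$, bound the sub-interval $\A$ by $\A(\Gamma,I)\leq\cal(\Gamma,I)^{-1}$ via (C4), and let $\eta\to 0^+$ (with the symmetric treatment of right boxes). The one place you deviate is precisely the step you flag as the ``main obstacle'': you bound $\A(\Gamma,[s,u+\eta])$ only by $(1+R^{-4\epsilon})\A(\Gamma,I)$, and consequently you prove $\cal(\Gamma,I\vert_\sigma)\geq R^{-\epsilon}\cal(\Gamma,I)/(1+R^{-4\epsilon})$, which is strictly weaker than the stated inequality. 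The concern behind that loss is in fact vacuous: $\A(\Gamma,[a',b'])=\tan\big(\arctan\gamma_L'(b')-\arctan\gamma_R'(a')\big)$, and since all one-sided derivatives lie in $[0,1]$ (indeed in $[0,R^{-2\epsilon}]$ by \eqref{eqn: admissible derivative bound}), the relevant angle differences lie in $[0,\pi/4]$, where $\tan$ is increasing; convexity makes the angle over a sub-interval no larger than the angle over $I$, so $\A(\Gamma,[s,u+\eta])\leq\A(\Gamma,I)$ with no loss. This exact monotonicity is what the paper invokes (``by convexity''), and with it your argument yields the lemma as stated.

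As written, your weaker conclusion would have to be propagated into Lemma \ref{lemma: sigma eccentricity} and condition I(k) of the multi-scale algorithm, which quote the clean constants; the accumulated factor over $N\leq 1/\epsilon$ steps is bounded, so nothing breaks in substance, but the fix above is needed to prove the statement literally and is simpler anyway. One further small attribution point: the monotonicity of $\gamma$ used in your upper bound comes from convexity together with (C3) (so $\gamma_R'\geq\gamma_R'(a)=0$ on $I$), not from the $L^\infty$ bound \eqref{eqn: admissible derivative bound} alone, which only controls $|\gamma'|$.
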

\begin{proof}
If the algorithm stops at Step 0, then $\sigma=\sigma^0, I\vert_{\sigma}=I$, and \eqref{eqn: tau 1 length relative range} follows immediately. 

Now suppose the algorithm proceeds to Step 1. For all $\sigma$ we have $I\vert_{\sigma}\subseteq I$, so the upper bound follows from the monotonicity of the function $\cal(\Gamma,\cdot)$, which in turn follows from convexity and the conditions (C1)-(C4). For the lower bound we argue as follows.
Let $I\vert_\sigma=[s,u]$. First, consider the case that $\sigma$ is a left box. There exists $\eta>0$ satisfying $u+\eta< b$. By the first inequality in \eqref{eqn: sigma angle sep}, we have $$\W(\Gamma,[s,u+\eta])>R^{-\epsilon}.$$ But $\W(\Gamma,[s,u+\eta])=\A(\Gamma,[s,u+\eta])\cal(\Gamma,[s,u+\eta])$, and by convexity and (C4), we have $$\A(\Gamma,[s,u+\eta])\leq\A(\Gamma,I)\leq \cal(\Gamma,I)^{-1}.$$ 
Combining both, we have $$\cal(\Gamma,[s,u+\eta])>R^{-\epsilon}\cal(\Gamma,I).$$ Letting $\eta\to 0^+$ and using the continuity of $\Gamma$, we get the desired lower bound in this case. 

We argue similarly when $\sigma$ is a right box, this time observing that there exists $\eta>0$ satisfying $s-\eta>a$, and using the second inequality in \eqref{eqn: sigma angle sep}.
\end{proof}
The following corollary says that the boxes $\sigma$ have some eccentricity, which is a technical requirement for the high/low argument (specifically, Lemma \ref{lemma: high lemma}).
\begin{lemma}
    \label{lemma: sigma eccentricity}
      For each $\sigma\in\calt_1(\Gamma)$, we have 
       \begin{equation}
      \label{eqn: sigma eccentricity}
      \len(\sigma)\geq R^{2\epsilon}\wid(\sigma).
  \end{equation}
\end{lemma}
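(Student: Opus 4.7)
The target inequality unpacks cleanly once we recall what each quantity is. By the definition in \eqref{eqn: len wid sigma def}, $\wid(\sigma)=R^{-\epsilon}$ and $\len(\sigma)=\cal(\Gamma,I\vert_\sigma)$, so the claim $\len(\sigma)\geq R^{2\epsilon}\wid(\sigma)$ is equivalent to the bound
\[
\cal(\Gamma,I\vert_\sigma)\geq R^{\epsilon}.
\]
The plan is to obtain this simply by chaining Lemma \ref{lemma: sigma length} with the admissibility condition (C2).

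First I would invoke Lemma \ref{lemma: sigma length}, which gives $\cal(\Gamma,I\vert_\sigma)\geq R^{-\epsilon}\cal(\Gamma,I)$ for every $\sigma\in\calt_1(\Gamma)$, regardless of whether the single-scale algorithm terminates at Step 0 (yielding the exceptional box) or proceeds further and produces typical left/right boxes. Since $\Gamma$ is admissible, condition (C2) guarantees $\cal(\Gamma,I)\geq R^{2\epsilon}$. Multiplying gives
\[
\cal(\Gamma,I\vert_\sigma)\geq R^{-\epsilon}\cdot R^{2\epsilon}=R^{\epsilon},
\]
which is exactly what we need after multiplying through by $\wid(\sigma)=R^{-\epsilon}$.

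There is really no obstacle here — this is a bookkeeping corollary of the preceding two results and the admissibility hypothesis, and the proof is a single line. The only thing worth explicitly flagging is that both the exceptional case ($\nu_1=0$, so $I\vert_\sigma=I$ and the bound is immediate from (C2) alone) and the typical case (where one genuinely needs the relative length estimate from Lemma \ref{lemma: sigma length}) are already handled uniformly by Lemma \ref{lemma: sigma length}, so no separate case analysis is required in the proof.
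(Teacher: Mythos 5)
Your argument is correct and coincides with the paper's own proof: both rewrite $\len(\sigma)=\cal(\Gamma,I\vert_\sigma)$ and $\wid(\sigma)=R^{-\epsilon}$ via \eqref{eqn: len wid sigma def}, apply Lemma \ref{lemma: sigma length} to get $\cal(\Gamma,I\vert_\sigma)\geq R^{-\epsilon}\cal(\Gamma,I)$, and then invoke (C2) to conclude. No gaps; the remark that Lemma \ref{lemma: sigma length} already covers the exceptional and typical boxes uniformly is consistent with the paper.
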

\begin{proof}
By Lemma \ref{lemma: sigma length} and \eqref{eqn: len wid sigma def} we have $$\len(\sigma)=\cal(\Gamma,I\vert_{\sigma})\geq R^{-\epsilon}\cal(\Gamma,I)=\wid(\sigma)\cal(\Gamma,I).$$ But by (C2), we have $\cal(\Gamma,I)\geq R^{2\epsilon}$, and so the result follows.
\end{proof}
The following gives a bound on the number of $\sigma\in\calt_1(\Gamma)$. This follows directly from \cite[Lemma 2.3 (i)]{SZ}, but we include the proof for completeness.
\begin{lemma}\label{lemma: number of R1 boxes}
    For all admissible curves $\Gamma$, we have $$\#\calt_1(\Gamma)\leq 32R^{\epsilon/2}.$$ 
\end{lemma}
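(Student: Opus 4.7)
The plan is to count the left boxes via Cauchy--Schwarz applied to the ``turning'' lower bound extracted from the sup-condition in \eqref{eqn: left box def}; the right box $\sigma^{\nu_1}$ adds only $+1$ to the total, and the case $\nu_1=0$ (a single exceptional box) is trivial. So I would fix $\nu_1\geq 1$ throughout.

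For each left box $\sigma^\nu$, $0\leq\nu\leq\nu_1-1$, the first inequality of \eqref{eqn: sigma angle sep} gives $\W(\Gamma,[s^\nu,u^\nu+\eta])>R^{-\epsilon}$ for all small $\eta>0$ (with the understanding that $u^\nu+\eta\leq b$; the possibly degenerate case $u^{\nu_1-1}=b$ only changes the final count by an additive constant). Letting $\eta\to 0^+$ and using continuity of $\Gamma$ together with the right-continuity of $\gamma_R'$, the limit yields
\begin{equation*}
\ell_\nu\,\tilde\alpha_\nu\geq R^{-\epsilon},\quad \ell_\nu:=\cal(\Gamma,[s^\nu,u^\nu]),\quad \tilde\alpha_\nu:=\frac{\gamma_R'(u^\nu)-\gamma_R'(s^\nu)}{1+\gamma_R'(u^\nu)\gamma_R'(s^\nu)}.
\end{equation*}
Note that $\tilde\alpha_\nu$ differs from $\A(\Gamma,[s^\nu,u^\nu])$ only in that it uses $\gamma_R'(u^\nu)$ in place of $\gamma_L'(u^\nu)$; this distinction is forced by the fact that the sup definition really ``sees'' the right-derivative at $u^\nu$.

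Taking square roots and applying Cauchy--Schwarz,
\begin{equation*}
\nu_1 R^{-\epsilon/2}\leq\sum_{\nu=0}^{\nu_1-1}\sqrt{\ell_\nu\,\tilde\alpha_\nu}\leq\Big(\sum_\nu\ell_\nu\Big)^{1/2}\Big(\sum_\nu\tilde\alpha_\nu\Big)^{1/2}.
\end{equation*}
For the length sum, the intervals $[s^\nu,u^\nu]$ are essentially disjoint subsets of $I$, and by \eqref{eqn: admissible derivative bound} each chord is comparable to its horizontal width, so $\sum_\nu\ell_\nu\leq\sqrt{2}\,\cal(\Gamma,I)$. For the angle sum, the identity $u^\nu=s^{\nu+1}$ for consecutive left boxes ($\nu\leq\nu_1-2$) together with $1+\gamma_R'\gamma_R'\geq 1$ yields the telescoping estimate
\begin{equation*}
\sum_{\nu=0}^{\nu_1-1}\tilde\alpha_\nu\leq\sum_\nu\big(\gamma_R'(u^\nu)-\gamma_R'(s^\nu)\big)=\gamma_R'(u^{\nu_1-1})-\gamma_R'(s^0)\leq\gamma_L'(b)=\A(\Gamma,I),
\end{equation*}
where (C3) provides both $\gamma_R'(s^0)=0$ and the final identification. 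Combined with (C4), this gives $\nu_1 R^{-\epsilon/2}\leq(\sqrt{2}\,\W(\Gamma,I))^{1/2}\leq 2^{1/4}$, hence $\#\calt_1(\Gamma)=\nu_1+1\leq 32R^{\epsilon/2}$, with room to spare.

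The main technical subtlety is the ``plus-$\eta$'' passage to the limit: $\W$ on the closed interval $[s^\nu,u^\nu]$ involves $\gamma_L'(u^\nu)$, whereas \eqref{eqn: left box def} only gives information about $\gamma_L'(u^\nu+\eta)\to\gamma_R'(u^\nu)$. This would be a serious nuisance on low-regularity curves if not for the fact that, once one works with $\tilde\alpha_\nu$, the $u^\nu=s^{\nu+1}$ left-box identity makes the sum telescope to a quantity controlled purely by (C3) and the endpoint slope $\gamma_L'(b)$, with no further appeal to any regularity of $\gamma$ beyond convexity.
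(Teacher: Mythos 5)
Your proof is correct and follows essentially the same strategy as the paper's: a per-left-box lower bound $\W(\Gamma,I\vert_{\sigma^\nu})\geq R^{-\epsilon}$ extracted from the supremum condition, Cauchy--Schwarz, a telescoping bound on the slope increments, the bound $\sum_\nu\ell_\nu\lesssim\cal(\Gamma,I)$, and finally (C3)--(C4). The only divergence is technical: the paper sidesteps the $\eta\to 0^+$ limit and the left/right-derivative issue at $u^\nu$ by applying \eqref{eqn: sigma angle sep} to the two-step intervals $[s^\nu,s^{\nu+2}]$ (accepting a factor-of-two overlap), whereas you pass to the limit and replace $\gamma_L'(u^\nu)$ by $\gamma_R'(u^\nu)$ via right-continuity of the right derivative; both are valid, and your handling of the right box and the degenerate case $u^{\nu_1-1}=b$ by additive constants is fine given the large margin in the constant $32$.
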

\begin{proof}
    Recall the relation $$\W(\Gamma,\cdot)=\A(\Gamma,\cdot)\cal(\Gamma,\cdot).$$ 
    If the single-scale algorithm stops at Step 0, then $\#\calt_1(\Gamma)=1$, and we are done. Suppose we are not in this case. For all $0\leq\nu\leq\nu_1-3$, $\sigma^\nu$ is a left box with $s^{\nu+2}>s^{\nu+1}=u^\nu$. Consequently, by the first inequality in \eqref{eqn: sigma angle sep}, we have $$R^{-\epsilon}<\W(\Gamma,[s^\nu,s^{\nu+2}])\quad\text{for all}\quad 0\leq\nu\leq\nu_1-3.$$
    Applying \eqref{eqn: admissible derivative bound} gives $$\W(\Gamma,[s^\nu,s^{\nu+2}])\leq 2(s^{\nu+2}-s^\nu)(\gamma_L'(s^{\nu+2})-\gamma_R'(s^\nu)).$$
    Combining the above two and using Cauchy--Schwarz, it follows that $$(\nu_1-2)R^{-\epsilon/2}\leq 2\sum_{\nu=0}^{\nu_1-3}(s^{\nu+2}-s^\nu)^{1/2}(\gamma_L'(s_{\nu+2})-\gamma_R'(s^\nu))^{1/2}\leq2\big(\sum_{\nu=0}^{\nu_1-3}s^{\nu+2}-s^\nu\big)^{1/2}\big(\sum_{\nu=0}^{\nu_1-3}\gamma_L'(s^{\nu+2})-\gamma_R'(s^\nu)\big)^{1/2}.$$
    Now $$\sum_{\nu=0}^{\nu_1-3}s^{\nu+2}-s^\nu\leq 2(b-a),\quad\text{and}\quad \sum_{\nu=0}^{\nu_1-3}\gamma_L'(s^{\nu+2})-\gamma_R'(s^\nu)\leq 2(\gamma_L'(b)-\gamma_R'(a)).$$ Since $(\gamma,[a,b])$ is an admissible pair, by \eqref{eqn: admissible derivative bound} and (C4) we have $(b-a)(\gamma_L'(b)-\gamma_R'(a))\leq 2$. It follows that $\nu_1\leq 2+16R^{\epsilon/2}$, and the result follows since $R\geq 1$.
\end{proof}
\begin{lemma}\label{lemma: sigma containment}
For all $\sigma\in\calt_1(\Gamma)$, we have $$\sigma\subseteq 2\cdot(I\times [-1,1]),$$ provided $R\geq 2^{1/\epsilon}$.
\end{lemma}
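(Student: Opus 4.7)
The plan is to bound the two coordinates of a generic point $p(t,w) := \Gamma(s^\nu) + t\,\len(\sigma)\,\mathbf{t}(s^\nu) + w R^{-\epsilon}\,\mathbf{n}(s^\nu)$ of $\sigma$ (for $(t,w)\in[0,1]\times[-1,1]$) against the dilated rectangle $2\cdot(I\times[-1,1]) = [a - (b-a)/2,\,b + (b-a)/2] \times [-2, 2]$. Setting $\theta := \arctan\gamma_R'(s^\nu)$, conditions (C1) and (C3) force $|\gamma_R'(s^\nu)|\leq R^{-2\epsilon}$, so $\mathbf{t}(s^\nu)$ is nearly horizontal and $\mathbf{n}(s^\nu)$ nearly vertical, and each factor of $\cos\theta$ or $\sin\theta$ can be approximated to high accuracy.

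For the vertical coordinate, the key geometric observation is that the long edge of $\sigma$ based at $\Gamma(s^\nu)$ lies precisely on the tangent line to $\Gamma$ at $\Gamma(s^\nu)$; by convexity, this tangent stays weakly below $\Gamma$ on $[s^\nu,u^\nu]$, so its value at $x = u^\nu$ is at most $\gamma(u^\nu)$. Since $\len(\sigma)\cos\theta$ exceeds $u^\nu - s^\nu$ by at most a factor of $\sqrt{1+R^{-4\epsilon}}$, the tangent's value at the right endpoint of the long edge overshoots $\gamma(u^\nu)$ by no more than $\gamma_R'(s^\nu)(u^\nu - s^\nu) R^{-4\epsilon}/2 \leq \W(\Gamma,I) R^{-4\epsilon}/2$, which is negligible by (C4). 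Combined with $\gamma(u^\nu)\leq\gamma_L'(b)(b-a) = \W(\Gamma,I)\leq 1$ from (C3), (C4), and the perpendicular contribution $\pm R^{-\epsilon}\cos\theta$, the vertical coordinate lies in $[-R^{-\epsilon},\,1 + R^{-4\epsilon}/2 + R^{-\epsilon}]\subseteq[-2,2]$ whenever $R\geq 2^{1/\epsilon}$.

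For the horizontal coordinate, the analogous bookkeeping bounds $p(t,w)_1$ above by $u^\nu + (b-a)R^{-4\epsilon}/2 + R^{-3\epsilon}$ and below by $a - R^{-3\epsilon}$. Condition (C2) together with the estimate $\cal(\Gamma,I)\leq (b-a)\sqrt{1+R^{-4\epsilon}}$ gives $b-a \geq R^{2\epsilon}/2$, which comfortably absorbs both error terms into the slack $(b-a)/2$. The argument is fundamentally bookkeeping driven by convexity and admissibility; the only minor obstacle is tracking the small-angle corrections coming from $\cos\theta$ and $\sin\theta$ consistently across both coordinates. The threshold $R\geq 2^{1/\epsilon}$ is used precisely to make each factor of the form $R^{-k\epsilon}$ with $k \geq 1$ uniformly small enough to absorb into the $O(1)$ slack of the target rectangle.
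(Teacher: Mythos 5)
Your proposal is correct and follows essentially the same route as the paper: parametrize a point of $\sigma$ via \eqref{eqn: sigma def}, use the admissibility bounds \eqref{eqn: admissible derivative bound} to control the small-angle corrections, bound the vertical coordinate by the convexity inequality $\gamma(s)+(u-s)\gamma_R'(s)\leq\gamma(u)\leq(b-a)\gamma_L'(b)\leq\W(\Gamma,I)\leq 1$ (note your ``$=\W(\Gamma,I)$'' should be ``$\leq\W(\Gamma,I)$'' since $\cal(\Gamma,I)\geq b-a$, which only helps), and absorb the horizontal errors into the slack $(b-a)/2$ using $b-a\geq R^{2\epsilon}/2$ from (C2). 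No further changes are needed.
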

\begin{proof}
Let $I=[a,b]$ and $I\vert_{\sigma}=[s,u]$. Then by \eqref{eqn: sigma def} we have $$\sigma=\Gamma(s)+\big(
    \mathbf{t}(s)\;\,\vert\;\,\mathbf{n}(s)
\big)\begin{pmatrix}
    \len(\sigma)&0\\0&\wid(\sigma)
\end{pmatrix}\big([0,1]\times[-1,1]\big).$$ Our goal is to show that $\sigma\subseteq[a-\frac{b-a}{2},b+\frac{b-a}{2}]\times [-2,2]$. Let $\xi\in\sigma$. Then 
\begin{align}
    \label{eqn: point in tau_{1,nu}}
   \nonumber \xi_1&=s+\rho_1|\Gamma(u)-\Gamma(s)|-\rho_2R^{-\epsilon}\gamma_R'(s)\\
    \xi_2&=\gamma(s)+\rho_1|\Gamma(u)-\Gamma(s)|\gamma_R'(s)+\rho_2R^{-\epsilon},
\end{align}
for some $0\leq\rho_1\leq 1$, and $|\rho_2|\leq 1$. By \eqref{eqn: admissible derivative bound}, we have 
\begin{equation}
    \label{eqn: sigma containment derivative bound}
    |\Gamma(u)-\Gamma(s)|\leq (1+R^{-4\epsilon})^{1/2}(u-s)\quad\text{and}\quad \gamma_R'(s)\leq R^{-2\epsilon}.
\end{equation} 
Using \eqref{eqn: sigma containment derivative bound} in the first equation in \eqref{eqn: point in tau_{1,nu}}, we get 
 $$s-2^{-3}\leq \xi_1\leq u+(1/4)(u-s)+2^{-3}\quad\text{for all}\quad R\geq 2^{1/\epsilon}.$$ 
 Now $[s,u]\subseteq[a,b]$, and by  \eqref{eqn: admissible derivative bound} and (C2), we have $$b-a\geq (1/2)|\Gamma(b)-\Gamma(a)|\geq (1/2)R^{2\epsilon}.$$
 It follows that
 \begin{equation}
    \label{eqn: xi_1 containment}
    \xi_1\in [a-\frac{b-a}{2},b+\frac{b-a}{2}].
\end{equation}
 
On the other hand, using \eqref{eqn: sigma containment derivative bound} in the second equation in \eqref{eqn: point in tau_{1,nu}}, we find
\begin{equation}
    \label{eqn: |xi_2| bound}
    |\xi_2|\leq \gamma(s)+(1+R^{-4\epsilon})^{1/2}(u-s)\gamma_R'(s)+R^{-\epsilon}\leq (1+R^{-4\epsilon})^{1/2}\big(\gamma(s)+(u-s)\gamma_R'(s)\big)+R^{-\epsilon},
\end{equation}
since $\gamma\geq 0$.
 By convexity and (C3), we have $$\gamma(s)+(u-s)\gamma_R'(s)\leq \gamma(u)\leq \gamma(a)+(u-a)\gamma_L'(u)=\gamma(a)+(u-a)(\gamma_L'(u)-\gamma_R'(a)).$$  
 Since $u\leq b$, by convexity and \eqref{eqn: admissible derivative bound} we have $$(u-a)(\gamma_L'(u)-\gamma_R'(a))\leq (b-a)(\gamma_L'(b)-\gamma_R'(a))\leq (1+R^{-4\epsilon})\W(\Gamma,I).$$
 Using the above with (C4) in \eqref{eqn: |xi_2| bound}, we get $$|\xi_2|\leq (1+R^{-4\epsilon})^{3/2}+R^{-\epsilon}.$$ It follows that for all $R\geq 2^{1/\epsilon}$ we have 
\begin{equation}
    \label{eqn: xi_2 containment}
    \xi_2\in[-2,2].
\end{equation}
Combining \eqref{eqn: xi_1 containment} and \eqref{eqn: xi_2 containment}, we get that $\xi\in 2\cdot (I\times [-1,1])$ for all $R\geq 2^{1/\epsilon}$. 
\end{proof}

\subsubsection{Multi-scale algorithm}\label{sec: multi-scale algorithm}
We now use the single-scale algorithm recursively to construct box decompositions at all scales $R_k^{-1}$, and the associated intervals that provide finer partitions of $I=[a,b]$. 

\medskip\noindent\underline{Step $0$:}
Let us define $\len(\tau_0):=\cal(\Gamma,I)$, and $\wid(\tau_0):=1$. Define the unit-scale box $\tau_0:=[a,a+\len(\tau_0)/\wid(\tau_0)]\times[-1,1]$ and $\calt_0(\Gamma):=\{\tau_0\}$.

\medskip\noindent\underline{Step $1$:} We use the single-scale algorithm to define $\calt_1(\Gamma;\tau_0):=\calt_1(\Gamma)$, where $\calt_1(\Gamma)$ is the decomposition obtained in \S\ref{sec: single-scale algorithm}. The associated intervals $(I\vert_{\tau_1})_{\tau_1\in\calt_1(\Gamma)}$ produce an initial partition of $I$.

\medskip\noindent\underline{Step $k+1$:} At the end of Step $k$, we will have obtained $\calt_k(\Gamma)$, which is the collection of rectangles of the form
\begin{equation}
    \label{eqn: tau_k form}
    \tau_k=\Gamma(t_k)+\big(\mathbf{t}(t_k)\;\vert\;\mathbf{n}(t_k)\big)\begin{pmatrix}
        \len(\tau_k)&0\\0&\wid(\tau_k)
    \end{pmatrix} \big([0,1]\times[-1,1]\big), 
\end{equation} for some $t_k\in I$, and $\wid(\tau_k):=R_k^{-1}.$ Let $I\vert_{\tau_k}=[t_k,v_k]\subseteq[a,b]$ be the interval associated with $\tau_k$, which is defined at Step $k$. Let $\Gamma\vert_{\tau_k}$ be the arc of $\Gamma$ defined by 
\begin{equation}
    \label{eqn: Gamma vert tau k def}
    \Gamma\vert_{\tau_k}:=\{\Gamma(t):t\in I\vert_{\tau_k}\}.
\end{equation}
 Further suppose that the following conditions are met at Step $k$:
\begin{enumerate}
    [label=\Roman*(k).]
    \item For all $\tau_{k}\in\calt_{k}(\Gamma)$ we have 
        $$\len(\tau_{k})=\cal(\Gamma,I\vert_{\tau_k})\geq R_{k-2}^{-1}.$$
    \item For all $\tau_k\in\calt_k(\Gamma)$, we have $$\W(\Gamma,I\vert_{\tau_k})\leq R_k^{-1}.$$
\end{enumerate}
Now we define the $R_{k+1}^{-1}$ boxes as follows. Fix $\tau_k\in\calt_k(\Gamma)$, and denote $\upsilon_k:=[0,\len(\tau_k)/\wid(\tau_k)]\times[-1,1]$. Define the similarity transformation \begin{equation}
    \label{eqn: cal tau k def}
    \cal_{\tau_k}(\xi):=(\wid\tau_k)^{-1}\big(\mathbf{t}(t_k)\;\vert\;\mathbf{n}(t_k)\big)^{-1}(\xi-\Gamma(t_k)),
\end{equation} 
mapping $\tau_k$ onto $\upsilon_k$. Define the rescaled curve
\begin{equation}
    \label{eqn: Gamma tau k def}
    \Gamma_{\tau_k}:=\cal_{\tau_k}(\Gamma\vert_{\tau_k}),
\end{equation}
 which is the graph of some convex function $\gamma_{\tau_k}$ over some interval $I_{\tau_k}=[a_k,b_k]$.
\begin{lemma}
     \label{lemma: rescaled curves}
   $\Gamma_{\tau_k}$ is an admissible curve.
 \end{lemma}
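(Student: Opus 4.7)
\medskip
\noindent\textbf{Proof proposal.} The plan is to verify conditions (C1)--(C4) directly from the hypotheses I($k$) and II($k$) already assumed at Step $k$, using the fact that $\cal_{\tau_k}$ is a similarity transformation of ratio $R_k=(\wid\tau_k)^{-1}$.

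First I would record two general facts: similarity transformations preserve angles between lines, and they scale Euclidean distances uniformly by their ratio. Since $\cal_{\tau_k}$ is a similarity of ratio $R_k$, it follows that
\[
\A(\Gamma_{\tau_k},I_{\tau_k})=\A(\Gamma,I\vert_{\tau_k}),\qquad \cal(\Gamma_{\tau_k},I_{\tau_k})=R_k\,\cal(\Gamma,I\vert_{\tau_k}),
\]
and consequently $\W(\Gamma_{\tau_k},I_{\tau_k})=R_k\,\W(\Gamma,I\vert_{\tau_k})$.

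With these identities, (C2) follows from property I($k$): since $\cal(\Gamma,I\vert_{\tau_k})=\len(\tau_k)\geq R_{k-2}^{-1}$, we get $\cal(\Gamma_{\tau_k},I_{\tau_k})\geq R_kR_{k-2}^{-1}=R^{2\epsilon}$. Condition (C4) follows from property II($k$): $\W(\Gamma_{\tau_k},I_{\tau_k})\leq R_kR_k^{-1}=1$. Condition (C1) is then a consequence of (C2) and (C4), since $\A=\W/\cal\leq R^{-2\epsilon}$.

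The remaining condition (C3) will follow from the precise form of $\cal_{\tau_k}$ in \eqref{eqn: cal tau k def}. The translation part sends $\Gamma(t_k)$ to the origin, so the point of $\Gamma_{\tau_k}$ corresponding to $t=t_k$ is the origin; since $t_k$ is the left endpoint of $I\vert_{\tau_k}$, this will be the left endpoint $a_k$ of $I_{\tau_k}$, giving $\gamma_{\tau_k}(a_k)=0$. For the derivative, the rotation $(\mathbf{t}(t_k)\,\vert\,\mathbf{n}(t_k))^{-1}$ sends the right-tangent direction $\mathbf{t}(t_k)$ at the left endpoint to the horizontal direction $(1,0)$; since linear maps commute with one-sided differentiation, this yields $\gamma_{\tau_k,R}'(a_k)=0$. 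The main thing to be careful about is the orientation: one has to check that the rotation preserves the orientation so that the rescaled curve still lies above its left-tangent line (i.e.\ $\gamma_{\tau_k}\geq 0$) and the interval $I_{\tau_k}$ really does have $a_k=0$ at its left end; this is where the normalization conventions from Notation \ref{not: lambda 0 and R_k} enter.

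The potential obstacle is purely bookkeeping: one must verify that the inductive hypotheses I($k$) and II($k$) have actually been maintained through the prior steps of the algorithm. Assuming these as given (they are part of the inductive setup), the verification above is essentially automatic, and the lemma reduces to the three short computations outlined. No new analytic input is required beyond the similarity property of $\cal_{\tau_k}$.
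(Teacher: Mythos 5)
Your proposal is correct and follows essentially the same route as the paper: transfer $\A$, $\cal$, $\W$ through the similarity $\cal_{\tau_k}$ (angle preservation and scaling by $R_k$), then read (C2) off I($k$), (C4) off II($k$), and (C3) off the explicit form of $\cal_{\tau_k}$. The only cosmetic difference is your derivation of (C1) via $\A=\W/\cal\leq R^{-2\epsilon}$ from the freshly verified (C2) and (C4), whereas the paper gets it from the monotonicity $\A(\Gamma,I\vert_{\tau_k})\leq\A(\Gamma,I)\leq R^{-2\epsilon}$ under restriction of the interval; both are valid.
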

 \begin{proof}
     (C1): Since $\cal_{\tau_k}$ preserves angles, it follows immediately that 
\begin{equation}
\label{eqn: angle preservation}
 \A(\Gamma_{\tau_k},I_{\tau_k})=\A(\Gamma,I\vert_{\tau_k}).
\end{equation}
Since $I\vert_{\tau_k}\subseteq I$, by convexity we have $$\A(\Gamma,I\vert_{\tau_k})\leq\A(\Gamma,I).$$ Combining the above two with the fact that $\Gamma$ satisfies $(C1)$, proves the claim.

\medskip\noindent (C2): We have $\cal(\Gamma_{\tau_k},I_{\tau_k})=|\Gamma_{\tau_k}(b_k)-\Gamma_{\tau_k}(a_k)|$. Clearly, $$\Gamma_{\tau_k}(a_k)=\cal_{\tau_k}(\Gamma(t_k)),\quad\text{and}\quad \Gamma_{\tau_k}(b_k)=\cal_{\tau_k}(\Gamma(v_k)).$$ Since $\cal_{\tau_k}$ is a similarity of ratio $R_k$, it follows that $|\Gamma_{\tau_k}(b_k)-\Gamma_{\tau_k}(a_k)|=R_k|\Gamma(t_k)-\Gamma(v_k)|$, in other words 
\begin{equation}
    \label{eqn: length scaling}
    \cal(\Gamma_{\tau_k},I_{\tau_k})=R_k\cal(\Gamma,I\vert_{\tau_k}).
\end{equation}
By I(k), we have $$\cal(\Gamma,I\vert_{\tau_k})=\len(\tau_k)\geq R_{k-2}^{-1}.$$ Using this in the display above it yields (C2).

\medskip\noindent (C3):  Let $I_{\tau_k}=[a_k,b_k]$, and $I\vert_{\tau_k}=[t_k,v_k]$. By \eqref{eqn: cal tau k def}, we see that $\cal_{\tau_k}$ maps $\Gamma(t_k)$ to the origin. As such, $\gamma_{\tau_k}(a_k)=\gamma_{\tau_k}(0)=0$.
    From \eqref{eqn: cal tau k def}, we also see that
     $\cal_{\tau_k}$ maps lines with direction $(1,\gamma_R'(t_k))$ onto lines with direction $(1,0)$. Since derivatives commute with linear transformations, it follows that $\gamma_{\tau_k,R}'(a_k)=\gamma_{\tau_k,R}'(0)=0$. 
     
\medskip\noindent (C4): 
Recall that $$\W(\Gamma_{\tau_k},I_{\tau_k})=\cal(\Gamma_{\tau_k},I_{\tau_k})\A(\Gamma_{\tau_k},I_{\tau_k}).$$ Combining \eqref{eqn: angle preservation} and \eqref{eqn: length scaling}, we get $$\W(\Gamma_{\tau_k},I_{\tau_k})=R_k\W(\Gamma,I\vert_{\tau_k})\leq 1,$$ where in the last step we have used II(k). 
\end{proof}

Since $\Gamma_{\tau_k}$ is an admissible curve, we may apply the single-scale algorithm to $\Gamma_{\tau_k}$ and obtain $\calt_1(\Gamma_{\tau_k})$ for each $\tau_k\in\calt_k(\Gamma)$. The $R_{k+1}^{-1}$ boxes are defined as follows:
\begin{equation}
    \label{eqn: calt k+1 def}
    \calt_{k+1}(\Gamma;\tau_k):=\mathcal{L}_{\tau_k}^{-1}(\calt_1(\Gamma_{\tau_k})),\quad\text{and}\quad\calt_{k+1}(\Gamma):=\bigcup_{\tau_k\in\calt_k(\Gamma)}\calt_{k+1}(\Gamma;\tau_k).
\end{equation}
Here we have made a slight abuse of notation to denote $$\mathcal{L}_{\tau_k}^{-1}(\calt_1(\Gamma_{\tau_k}))=\{\cal_{\tau_k}^{-1}(\sigma_{k+1}):\sigma_{k+1}\in\calt_1(\Gamma_{\tau_k})\}.\footnote{Quite often we would like to think of $\cal_{\tau_k}$ and its inverse as acting on these boxes rather than points in the plane, which would lead to similar abuse of notation.}$$
We show that each $\tau_{k+1}\in\calt_{k+1}(\Gamma;\tau_k)$ has the following form: 
\begin{equation}
    \label{eqn: tau k+1 form}
    \tau_{k+1}=\Gamma(t_{k+1})+\big(\mathbf{t}(t_{k+1})\;\vert\;\mathbf{n}(t_{k+1})\big)\begin{pmatrix}
        \len(\tau_{k+1})&0\\0&\wid(\tau_{k+1})
    \end{pmatrix} \big([0,1]\times[-1,1]\big), 
\end{equation}
for some $t_{k+1}\in I\vert_{\tau_k}$. By \eqref{eqn: calt k+1 def}, we have $\tau_{k+1}=\cal_{\tau_k}^{-1}(\sigma_{k+1})$ for some $\sigma_{k+1}\in\calt_1(\Gamma_{\tau_k})$. Let $I_{\tau_k}\vert_{\sigma_{k+1}}=[s_{k+1},u_{k+1}]$ denote the interval associated with $\sigma_{k+1}$. Define $I\vert_{\tau_{k+1}}:=[t_{k+1},v_{k+1}]$ to be the interval associated with $\tau_{k+1}$, where 
\begin{equation}
    \label{eqn: t k+1 v k+1 def}
    \Gamma(t_{k+1}):=\cal_{\tau_k}^{-1}(\Gamma_{\tau_k}(s_{k+1})),\quad\text{and}\quad\Gamma(v_{k+1}):=\cal_{\tau_k}^{-1}(\Gamma_{\tau_k}(u_{k+1})).
\end{equation}
By the single-scale algorithm, $\sigma_{k+1}$ has the following form: $$\sigma_{k+1}=\Gamma_{\tau_k}(s_{k+1})+\begin{pmatrix}
   \cos\phi&-\sin\phi\\\sin\phi&\cos\phi
\end{pmatrix}\begin{pmatrix}
    \len(\sigma_{k+1})&0\\0&\wid(\sigma_{k+1})
\end{pmatrix}\big([0,1]\times[-1,1]\big),$$ where $\phi$ is the argument of the vector $(1,\gamma_{\tau_k,R}'(s_{k+1}))$. Using the definitions \eqref{eqn: cal tau k def} and \eqref{eqn: t k+1 v k+1 def}, we get 
\begin{equation}
    \label{eqn: tau k+1 form calculation 1}
    \cal_{\tau_k}^{-1}(\sigma_{k+1})=\Gamma(t_{k+1})+\begin{pmatrix}
        \cos\theta_k&-\sin\theta_k\\\sin\theta_k&\cos\theta_k
    \end{pmatrix}\begin{pmatrix}
   \cos\phi&-\sin\phi\\\sin\phi&\cos\phi
\end{pmatrix}\begin{pmatrix}
    \len(\tau_{k+1})&0\\0&\wid(\tau_{k+1})
\end{pmatrix}\big([0,1]\times[-1,1]\big),
\end{equation}
 where $\theta_k$ is the argument of the vector $\mathbf{t}(t_k)$, and  
 \begin{equation}
      \label{eqn: wid k+1 vs wid k}
  \len(\tau_{k+1}):=\len(\sigma_{k+1})\wid(\tau_{k}),\quad\text{and}\quad \wid(\tau_{k+1}):=\wid(\sigma_{k+1})\wid(\tau_k).
 \end{equation} 
 Clearly, $$\begin{pmatrix}
        \cos\theta_k&-\sin\theta_k\\\sin\theta_k&\cos\theta_k
    \end{pmatrix}\begin{pmatrix}
   \cos\phi&-\sin\phi\\\sin\phi&\cos\phi
\end{pmatrix}=\begin{pmatrix}
    \cos(\theta_{k+1})&-\sin(\theta_{k+1})\\\sin(\theta_{k+1})&\cos(\theta_{k+1})
\end{pmatrix},$$ where $\theta_{k+1}:=\theta_k+\phi$. We claim that $\theta_{k+1}$ is the argument of the vector $\mathbf{t}(t_{k+1})$. Indeed, $\phi$ is the angle between the long directions of $\sigma_k=\cal_{\tau_k}(\tau_k)$ and $\sigma_{k+1}=\cal_{\tau_k}(\tau_{k+1})$. Since $\cal_{\tau_k}$ is a similarity, it preserves angles. As such, $\phi$ is also the angle between the long directions of $\tau_k$ and $\tau_{k+1}$. Now $\theta_k$ is the angle between the long direction of $\tau_k$ and the $\xi_1$-axis. Hence, $\theta_{k+1}$ is the angle between the long direction of $\tau_{k+1}$ and the $\xi_1$-axis, proving the claim above. In light of this, we have
\begin{equation}
    \label{eqn: rotation matrix k+1}
    \begin{pmatrix}
    \cos(\theta_{k+1})&-\sin(\theta_{k+1})\\\sin(\theta_{k+1})&\cos(\theta_{k+1})
\end{pmatrix}=\big(\mathbf{t}(t_{k+1})\,\vert\,\mathbf{n}(t_{k+1})\big).
\end{equation} Using \eqref{eqn: rotation matrix k+1} in \eqref{eqn: tau k+1 form calculation 1}, we obtain \eqref{eqn: tau k+1 form}. 

Recall the definition of $\cal_{\sigma_{k+1}}$ from \eqref{eqn: cal sigma def}. The above argument also reveals the following useful identity:
 \begin{equation}
        \label{eqn: cal composition law}
        \cal_{\tau_{k+1}}=\cal_{\sigma_{k+1}}\circ\cal_{\tau_k},
    \end{equation} where $\cal_{\tau_{k+1}}$ is defined analogous to the definition of $\cal_{\tau_k}$ given in \eqref{eqn: cal tau k def}. Moreover, we have the following.
\begin{lemma}
    \label{lemma: cal composition law}
    Let $\tau_k\in\calt_k(\Gamma)$, and $\tau_{k+1}=\cal_{\tau_k}^{-1}(\sigma_{k+1})$ for $\sigma_{k+1}\in\calt_1(\Gamma_{\tau_k})$. Then the following diagram commutes:
    \begin{figure}[H]
        \centering
        \includegraphics[width=0.35\linewidth]{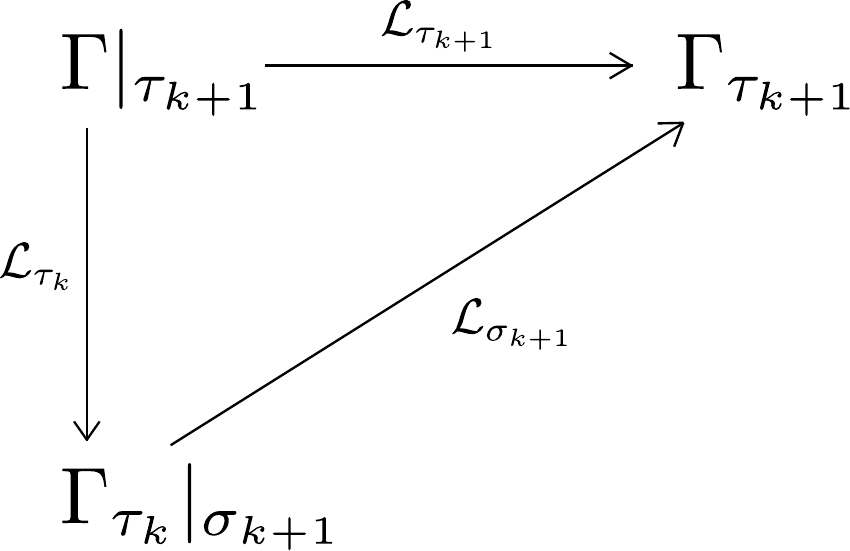}
        \label{fig:commutative diagram}
    \end{figure}
\end{lemma}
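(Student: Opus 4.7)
The commutative diagram (as suggested by the surrounding text and by the composition law just derived) encodes the consistency of the three similarities $\cal_{\tau_k}$, $\cal_{\sigma_{k+1}}$, and $\cal_{\tau_{k+1}}$ as they act on the arcs $\Gamma\vert_{\tau_{k+1}}$, $\Gamma_{\tau_k}\vert_{\sigma_{k+1}}$, and $\Gamma_{\tau_{k+1}}$: the plan is to show that $\cal_{\tau_k}$ carries the first arc to the second, $\cal_{\sigma_{k+1}}$ carries the second to the third, and the composition matches the direct map $\cal_{\tau_{k+1}}$ from the first to the third. The main engine is already in place: the identity $\cal_{\tau_{k+1}} = \cal_{\sigma_{k+1}} \circ \cal_{\tau_k}$ from \eqref{eqn: cal composition law}, which was obtained from the angle-addition computation $\theta_{k+1} = \theta_k + \phi$.

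The first step is the arc-level identity
\begin{equation*}
    \cal_{\tau_k}(\Gamma\vert_{\tau_{k+1}}) = \Gamma_{\tau_k}\vert_{\sigma_{k+1}}.
\end{equation*}
This is essentially a bookkeeping check. By \eqref{eqn: Gamma tau k def}, $\cal_{\tau_k}$ restricts to a homeomorphism from $\Gamma\vert_{\tau_k}$ onto $\Gamma_{\tau_k}$. The endpoints $t_{k+1}, v_{k+1}$ of the associated interval $I\vert_{\tau_{k+1}}$ were defined in \eqref{eqn: t k+1 v k+1 def} precisely so that $\cal_{\tau_k}(\Gamma(t_{k+1})) = \Gamma_{\tau_k}(s_{k+1})$ and $\cal_{\tau_k}(\Gamma(v_{k+1})) = \Gamma_{\tau_k}(u_{k+1})$. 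Since both $\Gamma\vert_{\tau_{k+1}}$ and $\Gamma_{\tau_k}\vert_{\sigma_{k+1}}$ are connected arcs parametrised by their first coordinates on matching intervals, the image correspondence follows from the injectivity of $\cal_{\tau_k}$ together with the continuity of both curves.

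The second step is then immediate: applying \eqref{eqn: cal composition law} to any point of $\Gamma\vert_{\tau_{k+1}}$ and invoking the first step gives
\begin{equation*}
    \cal_{\sigma_{k+1}}(\Gamma_{\tau_k}\vert_{\sigma_{k+1}}) = \cal_{\sigma_{k+1}} \circ \cal_{\tau_k}(\Gamma\vert_{\tau_{k+1}}) = \cal_{\tau_{k+1}}(\Gamma\vert_{\tau_{k+1}}) = \Gamma_{\tau_{k+1}},
\end{equation*}
where the final equality is the definition \eqref{eqn: Gamma tau k def} applied at level $k+1$. Together these two displays show that both paths through the diagram yield the same composite map on $\Gamma\vert_{\tau_{k+1}}$, which is the content of commutativity.

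There is no real obstacle beyond unwinding notation; the entire substance of the lemma is the composition law \eqref{eqn: cal composition law} already derived via the angle-addition argument. The role of this lemma is to repackage that pointwise identity of similarities into a statement about the arcs themselves, which is the form that will be convenient when iterating the single-scale algorithm and transporting decoupling estimates from $\Gamma_{\tau_{k+1}}$ back down to $\Gamma\vert_{\tau_{k+1}}$ in subsequent sections.
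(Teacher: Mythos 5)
Your proposal is correct and follows essentially the same route as the paper: both establish the arc identity $\cal_{\tau_k}(\Gamma\vert_{\tau_{k+1}})=\Gamma_{\tau_k}\vert_{\sigma_{k+1}}$ from the endpoint correspondence in \eqref{eqn: t k+1 v k+1 def}, then combine it with the composition law \eqref{eqn: cal composition law} and the definition $\cal_{\tau_{k+1}}(\Gamma\vert_{\tau_{k+1}})=\Gamma_{\tau_{k+1}}$ to close the diagram. Your extra remark about monotone parametrisation by the first coordinate just fleshes out the paper's ``it follows quickly'' step; no substantive difference.
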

\begin{proof}
    By the definition given in \eqref{eqn: Gamma tau k def}, we have 
    \begin{equation}
        \label{eqn: Gamma tau k+1 def}
        \cal_{\tau_{k+1}}(\Gamma\vert_{\tau_{k+1}})=\Gamma_{\tau_{k+1}}.
    \end{equation}  
    Let $I\vert_{\tau_{k+1}}=[t_{k+1},v_{k+1}]$ and $I_{\tau_k}\vert_{\sigma_{k+1}}=[s_{k+1},u_{k+1}]$. Then by \eqref{eqn: Gamma vert tau k def}, $$\Gamma\vert_{\tau_{k+1}}=\big\{\Gamma(t):t\in[t_{k+1},v_{k+1}]\big\},\quad\text{and}\quad\Gamma_{\tau_k}\vert_{\sigma_{k+1}}=\big\{\Gamma_{\tau_k}(s):s\in[s_{k+1},u_{k+1}]\big\}.$$
    By \eqref{eqn: t k+1 v k+1 def}, we have $$\cal_{\tau_k}:\Gamma(t_{k+1})\mapsto\Gamma_{\tau_k}(s_{k+1}),\quad\text{and}\quad \cal_{\tau_k}:\Gamma(v_{k+1})\mapsto\Gamma_{\tau_k}(u_{k+1}).$$ It follows quickly that 
    \begin{equation}
        \label{eqn: Gamma tau k vert sigma k+1}
        \cal_{\tau_k}(\Gamma\vert_{\tau_{k+1}})=\Gamma_{\tau_k}\vert_{\sigma_{k+1}}.
    \end{equation}
     By \eqref{eqn: cal composition law}, \eqref{eqn: Gamma tau k+1 def}, and \eqref{eqn: Gamma tau k vert sigma k+1}, we have $$\Gamma_{\tau_{k+1}}=\cal_{\tau_{k+1}}(\Gamma\vert_{\tau_{k+1}})=\cal_{\sigma_{k+1}}(\cal_{\tau_k}(\Gamma\vert_{\tau_{k+1}}))=\cal_{\sigma_{k+1}}(\Gamma_{\tau_k}\vert_{\sigma_{k+1}}),$$ completing the proof.
\end{proof}
In order to conclude Step $k+1$, we need to verify the following conditions: \begin{enumerate}
    [label=\Roman*(k+1).]
    \item For all $\tau_{k+1}\in\calt_{k+1}(\Gamma)$ we have $$\len(\tau_{k+1})=\cal(\Gamma,I\vert_{\tau_{k+1}})\geq R_{k-1}^{-1}.$$
    \item For all $\tau_{k+1}\in\calt_{k+1}(\Gamma)$ we have $$\W(\Gamma,I\vert_{\tau_{k+1}})\leq R_{k+1}^{-1}.$$
\end{enumerate}
\begin{proof}[Verifying $\mathrm{I(k+1)}$] Let $\tau_{k+1}=\cal_{\tau_k}^{-1}(\sigma_{k+1})$ for $\sigma_{k+1}\in\calt_1(\Gamma_{\tau_k})$. By \eqref{eqn: wid k+1 vs wid k}, we have $$\len(\tau_{k+1})/\wid(\tau_{k+1})=\len(\sigma_{k+1})/\wid(\sigma_{k+1}).$$ By \eqref{eqn: sigma eccentricity}, we have $$\len(\sigma_{k+1})\geq R^{2\epsilon}\wid(\sigma_{k+1}),$$ using which in the above yields $\len(\tau_{k+1})\geq R^{-1}_{k-1}$. It remains to show that $\len(\tau_{k+1})=\cal(\Gamma,I\vert_{\tau_{k+1}})$. By \eqref{eqn: wid k+1 vs wid k} and \eqref{eqn: len wid sigma def}, we have $$\len(\tau_{k+1})=R_k^{-1}\cal(\Gamma_{\tau_k},I_{\tau_k}\vert_{\sigma_{k+1}}).$$ But $\cal(\Gamma_{\tau_k},I_{\tau_k}\vert_{\sigma_{k+1}})=|\Gamma_{\tau_k}(u_{k+1})-\Gamma_{\tau_k}(s_{k+1})|$, where the points $s_{k+1},u_{k+1}$ are related to $t_{k+1},v_{k+1}$ by the relation \eqref{eqn: t k+1 v k+1 def}. Since $\cal_{\tau_k}$ is a similarity of ratio $R_k$, it follows that $\cal(\Gamma_{\tau_k},I_{\tau_k}\vert_{\sigma_{k+1}})=R_k\cal(\Gamma,I\vert_{\tau_{k+1}})$, combining which with the display above establishes I(k+1).
\end{proof}
\begin{proof}[Verifying $\mathrm{II(k+1)}$]
By \eqref{eqn: t k+1 v k+1 def} and the fact that $\cal_{\tau_k}$ is a similarity, we get 
\begin{equation}
\label{eqn: tan(b-a) formula}
   \A(\Gamma,I\vert_{\tau_{k+1}})=\A(\Gamma_{\tau_k},I_{\tau_k}\vert_{\sigma_{k+1}}).
\end{equation}
Since $\cal_{\tau_k}$ scales isotropically by $R_k$, it follows again that 
\begin{equation*}
\cal(\Gamma,I\vert_{\tau_{k+1}})=R_k^{-1}\cal(\Gamma_{\tau_k},I_{\tau_k}\vert_{\sigma_{k+1}}).
\end{equation*}
Combining the above, we have 
\begin{equation*}
    \label{eqn: W under cal tau k}
    \W(\Gamma,I\vert_{\tau_{k+1}})=R_k^{-1}\W(\Gamma_{\tau_k},I_{\tau_k}\vert_{\sigma_{k+1}}).
\end{equation*}
By the single-scale algorithm (specifically, \eqref{eqn: wid/len formula}) applied to $\Gamma_{\tau_k}$, we have $$\W(\Gamma_{\tau_k},I_{\tau_k}\vert_{\sigma_{k+1}})\leq R_1^{-1},$$ from which and the above II(k+1) follows.
\end{proof}
We call $\tau_{k+1}\in\calt(\Gamma;\tau_k)$ \textit{exceptional}, if $\sigma_{k+1}=\cal_{\tau_k}(\tau_{k+1})$ is exceptional (recall the definition from \S\ref{sec: single-scale algorithm}). Otherwise we call $\tau_{k+1}$ \textit{typical}. If $\tau_{k+1}$ is typical, we call it a \textit{left box}, if $\sigma_{k+1}$ is a left box, and we call $\tau_{k+1}$ a \textit{right box} if $\sigma_{k+1}$ is a right box (recall the definitions from \S\ref{sec: single-scale algorithm}). We make a simple but useful observation about exceptional boxes.
\begin{equation}
    \label{eqn: multiscale exceptional box properties}
    \tau_{k+1}\in\calt_{k+1}(\Gamma;\tau_k)\text{ is exceptional }\implies \#\calt_{k+1}(\Gamma;\tau_k)=1\text{ and }I\vert_{\tau_{k+1}}=I\vert_{\tau_k}.
\end{equation}
To see this, by definition we have that $\sigma_{k+1}\in\calt_1(\Gamma_{\tau_k})$ is exceptional so that by \eqref{eqn: calt k+1 def}, $$\calt_{k+1}(\Gamma;\tau_k)=\cal_{\tau_k}^{-1}(\calt_1(\Gamma_{\tau_k}))=\{\cal_{\tau_k}^{-1}(\sigma_{k+1})\}.$$ By Step 0 of the single-scale algorithm, we also get that $I_{\tau_k}\vert_{\sigma_{k+1}}=I_{\tau_k}$. By this, and the definitions of $I_{\tau_k}, I\vert_{\tau_{k+1}}$ (see \eqref{eqn: Gamma tau k def}, \eqref{eqn: t k+1 v k+1 def}) it follows that $I\vert_{\tau_{k+1}}=I\vert_{\tau_k}$.

The construction process terminates at $k=N$, with the collection $\calt_N(\Gamma)$. 
\subsubsection{Properties of the multi-scale decomposition}
 The following properties are satisfied by the collection $\calt_k(\Gamma)$ constructed above. Each of these results follow from their single-scale counterpart in \S\ref{sec: single scale properties}, combined with the recursive nature of the multi-scale algorithm.
  \begin{lemma}
     \label{lemma: wid over len}
    For all $\tau_{k}\in\calt_{k}(\Gamma)$ we have
      \begin{equation}
         \label{eqn: wid over len tau k}
        \W(\Gamma,I\vert_{\tau_k})\leq R_k^{-1}.
    \end{equation}
    Furthermore, if $I\vert_{\tau_k}=[t_k,v_k]$, then for all $\eta>0$ sufficiently small, we have \begin{align}
         \label{eqn: angle sep tau k}
         \nonumber\W(\Gamma,[t_k,v_k+\eta])>R_k^{-1}\quad&\text{if $\tau_k$ is a left box};\\
         \W(\Gamma,[t_k-\eta,v_k])>R_k^{-1}\quad&\text{if $\tau_k$ is a right box}.
     \end{align}
 \end{lemma}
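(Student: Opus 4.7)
The plan is to prove both parts of the lemma by induction on $k \in \{1, \dots, N\}$, leveraging the recursive structure of the multi-scale algorithm, with Lemma \ref{lemma: wid/len sigma} as the single-scale input. For the base case $k=1$, I would observe that by construction $\calt_1(\Gamma;\tau_0) = \calt_1(\Gamma)$ and $R_1^{-1} = R^{-\epsilon}$, so both statements are precisely the content of Lemma \ref{lemma: wid/len sigma}.

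For the inductive step, fix $\tau_{k+1} \in \calt_{k+1}(\Gamma;\tau_k)$ for some $\tau_k \in \calt_k(\Gamma)$ and write $\sigma_{k+1} := \cal_{\tau_k}(\tau_{k+1}) \in \calt_1(\Gamma_{\tau_k})$. By Lemma \ref{lemma: rescaled curves} the rescaled curve $\Gamma_{\tau_k}$ is admissible, so Lemma \ref{lemma: wid/len sigma} applies to $\sigma_{k+1}$. The first claim then follows from the scaling identity already established in the verification of II(k+1):
\begin{equation*}
\W(\Gamma, I\vert_{\tau_{k+1}}) \;=\; R_k^{-1}\,\W(\Gamma_{\tau_k}, I_{\tau_k}\vert_{\sigma_{k+1}}) \;\leq\; R_k^{-1} \cdot R^{-\epsilon} \;=\; R_{k+1}^{-1}.
\end{equation*}

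For the angle separation, I would write $I\vert_{\tau_{k+1}} = [t_{k+1}, v_{k+1}]$ and $I_{\tau_k}\vert_{\sigma_{k+1}} = [s_{k+1}, u_{k+1}]$. Since $\cal_{\tau_k}$ is a similarity sending $\Gamma\vert_{\tau_k}$ onto $\Gamma_{\tau_k}$ with $\cal_{\tau_k}(\Gamma(t)) = \Gamma_{\tau_k}(s(t))$ for a continuous, strictly increasing bijection $t \mapsto s(t)$ between $I\vert_{\tau_k}$ and $I_{\tau_k}$ (matched at the endpoints via \eqref{eqn: t k+1 v k+1 def}), each sufficiently small $\eta > 0$ yields a corresponding $\delta = \delta(\eta) > 0$ with $\delta(\eta) \to 0^+$ and
\begin{equation*}
\W(\Gamma, [t_{k+1}, v_{k+1}+\eta]) \;=\; R_k^{-1}\,\W(\Gamma_{\tau_k}, [s_{k+1}, u_{k+1}+\delta(\eta)]).
\end{equation*}
If $\tau_{k+1}$ is a left box then $\sigma_{k+1}$ is a left box in $\calt_1(\Gamma_{\tau_k})$ by definition, and Lemma \ref{lemma: wid/len sigma} gives $\W(\Gamma_{\tau_k}, [s_{k+1}, u_{k+1}+\delta]) > R^{-\epsilon}$ for small $\delta$. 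Combining, $\W(\Gamma, [t_{k+1}, v_{k+1}+\eta]) > R_{k+1}^{-1}$ for small $\eta$; the right-box case is symmetric.

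The main obstacle I anticipate is justifying that the enlargement $v_{k+1}+\eta$ remains inside $I\vert_{\tau_k}$, so that $\cal_{\tau_k}$ genuinely transports it into an enlargement of $I_{\tau_k}\vert_{\sigma_{k+1}}$. This holds because a left box $\sigma_{k+1} \in \calt_1(\Gamma_{\tau_k})$ necessarily has $u_{k+1} < b_k$ (otherwise the single-scale algorithm would have terminated in Case I rather than producing a left box), and under $\cal_{\tau_k}^{-1}$ this transports to $v_{k+1} < v_k$, giving the needed room. The analogous point covers right boxes. Everything else is recursive bookkeeping.
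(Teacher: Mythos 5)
Your argument is correct and follows essentially the same route as the paper: the first inequality is exactly the inductively maintained property II(k) (your displayed scaling identity is the paper's verification of II(k+1)), and the separation statement is obtained, as in the paper, by applying the single-scale bound \eqref{eqn: sigma angle sep} to the admissible rescaled curve and transporting it back through the similarity $\cal_{\tau_k}$ of ratio $R_k$ together with $R_{k+1}=R_kR_1$; note the induction hypothesis is never actually needed for the separation part, only the single-scale input. One small caveat: your parenthetical reason for $u_{k+1}<b_k$ is not quite right (even if $u_{k+1}=b_k$ the box produced at that step would still be labelled a left box, with the algorithm terminating at the next step), whereas the correct justification is that Case II gives $\W(\Gamma_{\tau_k},[s_{k+1},b_k])>R^{-\epsilon}$ and left-continuity of $\gamma_{\tau_k,L}'$ forces the supremum defining $u_{k+1}$ to lie strictly below $b_k$ --- the same fact the paper relies on implicitly in Lemma \ref{lemma: sigma length}.
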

 \begin{proof}
   The inequality \eqref{eqn: wid over len tau k} is just II(k) from the $k$th step of the multi-scale algorithm. 
   
   The $k=1$ case of \eqref{eqn: angle sep tau k} follows directly from \eqref{eqn: sigma angle sep}. For $k\geq 2$, suppose $\tau_{k}\in\calt_k(\Gamma;\tau_{k-1})$, so that $\tau_k=\cal_{\tau_{k-1}}^{-1}(\sigma_k)$ for some box $\sigma_k\in\calt_1(\Gamma_{\tau_{k-1}})$. If $I\vert_{\sigma_{k}}=[s_{k},u_{k}]$, then by \eqref{eqn: t k+1 v k+1 def} $$\cal_{\tau_k}(\Gamma(t_k))=\Gamma_{\tau_k}(s_k)\quad\text{and}\quad \cal_{\tau_k}(\Gamma(v_k))=\Gamma_{\tau_k}(u_k).$$
  By Lemma \ref{lemma: rescaled curves} we know $\Gamma_{\tau_k}$ is an admissible curve. Then \eqref{eqn: sigma angle sep} gives
   \begin{align}
       \label{eqn: sigma k wid/len}
      \nonumber \W(\Gamma_{\tau_k},[s_k,u_k+\eta'])>R_1^{-1}\quad&\text{if $\sigma_k$ is a left box};\\
       \W(\Gamma_{\tau_k},[s_k-\eta',u_k])>R_1^{-1}\quad&\text{if $\sigma_k$ is a right box},
   \end{align}
   for all $\eta'>0$.
    By definition, $\tau_k$ is a left (respectively, right) box if and only if $\sigma_k$ is a left (respectively, right) box. Then \eqref{eqn: angle sep tau k} follows from the fact that $\cal_{\tau_{k-1}}$ is a similarity of ratio $R_{k-1}$, and the fact that $R_{k}=R_{k-1}\cdot R_1$.
 \end{proof}
 Property I(k) from the $k$th step of the multi-scale algorithm is important for proving a forthcoming result (Lemma \ref{lemma: high lemma}). We record it in the form of the following lemma.
 \begin{lemma}
     \label{lemma: tau k eccentricity}
     For each $\tau_k\in\calt_k(\Gamma)$, we have 
     \begin{equation}
     \label{eqn: tau k are eccentric}
        \len(\tau_{k})\geq R^{2\epsilon}\wid(\tau_k).
    \end{equation}
 \end{lemma}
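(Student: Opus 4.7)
The plan is to extract this statement essentially for free from Property I(k), which was inductively verified as part of the $k$th step of the multi-scale algorithm in \S\ref{sec: multi-scale algorithm}. Recall that I(k) says $\len(\tau_k) = \cal(\Gamma, I\vert_{\tau_k}) \geq R_{k-2}^{-1}$, while by construction $\wid(\tau_k) = R_k^{-1}$. Since $R_k / R_{k-2} = R^{2\epsilon}$, dividing yields $\len(\tau_k) \geq R^{2\epsilon}\wid(\tau_k)$ immediately, so the first thing I would do is simply write out this one-line computation.

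The cases $k=0$ and $k=1$ require brief separate treatment since $R_{k-2}$ is not defined (or not meaningful) there. For $k=0$, we have $\wid(\tau_0) = 1$ and $\len(\tau_0) = \cal(\Gamma, I) \geq R^{2\epsilon}$ directly from (C2), which gives the desired inequality. For $k=1$, the statement coincides exactly with Lemma \ref{lemma: sigma eccentricity} applied to $\Gamma$ (noting $\calt_1(\Gamma; \tau_0) = \calt_1(\Gamma)$), so there is nothing further to prove.

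There is no real obstacle here: the substance of the lemma has already been absorbed into the inductive verification of I(k+1) carried out during the algorithm. For orientation, the mechanism by which eccentricity propagates across scales is the identity $\len(\tau_{k+1})/\wid(\tau_{k+1}) = \len(\sigma_{k+1})/\wid(\sigma_{k+1})$ from \eqref{eqn: wid k+1 vs wid k}, combined with the single-scale eccentricity bound \eqref{eqn: sigma eccentricity} for $\sigma_{k+1}$ applied to the rescaled admissible curve $\Gamma_{\tau_k}$. Thus the proof reduces to pointing to I(k) and the defining relation $\wid(\tau_k) = R_k^{-1}$, and noting $R^{2\epsilon} = R_k/R_{k-2}$.
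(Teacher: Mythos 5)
Your proposal is correct and matches the paper's (implicit) argument: the paper states this lemma as a direct record of Property I(k), i.e. $\len(\tau_k)=\cal(\Gamma,I\vert_{\tau_k})\geq R_{k-2}^{-1}=R^{2\epsilon}R_k^{-1}=R^{2\epsilon}\wid(\tau_k)$, which is exactly your one-line computation. Your separate handling of $k=0$ via (C2) and $k=1$ via Lemma \ref{lemma: sigma eccentricity} is a harmless (and slightly more careful) treatment of the base cases.
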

 As mentioned in the introduction, a key difficulty in the general setting is to relate the geometry of the decomposing boxes at two different scales. The following is a simple consequence of our construction, which turns out to be sufficient for the high/low argument to work in our setting.
 \begin{lemma}
     \label{lemma: len tau k range}
     For all $\tau_{k+1}\in\calt_{k+1}(\Gamma;\tau_k)$, we have 
     \begin{equation}
         \label{eqn: len tau k+1 relative}
         R^{-\epsilon}\len(\tau_k)\leq\len(\tau_{k+1})\leq \len(\tau_k).
     \end{equation}
 \end{lemma}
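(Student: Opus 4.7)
The plan is to reduce this lemma directly to its single-scale analogue, Lemma \ref{lemma: sigma length}, via the rescaling map $\cal_{\tau_k}$. Since $\tau_{k+1} \in \calt_{k+1}(\Gamma;\tau_k)$, by \eqref{eqn: calt k+1 def} there exists $\sigma_{k+1} \in \calt_1(\Gamma_{\tau_k})$ with $\tau_{k+1} = \cal_{\tau_k}^{-1}(\sigma_{k+1})$. I first invoke Lemma \ref{lemma: rescaled curves} to know that $\Gamma_{\tau_k}$ is an admissible curve, which is the hypothesis needed to apply the single-scale length estimate.

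Next, I apply Lemma \ref{lemma: sigma length} to the admissible curve $\Gamma_{\tau_k}$ at the box $\sigma_{k+1}$, which yields
\begin{equation*}
R^{-\epsilon}\cal(\Gamma_{\tau_k}, I_{\tau_k}) \;\leq\; \cal(\Gamma_{\tau_k}, I_{\tau_k}\vert_{\sigma_{k+1}}) \;\leq\; \cal(\Gamma_{\tau_k}, I_{\tau_k}).
\end{equation*}
By definition \eqref{eqn: len wid sigma def}, the middle quantity is exactly $\len(\sigma_{k+1})$. I then transfer these bounds back to $\Gamma$ using that $\cal_{\tau_k}$ is a similarity of ratio $R_k$: the identity \eqref{eqn: length scaling} gives $\cal(\Gamma_{\tau_k}, I_{\tau_k}) = R_k \cal(\Gamma, I\vert_{\tau_k})$, and property I(k) from the multi-scale algorithm identifies $\cal(\Gamma, I\vert_{\tau_k}) = \len(\tau_k)$. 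Hence
\begin{equation*}
R^{-\epsilon} R_k \len(\tau_k) \;\leq\; \len(\sigma_{k+1}) \;\leq\; R_k \len(\tau_k).
\end{equation*}

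Finally, I multiply through by $\wid(\tau_k) = R_k^{-1}$ and use the first identity in \eqref{eqn: wid k+1 vs wid k}, namely $\len(\tau_{k+1}) = \len(\sigma_{k+1})\wid(\tau_k)$, to conclude
\begin{equation*}
R^{-\epsilon} \len(\tau_k) \;\leq\; \len(\tau_{k+1}) \;\leq\; \len(\tau_k),
\end{equation*}
as required. There is no real obstacle here; the only thing to be careful about is to cite the correct identity \eqref{eqn: wid k+1 vs wid k} (not the width relation, but the length relation), and to remember that the $R_k$ factor from the similarity ratio cancels precisely against $\wid(\tau_k)$, leaving a clean $R^{-\epsilon}$ loss coming purely from the single-scale step.
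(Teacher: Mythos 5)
Your proof is correct and follows essentially the same route as the paper: reduce to the single-scale bound of Lemma \ref{lemma: sigma length} applied to $\sigma_{k+1}\in\calt_1(\Gamma_{\tau_k})$, then transfer back via \eqref{eqn: length scaling}, I(k), and the length relation in \eqref{eqn: wid k+1 vs wid k}. The explicit appeal to Lemma \ref{lemma: rescaled curves} for admissibility of $\Gamma_{\tau_k}$ is a harmless (and sensible) addition that the paper leaves implicit.
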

 \begin{proof}
    By construction, $\tau_{k+1}=\cal_{\tau_k}^{-1}(\sigma_{k+1})$ for some $\sigma_{k+1}\in\calt_1(\Gamma_{\tau_k})$, and by \eqref{eqn: wid k+1 vs wid k} we have $$\len(\tau_{k+1})=\len(\sigma_{k+1})R_k^{-1}.$$
   Now we apply Lemma \ref{lemma: sigma length} to the box $\sigma_{k+1}\in\calt_1(\Gamma_{\tau_k})$, and obtain $$R^{-\epsilon}\cal(\Gamma_{\tau_k},I_{\tau_k})\leq\cal(\Gamma_{\tau_k},I_{\tau_k}\vert_{\sigma_{k+1}})\leq\cal(\Gamma_{\tau_k},I_{\tau_k}).$$
   By \eqref{eqn: length scaling} we have $ \cal(\Gamma_{\tau_k},I_{\tau_k})=R_k\cal(\Gamma,I\vert_{\tau_k})$. On the other hand, by \eqref{eqn: len wid sigma def} and I(k) we know that $$\len(\sigma_{k+1})=\cal(\Gamma_{\tau_k},I_{\tau_k}\vert_{\sigma_{k+1}})\quad\text{and}\quad \len(\tau_k)=\cal(\Gamma,I\vert_{\tau_k}).$$
   Combining everything we get the desired estimate.
 \end{proof}
 The following gives a bound on the number of decomposing boxes at each scale by iterating the analogous bound from the single-scale algorithm (Lemma \ref{lemma: number of R1 boxes}).
    \begin{lemma}
        \label{lemma: number of boxes}
        Let $k\in\{1,\dots,N-1\}$. Then $$\#\calt_{k+1}(\Gamma;\tau_{k})\leq 32R^{\epsilon/2},\quad\text{for all}\quad \tau_k\in\calt_k(\Gamma);$$
        and hence $$\#\calt_k(\Gamma)\leq 32^k R^{k\epsilon/2}.$$
    \end{lemma}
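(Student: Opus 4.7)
The plan is to observe that this lemma is essentially a direct corollary of the single-scale bound (Lemma \ref{lemma: number of R1 boxes}) combined with the rescaling machinery already established in the multi-scale construction. The per-$\tau_k$ bound is proved in one step, and the global bound follows by a trivial induction on $k$.

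First I would fix $\tau_k\in\calt_k(\Gamma)$ and recall that, by definition \eqref{eqn: calt k+1 def}, $\calt_{k+1}(\Gamma;\tau_k)=\cal_{\tau_k}^{-1}(\calt_1(\Gamma_{\tau_k}))$. Since $\cal_{\tau_k}$ is a similarity, hence in particular a bijection on $\R^2$, it induces a bijection between $\calt_1(\Gamma_{\tau_k})$ and $\calt_{k+1}(\Gamma;\tau_k)$, so the two collections have the same cardinality. By Lemma \ref{lemma: rescaled curves}, $\Gamma_{\tau_k}$ is an admissible curve; thus the single-scale bound from Lemma \ref{lemma: number of R1 boxes} applies, yielding
\[
\#\calt_{k+1}(\Gamma;\tau_k)=\#\calt_1(\Gamma_{\tau_k})\leq 32R^{\epsilon/2},
\]
which is the first claim.

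For the second claim, I would induct on $k$. The base case $k=1$ is immediate from Lemma \ref{lemma: number of R1 boxes} applied to the (admissible) original curve. For the inductive step, the definition $\calt_{k+1}(\Gamma)=\bigcup_{\tau_k\in\calt_k(\Gamma)}\calt_{k+1}(\Gamma;\tau_k)$ together with the per-$\tau_k$ bound just established gives
\[
\#\calt_{k+1}(\Gamma)\leq\#\calt_k(\Gamma)\cdot 32R^{\epsilon/2}\leq 32^{k+1}R^{(k+1)\epsilon/2},
\]
closing the induction.

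There is no real obstacle here; the entire content of the lemma has been absorbed into Lemma \ref{lemma: rescaled curves} (which allows one to treat each rescaled curve on the same footing as the original) and Lemma \ref{lemma: number of R1 boxes} (which supplies the single-scale count). The only point worth a brief remark is that the use of $\cal_{\tau_k}^{-1}$ on the collection $\calt_1(\Gamma_{\tau_k})$ really is a bijection at the level of boxes, which is immediate since similarities map rectangles to rectangles injectively. With this cosmetic verification, the proof is two lines plus the induction.
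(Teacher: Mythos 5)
Your proposal is correct and follows essentially the same argument as the paper: the per-$\tau_k$ bound via $\calt_{k+1}(\Gamma;\tau_k)=\cal_{\tau_k}^{-1}(\calt_1(\Gamma_{\tau_k}))$, admissibility of $\Gamma_{\tau_k}$ from Lemma \ref{lemma: rescaled curves}, and Lemma \ref{lemma: number of R1 boxes}, followed by the same induction on $k$ using \eqref{eqn: calt k+1 def}. No gaps.
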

    \begin{proof}
        By construction, we have $\calt_{k+1}(\Gamma;\tau_k)=\cal_{\tau_k}^{-1}(\calt_1(\Gamma_{\tau_k}))$. By Lemma \ref{lemma: rescaled curves} we know that $\Gamma_{\tau_k}$ is admissible. Thus, by Lemma \ref{lemma: number of R1 boxes}, we have $$\#\calt_{k+1}(\Gamma;\tau_k)=\#\calt_1(\Gamma_{\tau_k})\leq 32R^{\epsilon/2}.$$      
        For the latter inequality, we argue by induction. The base case $k=1$ is just Lemma \ref{lemma: number of R1 boxes}. Now suppose the result holds for some $k\geq 1$. By \eqref{eqn: calt k+1 def} and the induction hypothesis, we get $$\#\calt_{k+1}(\Gamma)\leq\sum_{\tau_{k}\in\calt_{k}(\Gamma)}\#\calt_{k+1}(\Gamma;\tau_k)\leq\#\calt_k(\Gamma)\sup_{\tau_k\in\calt_k(\Gamma)}\#\calt_{k+1}(\Gamma;\tau_k)\leq 32^kR^{k\epsilon/2}\times 32R^{\epsilon/2}=32^{k+1}R^{(k+1)\epsilon/2},$$ which completes the induction step.
    \end{proof}
    \begin{lemma}
        \label{lemma: tau k+1 containment}
        Let $k\in\{0,1,\dots,N-1\}$, and $\tau_k\in\calt_k(\Gamma)$. For all $R\geq 2^{1/\epsilon}$, we have \begin{equation}
            \label{eqn: tau k+1 containment} \tau_{k+1}\subseteq 2\cdot\tau_k,\quad\text{for all}\quad \tau_{k+1}\in\calt_{k+1}(\Gamma;\tau_k).
        \end{equation}
    \end{lemma}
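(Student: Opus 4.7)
The plan is to rescale by the similarity $\cal_{\tau_k}$, so that $\tau_k$ becomes the unit-scale rectangle $\upsilon_k=[0,\len(\tau_k)/\wid(\tau_k)]\times[-1,1]$ and $\tau_{k+1}$ becomes some $\sigma_{k+1}\in\calt_1(\Gamma_{\tau_k})$, and then to invoke the single-scale containment Lemma~\ref{lemma: sigma containment} applied to $\Gamma_{\tau_k}$ (which is admissible by Lemma~\ref{lemma: rescaled curves}).

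The first step is to observe that similarities commute with the centred dilation operation $s\cdot(\,\cdot\,)$: this follows from $s\cdot D = c(D)+s(D-c(D))$ together with the fact that affine maps send centroids to centroids. Applied to $\tau_k$, this gives $\cal_{\tau_k}(2\cdot\tau_k)=2\cdot\cal_{\tau_k}(\tau_k)=2\cdot\upsilon_k$, so the desired inclusion reduces to
\[
    \sigma_{k+1}\;=\;\cal_{\tau_k}(\tau_{k+1})\;\subseteq\;2\cdot\upsilon_k.
\]

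Now Lemma~\ref{lemma: sigma containment} applied to the admissible curve $\Gamma_{\tau_k}$ (using $R\geq 2^{1/\epsilon}$) yields
\[
    \sigma_{k+1}\;\subseteq\;2\cdot(I_{\tau_k}\times[-1,1]).
\]
Writing $I_{\tau_k}=[0,b_k]$ and $L:=\len(\tau_k)/\wid(\tau_k)$, this right-hand set equals $[-b_k/2,\,3b_k/2]\times[-2,2]$, while $2\cdot\upsilon_k=[-L/2,\,3L/2]\times[-2,2]$. Hence the whole problem collapses to the comparison $b_k\leq L$, which is immediate from
\[
    b_k\;\leq\;|\Gamma_{\tau_k}(b_k)-\Gamma_{\tau_k}(0)|\;=\;\cal(\Gamma_{\tau_k},I_{\tau_k})\;=\;R_k\cal(\Gamma,I\vert_{\tau_k})\;=\;R_k\len(\tau_k)\;=\;L,
\]
using that $\Gamma_{\tau_k}(0)=0$ (from (C3)), the scaling identity \eqref{eqn: length scaling}, property I(k), and $\wid(\tau_k)=R_k^{-1}$.

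No serious obstacle is expected here: the real work was already done in the single-scale Lemma~\ref{lemma: sigma containment}, and by design the multi-scale construction is a conjugate of the single-scale construction under $\cal_{\tau_k}$, so the statement becomes essentially automatic once one correctly tracks how the dilation $2\cdot(\,\cdot\,)$ transforms under the similarity. The only mildly delicate point is verifying that $\cal_{\tau_k}$ commutes with the centred dilation, which is a brief algebraic check.
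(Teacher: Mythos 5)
Your proof is correct and essentially identical to the paper's: both arguments rest on the single-scale containment Lemma \ref{lemma: sigma containment} applied to the admissible rescaled curve $\Gamma_{\tau_k}$, on the fact that the similarity $\cal_{\tau_k}$ preserves centroids and hence commutes with the dilation $2\cdot(\,\cdot\,)$, and on the comparison between the interval length and $\cal(\Gamma_{\tau_k},I_{\tau_k})=\len(\tau_k)/\wid(\tau_k)$ (your bound $b_k\leq L$ is exactly the paper's $k=0$ comparison $b-a\leq\cal(\Gamma,I)$, transported to the rescaled level), the paper merely organizing this as a $k=0$ base case followed by rescaling. The one cosmetic caveat is that for $k=0$ the collection $\calt_1(\Gamma;\tau_0)$ is defined directly as $\calt_1(\Gamma)$ rather than as $\cal_{\tau_0}^{-1}(\calt_1(\Gamma_{\tau_0}))$, so in that case you should simply apply Lemma \ref{lemma: sigma containment} to $\Gamma$ itself and compare with $2\cdot\tau_0$, exactly as the paper does.
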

    \begin{proof}
First we prove the $k=0$ case. Recall from Step 0 of the multi-scale algorithm that $$\tau_0=[a,a+\cal(\Gamma,I)]\times[-1,1].$$ Since $\cal(\Gamma,I)=|\Gamma(b)-\Gamma(a)|\geq b-a$, we have that $[a,b]\subseteq[a,a+\cal(\Gamma,I)]$. The $k=0$ case then follows directly from Lemma \ref{lemma: sigma containment}.

For $k\geq 1$, let $\tau_{k+1}\in\calt_{k+1}(\Gamma;\tau_k)$. Then, there exists $\sigma_{k+1}\in\calt_1(\Gamma_{\tau_k})$, such that $\cal_{\tau_k}(\tau_{k+1})=\sigma_{k+1}$. By Lemma \ref{lemma: rescaled curves}, $\Gamma_{\tau_k}$ is an admissible curve associated with the pair $(\gamma_{\tau_k},I_{\tau_k})$. Let $I_{\tau_k}=[a_k,b_k]$ and $\upsilon_k=[0,\len(\tau_k)/\wid(\tau_k)]\times[-1,1]$ be as defined in Step $k+1$ of the multi-scale algorithm. But $a_k=0$, and by \eqref{eqn: length scaling}, we have $$\len(\tau_k)/\wid(\tau_k)=R_k\cal(\Gamma,I\vert_{\tau_k})=\cal(\Gamma_{\tau_k},I_{\tau_k}).$$ It follows that $\upsilon_k$ has the following form 
\begin{equation}
    \label{eqn: upsilon k form}
    \upsilon_k=[a_k,a_k+\cal(\Gamma_{\tau_k},I_{\tau_k})]\times[-1,1].
\end{equation}
Thus, by the $k=0$ case of this lemma, we get $\sigma_{k+1}\subseteq 2\cdot\upsilon_k$. Now by \eqref{eqn: cal tau k def}, we know that $\cal_{\tau_k}:\tau_k\mapsto \upsilon_k$. But affine transformations preserve centroids, so that $\cal_{\tau_k}:c(\tau_k)\mapsto c(\upsilon_k)$. By definition, we also have $\cal_{\tau_k}:\tau_{k+1}\mapsto\sigma_{k+1}$. Combining the three, we get $$\sigma_{k+1}\subseteq 2\cdot \upsilon_k=c(\upsilon_k)+2(\upsilon_k-c(\upsilon_k))\implies \tau_{k+1}\subseteq c(\tau_k)+2(\tau_k-c(\tau_k))=2\cdot\tau_k,$$ as required.
    \end{proof}
\subsection{Obtaining the ideal partition} Let $\Gamma$ be the graph of a convex function $\gamma:[0,1]\to\R$ satisfying \eqref{eqn: gamma normalisation}. Recall the partition \eqref{eqn: [0,1] partition} of $\Gamma$. Recall also the definitions of the index sets $\mathcal{P}_1$ and $\mathcal{P}_2$. Here we obtain the family $\J$ appearing in the statement of Theorem \ref{thm: convex decoupling} by partitioning each $I_p$ into a family of subintervals $\J_p$. This is done in cases. 

\medskip\noindent\underline{Case 1:} $p\in\mathcal{P}_1$. These are exceptional cases which do not require partitioning, and we simply set $\J_p:=\{I_p\}$. 

\medskip\noindent\underline{Case 2:} $p\in\mathcal{P}_2$. Recall that $r_p=R^{2\epsilon}\cal(\Gamma,I_p)^{-1}$, and define $R_p:=r_p^{-1}R$. By \eqref{eqn: I_p length bound} and \eqref{eqn: gamma normalisation}, we have $$R^{-1}\leq|I_p|\leq\cal(\Gamma,I_p)\leq|\Gamma(1)-\Gamma(0)|\leq\sqrt{2},$$ so that $$R^{-2\epsilon}\leq R_p\leq R^{1-\epsilon}.$$ 
We further divide this into two sub-cases.

\medskip\noindent\underline{Subcase 2A:} $R_p\geq R^\epsilon$. This is the typical case, and we use the multi-scale algorithm to the admissible curve $\Gamma_p$ for all such $p$. This produces the families $\mathcal{T}_k(\Gamma_p)$ where $k=0,1,\dots,N=1/\epsilon$. In order to decouple $\Gamma$ at scale $R^{-1}$, we would like to decouple each $\Gamma_p$ at scale $R_p^{-1}$. Without loss of generality, we may assume that $$R_p=R^{k_p\epsilon},\quad\text{for some}\quad 1\leq k_p\leq N-1.$$ If not, we have $R_p=R^{k_p\tilde\epsilon}$ for some $\tilde\epsilon\in[\epsilon/2,\epsilon)$. Then, we can simply replace the sequence of scales \eqref{eqn: intermediate scales} by the sequence $$R^{-N\tilde\epsilon}<R^{-(N-1)\tilde\epsilon}<\dots<R^{-\tilde\epsilon}<1,\quad N=1/\tilde\epsilon,$$ and proceed similarly. 
Now $\Gamma_p$ is an admissible curve over an interval $J_p$, and the associated intervals $I\vert_{\tau_{k_p}}:=[t_{k_p},v_{k_p}]$ (for $\tau_{k_p}\in\calt_{k_p}(\Gamma_p)$) form a covering of $J_p$.  Taking the pull-back under $\cal_p$, each $I\vert_{\tau_{k_p}}$ gives rise to an interval $[t,v]$ given by 
    \begin{equation}
        \label{eqn: pullback points}   \cal_p(\Gamma(t))=\Gamma_p(t_{k_p})\quad\text{and}\quad\cal_p(\Gamma(v))=\Gamma_p(v_{k_p}).
    \end{equation}
    We define the collection $\J_p$ as 
    \begin{equation}
        \label{eqn: J def}
        \J_p:=\big\{[t,v]:t,v\text{ satisfy }\eqref{eqn: pullback points}\text{ for some }\tau_{k_p}\in\calt_{k_p}(\Gamma_p)\big\},
    \end{equation}
which forms a covering of $I_p$.

\medskip\noindent\underline{Subcase 2B:} $R_p<R^\epsilon$. This is also an exceptional case, where we apply the single-scale algorithm \S\ref{sec: single-scale algorithm} to all such the admissible curve $\Gamma_p$ at the (different) scale $R_p^{-1}$. Thus, instead of applying the algorithm many times over small increments, we apply it only once, at the smallest level. Let us denote the resulting family of boxes as $\calt_1(\Gamma_p,R_p^{-1})$.\footnote{In this notation, the family $\calt_1(\Gamma)$ would be $\calt_1(\Gamma,R^{-\epsilon})$.} For each $I\vert_{\tau_{1,p}}=[t_{1,p},v_{1,p}]$ associated with $\tau_{1,p}\in\calt_1(\Gamma_p,R_p^{-1})$, define the pull-backs 
\begin{equation}
    \label{eqn: case 2b pullbacks}
    \cal_p(\Gamma(t))=\Gamma_p(t_{1,p})\quad\text{and}\quad \cal_p(\Gamma(v))=\Gamma_p(v_{1,p}),
\end{equation}
and the families $$ \J_p:=\big\{[t,v]:t,v\text{ satisfy }\eqref{eqn: case 2b pullbacks} \text{ for some }\tau_{1,p}\in\calt_{1}(\Gamma_p,R_p^{-1})\big\},$$ which gives a covering of $I_p$.

Having defined the families of intervals $\J_p$ for all possible indices $0\leq p\leq P-1$, we finally define $$\J:=\bigcup_{p=1}^P\J_p,$$ which we claim is an ideal partition in the sense of Definition \ref{def: ideal partition}.

    \medskip\noindent\textit{Verifying} (J1): Let $J\in\J$, so that $J\in\J_p$ for some $0\leq p\leq P-1$. We break into cases again.

     In Case 1, $J=I_p=[s_p,s_{p+1}]$, and by the definition of the collection $\mathcal{P}_1$, we know that $|I_p|=R^{-1}$. By \eqref{eqn: gamma normalisation}, it follows that 
    $$(s_{p+1}-s_p)(\gamma_L'(s_{p+1})-\gamma_R'(s_p))\leq |I_p|(\gamma_L'(1)-\gamma_R'(0))\leq R^{-1}.$$
   
   In Case 2A, $J=[t,v]$ for some $t,v$ satisfying \eqref{eqn: pullback points}. Since $\cal_p$ preserves angles, it follows that $$\A(\Gamma,J)=\A(\Gamma_p,I\vert_{\tau_{k_p}}).$$ On the other hand, since $\cal_p$ scales uniformly by $r_p$, we have $$\cal(\Gamma,J)=r_p^{-1}\cal(\Gamma_p,I\vert_{\tau_{k_p}}).$$ 
    Then $$\W(\Gamma,J)=r_p^{-1}\W(\Gamma_p,I\vert_{\tau_{k_p}})\leq r_p^{-1}R^{-k_p\epsilon},$$ where the last step follows from \eqref{eqn: wid over len tau k}. But $r_p^{-1}R^{-k_p\epsilon}=r_p^{-1}R_p^{-1}=R^{-1}$, by the definition of $R_p$. It follows that $$\W(\Gamma,J)\leq R^{-1}\quad\text{so that}\quad (v-t)(\gamma_L'(v)-\gamma_R'(t))\leq 2R^{-1},$$ where in the last step we used \eqref{eqn: gamma normalisation}.

    Finally, in Case 2B, $J=[t,v]$ for some $t,v$ satisfying \eqref{eqn: case 2b pullbacks}. Using similar arguments as above, we get $$\W(\Gamma,J)=r_p^{-1}\W(\Gamma_p,I\vert_{\tau_{1,p}})\leq r_p^{-1}R_p^{-1},$$ where we have used \eqref{eqn: wid/len formula}. But $r_p^{-1}R_p^{-1}=R^{-1}$, and so using \eqref{eqn: gamma normalisation} the desired conclusion follows again.
    
     \medskip\noindent\textit{Verifying} (J2): For all $p$ satisfying Case 1, we have $\#\J_p=1$. For all $p$ satisfying Case 2, we use \eqref{eqn: angle sep tau k} (in Subcase 2A) and \eqref{eqn: sigma angle sep} (in Subcase 2B) to the left typical boxes, and combine it with \cite[Lemma 2.3 (iii)]{SZ} to find that $$\#\J_p\leq C_1N(\Gamma_p,R_p^{-1})\log(2+R_p),$$ for some absolute constant $C_1$. But clearly $$N(\Gamma_p,R_p^{-1})=N(\Gamma\vert_{I_p},R^{-1}).$$ Summing over all $p$ we get $$\#\J\leq\sum_{p=1}^P\#\J_p\leq 3C_1R^{2\epsilon}\log(2+R)N(\Gamma,R^{-1}),$$ using the fact that $R_p\leq R$ and that $P\leq 3R^{2\epsilon}$. Then (J2) follows from the fact that $$\log(2+R)\leq R^\epsilon\quad\text{for all}\quad R\geq (1/\epsilon)^{1/\epsilon}.$$
     
    \medskip\noindent\textit{Verifying} (J3): Let $J\in\J_p$ for some $0\leq p\leq P-1$. If $p$ satisfies Case 1, then $J=I_p$. By \eqref{eqn: I_p length bound} we have $|I_p|\geq R^{-1}$. If $p$ satisfies Case 2A, then by \eqref{eqn: tau k are eccentric}, we have $$\cal(\Gamma,J)=r_p^{-1}\cal(\Gamma_p,I\vert_{\tau_{k_p}})=r_p^{-1}\len(\tau_{k_p})\geq r_p^{-1}R^{2\epsilon}R^{-k_p\epsilon}=R^{-1+2\epsilon}.$$ But by \eqref{eqn: gamma normalisation}, we have $\cal(\Gamma,J)\leq 2|J|$, and so (J3) follows for these intervals.
    Finally, if $p$ satisfies Case 2B, then following the proof of Lemma \ref{lemma: sigma eccentricity}, we find that
    $$\len(\tau_{1,p})\geq R^{2\epsilon}R_p^{-1}.$$
    Thus, $$\cal(\Gamma,J)=r_p^{-1}\len(\tau_{1,p})\geq r_p^{-1}R^{2\epsilon}R_p^{-1}=R^{-1+2\epsilon}.$$ Since $\cal(\Gamma,J)\leq 2|J|$, the conclusion follows here as well.    
\subsection{A reduction of the original problem}
Our goal is to prove Theorem \ref{thm: convex decoupling}. Since $P\leq 2R^{2\epsilon}$, in order to prove \eqref{eqn: main bound} (with $100\epsilon$ in place of $\epsilon$), it suffices to show the following for each $p$:
\begin{equation}
    \label{eqn: main bound p}
    \|\sum_{J\in\J_p}f_J\|_{L^6(\R^2)}\leq C_\epsilon R^{50\epsilon}\big(\sum_{J\in\J_p}\|f_J\|_{L^6(\R^2)}^2\big)^{1/2}.
\end{equation}
This is trivially established when $p$ satisfies Case 1 or Case 2B from the previous subsection. Indeed, if $p$ satisfies Case 1, then $\#\J_p=1$. If $p$ satisfies Case 2B, then by Lemma \ref{lemma: number of R1 boxes} applied to the collection $\calt_1(\Gamma_p,R_p^{-1})$, we have $\#\J_p\leq 32 R_p^{\epsilon/2}\leq 32 R^{\epsilon/2}$. Therefore, we focus on proving \eqref{eqn: main bound p} for only those $p$ that satisfy Case 2A. 

Applying the multi-scale algorithm to each such $\Gamma_p$, we obtain the families $\calt_{k_p}(\Gamma_p)$. Applying $\cal_p^{-1}$ to each $\tau_{k_p}\in\calt_{k_p}(\Gamma_p)$ produces the boxes $$\tau_J=\Gamma(t)+\frac{1}{\sqrt{1+\gamma_R'(t)^2}}\begin{pmatrix}
     1&\gamma_R'(t)\\\gamma_R'(t)&-1
 \end{pmatrix}\begin{pmatrix}
        \len(\tau_J)&0\\0&\wid(\tau_J)
    \end{pmatrix} \big([0,1]\times[-1,1]\big),$$ where $t$ satisfies \eqref{eqn: pullback points}, and $$\wid(\tau_J):=r_p^{-1}\wid(\tau_{k_p})=r_p^{-1}R^{-k_p\epsilon}=R^{-1},$$ and $$\len(\tau_J):=r_p^{-1}\len(\tau_{k_p})=r_p^{-1}\cal(\Gamma_p,I\vert_{\tau_{k_p}})=\cal(\Gamma,J).$$ Recall the Definition \ref{def: vertical nbhds}. It easily follows that $\mathcal{N}_{R^{-1}}(\Gamma,J)\subseteq 2\cdot\tau_J$ for all $J\in\J_p$. Thus, in order to prove \eqref{eqn: main bound p}, we may prove an estimate of the form $$\|\sum_{J\in\J_p}f_{\tau_J}\|_{L^6(\R^2)}\leq C_\epsilon R^{50\epsilon}\big(\sum_{J\in\J_p}\|f_{\tau_J}\|_{L^6(\R^2)}^2\big)^{1/2},$$ for all Schwarz functions $f_{\tau_J}$ with Fourier support in $2\cdot\tau_J$. But since the above estimate is invariant under affine transformations, it suffices to decouple at the level of the rescaled curves $\Gamma_p$. Since each $\Gamma_p$ is an admissible curve satisfying (C2) with an equality, we prove the following estimate, which is uniform across all such curves.
    \begin{prop}
        \label{prop: admissible decoupling}
        Let $\Gamma$ be an admissible curve that satisfies (C2) with equality, and let $\calt_k(\Gamma)$ be as constructed in \S\ref{sec: multi-scale algorithm} for $k=0,1,\dots,1/\epsilon$. There exists a uniform constant $C_\epsilon\geq 1$ such that 
        \begin{equation*}
        \|\sum_{\tau_k\in\calt_k(\Gamma)}f_{\tau_k}\|_{L^6(\R^2)}\leq C_\epsilon R^{50\epsilon}\big(\sum_{\tau_N\in\calt_k(\Gamma)}\|f_{\tau_k}\|_{L^6(\R^2)}^2\big)^{1/2},
        \end{equation*} for all Schwarz functions $f_{\tau_k}$ having Fourier support in $2\cdot\tau_k$.
    \end{prop}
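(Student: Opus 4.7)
The plan is to prove Proposition \ref{prop: admissible decoupling} by finite induction on the scale index $k \in \{0, 1, \ldots, N\}$, where $N = 1/\epsilon$. For each $k$, denote by $D_k$ the smallest constant for which the claimed inequality holds uniformly across admissible curves $\Gamma$ satisfying (C2) with equality and all Schwartz $f_{\tau_k}$ with Fourier support in $2 \cdot \tau_k$. The base case $k = 0$ is immediate since $\#\calt_0(\Gamma) = 1$. The aim is to establish a single-scale recursion of the form
\[
D_{k+1} \leq C_0 R^{C_0 \epsilon^2} D_k
\]
for some absolute constant $C_0$, so that iterating $N$ times yields $D_N \leq C_0^{N} R^{C_0 \epsilon} \leq C_\epsilon R^{50\epsilon}$ for $R$ sufficiently large.

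For the inductive step, given $f = \sum_{\tau_{k+1}} f_{\tau_{k+1}}$, I would group children by parents, writing $f = \sum_{\tau_k} g_{\tau_k}$ with $g_{\tau_k} := \sum_{\tau_{k+1} \in \calt_{k+1}(\Gamma; \tau_k)} f_{\tau_{k+1}}$. By Lemma \ref{lemma: tau k+1 containment}, each $g_{\tau_k}$ has Fourier support in $2 \cdot \tau_k$, so the inductive hypothesis gives
\[
\|f\|_{L^6} \leq D_k \big(\sum_{\tau_k} \|g_{\tau_k}\|_{L^6}^2\big)^{1/2}.
\]
The task is then to prove the single-scale bound
\[
\|g_{\tau_k}\|_{L^6} \leq C_0 R^{C_0 \epsilon^2} \big(\sum_{\tau_{k+1} \in \calt_{k+1}(\Gamma; \tau_k)} \|f_{\tau_{k+1}}\|_{L^6}^2\big)^{1/2}
\]
uniformly in $\tau_k$. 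Applying the similarity $\cal_{\tau_k}$ from \eqref{eqn: cal tau k def} and using affine invariance of the $L^6$ norm reduces this, via Lemma \ref{lemma: rescaled curves}, to a single-scale $\ell^2 L^6$ decoupling for the admissible curve $\Gamma_{\tau_k}$ at scale $R^{-\epsilon}$, in which the children $\tau_{k+1}$ correspond to boxes $\sigma \in \calt_1(\Gamma_{\tau_k})$.

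The single-scale step is where the high/low argument enters. For $F := \sum_\sigma F_\sigma$ denoting the rescaled function, expand $|F|^2 = \sum_\sigma |F_\sigma|^2 + \sum_{\sigma \neq \sigma'} F_\sigma \overline{F_{\sigma'}}$. Each diagonal term $|F_\sigma|^2$ has Fourier support in $\sigma - \sigma$, which clusters near the origin, while the cross terms live on frequency sets bounded away from zero. A smooth Fourier cutoff at an intermediate scale then splits $|F|^2 = g_\ell + g_h$, with $g_\ell$ essentially collecting the diagonal contribution and $g_h$ the off-diagonal. By H\"older, $\int |F|^6 \lesssim \int g_\ell^3 + \int g_h^3$. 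The low piece is bounded pointwise by $\sum |F_\sigma|^2$ convolved with a rapidly decaying envelope, and a standard Minkowski-plus-reverse-H\"older argument (combined with dyadic pigeonholing in $\len(\sigma)$, justified by Lemma \ref{lemma: sigma length} at cost $O(\log R)$) yields the target square-function bound. The high piece is to be controlled by Lemma \ref{lemma: high lemma}, the convex-curve analogue of the high lemma of Guth--Maldague--Wang; its essential geometric inputs are the eccentricity $\len(\sigma) \geq R^{2\epsilon} \wid(\sigma)$ from Lemma \ref{lemma: sigma eccentricity} and the convexity of $\Gamma_{\tau_k}$, which together ensure that the Fourier supports of the cross terms $F_\sigma \overline{F_{\sigma'}}$ are separated from zero and that the incidence count among parallelograms $\sigma - \sigma'$ is controlled.

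I expect the main obstacle to be the low-regularity version of the high lemma. In the parabolic case, one exploits self-similarity and the fact that adjacent boxes are comparable, both of which genuinely fail here (see the Cantor-function example in the introduction). The workaround I plan is to pigeonhole in box lengths at every scale using the coarse sequence of \S\ref{sec: intermediate scales}, so that the high lemma is applied within families of geometrically similar boxes where the parabolic-type $L^2$ orthogonality calculation goes through, while collecting the $O(\log R)$ dyadic classes and summing the $O(R^{\epsilon/2})$ boxes per parent from Lemma \ref{lemma: number of R1 boxes} costs only a factor of $R^{O(\epsilon)}$. This is the only place in the argument where the convex-curve geometry departs substantially from the parabolic model, and it is the source of the $R^{50\epsilon}$ loss rather than the sharper $(\log R)^c$ loss of \cite{gmw}.
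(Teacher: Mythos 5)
There is a genuine gap, and it sits at the heart of your plan: the single-scale recursion $D_{k+1}\leq C_0R^{C_0\epsilon^2}D_k$. The inequality $\|g_{\tau_k}\|_{L^6}\leq C_0R^{C_0\epsilon^2}\big(\sum_{\tau_{k+1}}\|f_{\tau_{k+1}}\|_{L^6}^2\big)^{1/2}$ that you need per step is itself an $\ell^2L^6$ decoupling statement for an admissible convex curve into up to $\sim R^{\epsilon/2}$ caps with \emph{sub-polynomial} loss in the effective scale $R^{\epsilon}$ -- i.e.\ it is the theorem being proved, just rescaled. A single application of the high/low split cannot deliver it: in a two-scale setting the ``low'' part of $\sum_\sigma|F_\sigma|^2$ is just an averaged square function at the \emph{same} scale (there is no finer scale to pass to), so closing the low-dominated case would require an $L^6$ reverse square function estimate with $R^{O(\epsilon^2)}$ loss, which is not available and is not what Lemma \ref{lemma: low lemma} provides -- that lemma only transfers $g_k^{\mathrm{lo}}$ to the square function $g_{k+1}$ at the next finer scale, and the gain materializes only through iterating this across all $N$ scales together with the pruning of \S\ref{sec: pruning} and the spatial decomposition into the sets $\Omega_k$. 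Your own bookkeeping also does not close: in the outline you need a loss $R^{C_0\epsilon^2}$ per step, but in the final paragraph you concede a per-step cost of $R^{O(\epsilon)}$ (and a genuinely single-step treatment of the $O(R^{\epsilon/2})$ children by Cauchy--Schwarz costs $R^{\epsilon/4}$), which after $1/\epsilon$ iterations gives $R^{O(1)}$ rather than $R^{50\epsilon}$.

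This is also structurally different from what the paper does, and the difference is not cosmetic: the paper explicitly avoids an induction on scales in which each step is a self-contained decoupling. Instead, Proposition \ref{prop: admissible decoupling} is reduced to the mixed-norm bound \eqref{eqn: convex decoupling ver 2} with $\big(\sum_{\theta}\|f_\theta\|_{L^\infty}^2\big)^2\big(\sum_\theta\|f_\theta\|_{L^2}^2\big)$ on the right (recovered as $\ell^2L^6$ only at the very end by wave-packet pigeonholing and reverse H\"older, as in \cite[\S 5]{gmw}); one then pigeonholes the lengths into branches $f^\Lambda$, proves the weak-type level-set estimate of Theorem \ref{thm: weak type} on $U^{\mathrm{br}}_{\alpha,\beta}$ by combining Lemmas \ref{lemma: high lemma}, \ref{lemma: low lemma}, \ref{lemma: replacing f}, \ref{lemma: high dom}, \ref{lemma: local bilinear square function} and \ref{lemma: unwinding} across all scales simultaneously via the partition $\{\Omega_k\}$, and finally removes the broad hypothesis by the broad/narrow iteration of \S\ref{sec: broad/narrow}, where the rescaling Lemma \ref{lemma: rescaling} plays the role your parabolic-rescaling step was meant to play. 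Your use of $\cal_{\tau_k}$, Lemma \ref{lemma: rescaled curves} and the containment \eqref{eqn: tau k+1 containment} is fine as far as it goes (modulo the enlargement to $3\cdot\tau_k$ in \eqref{eqn: tau k+1 containment amendment} once supports are doubled), but without a proof of the single-scale step your induction has no engine, and no such one-step engine is supplied by the high/low method.
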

 The remainder of the paper until \S\ref{sec: applications and examples} focuses on the proof of Proposition \ref{prop: admissible decoupling}. As such, let $k\in\{0,1,\dots,1/\epsilon\}$, which shall be kept fixed until \S\ref{sec: broad/narrow}. The high/low argument works by studying the problem at all scales between $R^{-k\epsilon}$ and $R^{-\epsilon}$, and so to simplify the notation, we shall denote this $k$ by $N$, which was previously used to denote $1/\epsilon$. With this new notation, we have $N\leq 1/\epsilon$. In order to prove Proposition \ref{prop: admissible decoupling}, we prove the following bound, which is ostensibly weaker:
\begin{equation}
    \label{eqn: convex decoupling ver 2}
     \|f\|_{L^6(B_R)}^6\leq C_\epsilon R^{20\epsilon}\big(\sum_{\tau_N\in\calt_N(\Gamma)}\|f_{\tau_N}\|_{L^\infty}^2\big)^2\big(\sum_{\tau_N\in\calt_N(\Gamma)}\|f_{\tau_N}\|_{L^2}^2\big),\quad\text{for all $R\times R$ square $B_R$.}
\end{equation}
Once \eqref{eqn: convex decoupling ver 2} is established, Proposition \ref{prop: admissible decoupling} will follow from a reverse H\"older's inequality achieved by pigeonholing in the wave-packets of the function $f$. The details of this step, \textit{mutatis mutandis}, can be found in \cite[\S 5]{gmw}. In order to simplify the notation further, for each $0\leq k\leq 1/\epsilon$, we replace each $\tau_k$ by $2\cdot \tau_k$. This change does not affect the geometry of these boxes, except for doubling their sizes, so that \eqref{eqn: tau k+1 containment} from Lemma \ref{lemma: tau k+1 containment} must be amended to the following
\begin{equation}
    \label{eqn: tau k+1 containment amendment}
    \tau_{k+1}\subseteq 3\cdot\tau_k,
\end{equation}
which is easy to verify from \eqref{eqn: tau k+1 containment}.
 \subsection{Pigeonholing in the lengths}\label{sec: pigeonholing}
In the case of the parabola (see Theorem A and Theorem B), the canonical boxes have the same size $R^{-1/2}\times R^{-1}$. In our general setting, however, we can have boxes of varying lengths. For instance, the canonical boxes for the examples considered in \S\ref{sec: examples} have every possible lengths in a sequence $$R^{-1/2}<\eta^\upkappa<\eta^{\upkappa-1}<\dots<\eta^2<\eta<1,$$ for some $\eta\in (0,1)$ and a large integer $\upkappa\in\mathbb{N}$. Nevertheless, the lengths all lie in the range $[R^{-1},R^{2\epsilon}]$ (see \eqref{eqn: lambda k absolute range}). This suggests that we can dyadically pigeonhole in the lengths. However, due to the multi-scale nature of the high/low argument, we must perform dyadic pigeonholing at scales $R_k^{-1}$ for every $k\in\{1,\dots,N\}$. Since we are working with $N\leq1/\epsilon$ many scales, this will incur a loss of $O(\log R)^{1/\epsilon}$, which is still negligible. We describe the pigeonholing process below.

Let $k\in\{0,1,\dots,N-1\}$. For each $\lambda_{k+1}\geq 0$, and each $\tau_{k}\in\calt_{k}$, we define the subcollections 
\begin{equation}
    \label{eqn: calt k+1 tau k lambda k+1 definition}
    \calt_{k+1}(\Gamma;\tau_k;\lambda_{k+1}):=\{\tau_{k+1}\in\calt_{k+1}(\Gamma;\tau_k):\lambda_{k+1}\leq\len(\tau_{k+1})\leq2\lambda_{k+1}\}.
\end{equation}
For a finite sequence of positive numbers $\Lambda:=(\lambda_1,\dots,\lambda_N)$, we define the `branch' $f^\Lambda$ of the function $f$ as follows. Define $f^\Lambda_{\tau_N}:=f_{\tau_N}$ for all $\tau_N\in\calt_N(\Gamma)$. For $\tau_k\in\calt_k(\Gamma)$, iteratively define $f^\Lambda_{\tau_k}$ by the relation 
\begin{equation}
    \label{eqn: f^Lambda_{tau_k} def}
    f^\Lambda_{\tau_k}:=\sum_{\tau_{k+1}\in\calt_{k+1
    }(\Gamma;\tau_k;\lambda_{k+1})}f^\Lambda_{\tau_{k+1}}.
\end{equation}
The iteration terminates at $k=0$, and we define $f^\Lambda:=f^\Lambda_{\tau_0}$. That is,
\begin{equation}
    \label{eqn: branch definition}
f^\Lambda=\sum_{\tau_1\in\calt_1(\Gamma;\tau_0;\lambda_1)}\sum_{\tau_2\in\calt_2(\Gamma;\tau_1;\lambda_2)}\dots\sum_{\tau_N\in\calt_N(\Gamma;\tau_{N-1};\lambda_N)}f_{\tau_N}.
\end{equation}
We introduce the following notation.
\begin{notation}
    \label{not: pigeonholed boxes}
    Let $0\leq k\leq N$, and $\tau_k\in\calt_k(\Gamma)$ satisfy $\lambda_k\leq \len(\tau_k)\leq 2\lambda_k$. For $0\leq j\leq N-k$, we define the collections $\calt^\Lambda_{k+j}(\Gamma;\tau_k)$ as follows. Define $\calt^\Lambda_{k}(\Gamma;\tau_k):=\{\tau_k\}$, and $\calt^\Lambda_{k+1}(\Gamma;\tau_k)=\calt_{k+1}(\Gamma;\tau_k;\lambda_{k+1})$. Then for $2\leq j\leq N-k$, iteratively define
    \begin{equation}
        \label{eqn: pigeonholed boxes}
        \calt^\Lambda_{k+j}(\Gamma;\tau_k):=\bigcup_{\tau_{k+1}\in\calt_{k+1}^\Lambda(\Gamma;\tau_k)}\calt_{k+j}^\Lambda(\Gamma;\tau_{k+1}),
    \end{equation}
     noting that $\calt_{k+j}^\Lambda(\Gamma;\tau_{k+1})=\calt_{(k+1)+(j-1)}^{\Lambda}(\Gamma;\tau_{k+1})$ is defined in the previous step of the iteration.
When $k=0$ in the above, we write $\calt^\Lambda_j(\Gamma;\tau_0)=:\calt^\Lambda_j(\Gamma)$. We also define the partial order $\prec$ by $$\tau_m\prec\tau_k \iff \tau_m\in\calt_m^\Lambda(\Gamma;\tau_k) \quad\text{for}\quad m\geq k.$$ This relation will be a surrogate for the frequently used ``$\theta\subset\tau$" notation in the context of parabolic decoupling.
\end{notation}
With this notation, we verify the following representation.
\begin{lemma}\label{lemma: pigeonholed boxes representation}
Let $k\in\{0,1,\dots,N\}$ and $\tau_k\in\calt^\Lambda(\Gamma)$. Then
\begin{equation}
    \label{eqn: pigeonholed boxes representation}
    \calt^\Lambda_{k+j}(\Gamma;\tau_k)=\bigcup_{\tau_{k+l}\in\calt^\Lambda_{k+l}(\Gamma;\tau_k)}\calt^\Lambda_{k+j}(\Gamma;\tau_{k+l}),\quad\text{for all}\quad 0\leq l\leq j\leq N-k.
\end{equation}
\end{lemma}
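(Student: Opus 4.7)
The plan is to prove the identity by induction on $j$, with $k$ and $l$ treated as free parameters in the range $0 \le l \le j \le N-k$. The relation $\prec$ is essentially a transitivity statement for the iterated refinement operator defined in \eqref{eqn: pigeonholed boxes}, so the proof should be essentially combinatorial and should not require any geometric input from the multi-scale algorithm beyond the definition itself.

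The base case $j=0$ forces $l=0$, and both sides reduce to $\{\tau_k\}$ by construction. The cases $l=0$ and $l=j$ for general $j$ are likewise immediate: when $l=0$ the outer union ranges only over $\{\tau_k\}$, while when $l=j$ the inner collections $\calt^\Lambda_{k+j}(\Gamma;\tau_{k+j})$ are singletons $\{\tau_{k+j}\}$, and the right-hand side is just $\calt^\Lambda_{k+j}(\Gamma;\tau_k)$ reindexed. So assume the statement holds for some $j \ge 1$ and all admissible $l$, and take $1 \le l \le j$ (the case $l = j+1$ being trivial as above).

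For the inductive step, apply the recursive definition \eqref{eqn: pigeonholed boxes} at the outermost level:
\begin{equation*}
\calt^\Lambda_{k+j+1}(\Gamma;\tau_k) = \bigcup_{\tau_{k+1} \in \calt^\Lambda_{k+1}(\Gamma;\tau_k)} \calt^\Lambda_{k+j+1}(\Gamma;\tau_{k+1}).
\end{equation*}
Apply the inductive hypothesis at level $k+1$ with indices $(j, l-1)$ to each inner term, yielding a double union indexed by $\tau_{k+1}$ and $\tau_{k+l} \in \calt^\Lambda_{k+l}(\Gamma;\tau_{k+1})$. Collapse this double union by invoking \eqref{eqn: pigeonholed boxes} once more, this time for the object $\calt^\Lambda_{k+l}(\Gamma;\tau_k)$ expanded over its depth-$1$ children, to get a single union over $\tau_{k+l} \in \calt^\Lambda_{k+l}(\Gamma;\tau_k)$, which is the right-hand side of \eqref{eqn: pigeonholed boxes representation} for $j+1$.

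The main step to be careful about is the index bookkeeping: when invoking the inductive hypothesis at level $k+1$ one must verify the parameter ranges $0 \le l-1 \le j \le N-(k+1)$, which holds exactly when $1 \le l \le j$ and $j \le N - k - 1$, i.e.\ precisely in the remaining cases. The cases $l=0$, $l=j+1$ sit outside this range but, as noted, are trivial. Beyond this, the argument uses only the definition \eqref{eqn: pigeonholed boxes} and does not appeal to any property of the curve $\Gamma$ or the lengths $\Lambda$, so no real obstacle arises.
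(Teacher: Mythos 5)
Your proposal is correct and follows essentially the same route as the paper's proof: both are purely combinatorial inductions that expand via the recursive definition \eqref{eqn: pigeonholed boxes}, apply the induction hypothesis at the shifted base index $k+1$, and then collapse the resulting double union using \eqref{eqn: pigeonholed boxes} once more, the only difference being that you induct on $j$ whereas the paper inducts on $l$. Your index bookkeeping and treatment of the trivial cases $l=0$, $l=j$ are fine, so there is nothing to fix.
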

\begin{proof}
We prove this by induction on $l$. For $l=1$, this follows directly from the definition \eqref{eqn: pigeonholed boxes}. Assume that \eqref{eqn: pigeonholed boxes representation} holds for some $l<j$. Let $\tau_k\in\calt^\Lambda_k(\Gamma)$. We wish to prove
\begin{equation}
    \label{eqn: pigeonholed boxes rep induction step}
    \calt^\Lambda_{k+j}(\Gamma;\tau_k)=\bigcup_{\tau_{k+l+1}\in\calt^\Lambda_{k+l+1}(\Gamma;\tau_k)}\calt^\Lambda_{k+j}(\Gamma;\tau_{k+l+1}).
\end{equation}
Using \eqref{eqn: pigeonholed boxes} with $j=l+1$, we have $$\calt^\Lambda_{k+l+1}(\Gamma;\tau_k)=\bigcup_{\tau_{k+1}\in\calt^\Lambda_{k+1}(\Gamma;\tau_k)}\calt^\Lambda_{k+l+1}(\Gamma;\tau_{k+1}),$$ using which the right-hand side of \eqref{eqn: pigeonholed boxes rep induction step} can be written as the double union 
\begin{equation}
    \label{eqn: double union identity}
    \bigcup_{\tau_{k+l+1}\in\calt^\Lambda_{k+l+1}(\Gamma;\tau_k)}\calt^\Lambda_{k+j}(\Gamma;\tau_{k+l+1})=\bigcup_{\tau_{k+1}\in\calt^\Lambda_{k+1}(\Gamma;\tau_k)}\bigcup_{\tau_{k+l+1}\in\calt^\Lambda_{k+l+1}(\Gamma;\tau_{k+1})}\calt^\Lambda_{k+j}(\Gamma;\tau_{k+l+1}).
\end{equation}
Set $k':=k+1$, and $j':=j-1$. Then the inner union on the right-hand side of \eqref{eqn: double union identity} can be rewritten as $$\bigcup_{\tau_{k+l+1}\in\calt^\Lambda_{k+l+1}(\Gamma;\tau_{k+1})}\calt^\Lambda_{k+j}(\Gamma;\tau_{k+l+1})=\bigcup_{\tau_{k'+l}\in\calt^\Lambda_{k'+l}(\Gamma;\tau_{k'})}\calt^\Lambda_{k'+j'}(\Gamma;\tau_{k'+l})=\calt^\Lambda_{k'+j'}(\Gamma;\tau_{k'}),$$ where the last equality follows from the induction hypothesis. Substituting back the values of $k'$ and $j'$, and using the above in \eqref{eqn: double union identity}, we then get 
\begin{equation*}
    \label{eqn: inner union identity}
     \bigcup_{\tau_{k+l+1}\in\calt^\Lambda_{k+l+1}(\Gamma;\tau_k)}\calt^\Lambda_{k+j}(\Gamma;\tau_{k+l+1})=\bigcup_{\tau_{k+1}\in\calt^\Lambda_{k+1}(\Gamma;\tau_k)}\calt^\Lambda_{k+j}(\Gamma;\tau_{k+1})=\calt^\Lambda_{k+j}(\Gamma;\tau_k),
\end{equation*}
where the last equality follows from \eqref{eqn: pigeonholed boxes}. This is the desired identity \eqref{eqn: pigeonholed boxes rep induction step}.
\end{proof}
The following is an immediate consequence of \eqref{eqn: multiscale exceptional box properties}.
\begin{lemma}
    \label{lemma: chain of exceptional boxes}
    If there exists a chain of the form $$\tau_k\prec\tau_{k-1}\prec\dots\prec\tau_{m+1}\prec\tau_m,$$ where $\tau_j$ is exceptional for all $m<j\leq k$, then $\#\calt_k^\Lambda(\Gamma;\tau_m)=1$. Furthermore, $I\vert_{\tau_k}=I\vert_{\tau_m}$.
\end{lemma}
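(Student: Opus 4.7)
The plan is to prove the claim by finite induction on the length $k-m$ of the chain, using \eqref{eqn: multiscale exceptional box properties} as the one-step engine and Lemma \ref{lemma: pigeonholed boxes representation} to promote single-step conclusions to multi-step ones. The two statements (cardinality one and equality of associated intervals) are handled together, since each propagates along the chain by the same mechanism.

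For the base case $k-m = 1$, the chain is simply $\tau_{m+1} \prec \tau_m$ with $\tau_{m+1}$ exceptional. Then \eqref{eqn: multiscale exceptional box properties} yields $\#\calt_{m+1}(\Gamma;\tau_m) = 1$ and $I\vert_{\tau_{m+1}} = I\vert_{\tau_m}$. Since $\calt^\Lambda_{m+1}(\Gamma;\tau_m) \subseteq \calt_{m+1}(\Gamma;\tau_m)$ and the chain relation guarantees $\tau_{m+1} \in \calt^\Lambda_{m+1}(\Gamma;\tau_m)$, the pigeonholed collection also has exactly one element, giving the base case. For the inductive step, assume the result for chains of length $k-m-1$. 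Apply the base case to the final link $\tau_{m+1} \prec \tau_m$ to get $\calt^\Lambda_{m+1}(\Gamma;\tau_m) = \{\tau_{m+1}\}$ together with $I\vert_{\tau_{m+1}} = I\vert_{\tau_m}$, and apply the inductive hypothesis to the sub-chain $\tau_k \prec \dots \prec \tau_{m+1}$ to get $\#\calt^\Lambda_k(\Gamma;\tau_{m+1}) = 1$ and $I\vert_{\tau_k} = I\vert_{\tau_{m+1}}$. Then invoke Lemma \ref{lemma: pigeonholed boxes representation} with parameters $k \to m$, $l = 1$, $j = k-m$ to write
\begin{equation*}
\calt^\Lambda_k(\Gamma;\tau_m) = \bigcup_{\tau' \in \calt^\Lambda_{m+1}(\Gamma;\tau_m)} \calt^\Lambda_k(\Gamma;\tau') = \calt^\Lambda_k(\Gamma;\tau_{m+1}),
\end{equation*}
where the last equality uses that $\calt^\Lambda_{m+1}(\Gamma;\tau_m)$ is the singleton $\{\tau_{m+1}\}$. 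This has cardinality one by the inductive hypothesis, and chaining the interval equalities $I\vert_{\tau_k} = I\vert_{\tau_{m+1}} = I\vert_{\tau_m}$ closes the induction.

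There is no real obstacle here; the argument is essentially bookkeeping. The only subtlety worth flagging is ensuring that \eqref{eqn: multiscale exceptional box properties} (which speaks about the unpigeonholed $\calt_{k+1}(\Gamma;\tau_k)$) transfers to the $\Lambda$-pigeonholed collection, which it does trivially because a subset of a singleton that contains an element of the chain must itself be that singleton. Once this is observed, the induction runs cleanly.
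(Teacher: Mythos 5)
Your proposal is correct and follows essentially the same route as the paper: both arguments induct along the chain, using \eqref{eqn: multiscale exceptional box properties} as the one-step input and Lemma \ref{lemma: pigeonholed boxes representation} to pass from one level to the next (the paper inducts upward on the intermediate index $j$ while you induct on the chain length, which is only an organizational difference). Your remark that the unpigeonholed singleton statement transfers to $\calt^\Lambda_{m+1}(\Gamma;\tau_m)$ because the latter is a nonempty subset of a singleton is a valid and welcome clarification of a point the paper glosses over.
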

\begin{proof}
    The proof follows directly from the definition of exceptional boxes, and does not use the pigeonholing. We prove inductively that $\#\calt_j^\Lambda(\Gamma;\tau_m)=1$ and $I\vert_{\tau_j}=I\vert_{\tau_m}$ for all $m+1\leq j\leq k$.

    For the base case $j=m+1$, we have an exceptional box $\tau_{m+1}\in\calt_{m+1}(\Gamma;\tau_m)$. By \eqref{eqn: multiscale exceptional box properties} it follows immediately that $\#\calt^\Lambda_{m+1}(\Gamma;\tau_m)=1$ and $I\vert_{\tau_{m+1}}=I\vert_{\tau_m}$. Now suppose that $\#\calt_j^\Lambda(\Gamma;\tau_m)=1$ and $I\vert_{\tau_j}=I\vert_{\tau_m}$ for some $m+1\leq j<k$. By \eqref{eqn: pigeonholed boxes representation} we have $$\calt_{j+1}^\Lambda(\Gamma;\tau_m)=\bigcup_{\tau_j'\in\calt_j^\Lambda(\Gamma;\tau_m)}\calt_{j+1}^\Lambda(\Gamma;\tau_j').$$ 
    By the induction hypothesis, $\calt_j^\Lambda(\Gamma;\tau_m)=\{\tau_j\}$, and since $\tau_{j+1}\prec\tau_j$ is exceptional, by \eqref{eqn: multiscale exceptional box properties} we have $\#\calt_{j+1}^\Lambda(\Gamma;\tau_j)=1$. Combining these two observations, and using it in the identity above, we get $$\#\calt^\Lambda_{j+1}(\Gamma;\tau_m)=\#\calt^\Lambda_{j+1}(\Gamma;\tau_j)=1.$$ Again by \eqref{eqn: multiscale exceptional box properties}, we know that $I\vert_{\tau_{j+1}}=I\vert_{\tau_j}$; and by the induction hypothesis we have $I\vert_{\tau_j}=I\vert_{\tau_m}$. This completes the induction step.
\end{proof}
With the notation introduced above, we also record the following representations of the branches:
\begin{equation}
    \label{eqn: f^Lambda representation 2}
    f^\Lambda=\sum_{\tau_k\in\calt^\Lambda_k(\Gamma)}f^\Lambda_{\tau_k}\quad\text{for all}\quad 0\leq k\leq N,
\end{equation}
and \begin{equation}
    \label{eqn: f^Lambda_tau_k representation 2}
    f^\Lambda_{\tau_k}=\sum_{\tau_{k+j}\in\calt^\Lambda_{k+j}(\Gamma;\tau_k)}f^\Lambda_{\tau_{k+j}}\quad\text{for all}\quad \tau_k\in\calt^\Lambda_k(\Gamma)\quad\text{and all}\quad 0\leq j\leq N-k,\,0\leq k\leq N.
\end{equation}
Each branch $f^\Lambda$ takes into account precisely those $\tau_k\in\calt_k$ that satisfy $\lambda_k\leq\len(\tau_k)\leq2\lambda_k$. Thus, by summing over dyadic sequences, we have $f=\sum_{\Lambda\in(2^\mathbb{Z})^N}f^\Lambda$.  Let us define 
$$\Z_{\mathrm{dyad}}(f,R,\epsilon):=\{\Lambda\in (2^\mathbb{Z})^N:f^\Lambda\not\equiv 0\}.$$ We observe the following.
\begin{lemma}
    \label{lemma: number of branches}
    We have $$\#\Z_{\mathrm{dyad}}(f,R,\epsilon)\leq (1+\log R)^{1/\epsilon}.$$ 
\end{lemma}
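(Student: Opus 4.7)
The plan is a direct counting argument. For a branch $f^\Lambda$ to be not identically zero, each pigeonholed collection in the iterated sum \eqref{eqn: branch definition} must be nonempty, so there must exist at least one chain $\tau_0 \succ \tau_1 \succ \cdots \succ \tau_N$ with $\tau_k \in \calt_k(\Gamma;\tau_{k-1};\lambda_k)$ for every $k \geq 1$. In particular, for each $k \in \{1,\dots,N\}$ there must exist some $\tau_k \in \calt_k(\Gamma)$ with $\lambda_k \leq \len(\tau_k) \leq 2\lambda_k$. It therefore suffices to bound, for each $k$, the number of dyadic scales $\lambda_k$ admitting such a $\tau_k$, and then multiply the counts over $k=1,\dots,N$.

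The key input is that the lengths $\len(\tau_k)$ lie in a bounded multiplicative range. For the lower bound, Lemma \ref{lemma: tau k eccentricity} gives $\len(\tau_k) \geq R^{2\epsilon}\wid(\tau_k) = R^{(2-k)\epsilon} \geq R^{-1+2\epsilon}$, using $k \leq N \leq 1/\epsilon$. For the upper bound, iterating Lemma \ref{lemma: len tau k range} yields $\len(\tau_k) \leq \len(\tau_0) = \cal(\Gamma,I)$, and in the setting of Proposition \ref{prop: admissible decoupling} we have $\cal(\Gamma,I) = R^{2\epsilon}$ since $\Gamma$ satisfies (C2) with equality.

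Combining, the possible lengths lie in $[R^{-1+2\epsilon}, R^{2\epsilon}]$, whose endpoints differ by a factor of $R$. Hence the set of dyadic $\lambda_k$ for which $\lambda_k \leq \len(\tau_k) \leq 2\lambda_k$ can hold for some $\tau_k$ has cardinality at most $\log_2(R) + O(1) \leq 1+\log R$. Taking the product over the $N \leq 1/\epsilon$ coordinates gives $\#\Z_{\mathrm{dyad}}(f,R,\epsilon) \leq (1+\log R)^{1/\epsilon}$, as required.

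The argument is essentially a pigeonhole, and the main (and only) obstacle is ensuring the two length bounds are tight enough so that the per-coordinate count is $1+\log R$ rather than something like $2+\log R$; the additive slack is absorbed into the "$1$" in $(1+\log R)^{1/\epsilon}$, so this requires no further work. No induction, no use of the Fourier support, and no interaction between scales beyond the monotonicity $\len(\tau_{k+1}) \leq \len(\tau_k)$ is needed.
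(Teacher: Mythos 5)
Your proposal is correct and follows essentially the same route as the paper: both arguments note that a nonzero branch forces $\calt_k^\Lambda(\Gamma)$ to be nonempty at every level, bound $\len(\tau_k)$ below via the eccentricity bound \eqref{eqn: tau k are eccentric} and above via $\len(\tau_k)=\cal(\Gamma,I\vert_{\tau_k})\leq\cal(\Gamma,I)=R^{2\epsilon}$ (using (C2) with equality), and then count the dyadic values of $\lambda_k$ compatible with this range, raised to the power $N\leq 1/\epsilon$. The only cosmetic difference is that you obtain the upper bound by iterating Lemma \ref{lemma: len tau k range} instead of invoking monotonicity of $\cal(\Gamma,\cdot)$ directly, and your slight looseness about whether the per-scale count is $1+\log R$ or $2+\log R$ is present in the paper's own proof as well.
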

\begin{proof}
    Let $\Lambda\in\Z(f,R,\epsilon)$. From \eqref{eqn: f^Lambda representation 2} it follows that $\calt_k^\Lambda(\Gamma)$ must be non-empty for all $0\leq k\leq N$. Each $\tau_k\in\calt_k^\Lambda(\Gamma)$ satisfies $\lambda_k\leq\len(\tau_k)\leq 2\lambda_{k}$. But from \eqref{eqn: tau k are eccentric} we have $$\len(\tau_k)\geq R^{-(k-2)\epsilon},\quad\text{and so}\quad \lambda_k\geq (1/2)R^{-(k-2)\epsilon}.$$ 
    On the other hand, $$\len(\tau_k)=\cal(\Gamma,I\vert_{\tau_k})\leq\cal(\Gamma,I)=R^{2\epsilon},\quad\text{so that}\quad \lambda_k\leq R^{2\epsilon}.$$
   Now the above bounds must hold for all $0\leq k\leq N$ in order for $f^\Lambda$ to be non-zero, so that 
    \begin{equation}
        \label{eqn: lambda k absolute range}
        (1/2)R^{-1+2\epsilon}\leq\lambda_k\leq R^{2\epsilon}\quad\text{for all}\quad 0\leq k\leq N.
    \end{equation} 
    Recall that each $\lambda_k$ is a dyadic number, and the number of dyadic reals in the interval $[(1/2)R^{-1+2\epsilon},R^{2\epsilon}]$ is $1+\log R$, so the result follows immediately.
\end{proof}
Since $f=\sum_{\Lambda\in \Z_{\mathrm{dyad}}(f,R,\epsilon)}f^\Lambda$, and $\Z_{\mathrm{dyad}}(f,R,\epsilon)\leq (1+\log R)^{1/\epsilon}$, in order to prove \eqref{eqn: convex decoupling ver 2}, it suffices to prove an analogous estimate for $f^\Lambda$ for all $\Lambda\in\Z_{\mathrm{dyad}}(f,R,\epsilon)$. For technical reasons, we actually prove it for all $\Lambda\in\Z(f,R,\epsilon)$, where $$\Z(f,R,\epsilon):=\{\Lambda\in\R_+^N:f^\Lambda\not\equiv 0\}.$$ In light of Lemma \ref{lemma: number of boxes}, Proposition \ref{prop: admissible decoupling} follows trivially by Cauchy--Schwarz when $R\lesssim_\epsilon 1$. Thus, we focus only on the case $R\gg_\epsilon 1$, which will be implicitly assumed from this point on.\footnote{$R\geq \epsilon^{-100/\epsilon}$ would certainly work.} 
\begin{restatable}{theorem}{pigeonholed}
    \label{thm: convex decoupling ver 2 pigeonholed}
    Let $\Gamma$ be an admissible curve that satisfies (C2) with an equality. There exists a uniform constant $C_\epsilon\geq 1$ such that for each $\Lambda\in\Z(f,R,\epsilon)$ and each $0\leq N\leq 1/\epsilon$, we have 
    \begin{equation}
    \label{eqn: convex decoupling ver 2 pigeonholed}
    \|f^\Lambda\|_{L^6(B_R)}^6\leq C_\epsilon R^{20\epsilon}\big(\sum_{\tau_N\in\calt^\Lambda_N(\Gamma)}\|f^\Lambda_{\tau_N}\|_{L^\infty(\R^2)}^2\big)^2\big(\sum_{\tau_N\in\calt^\Lambda_N(\Gamma)}\|f^\Lambda_{\tau_N}\|_{L^2(\R^2)}^2\big),
\end{equation}
for all $R\times R$ square $B_R$.
\end{restatable}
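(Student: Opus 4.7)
The plan is to prove this by induction on $N$, following the high/low frequency decomposition scheme of \cite{gmw}. The base case $N = 0$ is immediate: $\calt_0^\Lambda(\Gamma) = \{\tau_0\}$ so $f^\Lambda = f^\Lambda_{\tau_0}$, and H\"older's inequality gives $\|f^\Lambda_{\tau_0}\|_{L^6}^6 \leq \|f^\Lambda_{\tau_0}\|_{L^\infty}^4 \|f^\Lambda_{\tau_0}\|_{L^2}^2$, which is the desired conclusion with constant $1$. For the inductive step, I would begin with the telescoping identity
\begin{equation*}
|f^\Lambda|^2 = \sum_{\tau_N \in \calt_N^\Lambda(\Gamma)} |f^\Lambda_{\tau_N}|^2 + \sum_{k=0}^{N-1} \sum_{\tau_k \in \calt_k^\Lambda(\Gamma)} \Bigl(|f^\Lambda_{\tau_k}|^2 - \sum_{\tau_{k+1} \prec \tau_k} |f^\Lambda_{\tau_{k+1}}|^2\Bigr),
\end{equation*}
in which each inner difference is a sum of cross-terms $f^\Lambda_{\tau_{k+1}}\overline{f^\Lambda_{\tau_{k+1}'}}$ between distinct siblings $\tau_{k+1}, \tau_{k+1}' \prec \tau_k$.

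At each scale $k$ I would further split every cross-term $f^\Lambda_{\tau_{k+1}}\overline{f^\Lambda_{\tau_{k+1}'}}$ into \emph{low} and \emph{high} frequency parts via a smooth Fourier cutoff $\eta_k$ adapted to a small disc near the origin. The eccentricity bound \eqref{eqn: tau k are eccentric} together with the containment \eqref{eqn: tau k+1 containment amendment} guarantee that the Fourier supports $\tau_{k+1} - \tau_{k+1}'$ for distinct siblings concentrate in an annular region away from the origin, so the low contribution should be dominated pointwise by $\sum_{\tau_{k+1}\prec \tau_k}|f^\Lambda_{\tau_{k+1}}|^2 \ast \rho_k$ for an $L^1$-normalized bump $\rho_k$, via a standard wave-packet comparison. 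Summing in $k$ and telescoping yields the heuristic pointwise bound
\begin{equation*}
|f^\Lambda|^2 \lesssim \sum_{\tau_N\in\calt_N^\Lambda(\Gamma)} |f^\Lambda_{\tau_N}|^2 \ast \rho + \sum_{k=0}^{N-1} \mathrm{Hi}_k,
\end{equation*}
where $\rho$ is a composite averaging bump and $\mathrm{Hi}_k$ is the sum of the high parts at scale $k$.

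Cubing and integrating, then applying $\|a+b\|_{L^3}^3 \lesssim \|a\|_{L^3}^3 + \|b\|_{L^3}^3$, reduces the bound to a main term $\|\sum_{\tau_N}|f^\Lambda_{\tau_N}|^2\|_{L^3}^3$ and error terms $\|\mathrm{Hi}_k\|_{L^3}^3$. The main term is immediate from the pointwise inequality $\sum_{\tau_N}|f^\Lambda_{\tau_N}(x)|^2 \leq \sum_{\tau_N}\|f^\Lambda_{\tau_N}\|_{L^\infty}^2$, yielding
\begin{equation*}
\int \Bigl(\sum_{\tau_N}|f^\Lambda_{\tau_N}|^2\Bigr)^3 \leq \Bigl(\sum_{\tau_N}\|f^\Lambda_{\tau_N}\|_{L^\infty}^2\Bigr)^2 \Bigl(\sum_{\tau_N}\|f^\Lambda_{\tau_N}\|_{L^2}^2\Bigr),
\end{equation*}
which is exactly the target right-hand side. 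The error terms $\|\mathrm{Hi}_k\|_{L^3}^3$ I would control by invoking the forthcoming ``high lemma'' (Lemma \ref{lemma: high lemma}), which uses the eccentricity and containment properties to establish $L^2$-near-orthogonality at scale $k$, bounding $\|\mathrm{Hi}_k\|_{L^2}$ in terms of the scale-$(k+1)$ square function; a standard interpolation with a trivial $L^\infty$ bound, combined with the inductive hypothesis applied one scale coarser, should absorb these contributions into the target. The main obstacle I anticipate is the careful bookkeeping across all $N \leq 1/\epsilon$ scales: each scale contributes a multiplicative loss of $R^{O(\epsilon)}$, and the eccentricity factor $R^{2\epsilon}$ from Lemma \ref{lemma: tau k eccentricity} provides the crucial margin that must be tracked in detail to keep the total loss bounded by $C_\epsilon R^{20\epsilon}$ rather than by a polynomially larger factor.
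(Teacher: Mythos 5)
Your low-frequency telescoping is morally the paper's Low Lemma (Lemma \ref{lemma: low lemma}), and that part of the scheme is sound. The genuine gap is in your treatment of the high terms $\mathrm{Hi}_k$, which is where the actual content of the theorem lives. The High Lemma (Lemma \ref{lemma: high lemma}) controls the high part only in $L^2$, and only in terms of $\sum_{\tau_k}\|f_{\tau_k}\|_{L^4}^4$. To pass from that $\ell^4L^4$ expression to the target $\big(\sum_\theta\|f_\theta\|_{L^\infty}^2\big)^2\big(\sum_\theta\|f_\theta\|_{L^2}^2\big)$ one must trade two powers of $L^4$ for an $L^\infty$ bound on the coarse pieces, and without further input the only available bound is $\|f_{\tau_k}\|_{L^\infty}\lesssim\#\{\theta\prec\tau_k\}\max_\theta\|f_\theta\|_{L^\infty}$, which loses a power of $R$ far exceeding $R^{20\epsilon}$ (the cardinalities $\#\calt_k$ are genuine powers of $R$ by Lemma \ref{lemma: number of boxes}). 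Your proposed remedy --- interpolation with a trivial $L^\infty$ bound plus ``the inductive hypothesis applied one scale coarser'' --- does not supply the missing control: the inductive hypothesis is an $L^6$ decoupling statement for $f^\Lambda$ into coarser boxes and says nothing about the $L^3$ norm of a high-frequency square-function error, and the interpolation step reintroduces exactly the uncontrolled $\sum_{\tau_k}\|f_{\tau_k}\|_{L^\infty}^2$ factor.

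This is precisely why the paper does not argue by a global pointwise square-function domination. Instead it proves the weak-type level-set estimate of Theorem \ref{thm: weak type} on the broad set $U^{\mathrm{br}}_{\alpha,\beta}$ of \eqref{eqn: U set definition}: restricting to $|f|\sim\alpha$, $\sum_\theta|f_\theta|^2\sim\beta$ gives meaning to the pruning threshold $G\sim R^{5\epsilon}\beta/\alpha$ (Notation \ref{not: G}), the pruned pieces then satisfy $\|f_{k,\tau_k}\|_{L^\infty}\lesssim G$ (Lemma \ref{lemma: properties of f_k}), the spatial decomposition into the sets $\Omega_k$ (Definition \ref{def: Omega sets}) identifies the first scale at which the square function is high-dominated (Lemma \ref{lemma: high dom}), and the transversal bilinear square function estimate (Lemma \ref{lemma: local bilinear square function}), together with Lemma \ref{lemma: replacing f} and Lemma \ref{lemma: unwinding}, converts the $L^4$ output of the High Lemma into the target with only $R^{O(\epsilon)}$ loss. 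The full Theorem \ref{thm: convex decoupling ver 2 pigeonholed} is then recovered by dyadic pigeonholing in $\alpha,\beta$ and a broad/narrow iteration over scales, using the rescaling structure of Lemma \ref{lemma: rescaling}. None of this machinery (level sets, pruning, $\Omega_k$, broad/narrow, bilinearization) appears in your outline, and without it the high-term step fails quantitatively; you would need to rebuild essentially this apparatus to close the argument.
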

\subsection{Properties of the pigeonholed setup}
The pigeonholing process is a crucial step in the argument, as the high/low technique cannot be applied directly to the function $f$. Rather, we apply it to each branch $f^\Lambda$, where each $\tau_k$ involved has length about $\lambda_k$. Here, we discuss the properties of the pigeonholed setup that are instrumental for the high/low argument.

A key ingredient is the property \eqref{eqn: len tau k+1 relative}, which when combined with the pigeonholing above, gives the following relationship between successive scales 
\begin{equation}
    \label{eqn: lambda k relative range}
    \lambda_{k+1}\geq (1/2)R^{-\epsilon}\lambda_k.
\end{equation}
 The relation \eqref{eqn: lambda k relative range}, while very mild\footnote{By Lemma \ref{lemma: number of boxes}, it follows that the average length of a box $\tau_{k+1}\in\calt_{k+1}(\tau_k)$ is at least $R^{-\epsilon/2}\lambda_k$, which is quite larger than $R^{-\epsilon}\lambda_k$. When $\Gamma$ is the parabola, the average length coincides with $\len(\tau_{k+1})$ for each $\tau_{k+1}\in\calt_{k+1}(\tau_k)$.}, plays an important part in the argument.
Another important observation is that the boxes $\tau_k$ have separated directions. 
\begin{lemma}
    [Direction separation]
    \label{lemma: direction separation tau_k}
    Let $k\in\{1,\dots,N\}$, and let $\tau_k^1,\dots,\tau_k^4\in\calt^\Lambda_k(\Gamma)$ be four neighbouring boxes ordered left to right. If $I\vert_{\tau_k^j}=[t_k^j,v_k^j]$, then
    \begin{equation}
        \label{eqn: angle separation tau_k}
        \gamma_R'(t_k^4)-\gamma_R'(t_k^1)\geq (2\lambda_kR_k)^{-1}.
    \end{equation}
\end{lemma}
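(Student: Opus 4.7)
The plan is to prove the bound by reducing, via the angular separation property \eqref{eqn: angle sep tau k} from Lemma~\ref{lemma: wid over len}, to a case analysis on the single-scale types (left, right, exceptional) of the boxes $\tau_k^1,\dots,\tau_k^4$, with an induction on $k$ closing a residual case. First, taking $\eta \to 0^+$ in \eqref{eqn: angle sep tau k} and using the standard one-sided continuity identities $\lim_{\eta\to 0^+}\gamma_L'(v_k+\eta)=\gamma_R'(v_k)$ and $\lim_{\eta\to 0^+}\gamma_R'(t_k-\eta)=\gamma_L'(t_k)$ for a convex function, together with the pigeonholed length bound $\cal(\Gamma,I\vert_{\tau_k})=\len(\tau_k)\leq 2\lambda_k$, I would obtain the basic per-box separations: for a left box $\tau_k$, $\gamma_R'(v_k)-\gamma_R'(t_k)\geq (2\lambda_k R_k)^{-1}$, and for a right box, $\gamma_L'(v_k)-\gamma_L'(t_k)\geq (2\lambda_k R_k)^{-1}$.

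The two direct cases are then immediate. If some $\tau_k^j$ with $j\in\{1,2,3\}$ is a left box, then $v_k^j=t_k^{j+1}$ yields $\gamma_R'(t_k^{j+1})-\gamma_R'(t_k^j)\geq (2\lambda_k R_k)^{-1}$, and the claim follows by monotonicity of $\gamma_R'$ on $[t_k^1,t_k^4]$. If some $\tau_k^{j+1}$ with $j+1\in\{2,3,4\}$ is a right box, then by the structure of the single-scale algorithm (a right box is always the last child in the enumeration of its level-$(k-1)$ parent, while all earlier children are left boxes) the immediate spatial predecessor of $\tau_k^{j+1}$ lies in the same parent and is a left box, reducing to the previous case.

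In the residual configuration, $\tau_k^2$ and $\tau_k^3$ must both be exceptional (each the sole child of its parent), $\tau_k^1$ is right or exceptional, and $\tau_k^4$ is left or exceptional. The four parents $\tau_{k-1}^1,\dots,\tau_{k-1}^4$ are then distinct neighbouring boxes at level $k-1$, and the identity $\len(\tau_k^j)=\len(\tau_{k-1}^j)$ for exceptional $\tau_k^j$ combined with the pigeonholing ranges forces $\lambda_k/2\leq\lambda_{k-1}\leq 2\lambda_k$, so that $\lambda_{k-1}R_{k-1}\leq \lambda_k R_k$ for $R\geq 2^{1/\epsilon}$. I would then induct on $k$, applying the hypothesis to the four parents. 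When $\tau_k^1$ is itself exceptional, $t_k^1=t_{k-1}^1$ and $t_k^4=t_{k-1}^4$, so the level-$(k-1)$ bound transfers directly. The main obstacle is the subcase where $\tau_k^1$ is a right box, so $t_k^1>t_{k-1}^1$ and the induction does not transfer cleanly; I expect this to require combining the induction at level $k-1$ with the left-box separation at the immediate spatial predecessor of $\tau_k^1$ within the parent $\tau_{k-1}^1$, and the total-angular-variation estimate $\gamma_L'(v_{k-1}^1)-\gamma_R'(t_{k-1}^1)\lesssim (\lambda_{k-1}R_{k-1})^{-1}$ implied by property $\mathrm{II}(k-1)$, carefully telescoped. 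The base case $k=1$ reduces to the single-scale analysis of \S\ref{sec: single-scale algorithm} and follows from the eccentricity bound in Lemma~\ref{lemma: sigma eccentricity}.
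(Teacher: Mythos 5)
Your per-box separations (obtained by letting $\eta\to 0^+$ in \eqref{eqn: angle sep tau k} and using $\len(\tau_k)\le 2\lambda_k$) are fine, but the case analysis built on them has two concrete gaps. First, the identity $v_k^j=t_k^{j+1}$ and the reduction of right boxes to the left-box case are not justified: the four boxes are consecutive in the \emph{pigeonholed} family $\calt^\Lambda_k(\Gamma)$, so adjacent members need not abut (boxes of $\calt_k(\Gamma)$ whose lengths fall outside $[\lambda_k,2\lambda_k]$, or whose ancestors were pruned, may lie in between), and moreover a right box's interval \emph{overlaps} its predecessor (in \eqref{eqn: right box def} one has $s^{\nu}\le u^{\nu-1}$). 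In particular the "immediate spatial predecessor" of a right box is a left box of the full collection that need not belong to $\calt^\Lambda_k(\Gamma)$ at all, so it carries no length bound $\le 2\lambda_k$ and is not one of your four boxes; the claimed reduction therefore does not go through. (Relatedly, the $j=3$ left-box case can fail: if $\tau_k^4$ is the right box of the same parent, then $t_k^4\le v_k^3$ and monotonicity no longer converts the per-box bound into \eqref{eqn: angle separation tau_k}. This is exactly why the paper pivots on the \emph{middle} box $\tau_k^2$, whose slightly extended interval is guaranteed to sit inside $[t_k^1,t_k^4]$ because at most one of the two boxes to its right can overlap it.)

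Second, and more seriously, the residual exceptional case is not closed, by your own admission, and the proposed patch cannot work quantitatively. When $\tau_k^1$ is a right box, the shift error $\gamma_R'(t_k^1)-\gamma_R'(t_{k-1}^1)$ is controlled only through $\mathrm{II(k-1)}$, which gives a bound of order $(\lambda_{k-1}R_{k-1})^{-1}$ — the same size as (in fact up to four times larger than) the inductive gain $(2\lambda_{k-1}R_{k-1})^{-1}$, so there is no room to telescope; the error can swallow the entire lower bound. In addition, the four parents need not be four \emph{neighbouring} boxes of $\calt^\Lambda_{k-1}(\Gamma)$ (a pigeonholed level-$(k-1)$ box may have no pigeonholed children), so the inductive hypothesis does not directly apply to them. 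The paper avoids all of this with no induction on the lemma: it only ever uses $\tau_k^2$, and when $\tau_k^2$ is exceptional it invokes Lemma \ref{lemma: chain of exceptional boxes} to produce a \emph{typical} ancestor $\tau_m(\tau_k^2)$ with the \emph{same} associated interval $I\vert_{\tau_k^2}$, then applies \eqref{eqn: angle sep tau k} at scale $m$; the length factor is still $\len(\tau_k^2)\le 2\lambda_k$ and the only loss is replacing $R_k$ by $R_m\le R_k$, which is harmless. I would rework your argument along those lines rather than trying to repair the induction.
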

\begin{proof}
    The proof is divided into cases according to the nature of the box $\tau^2_k$.

    \medskip\noindent\underline{Case 1:} $\tau^2_k$ is not exceptional. The idea is to apply \eqref{eqn: angle sep tau k} to the box $\tau^2_k$, noting that 
  \begin{equation}
      \label{eqn: tau_k^2}
      t_k^2>t_k^1\quad\text{and}\quad v_k^2<t_k^4.
  \end{equation}
 Since \eqref{eqn: angle sep tau k} has two cases, we further break this situation into two subcases.
 
  \medskip\noindent\underline{Subcase 1A:} $\tau_k^2$ is a left box. By the first inequality in \eqref{eqn: angle sep tau k}, we have $$\W(\Gamma,[t_k^2,v_k^2+\eta])>R_k^{-1}\quad\text{for all}\quad \eta>0.$$  
  By \eqref{eqn: tau_k^2} we have $t_k^2\geq t_k^1$, and there exists $\eta>0$ such that $v_k^2+\eta\leq t_k^4$. By convexity it follows that $$\cal(\Gamma,[t_k^2,v_k^2+\eta])(\gamma_L'(t_k^4)-\gamma_R'(t_k^1))>R_k^{-1}\quad\text{for all $\eta>0$ sufficiently small}.$$ Recall that $\len(\tau_k^2)=\cal(\Gamma,[t_k^2,v_k^2])$. Taking limits as $\eta\to 0^+$, we get $$\len(\tau_k^2)(\gamma_L'(t_k^4)-\gamma_R'(t_k^1))\geq R_k^{-1}.$$ By the pigeonholing, we have $\len(\tau^2_k)\leq 2\lambda_k$, and so \eqref{eqn: angle separation tau_k} follows in this case.

  \medskip\noindent\underline{Subcase 1B:} $\tau_k^2$ is a right box. Here we apply the second inequality in \eqref{eqn: angle sep tau k} to get $$\W(\Gamma,[t_k^2-\eta,v_k^2])>R_k^{-1}\quad\text{for all}\quad \eta>0.$$ 
  By \eqref{eqn: tau_k^2} we have $v_k^2\leq t_k^4$, and there exists $\eta>0$ for which $t_k^2\geq t_k^1+\eta$. Again by convexity, we get $$\cal(\Gamma,[t_k^2-\eta,v_k^2])(\gamma_L'(t_k^4)-\gamma_R'(v_k^4))>R_k^{-1}\quad\text{for all $\eta>0$ sufficiently small}.$$ Taking limits as $\eta\to 0^+$ and using the pigeonholing we get the desired conclusion.

\medskip\noindent\underline{Case 2:} $\tau^2_k$ is exceptional. To deal with this case, we make the following observations. Suppose $\tau_k\in\calt^\Lambda_k(\Gamma)$ is an exceptional box. We record the following special case of \eqref{eqn: pigeonholed boxes representation}: $$\calt_k^\Lambda(\Gamma)=\bigcup_{\tau_j\in\calt_j^\Lambda(\Gamma)}\calt_k^\Lambda(\Gamma;\tau_j)\quad\text{for all}\quad j\leq k.$$ In other words, $\tau_k\prec\tau_j(\tau_k)$ for some $\tau_j(\tau_k)\in\calt_j^\Lambda(\Gamma)$ for all $j\leq k$. This yields a chain of boxes $$\tau_k\prec\tau_{k-1}(\tau_k)\prec\dots\prec\tau_1(\tau_k)\prec\tau_0.$$ If $\tau_j(\tau_k)$ is exceptional for all $1\leq j\leq k-1$, then by Lemma \ref{lemma: chain of exceptional boxes}, we get $\#\calt^\Lambda_k=\#\calt_k^\Lambda(\Gamma;\tau_0)=1$. By the hypothesis of this lemma, this cannot be the case. Therefore, there exists a maximal index $1\leq m\leq k-1$ for which $\tau_m(\tau_k)$ is typical. Another application of Lemma \ref{lemma: chain of exceptional boxes} then yields $I\vert_{\tau_m(\tau_k)}=I\vert_{\tau_k}$.

To prove \eqref{eqn: angle separation tau_k} in this case, we apply the above observation to the box $\tau_k=\tau_k^2$. Suppose that $\tau_m(\tau_k^2)$ is a left box, and follow the proof of Subcase 1A. By the first inequality in \eqref{eqn: angle sep tau k}, we have $$\W(\Gamma,[t_k^2,v_k^2+\eta])>R_m^{-1},$$ using the fact that $I\vert_{\tau_m(\tau_k^2)}=I\vert_{\tau_k^2}=[t_k^2,v_k^2]$. This leads to $$\len(\tau_k^2)(\gamma_L'(t^4_k)-\gamma_R'(t^1_k))\geq R_m^{-1},$$ and by the pigeonholing $$\gamma_L'(t^4_k)-\gamma_R'(t^1_k)\geq (2\lambda_kR_m)^{-1}.$$ But since $m\leq k$, we have $R_m\leq R_k$, which proves the lemma.
\end{proof}
Finally, we show that the pigeonholed structure is preserved under the similarity transformations $\cal_{\tau_k}$. This fact will be important for a rescaling argument in \S\ref{sec: broad/narrow}.
Recall the definitions of $\cal_{\tau_k}$ and $\Gamma_{\tau_k}$ from Step $k+1$ of the multi-scale algorithm \S\ref{sec: multi-scale algorithm}. Recall also the collections $\calt_{k+j}^\Lambda(\Gamma;\tau_k)$ defined in \eqref{eqn: pigeonholed boxes}.
\begin{lemma}[Rescaling]
\label{lemma: rescaling}
Let $\Gamma$ be any admissible curve, $\Lambda=(\lambda_i)_{i=1}^N$ be a positive sequence, and fix $\tau_k\in\calt^\Lambda_k(\Gamma)$. Let $\Lambda':=(R_k\lambda_{i+k})_{i=1}^N$.  Then for all $0\leq l\leq j\leq N-k$, the transformation $\cal_{\tau_k}$ induces a bijection between the collections $\{\calt_{k+j}^\Lambda(\Gamma;\tau_{k+l}):\tau_{k+l}\in\calt^\Lambda_{k+l}(\Gamma;\tau_k)\}$  and $\{\calt^{\Lambda'}_{j}(\Gamma_{\tau_k};\tilde\tau_l):\tilde\tau_l\in\calt_l^{\Lambda'}(\Gamma_{\tau_k})\}$ via $\cal_{\tau_k}:\tau_{k+l}\mapsto\tilde\tau_l$.
\end{lemma}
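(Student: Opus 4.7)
The plan is to reduce the pigeonholed statement to an unpigeonholed bijection $\calt_{k+j}(\Gamma;\tau_k) \leftrightarrow \calt_j(\Gamma_{\tau_k})$ obtained by iterating the commutative diagram of Lemma \ref{lemma: cal composition law}, and then observe that similarities preserve lengths up to a uniform scaling factor so pigeonholing is invariant under them.

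First I would prove by induction on $j$ the following strengthened claim: for all $0\le j\le N-k$, $\cal_{\tau_k}$ is a bijection from $\calt_{k+j}(\Gamma;\tau_k)$ onto $\calt_j(\Gamma_{\tau_k})$, and moreover, writing $\tilde\tau_j:=\cal_{\tau_k}(\tau_{k+j})$, one has the identifications
\begin{equation*}
\cal_{\tau_{k+j}}=\cal_{\tilde\tau_j}\circ\cal_{\tau_k}\qquad\text{and}\qquad \Gamma_{\tau_{k+j}}=(\Gamma_{\tau_k})_{\tilde\tau_j}.
\end{equation*}
The base case $j=1$ is exactly the content of Lemma \ref{lemma: cal composition law} together with the definition \eqref{eqn: calt k+1 def}. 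The inductive step passes from $j$ to $j+1$ by applying Lemma \ref{lemma: cal composition law} once more to $\tau_{k+j}$: any $\tau_{k+j+1}\in\calt_{k+j+1}(\Gamma;\tau_{k+j})$ is of the form $\cal_{\tau_{k+j}}^{-1}(\sigma_{k+j+1})$ for some $\sigma_{k+j+1}\in\calt_1(\Gamma_{\tau_{k+j}})$; the induction hypothesis $\Gamma_{\tau_{k+j}}=(\Gamma_{\tau_k})_{\tilde\tau_j}$ identifies $\sigma_{k+j+1}$ with an element of $\calt_1((\Gamma_{\tau_k})_{\tilde\tau_j})$, i.e., with an element of $\calt_{j+1}(\Gamma_{\tau_k};\tilde\tau_j)$, and composing similarities via \eqref{eqn: cal composition law} yields the desired identity at level $j+1$.

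Next I would use this bijection to handle arbitrary $l$. For $\tau_{k+l}\in\calt^\Lambda_{k+l}(\Gamma;\tau_k)$, Lemma \ref{lemma: pigeonholed boxes representation} (applied iteratively in the unpigeonholed setting, which corresponds to the trivial choice of $\Lambda$) shows that $\calt_{k+j}(\Gamma;\tau_{k+l})$ consists precisely of those $\tau_{k+j}\in\calt_{k+j}(\Gamma;\tau_k)$ which descend from $\tau_{k+l}$ in the tree order $\prec$. Via the bijection of the previous paragraph, these correspond exactly to those $\tilde\tau_j\in\calt_j(\Gamma_{\tau_k})$ which descend from $\tilde\tau_l=\cal_{\tau_k}(\tau_{k+l})$, i.e., to $\calt_j(\Gamma_{\tau_k};\tilde\tau_l)$.

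Finally, I would incorporate the pigeonholing. Since $\cal_{\tau_k}$ is a similarity of ratio $R_k$, one has $\len(\cal_{\tau_k}(\tau))=R_k\len(\tau)$ for any rectangle. Consequently, the pigeonholing condition $\lambda_{k+i}\le\len(\tau_{k+i})\le 2\lambda_{k+i}$ defining membership in $\calt^\Lambda_{k+i}$ translates exactly to the condition $R_k\lambda_{k+i}\le\len(\tilde\tau_i)\le 2R_k\lambda_{k+i}$, which is precisely the condition for membership in $\calt^{\Lambda'}_i$ with $\Lambda'_i=R_k\lambda_{k+i}$. Imposing this constraint at each intermediate level between $l$ and $j$ picks out corresponding subtrees on both sides, yielding the claimed bijection. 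The main obstacle is bookkeeping: one needs to verify carefully that the iterated identification $\Gamma_{\tau_{k+j}}=(\Gamma_{\tau_k})_{\tilde\tau_j}$ is consistent with the two possible ways of defining the $(j{+}1)$-th layer (refining $\Gamma$ directly starting from $\tau_{k+j}$ versus refining $\Gamma_{\tau_k}$ starting from $\tilde\tau_j$), but this is forced by the associativity implicit in \eqref{eqn: cal composition law} and the fact that the single-scale algorithm depends only on the underlying curve.
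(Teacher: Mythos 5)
Your proposal is correct in substance and runs on the same engine as the paper's proof, namely iterating Lemma \ref{lemma: cal composition law} to obtain the identities $\cal_{\tau_{k+j}}=\cal_{\tilde\tau_j}\circ\cal_{\tau_k}$ and $\Gamma_{\tau_{k+j}}=(\Gamma_{\tau_k})_{\tilde\tau_j}$, but it is organized differently, and the difference is worth noting. The paper works directly with the pigeonholed collections via a double induction (diagonal cases $P(j,j)$ by induction on $j$, then sub-diagonal cases $P(l,j)$ by downward induction on $l$), carrying the rescaled sequences $\Lambda',\Lambda'',\Lambda'''$ through every step, and it proves the iterated commutative diagram separately in the appendix (Lemma \ref{lemma: commutative diagram}), where the composition identity is only determined up to a symmetry of the rectangle and must be pinned down using positivity of determinants and acuteness of the angles in \eqref{eqn: cal tau k def}. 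You instead establish an unpigeonholed tree isomorphism with the two identities built into a strengthened induction hypothesis, and only afterwards superimpose the pigeonholing via $\len(\cal_{\tau_k}(\tau))=R_k\len(\tau)$, which converts each window $[\lambda_{k+i},2\lambda_{k+i}]$ exactly into $[R_k\lambda_{k+i},2R_k\lambda_{k+i}]$; this decoupling of the tree combinatorics from the pigeonholing is cleaner, and since you obtain $\cal_{\tilde\tau_{j+1}}=\cal_{\sigma_{k+j+1}}\circ\cal_{\tilde\tau_j}$ by applying \eqref{eqn: cal composition law} to the admissible curve $\Gamma_{\tau_k}$ itself rather than by comparing images of a single box, you sidestep the ``up to a symmetry'' complication of the appendix. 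Three small repairs are needed: there is no literal ``trivial choice of $\Lambda$'' making the constraint vacuous (every $\Lambda$ imposes a window), so the unpigeonholed collections $\calt_{k+j}(\Gamma;\tau_{k+l})$ for $j\geq 2$ should simply be defined by iterating \eqref{eqn: calt k+1 def}; in the inductive step the element of $\calt_{j+1}(\Gamma_{\tau_k};\tilde\tau_j)$ corresponding to $\sigma_{k+j+1}\in\calt_1(\Gamma_{\tau_{k+j}})$ is $\cal_{\tilde\tau_j}^{-1}(\sigma_{k+j+1})$, not $\sigma_{k+j+1}$ itself (indeed $\cal_{\tau_k}(\tau_{k+j+1})=\cal_{\tilde\tau_j}^{-1}(\sigma_{k+j+1})$ by the induction hypothesis); and the length-scaling step should be checked against the intrinsic definition $\len(\tau)=\cal(\Gamma,I\vert_\tau)$ from I(k) together with the correspondence of associated intervals, which is also what gives $\cal_{\tau_k}(\Gamma\vert_{\tau_{k+j+1}})=\Gamma_{\tau_k}\vert_{\tilde\tau_{j+1}}$ for the curve identity. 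All of these are routine within the framework you set up.
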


In the last line, we have implicitly made the assertion that $\cal_{\tau_k}(\tau_{k+l})\in\calt_l^{\Lambda'}(\Gamma_{\tau_k})$. This follows explicitly from the case $j=l$ of this lemma. Indeed, for $j=l$ the first collection reduces to $$\{\calt_{k+j}^\Lambda(\Gamma;\tau_{k+l}):\tau_{k+l}\in\calt^\Lambda_{k+l}(\Gamma;\tau_k)\}=\{\tau_{k+j}:\tau_{k+j}\in\calt^\Lambda_{k+j}(\Gamma;\tau_k)\}=\calt^\Lambda_{k+j}(\Gamma;\tau_k),$$ and the second collection reduces to $$\{\calt^{\Lambda'}_{j}(\Gamma_{\tau_k};\tilde\tau_l):\tilde\tau_l\in\calt_l^{\Lambda'}(\Gamma_{\tau_k})\}=\{\tilde\tau_l:\tilde\tau_l\in\calt^{\Lambda'}_l(\Gamma_{\tau_k})\}=\calt^{\Lambda'}_l(\Gamma_{\tau_k}).$$
The lemma then claims that (by an abuse of notation) $$\cal_{\tau_k}(\calt^\Lambda_{k+j}(\Gamma;\tau_k))=\{\cal_{\tau_k}(\tau_{k+j}):\tau_{k+j}\in\calt_{k+j}(\Gamma;\tau_k)\}=\calt^{\Lambda'}_l(\Gamma_{\tau_k}).$$
For notational convenience, we will make similar abuse throughout the proof.
\begin{proof}
    We prove this by double induction on $j$ and $l$. Let $P(l,j)$ denote the statement that the result holds for the pair of indices $(l,j)$ for all admissible curves $\Gamma$, all positive sequences $\Lambda$, and all $\tau_k\in\calt_k^\Lambda(\Gamma)$, for all $0\leq k\leq N$. First, we prove the diagonal cases $P(j,j)$ as follows: $$P(0,0)\implies P(1,1)\implies\dots\implies P(j,j).$$ Then we prove the sub-diagonal cases $P(l,j)$ as follows: $$P(j,j)\implies P(j-1,j)\implies\dots\implies P(l,j).$$
    
\noindent\underline{Diagonal case $P(j,j)$:}  
    By the remark appearing immediately before the proof, the two collections reduce to $\calt^\Lambda_{k+j}(\Gamma;\tau_k)$ and $\calt^{\Lambda'}_j(\Gamma_{\tau_k})$. Since $\cal_{\tau_k}$ is bijective, our goal is to show 
    \begin{equation}
        \label{eqn: diagonal case goal}
   \calt^\Lambda_{k+j}(\Gamma;\tau_k)= \cal_{\tau_k}^{-1}(\calt^{\Lambda'}_j(\Gamma_{\tau_k})).        
    \end{equation}
    We prove this by inducting on $j$. For $j=0$, the desired identity becomes $$\calt_k^\Lambda(\Gamma;\tau_k)=\mathcal{L}_{\tau_k}^{-1}(\calt_0^{\Lambda'}(\Gamma_{\tau_k})).$$ By Step 0 of the multi-scale algorithm, $\calt_0^{\Lambda'}(\Gamma_{\tau_k})$ consists of the single unit-scale box $[0,\cal(\Gamma_{\tau_k},I_{\tau_k})]\times[-1,1]$. By \eqref{eqn: length scaling} and the definitions of $\len(\tau_k),\wid(\tau_k)$ we have $$\cal(\Gamma_{\tau_k},I_{\tau_k})=R_k\cal(\Gamma,I\vert_{\tau_k})=\len(\tau_k)/\wid(\tau_k).$$ But by definition (see \eqref{eqn: cal tau k def}), $\cal_{\tau_k}$ maps $[0,\len(\tau_k)/\wid(\tau_k)]\times[-1,1]$ onto $\tau_k$. Since $\tau_k\in\calt^\Lambda(\Gamma)$, we have $\calt_k^{\Lambda}(\Gamma;\tau_k)=\{\tau_k\}$, and $P(0,0)$ follows. 
    
    Let us also prove $P(1,1)$, that is 
    \begin{equation}
        \label{eqn: P(1,1)}
        \calt^\Lambda_{k+1}(\Gamma;\tau_k)=\cal_{\tau_k}^{-1}(\calt_1^{\Lambda'}(\Gamma_{\tau_k})).
    \end{equation}
     By the definition given in \eqref{eqn: calt k+1 def}, we have $\calt_{k+1}(\Gamma;\tau_k)=\cal_{\tau_k}^{-1}(\calt_1(\Gamma_{\tau_k}))$. Rewriting \eqref{eqn: wid k+1 vs wid k} we get $\len(\cal_{\tau_k}(\tau_{k+1}))=R_k\len(\tau_{k+1})$. From here it follows that
     \begin{align*}
         \cal_{\tau_k}^{-1}(\calt_1^{\Lambda'}(\Gamma_{\tau_k}))&=\{\cal_{\tau_k}^{-1}(\tilde\tau_1):\tilde\tau_1\in\calt_1(\Gamma_{\tau_k}),\,R_k\lambda_{k+1}\leq \len(\tilde\tau_1)\leq 2R_k\lambda_{k+1}\}\\
         &=\{\tau_{k+1}:\tau_{k+1}\in\calt_{k+1}(\Gamma;\tau_k),\,\lambda_{k+1}\leq\len(\tau_{k+1})\leq 2\lambda_{k+1}\}\\
         &=\calt^\Lambda_{k+1}(\Gamma;\tau_k),
     \end{align*}
 where the first and last equality follow from \eqref{eqn: calt k+1 tau k lambda k+1 definition} and Notation \ref{not: pigeonholed boxes}.
 
    Supposing we have $P(j,j)$ for some $0\leq j\leq N-k-1$, we wish to show that $P(j+1,j+1)$ holds; that is,
    \begin{equation}
        \label{eqn: self-similarity induction step}
        \calt_{k+j+1}^\Lambda(\Gamma;\tau_k)=\mathcal{L}_{\tau_k}^{-1}(\calt_{j+1}^{\Lambda'}(\Gamma_{\tau_k})).
    \end{equation}
    By the definition given in \eqref{eqn: pigeonholed boxes}, we have  
    \begin{equation}
        \label{eqn: self-similarity step 1}
        \calt_{k+j+1}^\Lambda(\Gamma;\tau_k)=\bigcup_{\tau_{k+1}\in\calt_{k+1}^\Lambda(\Gamma;\tau_k)}\calt^\Lambda_{k+j+1}(\Gamma;\tau_{k+1}).
    \end{equation}
    For each $\tau_{k+1}\in\calt^\Lambda_{k+1}(\Gamma;\tau_k)$, we apply the induction hypothesis and obtain 
    \begin{equation}
        \label{eqn: self-similarity step 2}
        \calt_{k+j+1}^\Lambda(\Gamma;\tau_{k+1})=\calt^\Lambda_{(k+1)+j}(\Gamma;\tau_{k+1})=\cal_{\tau_{k+1}}^{-1}(\calt_j^{\Lambda''}(\Gamma_{\tau_{k+1}})),
    \end{equation}
    where $\Lambda'':=(R_{k+1}\lambda_{i+k+1})_{i=1}^N$.
    By definition \eqref{eqn: calt k+1 def}, $\calt_{k+1}(\Gamma;\tau_k)=\mathcal{L}_{\tau_k}^{-1}(\calt_1(\Gamma_{\tau_k}))$, and so each $\tau_{k+1}\in\calt^\Lambda_{k+1}(\Gamma;\tau_k)$ is of the form $$\tau_{k+1}=\mathcal{L}_{\tau_k}^{-1}(\sigma_{k+1}),\quad \lambda_{k+1}\leq\len(\tau_{k+1})\leq 2\lambda_{k+1},$$ for some $\sigma_{k+1}\in\calt_1(\Gamma_{\tau_k})$. Now $\cal_{\tau_{k}}$ scales isotropically by $R_k$, and so by the definition of $\Lambda'$, it follows that $\sigma_{k+1}\in\calt_1^{\Lambda'}(\Gamma_{\tau_k})$. 
   By Lemma \ref{lemma: cal composition law}, we know that the following diagram commutes \begin{figure}[H]
       \centering
       \includegraphics[width=0.35\linewidth]{commutative_diagram.pdf}
\end{figure}
   \noindent  In particular, this means that 
   \begin{equation}
       \label{eqn: Gamma tau k+1 representation}
       \Gamma_{\tau_{k+1}}=\cal_{\sigma_{k+1}}(\Gamma_{\tau_k}\vert_{\sigma_{k+1}})=:(\Gamma_{\tau_k})_{\sigma_{k+1}}.
   \end{equation} 
    It also follows that 
    \begin{equation}
        \label{eqn: composition law application 1}
        \cal_{\tau_k}^{-1}\cal_{\sigma_{k+1}}^{-1}(\calt_j^{\Lambda''}(\Gamma_{\tau_{k+1}}))=\cal_{\tau_{k+1}}^{-1}(\calt^{\Lambda''}_j(\Gamma_{\tau_{k+1}})).
    \end{equation}
    By \eqref{eqn: wid k+1 vs wid k}, we get $$R_1\lambda'_{i+1}=R_{k+1}R_k^{-1}\lambda'_{i+1}=R_{k+1}\lambda_{i+k+1}=\lambda_i'',$$ using the definition of $\Lambda'$ as well as $\Lambda''$.  
   In light of this identity and \eqref{eqn: Gamma tau k+1 representation}, we apply our induction hypothesis to the curve $\Gamma_{\tau_k}$, and the transformation $\cal_{\sigma_{k+1}}$, which gives $$\calt_{j+1}^{\Lambda'}(\Gamma_{\tau_k};\sigma_{k+1})=\cal_{\sigma_{k+1}}^{-1}(\calt_j^{\Lambda''}((\Gamma_{\tau_{k}})_{\sigma_{k+1}}))=\cal_{\sigma_{k+1}}^{-1}(\calt_j^{\Lambda''}(\Gamma_{\tau_{k+1}})).$$ Recalling that $\sigma_{k+1}=\cal_{\tau_k}(\tau_{k+1})$, and combining the above observations we get
    \begin{equation}
        \label{eqn: self-similarity step 3}
        \cal_{\tau_k}^{-1}(\calt_{j+1}^{\Lambda'}(\Gamma_{\tau_k};\cal_{\tau_k}(\tau_{k+1})))=\cal_{\tau_k}^{-1}(\calt_{j+1}^{\Lambda'}(\Gamma_{\tau_k};\sigma_{k+1}))=\cal_{\tau_k}^{-1}\cal_{\sigma_{k+1}}^{-1}(\calt_j^{\Lambda''}(\Gamma_{\tau_{k+1}}))=\cal_{\tau_{k+1}}^{-1}(\calt_j^{\Lambda''}(\Gamma_{\tau_{k+1}})).
    \end{equation}
    Using \eqref{eqn: self-similarity step 3} and \eqref{eqn: self-similarity step 2} in \eqref{eqn: self-similarity step 1}, we get 
    \begin{equation}
        \label{eqn: self-similarity step 4}
        \calt_{k+j+1}^\Lambda(\Gamma;\tau_k)=\cal_{\tau_k}^{-1}\big(\bigcup_{\tau_{k+1}\in\calt^\Lambda_{k+1}(\Gamma;\tau_k)}\calt_{j+1}^{\Lambda'}(\Gamma_{\tau_k};\cal_{\tau_k}(\tau_{k+1})\big).
    \end{equation}
    Now we apply the case $P(1,1)$, so that by \eqref{eqn: P(1,1)}, we get
   $$\bigcup_{\tau_{k+1}\in\calt_{k+1}^\Lambda(\Gamma;\tau_k)}\calt_{j+1}^{\Lambda'}(\Gamma_{\tau_k};\cal_{\tau_k}(\tau_{k+1}))=\bigcup_{\tilde\tau_1\in\calt_1^{\Lambda'}(\Gamma_{\tau_k})}\calt_{j+1}^{\Lambda'}(\Gamma_{\tau_k};\tilde\tau_1)=\calt_{j+1}^{\Lambda'}(\Gamma_{\tau_k}),$$ where the last equality follows directly from \eqref{eqn: pigeonholed boxes}. Using the above in \eqref{eqn: self-similarity step 4} establishes \eqref{eqn: self-similarity induction step}, and therefore $P(j+1,j+1)$.
   
\medskip\noindent\underline{Sub-diagonal case $P(l,j)$:} Fix $0\leq j\leq N-k$. We prove $P(l,j)$ holds for all $0\leq l\leq j$, by induction on $l$. By the diagonal case, we have already established the correspondence between the collections $\calt_{k+l}^\Lambda(\Gamma;\tau_k)$ and $\calt_l^{\Lambda'}(\Gamma_{\tau_k})$ via $\cal_{\tau_k}$. Specifically, for each $\tau_{k+l}\in \calt_{k+l}^\Lambda(\Gamma;\tau_k)$, its image $\cal_{\tau_k}(\tau_{k+l})\in \calt_l^{\Lambda'}(\Gamma_{\tau_k})$ and, conversely, for each $\tilde\tau_l\in\calt_l^{\Lambda'}(\Gamma_{\tau_k})$, its pre-image $\cal_{\tau_k}^{-1}(\tilde\tau_l)\in \calt_{k+l}^\Lambda(\Gamma;\tau_k)$. Thus, the goal is to now show that 
\begin{equation}
    \label{eqn: sub-diagonal case goal}
        \cal_{\tau_k}(\calt_{k+j}^\Lambda(\Gamma;\tau_{k+l}))=\calt^{\Lambda'}_{j}(\Gamma_{\tau_k};\cal_{\tau_k}(\tau_{k+l}))\quad\text{for all}\quad\tau_{k+l}\in\calt^\Lambda_{k+l}(\Gamma;\tau_k).
\end{equation}

We take $l=j$ as our base case, which holds by the diagonal case $P(j,j)$ treated above. Suppose $P(l+1,j)$ holds for some $1\leq l+1\leq j$. We now show that $P(l,j)$ holds. By the definition \eqref{eqn: pigeonholed boxes}, we have $$\calt^\Lambda_{k+j}(\Gamma;\tau_{k+l})=\bigcup_{\tau_{k+l+1}\in\calt_{k+l+1}^\Lambda(\Gamma;\tau_{k+l})}\calt_{k+j}^\Lambda(\Gamma;\tau_{k+l+1}).$$
Applying $\cal_{\tau_k}$ to both sides yields the following 
\begin{equation}
    \label{eqn: sub-diagonal penultimate step}
     \cal_{\tau_k}(\calt_{k+j}^\Lambda(\Gamma;\tau_{k+l}))=\bigcup_{\tau_{k+l+1}\in\calt_{k+l+1}^\Lambda(\Gamma;\tau_{k+l})}\calt^{\Lambda'}_{j}(\Gamma_{\tau_k};\cal_{\tau_k}(\tau_{k+l+1})),
\end{equation}
 where we used the induction hypothesis $P(l+1,j)$ to infer $\cal_{\tau_k}(\calt_{k+j}^\Lambda(\Gamma;\tau_{k+l+1}))=\calt^{\Lambda'}_{j}(\Gamma_{\tau_k};\cal_{\tau_k}(\tau_{k+l+1}))$. Now consider $\tau_{k+l}\in\calt^\Lambda_{k+l}(\Gamma;\tau_k)$. By $P(l,l)$, we know that $\tau^k_l:=\cal_{\tau_k}(\tau_{k+l})\in\calt_l^{\Lambda'}(\Gamma_{\tau_k})$.
 
\medskip\noindent\textbf{Claim:} The following diagram commutes
\begin{figure}[H]
    \centering
    \includegraphics[width=0.35\linewidth]{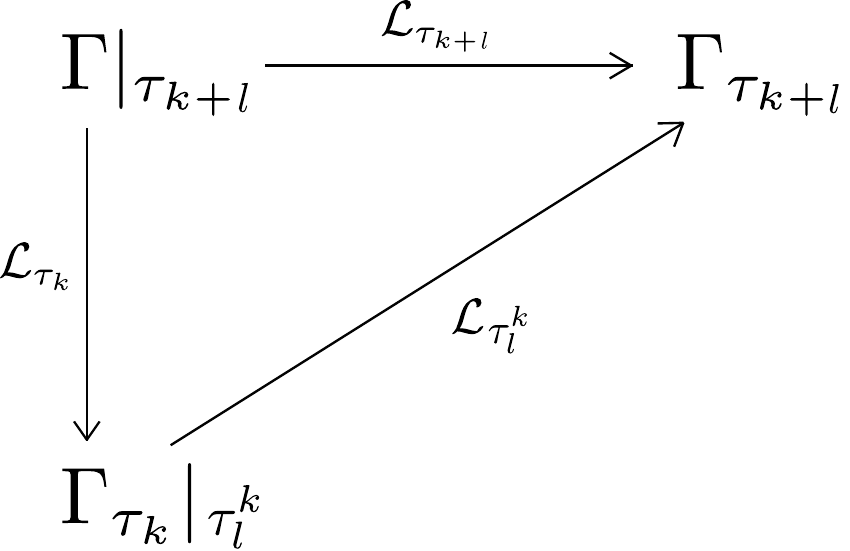}
\end{figure}
\noindent This can be shown by repeated applications of Lemma \ref{lemma: cal composition law}. A proof is included in the appendix.
In particular, this tells us 
\begin{equation}
    \label{eqn: Gamma tau k+l representation}
    \Gamma_{\tau_{k+l}}=\cal_{\tau^k_l}(\Gamma_{\tau_k}\vert_{\tau^k_l})=:(\Gamma_{\tau_k})_{\tau^k_l}.
\end{equation}
In light of this claim, by $P(1,1)$, we have 
\begin{equation}
    \label{eqn: diagonal application 1}
    \calt^\Lambda_{k+l+1}(\Gamma;\tau_{k+l})=\cal_{\tau_{k+l}}^{-1}(\calt^{\Lambda'''}_1(\Gamma_{\tau_{k+l}}))=\cal_{\tau_k}^{-1}\circ\cal_{\tau^k_l}^{-1}(\calt^{\Lambda'''}_1(\Gamma_{\tau_{k+l}})),
\end{equation}
 where $\Lambda''':=(R_{k+l}\lambda_{i+k+l})_{i=1}^N$. On the other hand, we have $$R_l\lambda_{i+l}'=R_{k+l}R_k^{-1}\lambda_{i+l}'=R_{k+l}\lambda_{i+k+l}=\lambda_i'''.$$ In light of \eqref{eqn: Gamma tau k+l representation} and $P(1,1)$, we have
 \begin{equation}
     \label{eqn: diagonal application 2}
     \cal_{\tau^k_l}(\calt^{\Lambda'}_{l+1}(\Gamma_{\tau_k};\tau^k_l))=\calt^{\Lambda'''}_1((\Gamma_{\tau_k})_{\tau^k_l}))=\calt^{\Lambda'''}_1(\Gamma_{\tau_{k+l}}).
 \end{equation}
 Using \eqref{eqn: diagonal application 2} in \eqref{eqn: diagonal application 1}, we get $$\calt^\Lambda_{k+l+1}(\Gamma;\tau_{k+l})=\cal_{\tau_k}^{-1}(\calt^{\Lambda'}_{l+1}(\Gamma_{\tau_k};\tau^k_l)),$$ where we recall that $\tau^k_l=\cal_{\tau_k}(\tau_{k+l})\in\calt^{\Lambda'}_l(\Gamma_{\tau_k})$. Using the above in \eqref{eqn: sub-diagonal penultimate step}, we get $$ \cal_{\tau_k}(\calt_{k+j}^\Lambda(\Gamma;\tau_{k+l}))=\bigcup_{\tilde\tau_{l+1}\in\calt^{\Lambda'}_{l+1}(\Gamma_{\tau_k};\tau^k_l)}\calt^{\Lambda'}_{j}(\Gamma_{\tau_k};\tilde\tau_{l+1})=\calt_j^{\Lambda'}(\Gamma_{\tau_k};\tau^k_l),$$ where the last equality follows from \eqref{eqn: pigeonholed boxes}. Since $\tau^k_l=\cal_{\tau_k}(\tau_{k+l})$, this is precisely \eqref{eqn: sub-diagonal case goal}, which establishes $P(l,j)$.
\end{proof}
\begin{notation}\label{not: lambda gamma}
    The remainder of the paper is dedicated to the proof of Theorem \ref{thm: convex decoupling ver 2 pigeonholed}. As such, we fix an admissible curve $\Gamma$ over an interval $I$ and a sequence $\Lambda\in\Z(f,R,\epsilon)$, and keep these fixed until \S\ref{sec: broad/narrow}. While $\Gamma$ and $\Lambda$ remain fixed, for notational convenience, we choose to suppress the dependence on $\Gamma$ and $\Lambda$ wherever they should appear until \S\ref{sec: broad/narrow}. For instance, we write $f^\Lambda$ as simply $f$, and $f^\Lambda_{\tau_k}$ as simply $f_{\tau_k}$. Similarly, we denote the collections $\calt_{k+1}(\Gamma;\tau_k;\lambda_{k+1})$ as simply $\calt_{k+1}(\tau_k)$, denote the collections $\calt^\Lambda_{k+j}(\Gamma;\tau_{k+l})$ as $\calt_{k+j}(\tau_{k+l})$, and denote the collections $\calt_k^\Lambda(\Gamma)$ as $\calt_k$. To also be consistent with the standard notation in the literature, we shall also denote $\tau_N\in\calt_N$ by $\theta$, and $\calt_N$ by $\Theta$.
\end{notation} With Notation \ref{not: lambda gamma}, \eqref{eqn: f^Lambda_{tau_k} def} can be rewritten as \begin{equation}
    \label{eqn: f_{tau_k} representation}
    f_{\tau_k}=\sum_{\tau_{k+1}\in\calt_{k+1}(\tau_k)}f_{\tau_{k+1}},\quad\text{for all}\quad \tau_k\in\calt_k;
\end{equation}
and \eqref{eqn: f^Lambda representation 2} can be rewritten as 
\begin{equation}
    \label{eqn: f representation}
    f=\sum_{\tau_k\in\calt_k}f_{\tau_k},\quad\text{for all}\quad 0\leq k\leq N.
\end{equation}
With this simplified notation, we record the following special case of Lemma \ref{lemma: pigeonholed boxes representation}, which we shall use frequently: 
\begin{equation}
    \label{eqn: calt k+1 representation}
    \calt_{k+1}=\bigcup_{\tau_{k}\in\calt_k}\calt_{k+1}(\tau_k).
\end{equation}
This follows by setting $k=0$, $j=k+1$ and $l=k$ in \eqref{eqn: pigeonholed boxes representation}. Finally, let us record the main estimate from Theorem \ref{thm: convex decoupling ver 2 pigeonholed} with this new notation: 
\begin{equation}
    \label{eqn: convex decoupling pigeonholed simplified notation}
     \|f\|_{L^6(B_R)}^6\leq C_\epsilon R^{20\epsilon}\big(\sum_{\theta\in\Theta}\|f_{\theta}\|_{L^\infty(\R^2)}^2\big)^2\big(\sum_{\theta\in\Theta}\|f_{\theta}\|_{L^2(\R^2)}^2\big).
\end{equation}
\subsection{Normalisation and error terms}\label{sec: normalising f}
Since the estimate \eqref{eqn: convex decoupling pigeonholed simplified notation} is homogeneous, we may replace $f$ by a (non-zero) constant multiple $cf$. We choose to normalise the $\ell^\infty L^\infty$ norm of $(f_\theta)_{\theta\in\Theta}$: \begin{equation}
    \label{eqn: normalisation in sup norm} 
    \max_{\theta\in\Theta}\|f_\theta\|_{L^\infty(\R^2)}=1.
\end{equation}
By the Bernstein inequality and the Fourier localisation of each piece $f_\theta$, we have the following.
\begin{lemma}[Bernstein]
\label{lemma: bernstein} For all $1\leq p\leq q\leq\infty$, and all $\theta$ we have $$\|f_\theta\|_{L^q(\R^2)}\lesssim|\theta|^{1/p-1/q}\|f_\theta\|_{L^p(\R^2)}.$$
\end{lemma}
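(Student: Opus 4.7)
The plan is to give the standard Young-convolution proof of Bernstein's inequality, exploiting that $\widehat{f_\theta}$ is Fourier-supported on a set of measure comparable to $|\theta|$. First I would pick a smooth bump $\eta_\theta$ which is identically $1$ on (a slightly enlarged copy of) the support of $\widehat{f_\theta}$, still supported in a rectangular region $c\cdot\theta$ of measure $\approx|\theta|$. Then $\widehat{f_\theta}=\widehat{f_\theta}\cdot\eta_\theta$, which after taking inverse Fourier transforms gives the reproducing identity
\begin{equation*}
f_\theta=f_\theta*\widecheck{\eta}_\theta.
\end{equation*}

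Next I would apply Young's convolution inequality with exponents $p,q$ and $r$ satisfying $1+\tfrac{1}{q}=\tfrac{1}{p}+\tfrac{1}{r}$ (valid since $p\leq q$), obtaining
\begin{equation*}
\|f_\theta\|_{L^q(\R^2)}\leq\|f_\theta\|_{L^p(\R^2)}\,\|\widecheck{\eta}_\theta\|_{L^r(\R^2)}.
\end{equation*}
Here $\theta$ is, up to a similarity (with ratio comparable to its length), a rectangle of dimensions $\len(\theta)\times\wid(\theta)$. Thus $\widecheck{\eta}_\theta$ is (after the same similarity transformation) a Schwartz function whose absolute value is controlled by $|\theta|$ times a bump essentially supported on the dual rectangle of dimensions $\len(\theta)^{-1}\times\wid(\theta)^{-1}$ with rapid decay off of it. A direct computation (reducing to the model case via affine invariance) then gives
\begin{equation*}
\|\widecheck{\eta}_\theta\|_{L^r(\R^2)}\lesssim|\theta|\cdot|\theta|^{-1/r}=|\theta|^{1-1/r}=|\theta|^{1/p-1/q},
\end{equation*}
uniformly in $\theta$, since the similarities preserve Lebesgue measure up to the relevant scaling factors.

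Combining the two displays yields the claimed inequality. There is essentially no obstacle: the only mildly delicate point is ensuring the implicit constant in $\lesssim$ is uniform over all boxes $\theta\in\Theta$, which follows because the boxes are all affine images of axis-parallel rectangles and the construction of $\eta_\theta$ can be carried out by pulling back a single fixed Schwartz bump through the affine map sending $[0,1]\times[-1,1]$ to $\theta$; such a pullback changes $L^r$ norms by a factor $|\theta|^{1/r}$, accounting exactly for the stated dependence on $|\theta|$.
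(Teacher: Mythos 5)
Your proof is correct and is exactly the standard reproducing-kernel/Young's inequality argument that the paper implicitly invokes (the lemma is stated there without proof, as a standard consequence of Fourier support in a rectangle of measure $\approx|\theta|$). The only cosmetic point is that the map taking $[0,1]\times[-1,1]$ to $\theta$ is an anisotropic affine map rather than a similarity, and that $f_\theta$ is supported in a bounded dilate $2\cdot\theta$ rather than $\theta$ itself; neither affects the scaling computation or the uniformity of the constant.
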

If $\Gamma$ is an admissible curve satisfying (C2) with equality, then it follows from \eqref{eqn: lambda k absolute range}, that \begin{equation}
    \label{eqn: theta size}
    |\theta|\leq R^2\quad\text{for all}\quad \theta\in\Theta.
\footnote{We have the stronger conclusion $|\theta|\leq R^{2\epsilon-1}$, but choose to work with a weaker formulation to have some flexibility for the rescaling arguments appearing in \S\ref{sec: broad/narrow}.}\end{equation}
Using this in Lemma \ref{lemma: bernstein}, we get $$\max_\theta\|f_\theta\|_{L^\infty(\R^2)}\leq\big(\sum_{\theta\in\Theta}\|f_\theta\|_{L^\infty(\R^2)}^2\big)^{1/2}\lesssim R^{1/3}\big(\sum_{\theta\in\Theta}\|f_\theta\|_{L^6(\R^2)}^2\big)^{1/2}.$$ Combining the above with \eqref{eqn: normalisation in sup norm}, followed by H\"older's inequality we get 
\begin{equation}
    \label{eqn: error term bound}
R^{-2}\leq\big(\sum_{\theta\in\Theta}\|f_\theta\|_{L^6(\R^2)}^2\big)^{3}\leq\big(\sum_{\theta\in\Theta}\|f_{\theta}\|_{L^\infty(\R^2)}^2\big)^2\big(\sum_{\theta\in\Theta}\|f_{\theta}\|_{L^2(\R^2)}^2\big),
\end{equation}
which is the right-hand side of \eqref{eqn: convex decoupling pigeonholed simplified notation}.
The above will be used to control various error terms in our estimates by the right-hand expression above. The purpose of the normalisation is to express error terms in the standard form of $R^{-O(1)}$.

\section{Pruning, high/low decomposition, and a partition of the spatial domain}\label{sec: pruning, hi/low, omega sets}
In this section, we carry out the main argument of Guth--Maldague--Wang \cite{gmw} in our general setting. 
Let $K\geq 1$ be a large integer to be specified later.\footnote{The parameter $K$ will be determined by the proof of Lemma \ref{lemma: local bilinear square function}.} 
\begin{definition}[Transverse boxes]
    \label{def: transversality}
    For any $k\in\{1,\dots,N\}$, we write $\tau_{k}'\nsim\tau_{k}''$ if the boxes are separated by at least $\lceil K/4\rceil$ many neighbouring boxes $\tau_k\in\calt_k$. We write $\tau_{k}'\sim\tau_{k}''$ otherwise.
\end{definition}
 Let $B_R$ be an $R\times R$ square, and let $\alpha,\beta>0$. We define the set \begin{align}\label{eqn: U set definition}
    U_{\alpha,\beta}^{\text{br}}:=\bigg\{x\in B_R:&\alpha\leq|f(x)|\leq 2\alpha,\; \beta\leq\sum_{\theta\in\Theta}|f_\theta(x)|^2\leq 2\beta,|f(x)|\leq 2KR^{2\epsilon}\max_{\tau_1\nsim\tau_1'}|f_{\tau_1}(x)|^{1/2}|f_{\tau_1'}(x)|^{1/2},\nonumber\\&\big(\sum_{\tau_1\in\calt_1}|f_{\tau_1}(x)|^6\big)^{1/6}\leq 2KR^{2\epsilon}\max_{\substack{\tau_1,\tau_1'\in\calt_1\\\tau_1\nsim\tau_1'}}|f_{\tau_1}(x)|^{1/2}|f_{\tau_1'}(x)|^{1/2}\bigg\}.
\end{align}
In light of the normalisation \eqref{eqn: normalisation in sup norm}, by a pigeonholing argument and a broad/narrow analysis we can reduce the proof of \eqref{eqn: convex decoupling pigeonholed simplified notation} to proving the following weak type estimate. 
\begin{theorem}
    \label{thm: weak type}
    For each $R^{-1/2}\leq\alpha\leq R^{1/2}$, and each $R^{-1/2}\alpha^2\leq\beta\leq R^{1/2}$, we have 
    \begin{equation}
    \label{eqn: main broad}
    \alpha^6|U_{\alpha,\beta}^{\mathrm{br}}|\leq C_\epsilon R^{20\epsilon}\big(\sum_{\theta\in\Theta}\|f_\theta\|_{L^\infty(\R^2)}^2\big)^2\big(\sum_{\theta\in\Theta}\|f_\theta\|_{L^2(\R^2)}^2\big).
\end{equation}
\end{theorem}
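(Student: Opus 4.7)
The plan is to implement the pruning and high-low machinery of Guth--Maldague--Wang \cite{gmw} in the multi-scale setup of \S\ref{sec: pigeonholing}. First I would prune the wave packets of $f$: for each $\theta \in \Theta$, retain only those wave packets whose amplitude lies below a threshold $A(\alpha,\beta)$ chosen compatibly with the normalisation \eqref{eqn: normalisation in sup norm}, producing truncated pieces $g_\theta$ and aggregates $g_{\tau_k} := \sum_{\theta \prec \tau_k} g_\theta$ with improved $L^\infty$ bounds at every intermediate scale. The contribution of the discarded high wave packets to $U_{\alpha,\beta}^{\mathrm{br}}$ is controlled by \eqref{eqn: error term bound}, so it suffices to work with $g := \sum_\theta g_\theta$ in place of $f$ throughout.

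Next, for $k=1,\dots,N-1$ I would split $g_{\tau_k} = g_{\tau_k}^\ell + g_{\tau_k}^h$ via a smooth Fourier cutoff concentrated on a thickened affine strip around $\tau_k$, with dual scale slightly above $\wid(\tau_k) = R_k^{-1}$. Here $\widehat{g_{\tau_k}^\ell}$ captures the affine contribution, and $\widehat{g_{\tau_k}^h}$ sits in the curvature gap separating the children $\tau_{k+1} \in \calt_{k+1}(\tau_k)$. Then I would partition $U_{\alpha,\beta}^{\mathrm{br}}$ into disjoint pieces $\Omega_1,\ldots,\Omega_{N-1},\Omega_\flat$, where $\Omega_k$ records the first scale at which the aggregate high contribution dominates the running sum and $\Omega_\flat$ is the remainder where the low parts dominate at every intermediate scale. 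For $x \in \Omega_k$, a high lemma would yield
\begin{equation*}
|g(x)|^2 \lesssim_\epsilon R^{\epsilon}\sum_{\tau_k \in \calt_k} |g_{\tau_k}^h(x)|^2
\end{equation*}
by essentially disjoint Fourier supports; multiplying by $|g(x)|^4 \leq (2\alpha)^4$, integrating, and applying Plancherel together with the pruned $L^\infty$ control would then bound $\alpha^6|\Omega_k|$ by the desired quantity using \eqref{eqn: f^Lambda_tau_k representation 2}. On $\Omega_\flat$, the iterated low-dominance would reduce matters to a fine-scale bilinear estimate: here I would invoke the broad condition built into \eqref{eqn: U set definition}, rescale each transverse pair $\tau_1 \nsim \tau_1'$ via Lemma \ref{lemma: rescaling} to a unit-scale admissible pair, and apply a local Córdoba--Fefferman $L^4$ square-function bound.

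The main obstacle is establishing the high lemma in this irregular setting. In \cite{gmw} all boxes at a given scale are translates of each other, so the disjoint frequency supports of the $\widehat{g_{\tau_k}^h}$ follow immediately from parabolic rescaling. Here the boxes $\tau_k$ have variable widths and orientations, so the natural frequency cutoffs must be defined in the coordinates of the parent $\tau_{k-1}$ and pulled back via $\cal_{\tau_{k-1}}$; the rescaling Lemma \ref{lemma: rescaling} ensures this construction is consistent across scales, while the quantitative direction separation from Lemma \ref{lemma: direction separation tau_k}, combined with the eccentricity bound \eqref{eqn: tau k are eccentric}, should provide the frequency disjointness required for the high lemma. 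Summing across the $N \leq 1/\epsilon$ scales introduces a factor of $R^{O(\epsilon)}$, which together with the pigeonholing losses from \S\ref{sec: pigeonholing} is absorbed into the final $R^{20\epsilon}$ in \eqref{eqn: main broad}.
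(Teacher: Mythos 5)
Your overall scheme (prune, high/low, partition the spatial domain by the first scale at which high frequencies dominate, use the broad condition bilinearly) is the right one, but two of your central steps would fail as stated. First, the pruning: cutting wave packets only at the finest scale $\theta$ and summing does not give improved $L^\infty$ bounds on the aggregates at intermediate scales (you lose the cardinality of $\{\theta\prec\tau_k\}$), which is why the paper prunes iteratively at every scale $k=N,\dots,1$ against the threshold $G=A_\epsilon R^{5\epsilon}\beta/\alpha$ of Notation \ref{not: G}. More importantly, the discarded wave packets are \emph{not} globally negligible and cannot be absorbed via \eqref{eqn: error term bound}: their contribution is only controlled pointwise by $G^{-1}g_j$ plus $R^{-100}$ (Lemma \ref{lemma: bad wave packets}), and this is small only on $\Omega_k\cap U^{\mathrm{br}}_{\alpha,\beta}$, where the square functions $g_j$, $j>k$, are below their $\beta$-thresholds by construction (Lemma \ref{lemma: replacing f}). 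The spatial decomposition and the pruning error estimate are intertwined; treating the pruning error as an $R^{-O(1)}$ term breaks the argument.

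Second, the high/low step is applied to the wrong object. The ``curvature gap'' that produces almost-disjoint high-frequency supports exists for the difference sets $\tau_k-\tau_k$, i.e.\ for the Fourier supports of $|f_{k+1,\tau_k}|^2$, which form a bush of tubes through the origin; the supports of the linear pieces $g_{\tau_k}$ themselves have no such gap (the children sit inside $3\cdot\tau_k$), so splitting $g_{\tau_k}$ by a frequency cutoff around an affine strip has no orthogonality to exploit, and your claimed pointwise bound $|g|^2\lesssim R^\epsilon\sum_{\tau_k}|g^h_{\tau_k}|^2$ does not follow from disjoint Fourier supports (which give only $L^2$ information). This is why the paper runs high/low on the averaged square functions $g_k$ of Definition \ref{def: g_k}, with the High Lemma (Lemma \ref{lemma: high lemma}) an $\ell^4L^4$ statement and the Low Lemma (Lemma \ref{lemma: low lemma}) a pointwise domination by $g_{k+1}$. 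The quantitative accounting also fails in your version: from $\alpha^4\int_{\Omega_k}|g|^2$ plus Plancherel you would get $\alpha^4\sum_\theta\|f_\theta\|_{L^2}^2$, and there is no way to bound $\alpha^4$ by $R^{O(\epsilon)}(\sum_\theta\|f_\theta\|_{L^\infty}^2)^2$ under \eqref{eqn: normalisation in sup norm} (the crude pointwise bound costs a factor $\#\Theta\sim R^{1/2}$). The paper instead passes through the bilinear square function estimate of Lemma \ref{lemma: local bilinear square function} on the high-dominated squares $Q_k$ (not on the low-dominated remainder, where simple Cauchy--Schwarz plus the smallness of $g_1$ suffices), applies the High Lemma to get $\sum_{\tau_k}\|f_{k+1,\tau_k}\|_{L^4}^4$, and then uses the pruned $L^\infty$ bound $\|f_{k+1,\tau_k}\|_{L^\infty}\lesssim R^\epsilon G$ together with Lemma \ref{lemma: unwinding} to produce the factor $G^2\sim R^{10\epsilon}\beta^2/\alpha^2$; it is exactly this factor that upgrades $\alpha^4$ to $\alpha^6$ with $\beta^2\leq(\sum_\theta\|f_\theta\|_{L^\infty}^2)^2$. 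Without an $L^4$-level high estimate and the multi-scale pruning threshold entering quadratically, the weak-type bound \eqref{eqn: main broad} is out of reach by your route.
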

The details of this reduction are discussed in \S\ref{sec: broad/narrow}. The proof of Theorem \ref{thm: weak type} is the gist of the paper, and the rest of this section sets up the main components of the proof. 
\begin{notation}
\label{not: G}
    Fix two quantities $\alpha$ and $\beta$ satisfying $R^{-1/2}\leq\alpha\leq R^{1/2}$, and  $R^{-1/2}\alpha^2\leq\beta\leq R^{1/2}$, and
    define $G:=A_\epsilon R^{5\epsilon}(\beta/\alpha)$, for some constant $A_\epsilon\geq 1$ to be specified later.\footnote{The constant $A_\epsilon$ will be determined by the proof of Lemma \ref{lemma: replacing f}.} Note that since $\beta\geq R^{-1/2}\alpha^2$ and $\alpha\geq R^{-1/2}$, we have $G\geq R^{-1}$.
\end{notation}
\subsection{Auxiliary functions} We define a number of auxiliary functions that appear frequently in our proof. 
\begin{definition}
    [Polar of a Convex body]
    We shall call a compact convex set $D\subset\R^2$ a \textit{convex body} if it has non-empty interior. We define its \textit{dual} or \textit{polar body} as $$D^*:=\{x\in\R^2:|x\cdot\xi|\leq 1\quad\text{for all}\quad \xi\in D\}.$$
\end{definition}
\begin{theorem}[John Ellipsoid Theorem \cite{john}]
For every convex body $D\subset\R^2$, there exists an ellipsoid $J(D)$ (called the John Ellipsoid of $D$) satisfying $$J(D)\subseteq D\subseteq 2\cdot J(D).$$ 
\end{theorem}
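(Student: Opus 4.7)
The plan is to prove the two-dimensional John Ellipsoid Theorem in the usual two stages: existence of a maximal-volume inscribed ellipsoid via compactness, followed by the inclusion $D \subseteq 2 \cdot J(D)$ via an optimality argument. For existence, I would parametrise inscribed ellipsoids by a centre $c \in \R^2$ and a positive-definite symmetric matrix $M$, writing $E_{c,M} := \{x \in \R^2 : (x-c)^T M (x-c) \leq 1\}$ with volume proportional to $(\det M)^{-1/2}$. The set of admissible pairs $(c, M)$ with $E_{c,M} \subseteq D$ is non-empty (because $D$ has non-empty interior), closed under pointwise convergence of $(c,M)$, and bounded (because $D$ is compact, forcing $c\in D$ and $\det M$ to be bounded once the containment is imposed). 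Continuity of the volume functional on this compact set then yields a maximiser $J(D)$.

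For the inclusion, the first move is to apply an invertible affine transformation to reduce to the normalised case $J(D) = \B$, the closed unit disk centred at the origin; this reduction is legitimate because affine maps preserve convexity, volume ratios, and the property of being the maximal inscribed ellipsoid. Under this normalisation, $2 \cdot J(D) = 2\B$, and the goal becomes $D \subseteq 2\B$. Argue by contradiction: suppose some $p \in D$ satisfies $|p| > 2$; after rotating, take $p = (r, 0)$ with $r > 2$, so that by convexity $D$ contains $C := \mathrm{conv}(\B \cup \{p\})$, which is $\B$ together with the triangular cone bounded by the two tangent lines from $p$ to $\partial\B$. The crux is to exhibit an ellipse $E \subseteq C$ of area strictly greater than $\pi$, contradicting the maximality of $\B$. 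A natural ansatz is an axis-aligned ellipse $E_{a,A,B} := \{(x,y) : ((x-a)/A)^2 + (y/B)^2 \leq 1\}$ centred on the positive $x$-axis; imposing tangency to the two tangent lines from $p$ (whose slopes are $\pm 1/\sqrt{r^2 - 1}$) gives a one-parameter family of candidates, and a direct computation shows that the optimal area $\pi A B$ in this family strictly exceeds $\pi$ whenever $r > 2$. The threshold $r = 2$ is extremal, realised by an equilateral triangle circumscribed about $\B$ whose Steiner inellipse is $\B$, confirming the sharpness of the constant.

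The main obstacle is this explicit area optimisation. A more conceptual alternative, which I would prefer for a clean write-up, uses the Karush--Kuhn--Tucker conditions for the variational problem defining $J(D)$: after the normalisation $J(D) = \B$, there exist contact points $u_1, \dots, u_m \in \partial\B \cap \partial D$ and positive weights $\lambda_i$ satisfying $\sum_i \lambda_i u_i u_i^T = I$ and $\sum_i \lambda_i u_i = 0$, the first identity coming from stationarity under perturbations of $M$ and the second from stationarity under translations of the centre. Taking traces yields $\sum_i \lambda_i = 2$. Combined with the supporting-line inequality $u_i \cdot x \leq 1$ for every $x \in D$ (valid because $u_i$ is an outward normal to a supporting line of $D$ at $u_i$), a short manipulation of the scalar identities $\sum_i \lambda_i (u_i \cdot x) = 0$ and $\sum_i \lambda_i (u_i \cdot x)^2 = |x|^2$ then forces $|x| \leq 2$. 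The remaining technical step along this route is to derive the KKT conditions rigorously from the first-order analysis of admissible perturbations to the maximiser, which is where I expect the bulk of the bookkeeping to lie.
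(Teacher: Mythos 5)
The paper offers no proof of this statement at all: it is quoted as the classical John ellipsoid theorem with a citation to \cite{john} and used as a black box, so the only question is whether your outline correctly proves the classical fact, and it does. Both routes you sketch are standard and sound. Two small points to tighten. In the existence step, containment $E_{c,M}\subseteq D$ bounds the semi-axes \emph{above}, i.e.\ it bounds the eigenvalues of $M$ (hence $\det M$) from \emph{below}, not above: arbitrarily thin slivers inside $D$ have unbounded eigenvalues, so to get a compact admissible family you should restrict to ellipses whose area is at least that of one fixed disk contained in $D$; together with the eigenvalue lower bound this confines $M$ to a compact set on which the volume functional attains its maximum. In Route A, the containment $E\subseteq C$ is constrained not only by the two tangent lines from $p$ but also by the supporting lines along the circular arc of $\partial C$; with that included, the computation still comes out as you claim: perturbing to semi-axes $1+s$, $1-t$ and centre $(u,0)$, the arc constraints force $u\geq s$, the tangent-line constraint forces $u\leq (t(r^2-1)-s)/r$, so feasibility requires $s\leq t(r-1)$, which is compatible with the first-order area gain $s>t$ exactly when $r>2$. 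So your threshold is correct and sharp, consistent with your Steiner-inellipse remark.

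Route B is likewise correct: with $\sum_i\lambda_i=2$, $\sum_i\lambda_iu_i=0$, $\sum_i\lambda_iu_iu_i^T=I$, and $t_i:=u_i\cdot x$ satisfying $t_i\leq 1$ (supporting line) and $t_i\geq-|x|$ (Cauchy--Schwarz, $|u_i|=1$), the inequality $0\leq\sum_i\lambda_i(1-t_i)(|x|+t_i)=|x|(2-|x|)$ gives $|x|\leq 2$, which is the ``short manipulation'' you allude to. As you correctly identify, the real labour on this route is deriving the contact-point (Fritz John) optimality conditions from maximality of the inscribed ellipse; that is standard but not short, which is presumably why the paper simply cites the result. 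Note finally that the paper's dilate $2\cdot J(D)$ is taken about the centroid of $J(D)$, i.e.\ the centre of the ellipse, so your normalisation $J(D)=\B$, target $D\subseteq 2\B$, matches the statement exactly.
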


\begin{notation}\label{not: delta}
    Let $\delta:=\epsilon^{10}$. 
\end{notation}
\begin{definition}[Definition of $\eta_{k}$]
\label{def: eta}
Let $\eta\in\mathcal{S}(\R^2)$ be chosen to satisfy the following 
\begin{itemize}
    \item $\widehat{\eta}(\xi)=0$ for all $\xi\notin 2\B$;
    \item $\widehat{\eta}(\xi)=1$ for all $\xi\in\B$;
    \item $\widehat{\eta}$ is radially decreasing.
\end{itemize}
For each $k\in\{1,\dots,N-1\}$ and $\lambda_k$ as in \S\ref{sec: pigeonholing}, define $\eta_{k}$ as $$\widehat{\eta}_{k}(\xi):=\widehat{\eta}(\lambda_{k}^{-1}\xi).$$
\end{definition}
We record the following properties of $\eta_{k}$ which are immediate from the choice of $\eta$.
\begin{lemma}
\label{lemma: eta}
    For each $k\in\{1,\dots,N-1\}$, we have \begin{itemize}
        \item $\widehat{\eta}_{k}(\xi)=0$ for all $\xi\notin B(0,2\lambda_{k})$;
        \item $\widehat{\eta}_{k}(\xi)=1$ for all $\xi\in B(0,\lambda_{k})$;
        \item $\|\eta_{k}\|_{L^1(\R^2)}\lesssim 1$.
    \end{itemize}
\end{lemma}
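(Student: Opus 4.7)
The first two claims would follow immediately by unpacking the definition $\widehat{\eta}_{k}(\xi):=\widehat{\eta}(\lambda_{k}^{-1}\xi)$ and applying the corresponding bullet points in Definition \ref{def: eta}. For the vanishing claim, the condition $\xi\notin B(0,2\lambda_k)$ is equivalent to $\lambda_k^{-1}\xi\notin 2\B$, which by the first bullet of Definition \ref{def: eta} forces $\widehat{\eta}(\lambda_k^{-1}\xi)=0$. The flatness claim is handled symmetrically: $\xi\in B(0,\lambda_k)$ iff $\lambda_k^{-1}\xi\in\B$, on which $\widehat{\eta}\equiv 1$ by the second bullet. These two steps are purely a change of variable in the frequency argument.

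For the $L^1$ bound, the plan is to use the standard dilation rule for the Fourier transform: since $\widehat{\eta}_k$ is obtained from $\widehat{\eta}$ by rescaling frequencies by $\lambda_k^{-1}$, the spatial-side function is $\eta_k(x)=\lambda_k^{2}\,\eta(\lambda_k x)$ (with the sign/normalization conventions used in the paper). The change of variables $y=\lambda_k x$ then gives $\|\eta_k\|_{L^1(\R^2)}=\|\eta\|_{L^1(\R^2)}$, so the $L^1$ norm is scale-invariant. Since $\eta\in\mathcal{S}(\R^2)$ is a fixed Schwartz function chosen independently of $k$, $R$, and $\Gamma$, its $L^1$ norm is a finite absolute constant, which is exactly the $\lesssim 1$ claimed. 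I do not anticipate any genuine obstacle: the lemma is a routine bookkeeping statement recording the mapping properties of the bump functions $\eta_k$ for use later in the high/low decomposition, and each bullet reduces to a one-line calculation from the definitions.
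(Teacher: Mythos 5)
Your argument is correct and is exactly the intended one: the paper records these properties as immediate from Definition \ref{def: eta}, and your verification (frequency rescaling for the first two bullets, the dilation identity $\eta_k(x)=\lambda_k^2\eta(\lambda_k x)$ giving $\|\eta_k\|_{L^1}=\|\eta\|_{L^1}$ for the third) is the standard one-line computation the paper leaves implicit.
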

\begin{definition}[Definition of $\rho$]
\label{def: rho}
     Let $\rho\in\mathcal{S}(\R^2)$ be defined as $\widehat{\rho}(\xi):=\widehat{\eta}(R^{-\delta}\xi)$. For each $k\in\{1,\dots,N\}$ and $\tau_k\in\calt_k$, let $A_{\tau_k}$ be an affine transformation mapping $2J(\tau_k)$ onto $32^{k-N}\B$ (the particular choice of the map is irrelevant). We define $\rho_{\tau_k}$ by $$\widehat{\rho}_{\tau_k}(\xi):=\widehat{\rho}(A_{\tau_k}\xi).$$ For each $k\in\{1,\dots,N\}$, we also define $A_k$ as the dilation mapping $\lambda_k\B$ onto $32^{k-N}\B$. Define $\rho_k$ by $$\widehat{\rho}_k(\xi):=\widehat{\rho}(A_k\xi).$$  
\end{definition}
 We list out the relevant properties of these functions which follow immediately from their definition. 
\begin{lemma}
\label{lemma: rho}
   For each $\tau_k\in\calt_k$, we have
    \begin{itemize}
        \item $\widehat{\rho}_{\tau_k}(\xi)=0$ for all $\xi\notin 2R^\delta32^{N-k}\cdot\tau_k$;
        \item $\widehat{\rho}_{\tau_k}(\xi)=1$ for all $\xi\in R^\delta32^{N-k}\cdot\tau_k$;
        \item $\|\rho_{\tau_k}\|_{L^1(\R^2)}\lesssim 1$.
    \end{itemize}
    Also, 
    \begin{itemize}
        \item $\widehat{\rho}_k(\xi)=0$ for all $\xi\notin 2R^\delta\lambda_k32^{N-k}\B$;
        \item $\widehat{\rho}_k(\xi)=1$ for all $\xi\in R^\delta\lambda_k32^{N-k}\B$;
        \item $\|\rho_k\|_{L^1(\R^2)}\lesssim 1$.
    \end{itemize}
\end{lemma}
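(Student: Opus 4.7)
The plan is to unwind the definitions and apply the three properties of $\widehat{\eta}$ from Lemma \ref{lemma: eta}, together with the John ellipsoid inclusions $J(\tau_k) \subseteq \tau_k \subseteq 2 \cdot J(\tau_k)$. The two sets of bullet points are proved by essentially the same argument, with the case of $\rho_k$ being a direct simplification (no John ellipsoid step needed) of the case of $\rho_{\tau_k}$.

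First, I would chain the definitions to write
\[ \widehat{\rho}_{\tau_k}(\xi) = \widehat{\rho}(A_{\tau_k}\xi) = \widehat{\eta}(R^{-\delta} A_{\tau_k}\xi). \]
From the support and ``equal to one'' properties of $\widehat{\eta}$, this vanishes whenever $A_{\tau_k}\xi \notin 2R^\delta \B$ and equals one whenever $A_{\tau_k}\xi \in R^\delta \B$. Next, using the defining property $A_{\tau_k}(2 \cdot J(\tau_k)) = 32^{k-N}\B$ together with the general identity $A(s \cdot D) = s \cdot A(D)$ relating affine maps and centroid dilations, I would compute, for any $c > 0$,
\[ A_{\tau_k}^{-1}(c\B) = (2c \cdot 32^{N-k}) \cdot J(\tau_k). \]
Sandwiching $J(\tau_k)$ between $\tau_k$ and $2 \cdot J(\tau_k)$ via the John inclusions then translates these preimages into dilates of $\tau_k$, yielding the first two bullets (up to the numerical constants in the statement).

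For the $L^1$ bound, I would use the fact that if $\widehat{h}(\xi) = \widehat{g}(L\xi + b)$ for an invertible linear map $L$ and shift $b \in \R^2$, then $h(x) = |\det L|^{-1} e^{2\pi i (L^{-1}b) \cdot x} g(L^{-T}x)$, so that $\|h\|_{L^1(\R^2)} = \|g\|_{L^1(\R^2)}$. Applying this identity twice --- once to pass from $\eta$ to $\rho$ and once from $\rho$ to $\rho_{\tau_k}$ --- reduces everything to $\|\eta\|_{L^1(\R^2)} \lesssim 1$, which is immediate from the Schwartz property of $\eta$.

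The argument for $\rho_k$ is essentially identical but simpler: $A_k$ is a pure dilation mapping $\lambda_k \B$ onto $32^{k-N}\B$, so no John ellipsoid step is needed and the preimage computation is direct. The main challenge here is not conceptual but bookkeeping --- tracking the various dilation factors through the compositions and the John inclusions. No deep machinery is required beyond the standard dilation/shift behaviour of the Fourier transform and the statement of the John ellipsoid theorem.
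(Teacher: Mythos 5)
Your proposal is correct and is exactly the argument the paper has in mind: the paper offers no written proof of this lemma (it asserts the properties ``follow immediately from their definition''), and your unwinding of the definitions --- the three properties of $\widehat{\eta}$, the computation of $A_{\tau_k}^{-1}(c\B)$ via the John ellipsoid, and the affine invariance of the $L^1$ norm under composition with an affine map on the Fourier side --- is the intended verification; the $\rho_k$ bullets and both $L^1$ bounds come out exactly as you say.

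One point where your hedge ``up to the numerical constants in the statement'' is quietly doing real work and deserves to be made explicit: with your (correct) formula $A_{\tau_k}^{-1}(c\B)=(2c\,32^{N-k})\cdot J(\tau_k)$, the second bullet follows exactly, since it reduces to $R^\delta 32^{N-k}\cdot\tau_k\subseteq 2R^\delta 32^{N-k}\cdot J(\tau_k)$, i.e.\ the John inclusion $\tau_k\subseteq 2\cdot J(\tau_k)$. The first bullet, however, does not come out with the stated constant: you get $\supp\widehat{\rho}_{\tau_k}\subseteq A_{\tau_k}^{-1}(2R^\delta\B)=4R^\delta 32^{N-k}\cdot J(\tau_k)$, and the only usable inclusion $J(\tau_k)\subseteq\tau_k$ gives containment in $4R^\delta 32^{N-k}\cdot\tau_k$, not $2R^\delta 32^{N-k}\cdot\tau_k$; the containment $2\cdot J(\tau_k)\subseteq\tau_k$ that the stated constant would require is false. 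This factor-of-two slack is an artefact of the constants chosen in Definition \ref{def: rho} rather than of your method, and it is harmless downstream, since every later use of the support property absorbs absolute constants into factors like $R^{2\delta}$ for $R\gg_\epsilon 1$; still, a complete write-up should either prove the first bullet with $4$ in place of $2$ or record that the discrepancy is immaterial for the applications.
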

\begin{definition}
    [Definition of $w,\omega$]
    \label{def: w,omega}
    For each $\tau_k\in\calt_k$, we define $$w_{\tau_k}(x):=\sup_{y\in x+4\tau_k^*}|\rho_{\tau_k}(y)|.$$ 
   Define $\omega_{\tau_1}:=w_{\tau_1}$, and for each $k\geq 1$ we inductively define 
$$\omega_{\tau_{k+1}}:=\max\big(w_{\tau_{k+1}},\omega_{\tau_k},\,\omega_{\tau_k}*|\rho_{\tau_k}|*|\eta_{k+1}|\big)\quad\text{for all}\quad \tau_{k+1}\in\calt_{k+1}(\tau_{k}).$$ 
For each $k$, we also define $$w_k(x):=\sup_{y\in x+4R^\delta\lambda_k^{-1}\B}|\rho_k(y)|.$$
\end{definition}
\begin{lemma}
    \label{lemma: L^1 norm of omega} 
    For all $k$ and each $\tau_k\in\calt_k$, we have 
    \begin{equation}
        \label{eqn: w norm bound}
        \|w_{\tau_k}\|_{L^1(\R^2)}, \|\omega_{\tau_k}\|_{L^1(\R^2)}\lesssim_\epsilon R^{2\delta}.
    \end{equation}
    \end{lemma}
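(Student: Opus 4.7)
The plan is to bound $\|w_{\tau_k}\|_{L^1}$ directly from the rapid decay of $\rho_{\tau_k}$, and then handle $\|\omega_{\tau_k}\|_{L^1}$ by induction on $k$.

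For $w_{\tau_k}$, I would first note that writing $A_{\tau_k}(\xi) = M_{\tau_k}(\xi - c(\tau_k))$ for the linear part $M_{\tau_k}$ and performing a standard Fourier inversion yields
\begin{equation*}
\rho_{\tau_k}(x) = |\det M_{\tau_k}|^{-1}\, e^{2\pi i x\cdot c(\tau_k)}\, \rho\bigl(M_{\tau_k}^{-T} x\bigr).
\end{equation*}
Since $M_{\tau_k}$ maps a region of area $\sim |\tau_k|$ onto a ball of area $\sim 32^{2(k-N)}$, one has $|\det M_{\tau_k}|^{-1} \sim 32^{2(N-k)}|\tau_k|$. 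Combined with the identity $\rho(u) = R^{2\delta}\eta(R^\delta u)$ and the Schwartz decay of $\eta$, this gives both $\|\rho_{\tau_k}\|_{L^\infty}\lesssim 32^{2(N-k)}|\tau_k|R^{2\delta}$ and the pointwise tail
\begin{equation*}
|\rho_{\tau_k}(y)| \lesssim_M \|\rho_{\tau_k}\|_{L^\infty}\bigl(1 + R^\delta 32^{N-k}\|y\|_{\tau_k^*}\bigr)^{-M},
\end{equation*}
where $\|\cdot\|_{\tau_k^*}$ is the Minkowski functional of $\tau_k^*$ (the polar of the centred box). Translating by an element of $4\tau_k^*$ shifts $\|y\|_{\tau_k^*}$ by at most $4$, so for $M$ large enough
\begin{equation*}
w_{\tau_k}(x) \lesssim_M \|\rho_{\tau_k}\|_{L^\infty}\bigl(1 + c\, R^\delta 32^{N-k}(\|x\|_{\tau_k^*}-4)_+\bigr)^{-M}
\end{equation*}
for some absolute $c > 0$. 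Integrating in $x$ via a shell decomposition and using $|\tau_k|\cdot|\tau_k^*| \sim 1$, one concludes
\begin{equation*}
\|w_{\tau_k}\|_{L^1} \lesssim |\tau_k^*|\cdot\|\rho_{\tau_k}\|_{L^\infty}\lesssim 32^{2(N-k)}\, R^{2\delta}\lesssim_\epsilon R^{2\delta},
\end{equation*}
using that $k \leq N \leq 1/\epsilon$ forces $32^{2(N-k)} \leq 32^{2/\epsilon}$.

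For $\omega_{\tau_k}$, I would induct on $k$. The base case $k = 1$ is immediate since $\omega_{\tau_1} = w_{\tau_1}$. For the inductive step, the trivial bound $\|\max(a,b,c)\|_{L^1}\leq \|a\|_{L^1}+\|b\|_{L^1}+\|c\|_{L^1}$ combined with Young's convolution inequality applied to the recursive definition of $\omega_{\tau_{k+1}}$ yields
\begin{equation*}
\|\omega_{\tau_{k+1}}\|_{L^1} \leq \|w_{\tau_{k+1}}\|_{L^1} + \bigl(1 + \|\rho_{\tau_k}\|_{L^1}\|\eta_{k+1}\|_{L^1}\bigr)\|\omega_{\tau_k}\|_{L^1}.
\end{equation*}
By Lemmas \ref{lemma: eta} and \ref{lemma: rho}, both $\|\rho_{\tau_k}\|_{L^1}$ and $\|\eta_{k+1}\|_{L^1}$ are $O(1)$, so iterating gives $\|\omega_{\tau_k}\|_{L^1}\lesssim \sum_{j=1}^k\|w_{\tau_j}\|_{L^1}\lesssim k\cdot R^{2\delta}\lesssim_\epsilon R^{2\delta}$.

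The main subtlety lies in the $w_{\tau_k}$ estimate: although $4\tau_k^*$ is much larger than the essential support of $\rho_{\tau_k}$ (of measure $\sim |\tau_k^*|/(R^\delta 32^{N-k})^2$), the sup operation inflates the effective $L^\infty$-support of $\rho_{\tau_k}$ up to all of $4\tau_k^*$, which costs a factor of $(R^\delta 32^{N-k})^2$ in $L^1$. The $R^{2\delta}$ part is genuine and must appear in the final bound, while the $32^{2(N-k)}$ part is absorbed into the $\epsilon$-dependent constant precisely because the total number of scales $N$ is at most $1/\epsilon$; this is the only reason one gets a bound independent of $k$.
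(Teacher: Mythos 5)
Your proposal is correct and takes essentially the same route as the paper: a pointwise rapid-decay bound for $w_{\tau_k}$ obtained by undoing the affine normalisation of $\rho_{\tau_k}$ and noting that the sup over $x+4\tau_k^*$ only shifts the normalised variable by $O(1)$ (you phrase this via the Minkowski functional of the centred dual box and $|\tau_k|\,|\tau_k^*|\sim 1$, the paper via the change of variables $x=A_{\tau_k}^t z$ and $J(\tau_k)^*$), followed by induction on $k$ for $\omega_{\tau_k}$ using subadditivity of the max in $L^1$ and Young's inequality, with all $32^{O(N-k)}$ and per-scale factors absorbed into $C_\epsilon$ because $N\le 1/\epsilon$. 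The only cosmetic imprecision is that iterating $\|\omega_{\tau_{k+1}}\|_{L^1}\le\|w_{\tau_{k+1}}\|_{L^1}+C\|\omega_{\tau_k}\|_{L^1}$ yields $\sum_{j}C^{k-j}\|w_{\tau_j}\|_{L^1}$ rather than $\sum_j\|w_{\tau_j}\|_{L^1}$, but since $C^{k}\lesssim_\epsilon 1$ this is harmless and matches the paper's constant $(d_\epsilon+\cdots+d_\epsilon^{k})$.
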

\begin{proof}
    We first prove the $L^1$ bound for the function $w_{\tau_k}$, by establishing the following pointwise bound
     \begin{equation}
        \label{eqn: w rapid decay}
        w_{\tau_k}(x)\lesssim_M R^{2\delta}|\det A_{\tau_k}|^{-1}(1+R^\delta(|A_{\tau_k}^{-t}x|-8\cdot 32^{N-k}))^{-M}\quad\text{for all}\quad x\notin 4J(\tau_k)^*.
    \end{equation}
    We have $$w_{\tau_k}(x)=\sup_{y\in x+4\tau_k^*}|\rho_{\tau_k}(y)|\leq \sup_{y\in x+4J(\tau_k)^*}|\rho_{\tau_k}(y)|,$$ since $\tau_k^*\subseteq J(\tau_k)^*$. Now $4J(\tau_k)^*=8\cdot 32^{N-k}A_{\tau_k}^t(\B)$ and $\rho_{\tau_k}(y)=|\det A_{\tau_k}|^{-1}\rho(A_{\tau_k}^{-t}y)$, so 
    \begin{equation}
        \label{eqn: w pointwise bound}
        w_{\tau_k}(A_{\tau_k}^tz)\leq|\det A_{\tau_k}|^{-1}\sup_{u\in z+8\cdot 32^{N-k}\B}|\rho(u)|,\quad\text{where}\quad z:=A_{\tau_k}^{-t}x.
    \end{equation}
    Now $\rho(u)=R^{2\delta}\eta(R^\delta u)$ and since $\eta\in\mathcal{S}(\R^2)$, we have $$|\eta(R^\delta u)|\lesssim_M (1+R^\delta|u|)^{-M}.$$ For all $u\in z+8\cdot 32^{N-k}\B$ we have $|u|\geq |z|-8\cdot 32^{N-k}$, so that by \eqref{eqn: w pointwise bound} we get 
    \begin{equation}
        \label{eqn: w rapid decay z version}
         w_{\tau_k}(A_{\tau_k}^tz)\lesssim_M R^{2\delta}|\det A_{\tau_k}|^{-1}\big(1+R^\delta(|z|-8\cdot 32^{N-k})\big)^{-M},\quad\text{for all}\quad z\notin 8\cdot 32^{N-k}\B,
    \end{equation} 
    which is just \eqref{eqn: w rapid decay} given in terms of the variable $z$. In order to obtain the $L^1$ bound for $\|w_{\tau_k}(x)\|_{L^1_x}$ from this, we use the change of variables $x=A_{\tau_k}^t z$ in the integral, and split the integration into the regions $\{|z|\leq 8\cdot 32^{N-k}\}$ and $\{|z|\geq 8\cdot 32^{N-k}\}$. Using \eqref{eqn: w rapid decay z version} in the latter region with $M=10$ gives
    \begin{equation}
        \label{eqn: rho norm estimate}
        \|w_{\tau_k}\|_{L^1}\lesssim_\epsilon R^{2\delta}\bigg(1+\int_{|z|\geq 8\cdot 32^{N-k}}\big(1+R^\delta(|z|-8\cdot 32^{N-k})\big)^{-10}\bigg)\lesssim R^{2\delta},
    \end{equation}   
    as required.
    
    Now we prove the $L^1$ bound for the functions $\omega_{\tau_k}$. By inducting on $k$, we will show that 
    \begin{equation}
        \label{eqn: omega k induction}
        \|\omega_{\tau_k}\|_{L^1(\R^2)}\leq (d_\epsilon+d_\epsilon^2+\dots+d_\epsilon^k)R^{2\delta},
    \end{equation} for some constant $d_\epsilon\geq 1$. For $k=1$, we have $\omega_{\tau_1}=w_{\tau_1}$, so the base case of \eqref{eqn: omega k induction} follows from above if $d_\epsilon$ is chosen sufficiently large. Now suppose we have the bound \eqref{eqn: omega k induction} for some $k\geq 1$. Observe that
    \begin{align*}
         \|\omega_{\tau_{k+1}}\|_{L^1}&\leq \|w_{\tau_{k+1}}\|_{L^1}+\|\omega_{\tau_k}\|_{L^1}+\|\omega_{\tau_k}*|\rho_{\tau_k}|*|\eta_{k+1}|\|_{L^1}.
    \end{align*}
    The first part of the proof established that $\|w_{\tau_{k+1}}\|_{L^1}\lesssim_\epsilon R^{2\delta}$. By Young's convolution inequality, Lemma \ref{lemma: rho}, and Lemma \ref{lemma: eta}, we have $$\|\omega_{\tau_k}*|\rho_{\tau_k}|*|\eta_{k+1}|\|_{L^1}\lesssim\|\omega_{\tau_k}\|_{L^1(\R^2)}.$$
    Hence, if we choose $d_\epsilon$ large enough, we get $$\|\omega_{\tau_{k+1}}\|_{L^1}\leq d_\epsilon R^{2\delta}+d_\epsilon\|\omega_{\tau_k}\|_{L^1}\leq (d_\epsilon+d_\epsilon^2+\dots+d_\epsilon^{k+1})R^{2\delta},$$ by the induction hypothesis.   
    \end{proof}
    \begin{lemma}
     For each $k$ and all $M\in\mathbb{N}$, we have  \begin{equation}
        \label{eqn: w_k rapid decay}
        w_{k}(x)\lesssim_M R^{2\delta}|\det A_{k}|^{-1}(1+R^\delta(|A_{k}^{-t}x|-4R^\delta\cdot 32^{N-k}))^{-M}\quad\text{for all}\quad x\notin 4R^\delta\lambda_k^{-1}\B.
    \end{equation}
\end{lemma}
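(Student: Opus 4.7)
The plan is to carry out the same scheme used in the proof of \eqref{eqn: w rapid decay} for $w_{\tau_k}$, with a simpler geometric input because $A_k$ is now an isotropic dilation rather than a general affine map. I would begin by unpacking the Fourier definitions. Since $\widehat{\rho}_k(\xi)=\widehat{\rho}(A_k\xi)$ and $A_k$ is the dilation sending $\lambda_k\B$ onto $32^{k-N}\B$, i.e.\ $A_k=32^{k-N}\lambda_k^{-1}I$, Fourier inversion gives
$$\rho_k(y)=|\det A_k|^{-1}\rho(A_k^{-t}y),$$
while $\widehat{\rho}(\xi)=\widehat{\eta}(R^{-\delta}\xi)$ yields $\rho(u)=R^{2\delta}\eta(R^\delta u)$. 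Combining the two and using the Schwartz decay $|\eta(v)|\lesssim_M(1+|v|)^{-M}$ gives, for every $M\in\mathbb{N}$,
$$|\rho_k(y)|\lesssim_M R^{2\delta}|\det A_k|^{-1}(1+R^\delta|A_k^{-t}y|)^{-M}.$$

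Next I would take the supremum in the definition of $w_k(x)$ over $y\in x+4R^\delta\lambda_k^{-1}\B$. By the reverse triangle inequality, $|A_k^{-t}y|\geq |A_k^{-t}x|-|A_k^{-t}(y-x)|$. The key geometric observation is that $A_k^{-t}=32^{N-k}\lambda_k\,I$ scales isotropically, so $|y-x|\leq 4R^\delta\lambda_k^{-1}$ translates into $|A_k^{-t}(y-x)|\leq 4R^\delta\cdot 32^{N-k}$. Under the hypothesis $x\notin 4R^\delta\lambda_k^{-1}\B$, we have $|A_k^{-t}x|>4R^\delta\cdot 32^{N-k}$, so the quantity $|A_k^{-t}x|-4R^\delta\cdot 32^{N-k}$ is positive; monotonicity of $(1+t)^{-M}$ in $t\geq 0$ then gives
$$(1+R^\delta|A_k^{-t}y|)^{-M}\leq \bigl(1+R^\delta(|A_k^{-t}x|-4R^\delta\cdot 32^{N-k})\bigr)^{-M}$$
uniformly in $y\in x+4R^\delta\lambda_k^{-1}\B$. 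Taking the supremum and substituting back produces exactly the stated estimate \eqref{eqn: w_k rapid decay}.

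There is no real obstacle here: the isotropy of $A_k$ makes the norm estimate $|A_k^{-t}(y-x)|\leq 4R^\delta\cdot 32^{N-k}$ a one-line computation, whereas in the analogous bound for $w_{\tau_k}$ one had to pass through the John ellipsoid $J(\tau_k)$ because $A_{\tau_k}$ was only an affine map onto a ball. In effect, the present lemma is a streamlined specialisation of the previous one, and its sole purpose is to record the decay in a form usable later when $\rho_k$ rather than $\rho_{\tau_k}$ appears.
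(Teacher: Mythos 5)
Your proof is correct and follows essentially the same route as the paper: both use the scaling identity $\rho_k(y)=|\det A_k|^{-1}\rho(A_k^{-t}y)$ together with $\rho(u)=R^{2\delta}\eta(R^\delta u)$, the Schwartz decay of $\eta$, and the fact that the isotropic map $A_k^{-t}$ sends the ball $4R^\delta\lambda_k^{-1}\B$ onto $4R^\delta 32^{N-k}\B$, so the reverse triangle inequality yields the stated decay off $4R^\delta\lambda_k^{-1}\B$. The only cosmetic difference is that you bound $|\rho_k(y)|$ pointwise before taking the supremum, while the paper takes the supremum first and then invokes the decay of $\eta$; the computations are identical.
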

\begin{proof}
   The bound \eqref{eqn: w_k rapid decay} is analogous to \eqref{eqn: w rapid decay} obtained in the proof above. Using the definitions of $w_k,\rho_k$ and arguing as above, we find that
   $$w_k(x)=|\det A_k|^{-1}\sup_{u\in z+4R^\delta 32^{N-k}\B}|\rho(u)|,\quad\text{where}\quad z:=A_k^{-t}x.$$ For all $u\in z+4R^\delta 32^{N-k}\B$, we have $|u|\geq |z|-4R^\delta32^{N-k}$. Using the fact $\rho(u)=R^{2\delta}\eta(R^\delta u)$ and the rapid decay of $\eta$, we then get $$\sup_{u\in z+4R^\delta 32^{N-k}\B}|\rho(u)|\lesssim_M R^{2\delta}\big(1+R^\delta(|z|-4R^\delta 32^{N-k})\big)^{-M},\quad\text{for all}\quad z\notin 4R^\delta 32^{N-k}\B.$$ From here, \eqref{eqn: w_k rapid decay} follows by substituting back $x=A_k^tz$.
\end{proof}
    \begin{definition}
        [Definition of $\phi$]
        \label{def: phi}
        Let $\phi\in\mathcal{S}(\R^2)$ be chosen to satisfy the following conditions 
        \begin{itemize}
            \item $\widehat{\phi}$ is supported in $\B$;
            \item $0\leq\phi(x)\leq 1$ for all $x\in\R^2$;
            \item $\phi(x)\gtrsim 1$ for all $x\in\B$.
        \end{itemize}
        Now for a convex body $D\subset\R^2$, we define $\widehat{\phi}_D(\xi):=\widehat{\phi}(A_D^{-t}\xi)$, where $A_D$ is the affine transformation mapping $2J(D)$ onto $\B$.
    \end{definition}
    We record the following list of properties satisfied by $\phi_D$, that follow easily from the above definition.
    \begin{lemma}\label{lemma: phi}
        For each convex body $D\subset\R^2$, we have 
        \begin{itemize}
            \item $\widehat{\phi}_D$ is supported in $(1/2)D^*$;
            \item $0\leq\phi_D(x)\leq 1$ for all $x\in\R^2$;
            \item $\phi_D(x)\gtrsim 1$ for all $x\in 2J(D)$.
        \end{itemize}
    \end{lemma}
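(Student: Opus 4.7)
The three properties are routine consequences of Definition~\ref{def: phi} together with John's ellipsoid theorem, so the plan is essentially to unpack the definitions carefully.

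First I would note that, up to the normalisation encoded in the definition $\widehat{\phi}_D(\xi) = \widehat{\phi}(A_D^{-t}\xi)$, the physical-side function is $\phi_D(x) = \phi(A_D x)$ (after absorbing the appropriate Jacobian factor). This identification immediately gives the last two bullets: since $0 \leq \phi \leq 1$ everywhere by Definition~\ref{def: phi}, the same bound holds for $\phi_D$; and for any $x \in 2J(D)$, the image $A_D x$ lies in $\B$ by the very definition of $A_D$, so the third bullet of Definition~\ref{def: phi} gives $\phi_D(x) = \phi(A_D x) \gtrsim 1$.

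The only step that requires a short argument is the Fourier support property. Since $\widehat{\phi}$ vanishes outside $\B$, $\widehat{\phi}_D$ vanishes outside $\{\xi : A_D^{-t}\xi \in \B\} = A_D^t(\B)$. I would then compute $A_D^t(\B)$ via polar duality: for any linear $L$ with $L(E) = \B$ on an ellipsoid $E$ centred at the origin, one has the standard identity $L^t(\B) = E^*$. Applying this with $E = 2J(D)$ (using that $A_D$ sends $2J(D)$ onto $\B$, and translating if necessary so $J(D)$ is centred at the origin), one obtains
\begin{equation*}
A_D^t(\B) \;=\; (2J(D))^* \;=\; \tfrac{1}{2}J(D)^*.
\end{equation*}
Finally, John's theorem in the form $D \subseteq 2J(D)$ dualises to $(2J(D))^* \subseteq D^*$, which yields $A_D^t(\B) \subseteq D^*$. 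The sharper inclusion in $\tfrac{1}{2}D^*$ follows by arranging the reference function $\widehat{\phi}$ to be supported slightly inside $\B$ (e.g.\ in $\tfrac{1}{2}\B$): this contributes an extra factor of $\tfrac{1}{2}$ that combines with $(2J(D))^* \subseteq D^*$ via the polar scaling $(\lambda E)^* = \lambda^{-1}E^*$ to give exactly $\widehat{\phi}_D$ supported in $\tfrac{1}{2}D^*$.

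The main (and minor) obstacle is just keeping track of the two factors of $2$ that appear in the polar-body computation: one from the John inclusion $D \subseteq 2J(D)$ and one from the polar of a dilate. There is no substantive analytic difficulty; the lemma amounts to a careful bookkeeping exercise in affine/polar geometry combined with John's theorem.
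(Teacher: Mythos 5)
Your verification is correct and is exactly the routine argument the paper leaves implicit (Lemma \ref{lemma: phi} is recorded without proof as immediate from Definition \ref{def: phi}): read $\phi_D$ as $\phi\circ A_D$ up to the normalising Jacobian, use $A_D(2J(D))=\B$ for the two pointwise bullets, and compute the Fourier support via $A_D^t(\B)=(2J(D))^*$ combined with the dualised John inclusion $(2J(D))^*\subseteq D^*$. Your factor-of-two observation is also a fair catch: with $\widehat{\phi}$ merely supported in $\B$ one only gets $\mathrm{supp}\,\widehat{\phi}_D\subseteq(2J(D))^*\subseteq D^*$ rather than $\tfrac{1}{2}D^*$, and your fix of choosing $\widehat{\phi}$ supported in $\tfrac{1}{2}\B$ (still consistent with Definition \ref{def: phi}) is harmless, since none of the later applications are sensitive to this absolute constant.
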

    \begin{definition}
        [A partition of unity]
        \label{def: zeta} Let $\zeta\in\mathcal{S}(\R^2)$ satisfy the following conditions 
        \begin{itemize}
           \item $\zeta(x)=0$ for all $x\notin [-1,1]^2$;
            \item $\zeta(x)=1$ for all $x\in [-1/2,1/2]^2$;
            \item $0\leq\zeta(x)\leq 1$ for all $x\in\R^2$.
        \end{itemize}
        It follows that $$1\leq\sum_{n\in\mathbb{Z}^2}\zeta(x-n)\leq 9 \quad\text{for all}\quad x\in\R^2.$$
        For the square $Q_n:=n+[-1/2,1/2]^2$ centred at $n\in\mathbb{Z}^2$, define $$\psi_{Q_n}(x):=\frac{\zeta(x-n)}{\sum_{n\in\mathbb{Z}^2}\zeta(x-n)}.$$ It follows that \begin{itemize}
           \item $\psi_{Q_n}(x)=0$ for $x\notin 2Q_n$;
           \item $\sum_{n\in\mathbb{Z}^2}\psi_{Q_n}(x)=1$ for all $x\in\R^2$;
            \item $0\leq \psi_{Q_n}(x)\leq 1$ for all $x\in\R^2$.
        \end{itemize}
        Let $\T$ denote a disjoint family of congruent parallelograms tiling $\R^2$. For each $T\in\T$, let $A_T$ denote some affine transformation mapping $T$ onto $Q_0$, and define $$\psi_T(x):=\psi_{Q_0}(A_Tx).$$ Then the family $\{\psi_T\}_{T\in\T}$ forms a smooth partition of unity sub-ordinate to the cover $\T$, satisfying
        \begin{itemize}
          \item $\psi_{T}(x)=0$ for $x\notin 2T$;
            \item $\sum_{T\in\T}\psi_{T}(x)=1$ for all $x\in\R^2$;
           \item $0\leq \psi_{T}(x)\leq 1$ for all $x\in\R^2$.
        \end{itemize}
    \end{definition}
    We use the partition of unity described above to perform wave-packet decompositions of the function $f$. This is described below.
\subsection{A pruning process for wave-packets}\label{sec: pruning} In this part of the argument, we modify the original function $f$ by considering its wave-packet decomposition, and pruning out the \textit{bad wave-packets}, where we do not have good control over $\|f_{\tau_k}\|_{L^\infty}$ (recall the definition of $f_{\tau_k}$ from \S \ref{sec: pigeonholing}).

Let us first describe the wave-packet decomposition. For each $\tau_k$, let $\T_{\tau_k}$ denote a collection of essentially disjoint rectangles congruent to $\tau_k^*$ that tiles $\R^2$. We form the partition of unity $\{\psi_T\}_{T\in\T_{\tau_k}}$ sub-ordinate to $\T_{\tau_k}$ as described in Definition \ref{def: zeta}. In light of \eqref{eqn: f representation}, we have the following wave-packet decomposition $$f=\sum_{\tau_k\in\calt_k}\sum_{T\in\T_{\tau_k}}\psi_Tf_{\tau_k}.$$ 
Since our estimates are localised to the $R\times R$ square $B_R$, we 
may pass to the subcollection of \textit{close wave-packets} $$\T^{\mathrm{cl}}_{\tau_k}:=\{T\in\T_{\tau_k}:T\cap 10B_R\neq\emptyset\},$$ so that $$f(x)=\sum_{\tau_k\in\calt_k}\sum_{T\in\T^{\mathrm{cl}}_{\tau_k}}\psi_T(x)f_{\tau_k}(x)\quad\text{for all}\quad x\in B_R.$$ 

We categorize the wave-packets as \textit{good} or \textit{bad}, according to their amplitudes. The quantity $G$ introduced in Notation \ref{not: G} will set the threshold. We do this process inductively, starting at the finest level $k=N$, and progressing down to $k=1$. For each $\tau_N\in\calt_N$ define the subcollection of good wave-packets $$\T_{\tau_N}^{\mathrm{g}}:=\{T\in\T^{\mathrm{cl}}_{\tau_N}:\|\psi_Tf_{\tau_N}\|_{L^\infty}\leq G\}.$$ After removing the bad wave-packets $\T^{\mathrm{b}}_{\tau_N}:=\T^{\mathrm{cl}}_{\tau_N}\setminus\T^{\mathrm{g}}_{\tau_N}$, we define the \textit{pruned} function $$f_{N,\tau_N}:=\sum_{T\in\T_{\tau_N}^{\mathrm{g}}}\psi_Tf_{\tau_N}.$$ For each $\tau_{N-1}\in\calt_{N-1}$, we also define the pruned function $$f_{N,\tau_{N-1}}:=\sum_{\tau_N\in\calt_N(\tau_{N-1})}f_{N,\tau_N},$$ as well as the pruned function $$f_N:=\sum_{\tau_N\in\calt_N}f_{N,\tau_N}.$$
At level $k$, we identify the good wave-packets $\T^{\mathrm{g}}_{\tau_k}\subseteq\T^{\mathrm{cl}}_{\tau_k}$, and subsequently define the pruned function $f_{k,\tau_k}:=\sum_{T\in\T^{\mathrm{g}}_{\tau_k}}\psi_Tf_{k+1,\tau_k}$ for each $\tau_k\in\calt_k$. We also define the pruned function $f_{k,\tau_{k-1}}:=\sum_{\tau_k\in\calt_k(\tau_{k-1})}f_{k,\tau_k}$ for each $\tau_{k-1}\in\calt_{k-1}$, and $f_k:=\sum_{\tau_k\in\calt_k}f_{k,\tau_k}$. At level $k-1$, we define the good wave-packets $$\T_{\tau_{k-1}}^{\mathrm{g}}:=\{T\in\T^{\mathrm{cl}}_{\tau_{k-1}}:\|\psi_Tf_{k,\tau_{k-1}}\|_{L^\infty}\leq G\}\quad\text{for each}\quad \tau_{k-1}\in\calt_{k-1}.$$ By pruning out the bad wave-packets $\T^{\mathrm{b}}_{\tau_{k-1}}:=\T^{\mathrm{cl}}_{\tau_{k-1}}\setminus\T^{\mathrm{g}}_{\tau_{k-1}},$ we define the functions $$f_{k-1,\tau_{k-1}}:=\sum_{T\in\T_{\tau_{k-1}}^{\mathrm{g}}}\psi_Tf_{k,\tau_{k-1}}\quad\text{for each}\quad \tau_{k-1}\in\calt_{k-1}.$$ We also define the functions $$f_{k-1,\tau_{k-2}}:=\sum_{\tau_{k-1}\in\calt_{k-1}(\tau_{k-2})}f_{k-1,\tau_{k-1}},\quad\text{for all}\quad\tau_{k-2}\in\calt_{k-2},$$ as well as the function $$f_{k-1}:=\sum_{\tau_{k-1}\in\calt_{k-1}}f_{k-1,\tau_{k-1}}.$$ 
The process terminates at the level $k=1$, producing the pruned functions $f_{1,\tau_1}$ for each $\tau_1\in\calt_1$, and the pruned function $f_{1,\tau_0}=f_1$.
\begin{remark}
    The pruned functions $f_{k,\tau_k}$ and $f_{k+1,\tau_k}$ are both modified versions of the function $f_{\tau_k}$, and the first subscripts indicate the respective level at which they have been defined. Similarly, the function $f_k$ is a modified version of $f$, and the subscript $k$ indicates that the function has been produced at the level $k$.
\end{remark}
\begin{lemma} [Properties of $f_k$]\label{lemma: properties of f_k} For each $\tau_k\in\calt_k$, the pruned functions satisfy the following properties:
    \begin{enumerate}[label=(\roman*)]
        \item $|f_{k,\tau_k}|\leq |f_{k+1,\tau_k}|$;
        \item $\|f_{k,\tau_k}\|_{L^\infty}\leq 9G$;
        \item $f_{k+1,\tau_k}$ and $f_{k,\tau_k}$ are essentially Fourier supported in $\tau_k$ in the following sense $$\|f_{k+1,\tau_{k}}-f_{k+1,\tau_{k}}*\rho_{\tau_{k}}\|_{L^\infty(\R^2)}\lesssim R^{-100},\quad\text{and}\quad \|f_{k,\tau_{k}}-f_{k,\tau_{k}}*\rho_{\tau_{k}}\|_{L^\infty(\R^2)}\lesssim R^{-100};$$
        \item $f_{k,\tau_k}$ is supported in $10B_R$.
    \end{enumerate}
\end{lemma}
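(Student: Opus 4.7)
Properties (i) and (ii) follow directly from the partition-of-unity structure. Writing $f_{k,\tau_k}(x) = \sum_{T \in \T^{\mathrm{g}}_{\tau_k}} \psi_T(x)\, f_{k+1,\tau_k}(x)$, the bounds $0 \le \psi_T \le 1$ and $\sum_{T \in \T_{\tau_k}} \psi_T \equiv 1$ give (i) immediately upon restricting to the subcollection $\T^{\mathrm{g}}_{\tau_k}$. For (ii) I would observe that $\{\psi_T\}_{T \in \T_{\tau_k}}$ has bounded overlap at most $9$, since $\psi_T$ is supported in $2T$ and the $T$ form a tiling by congruent parallelograms. By the definition of $\T^{\mathrm{g}}_{\tau_k}$, each nonzero summand is bounded by $G$ in sup norm, so $\|f_{k,\tau_k}\|_{L^\infty} \le 9G$. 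Property (iv) is similarly immediate: the bounds $\lambda_k^{-1}, R_k \le R$ from \eqref{eqn: lambda k absolute range} guarantee $\operatorname{diam}(T) \lesssim R$, and each $T \in \T^{\mathrm{g}}_{\tau_k} \subseteq \T^{\mathrm{cl}}_{\tau_k}$ meets $10 B_R$, so the union $\bigcup_{T \in \T^{\mathrm{g}}_{\tau_k}} 2T$, which contains the support of $f_{k,\tau_k}$, is absorbed into $10 B_R$ after a harmless enlargement of the constant $10$.

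The substantive content is (iii), which I would prove by downward induction on $k$, starting from $k = N$. For the base case, $f_{\tau_N}$ is already Fourier supported in $\tau_N$ by construction (after the dilation adjustment above \eqref{eqn: tau k+1 containment amendment}). Multiplying by $\psi_T$, with $T$ a translate of $\tau_N^*$, convolves the Fourier side with $\widehat{\psi}_T$, a Schwarz function concentrated in $J(\tau_N)$; by choosing the seed $\zeta$ from Definition~\ref{def: zeta} to have sufficient decay, the portion of $\widehat{\psi_T f_{\tau_N}}$ living outside $R^\delta \cdot \tau_N$ is $O(R^{-100})$ in the relevant norm. Since $\widehat{\rho}_{\tau_N} \equiv 1$ on $R^\delta \cdot \tau_N$ by Lemma~\ref{lemma: rho}, the base case follows after summing over $T \in \T^{\mathrm{g}}_{\tau_N}$.

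For the inductive step $k \to k-1$, I assume each $f_{k,\tau_k}$ is essentially Fourier supported in $R^\delta 32^{N-k} \cdot \tau_k$. Using the amended containment $\tau_k \subseteq 3 \cdot \tau_{k-1}$ from \eqref{eqn: tau k+1 containment amendment} and tracking centroids, one verifies the geometric inclusion $R^\delta 32^{N-k} \cdot \tau_k \subseteq R^\delta 32^{N-k+1} \cdot \tau_{k-1}$; the factor $32$ is calibrated exactly to absorb both the nested dilation and the small centroid shift induced by the tiny rotations of the multi-scale algorithm. Summing over $\tau_k \in \calt_k(\tau_{k-1})$ keeps $f_{k,\tau_{k-1}}$ essentially Fourier-inside the enlarged region. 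Multiplying by $\psi_T$ at level $k-1$ smears the Fourier side by $\widehat{\psi}_T$, concentrated in $J(\tau_{k-1})$ and hence comfortably within the region $\{\widehat{\rho}_{\tau_{k-1}} = 1\}$. The $R^{-100}$ error bound is harvested at each step by a standard convolution estimate using the weight functions $\omega_{\tau_{k-1}}$ of Definition~\ref{def: w,omega} together with Young's inequality and the $L^1$ bounds of Lemma~\ref{lemma: L^1 norm of omega}.

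The main obstacle is (iii): the Fourier-support enlargement must be made quantitative and then iterated $N \le 1/\epsilon$ times without the accumulated losses swallowing the $R^{-100}$ budget. The key is that the buffers $R^\delta$ and $32^{N-k}$ in the definition of $\rho_{\tau_k}$ are strong enough to dominate both the Schwarz-tail enlargement at each pruning step and the nested-dilation enlargement from $\tau_{k+1} \subseteq 3 \cdot \tau_k$, while the $1/\epsilon$ depth of the induction contributes at most a polynomial-in-$1/\epsilon$ factor, which is harmless compared with $R^{-100}$ in the regime $R \gg_\epsilon 1$ fixed in \S\ref{sec: pigeonholing}.
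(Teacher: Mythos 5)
Your plan follows the paper's own route: (i), (ii) and (iv) are read off from the partition of unity, the overlap bound $9$, and the restriction to close wave-packets, and (iii) is proved by downward induction from $k=N$ using the buffers $R^\delta 32^{N-k}$, the nesting $\tau_{k+1}\subseteq 3\cdot\tau_k$, and the rapid decay of $\widehat{\psi}_T$, with $O(R^{-100})$ errors accumulated over at most $1/\epsilon$ levels; this is exactly the argument of Lemma \ref{lemma: rapid decay induction} in the appendix. (Minor point: for the first inequality the base case $k=N$ is exact, since $f_{N+1,\tau_N}=f_{\tau_N}$ is compactly Fourier supported, so no Schwartz-tail estimate is needed there; the tail argument you describe is what is needed for the pruned function $f_{N,\tau_N}$.)

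One detail in your step (iii) is misdirected and would need repair in a full write-up: the per-step error is not closed by ``a convolution estimate with the weights $\omega_{\tau_{k-1}}$ and Lemma \ref{lemma: L^1 norm of omega}'' --- those weights play no role in this lemma. What is actually needed is (a) a splitting of each summand into a main term involving $f_{k+2,\tau_{k+1}}*\rho_{\tau_{k+1}}$ and an error $\mathcal{E}_{\tau_{k+1}}$ handled by the induction hypothesis together with $\|\psi_T\|_{L^\infty}\le 1$ and $\|\rho_{\tau_k}\|_{L^1}\lesssim 1$; (b) for the main term, the geometric inclusion $2R^\delta 32^{N-k-1}\cdot\tau_{k+1}\subseteq 16R^\delta 32^{N-k-1}\cdot\tau_k$ forcing the frequency variable of $\widehat{\psi}_T$ into the far tail (with $\tau_{k+1}^{**}\subseteq 8\cdot\tau_k^{**}$); and (c) an a priori bound $\|f_{k+2,\tau_{k+1}}\|_{L^\infty}\lesssim R^{\epsilon/2}G\lesssim R^2$ (via (ii) and Lemma \ref{lemma: number of boxes}) together with the crude count $\#\T^{\mathrm{g}}_{\tau_{k+1}}\lesssim R^{O(1)}$, so that summing the Schwartz tails over wave-packets still leaves $O(R^{-100(k+1)})$. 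Since $\widehat{\psi}_T$ is only rapidly decaying, never compactly supported, the tail sum is not controlled without (c); your plan omits this sup-norm and counting input. Your caveat in (iv) about enlarging the constant $10$ is fine and is glossed over in the paper as well.
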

\begin{proof}
    (i) This follows from the choice of the partition of unity $\{\psi_T\}_{T\in\T_{\tau_k}}$. 
    
    \noindent (ii) Again by the choice of the partition of unity, for each $x\in\R^2$, we have $$f_{k,\tau_k}(x)=\sum_{\substack{T\in\T_{\tau_k}^{\mathrm{g}}\\ x\in 2T}}\psi_T(x)f_{k+1,\tau_k}(x).$$ 
    Now $\#\{T\in\T_{\tau_k}:x\in 2T\}\leq 9$, and by definition of the collection $\T_{\tau_k}^{\mathrm{g}}$, we have  $$|\psi_Tf_{k+1,\tau_k}(x)|\leq G\quad\text{for all}\quad T\in\T_{\tau_k}^{\mathrm{g}}.$$ Thus, $$|f_{k,\tau_k}(x)|\leq\sum_{\substack{T\in\T_{\tau_k}^{\mathrm{g}}\\ x\in 2T}}|\psi_T(x)f_{k+1,\tau_k}(x)|\leq 9G.$$ 

    \noindent (iii) From Lemma \ref{lemma: rho}, we observe that $f_{k+1,\tau_{k}}*\rho_{\tau_{k}}$ is compactly Fourier supported around $\tau_k$, and its Fourier transform is equal to $\widehat{f}_{k+1,\tau_k}$ on $\tau_k$. Each unmodified $f_{\tau_k}$ is compactly Fourier supported on $\tau_k$, and the modified versions $f_{k+1,\tau_k}$ are obtained by successively multiplying the original version by spatial cut-offs $\psi_T$, introducing uncertainty on the Fourier side. However, the rectangles involved in the construction of $f_{k+1,\tau_k}$ all have scale larger or equal to that of $\tau_k^*$. Hence, the modified functions $\widehat{f}_{k+1,\tau_k}$ are essentially supported on $\tau_k$, since the scale of uncertainty is smaller than or equal to that of $\tau_k$. See the appendix for a rigorous proof of (iii).

    \noindent (iv) This follows from the fact that the good wave-packets are chosen to be in $\T^{\mathrm{cl}}_{\tau_k}$, which are the rectnagles that intersect $10B_R$.
\end{proof}
\begin{definition} [Definition of $g_k$]
\label{def: g_k}
    For $1\leq k\leq N$, we define the square functions $g_k$ as $$g_k:=\sum_{\tau_k\in\calt_k}|f_{k+1,\tau_k}|^2*\omega_{\tau_k},$$
    where $f_{N+1,\tau_N}:=f_{\tau_N}$.
\end{definition}
For $k=N$, this yields an averaged version of the original square function $\sum_\theta|f_\theta|^2$. Convolving with $\omega_{\tau_k}$ aids us in the application of certain uncertainty heuristics. Note that by Young's convolution inequality and Lemma \ref{lemma: L^1 norm of omega}, we have $$\||f_{k+1,\tau_k}|^2*\omega_{\tau_k}\|_{L^p(\R^2)}\lesssim_\epsilon R^{2\delta}\|f_{k+1,\tau_k}^2\|_{L^p(\R^2)},$$ which can be used to remove the convolutions from our final decoupling estimate.
\begin{lemma}
    [Controlling the bad wave-packets]\label{lemma: bad wave packets} For each $\tau_1\in\calt_1$, we have $$|\sum_{\tau_k\in\calt_k(\tau_1)}\sum_{T\in\T_{\tau_k}^{\mathrm{b}}}\psi_Tf_{k+1,\tau_k}|\leq G^{-1}g_k+R^{-100}.$$
\end{lemma}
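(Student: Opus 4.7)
The plan is to exploit the two defining features of bad wave-packets: they satisfy $\|\psi_T f_{k+1,\tau_k}\|_{L^\infty}>G$, and $f_{k+1,\tau_k}$ has (essentially) Fourier support in $\tau_k$. These combine via a locally constant property to dominate each bad piece pointwise by the corresponding summand in $g_k$ times $G^{-1}$.

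First, I would establish a locally constant estimate: for every $\tau_k\in\calt_k$, every $T\in\T_{\tau_k}$, and every $x,y\in 2T$,
\begin{equation*}
|f_{k+1,\tau_k}(y)|^{2}\;\lesssim\;|f_{k+1,\tau_k}|^{2}*\omega_{\tau_k}(x)\;+\;R^{-100}.
\end{equation*}
The derivation uses Lemma \ref{lemma: properties of f_k}(iii) to replace $f_{k+1,\tau_k}$ by $f_{k+1,\tau_k}*\rho_{\tau_k}$ at the cost of an $L^\infty$ error $O(R^{-100})$, followed by Cauchy--Schwarz together with the bound $\|\rho_{\tau_k}\|_{L^1}\lesssim 1$ from Lemma \ref{lemma: rho} to pass to $|f_{k+1,\tau_k}|^{2}*|\rho_{\tau_k}|(y)$. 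Since $T$ is congruent to $\tau_k^{*}$, any $x,y\in 2T$ satisfy $y-x\in 4\tau_k^{*}$, hence for every $z$ we have $|\rho_{\tau_k}(y-z)|\le w_{\tau_k}(x-z)\le\omega_{\tau_k}(x-z)$ directly from the sup-definition of $w_{\tau_k}$ in Definition \ref{def: w,omega}. This exchanges the base-point of the convolution.

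Next, I would combine this with the defining inequality for $T\in\T^{\mathrm{b}}_{\tau_k}$. For any $x\in\R^{2}$ that lies in some $2T$ with $T\in\T^{\mathrm{b}}_{\tau_k}$,
\begin{equation*}
|f_{k+1,\tau_k}(x)|\;\le\;\|f_{k+1,\tau_k}\|_{L^\infty(2T)}\;\le\;G^{-1}\|f_{k+1,\tau_k}\|_{L^\infty(2T)}^{2}\;\lesssim\;G^{-1}\bigl(|f_{k+1,\tau_k}|^{2}*\omega_{\tau_k}(x)+R^{-100}\bigr),
\end{equation*}
where the middle step uses $\|f_{k+1,\tau_k}\|_{L^\infty(2T)}\ge\|\psi_T f_{k+1,\tau_k}\|_{L^\infty}>G$. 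Because $\sum_{T\in\T^{\mathrm{b}}_{\tau_k}}\psi_T(x)\le 1$, the same pointwise bound holds for $\bigl|\sum_{T\in\T^{\mathrm{b}}_{\tau_k}}\psi_T(x)f_{k+1,\tau_k}(x)\bigr|$ (and the quantity is zero when no bad $T$ contains $x$ in its double).

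Finally, I would sum over $\tau_k\in\calt_k(\tau_1)$. The convolution terms assemble to $G^{-1}g_k(x)$ by Definition \ref{def: g_k}. The error terms total $O\bigl(G^{-1}R^{-100}\#\calt_k(\tau_1)\bigr)$, which is negligible since $G^{-1}\lesssim R^{2}$ (from $\alpha\le R^{1/2}$, $\beta\ge R^{-1/2}\alpha^{2}$) and $\#\calt_k(\tau_1)\lesssim_\epsilon R^{1/2}$ by Lemma \ref{lemma: number of boxes}, so one may absorb this into $R^{-100}$ after slightly increasing the decay exponents throughout. The implicit constant produced by Cauchy--Schwarz and by the bounded overlap of $\{2T:T\in\T_{\tau_k}\}$ is absorbed into $G$ by choosing the parameter $A_\epsilon$ of Notation \ref{not: G} large enough.

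The main technical obstacle is the locally constant step, specifically the weight transfer $|\rho_{\tau_k}(y-z)|\le\omega_{\tau_k}(x-z)$ that moves the center of the convolution from $y$ to an arbitrary $x\in 2T$. This is precisely what the sup-over-$4\tau_k^{*}$ built into the definition of $w_{\tau_k}$ is engineered to allow, and once it is invoked the remainder of the argument is purely a matter of inserting the constants.
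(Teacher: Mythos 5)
Your proposal is correct and follows essentially the same route as the paper's proof: the threshold inequality for bad wave-packets inserts the factor $G^{-1}\|\cdot\|_{L^\infty}^2$, Lemma \ref{lemma: properties of f_k}(iii) plus Cauchy--Schwarz passes to $|f_{k+1,\tau_k}|^2*|\rho_{\tau_k}|$, and the sup built into $w_{\tau_k}$ over $4\tau_k^*$ transfers the base point within $2T$ so that $\omega_{\tau_k}$ appears, after which summation and the bounds $G\geq R^{-1}$, $\#\calt_k\lesssim_\epsilon R^{1/2}$ handle the error terms exactly as in the paper. The only differences are cosmetic bookkeeping (you use $\sum_T\psi_T\leq 1$ where the paper sums over the $O(1)$ bad tiles containing $x$).
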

\begin{proof}
    By relaxing the outer summation, for each $x\in\R^2$ we have 
    \begin{equation}
        \label{eqn: bad wave-packets}
|\sum_{\tau_k\in\calt_k(\tau_1)}\sum_{T\in\T_{\tau_k}^{\mathrm{b}}}\psi_T(x)f_{k+1,\tau_k}(x)|\leq G^{-1}\sum_{\tau_k\in\calt_k}\sum_{\substack{T\in\T_{\tau_k}^{\mathrm{b}}\\x\in 2T}}\|\psi_Tf_{k+1,\tau_k}\|^2_{L^\infty},
    \end{equation}
     since for each $T\in\T_{\tau_k}^{\mathrm{b}}$ we have $\|\psi_Tf_{k+1,\tau_k}\|_{L^\infty}\geq G.$ Now fix $T\in\T^{\mathrm{b}}_{\tau_k}$ satisfying $x\in 2T$, and let $\|\psi_Tf_{k+1,\tau_k}\|^2_{L^\infty}=|\psi_T(y)f_{k+1,\tau_k}(y)|$ for some $y\in 2T$. By Lemma \ref{lemma: properties of f_k} (iii), we know that
    \begin{equation}
        \label{eqn: local constancy use 1}
        |f_{k+1,\tau_k}(y)-f_{k+1,\tau_k}*\rho_{\tau_k}(y)|\lesssim R^{-100}.
    \end{equation}
    By Cauchy--Schwarz and Lemma \ref{lemma: rho}, we find $$|f_{k+1,\tau_k}*\rho_{\tau_k}(y)|^2\lesssim|f_{k+1,\tau_k}|^2*|\rho_{\tau_k}|(y).$$
    Next, observe that $$|f_{k+1,\tau_k}|^2*\omega_{\tau_k}(x)=\int|f_{k+1,\tau_k}(u)|^2*\omega_{\tau_k}(x-u)\dd{u}\geq \int|f_{k+1,\tau_k}(u)|^2*w_{\tau_k}(x-u)\dd{u},$$ since $\omega_{\tau_k}\geq w_{\tau_k}$ by Definition \ref{def: w,omega}. From the definition we also have $$w_{\tau_k}(x-u)=\sup_{z\in x+4\tau_k^*}|\rho_{\tau_k}(z-u)|\geq |\rho_{\tau_k}(y-u)|,$$ where the last step follows from the fact that $x,y\in 2T$, so that $y-x\in 2T-2T\subseteq 4\tau_k^*$. Combining the above, we find \begin{equation}
        \label{eqn: rho to omega}
        |f_{k+1,\tau_k}*\rho_{\tau_k}(y)|^2\lesssim|f_{k+1,\tau_k}|^2*\omega_{\tau_k}(x).
    \end{equation}
    Thus, for the $T$ fixed above, we have $$\|\psi_Tf_{k+1,\tau_k}\|^2_{L^\infty}=|\psi_T(y)f_{k+1,\tau_k}(y)|^2\leq|f_{k+1,\tau_k}(y)|^2\lesssim |f_{k+1,\tau_k}(y)-f_{k+1,\tau_k}*\rho_{\tau_k}(y)|^2+|f_{k+1,\tau_k}*\rho_{\tau_k}(y)|^2,$$ where we have used the fact that $\psi_T(y)\leq 1$. Then using \eqref{eqn: local constancy use 1} with \eqref{eqn: rho to omega}, we get $$\|\psi_Tf_{k+1,\tau_k}\|^2_{L^\infty}\lesssim |f_{k+1,\tau_k}|^2*\omega_{\tau_k}(x)+R^{-200}.$$ This holds for all $T\in\T^{\mathrm{b}}_{\tau_k}$ satisfying $x\in 2T$. Using this in \eqref{eqn: bad wave-packets}, and observing that $\#\{T\in\T^{\mathrm{b}}_{\tau_k}:x\in 2T\}\lesssim 1$, we get $$|\sum_{\tau_k\in\calt_k(\tau_1)}\sum_{T\in\T_{\tau_k}^{\mathrm{b}}}\psi_T(x)f_{k+1,\tau_k}(x)|\lesssim G^{-1}\sum_{\tau_k\in\calt_k}|f_{k+1,\tau_k}|^2*\omega_{\tau_k}(x)+G^{-1}(\#\calt_k)R^{-200}.$$
    Since $G\geq R^{-1}$ and $\#\calt_k\leq R^{k\epsilon/2}$, we have $G^{-1}(\#\calt_k)R^{-200}\leq R^{-100}$, and the desired conclusion follows. 
\end{proof}
\subsection{High/low decomposition}
In this section we describe the \textit{high/low decomposition}, which is central to the main argument.
\begin{definition}
    [High and low parts]
    \label{def: hi lo}
    Recall the function $\eta_{k+1}$ introduced in Definition \ref{def: eta}. We define the \textit{low part} of $g_k$ as $$g_k^{\mathrm{lo}}:=g_k*\eta_{k+1},$$ and the \textit{high part} as $$g_k^{\mathrm{hi}}:=g_k-g_k^{\mathrm{lo}}.$$ 
\end{definition}
     Then we have \begin{equation}
    \label{eqn: hi+lo} g_k\leq |g_k^{\mathrm{hi}}|+|g_k^{\mathrm{lo}}|.
\end{equation}
The following lemmas describe the desirable features of the high and the low parts of $g_k$, respectively. We state these lemmas presently, briefly explaining their roles in the proof of Theorem \ref{thm: weak type}, and provide the proofs afterwards. 
\begin{lemma} 
    [High Lemma]\label{lemma: high lemma} There exists an absolute constant $C_{\mathrm{hi\,}}\geq 1$ such that 
    $$\|g_k^{\mathrm{hi}}\|_{L^2(Q_R)}^2\leq C_{\mathrm{hi\,}}  R^{2\epsilon}\sum_{\tau_k\in\calt_k}\|f_{k+1,\tau_k}\|_{L^4}^4+R^{-90},$$ for all $R$-ball $Q_R$.
\end{lemma}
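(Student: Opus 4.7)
The plan is to use Plancherel together with the Fourier localisation inherent in $g_k^{\mathrm{hi}}$. Since $Q_R\subset\R^2$ and Lemma \ref{lemma: eta} tells us that $1-\widehat{\eta}_{k+1}$ vanishes on $B(0,\lambda_{k+1})$ and is $O(1)$ globally, Plancherel gives
\begin{equation*}
\|g_k^{\mathrm{hi}}\|_{L^2(Q_R)}^2\lesssim\int_{|\xi|\geq\lambda_{k+1}}|\widehat{g}_k(\xi)|^2\,\dd{\xi}.
\end{equation*}
To make Fourier supports honestly compact, I would replace each $f_{k+1,\tau_k}$ by $\tilde f_{\tau_k}:=f_{k+1,\tau_k}*\rho_{\tau_k}$, which by Lemma \ref{lemma: rho} has Fourier support in $\tau_k^\sharp:=2R^\delta\cdot 32^{N-k}\cdot\tau_k$. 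Combining Lemma \ref{lemma: properties of f_k}(iii), the uniform bound $\|f_{k+1,\tau_k}\|_{L^\infty}\leq 9G\lesssim R^{5\epsilon}$, the spatial localisation of $f_{k+1,\tau_k}$ in $10B_R$, Lemma \ref{lemma: L^1 norm of omega}, and $\#\calt_k\lesssim R^{1/2}$, the surrogate $\tilde g_k:=\sum_{\tau_k\in\calt_k}|\tilde f_{\tau_k}|^2*\omega_{\tau_k}$ satisfies $\|g_k^{\mathrm{hi}}-\tilde g_k^{\mathrm{hi}}\|_{L^2(\R^2)}^2\lesssim R^{-90}$, which will be absorbed into the tail.

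The payoff of working with $\tilde g_k$ is that $\widehat{|\tilde f_{\tau_k}|^2}$ is exactly supported in $\tau_k^\sharp-\tau_k^\sharp$, a rectangle centred at $0$ of dimensions $\lesssim C_\epsilon R^\delta\lambda_k\times C_\epsilon R^\delta R_k^{-1}$ with long direction parallel to $\mathbf{t}(t_k)$. The crux of the proof is the multiplicity estimate
\begin{equation*}
M:=\sup_{|\xi|\geq\lambda_{k+1}}\#\{\tau_k\in\calt_k:\xi\in\tau_k^\sharp-\tau_k^\sharp\}\lesssim_\epsilon R^{\epsilon+\delta}.
\end{equation*}
Indeed, writing $|\xi|=r\geq\lambda_{k+1}$ and letting $\alpha$ denote the angle between $\xi$ and $\mathbf{t}(t_k)$, membership in $\tau_k^\sharp-\tau_k^\sharp$ forces $r|\sin\alpha|\lesssim C_\epsilon R^\delta R_k^{-1}$, so $\mathbf{t}(t_k)$ is confined to an angular window of size $\Theta\lesssim C_\epsilon R^\delta R_k^{-1}/r$. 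By the direction separation Lemma \ref{lemma: direction separation tau_k}, tangents of four-apart boxes of $\calt_k$ are separated by at least $(2\lambda_k R_k)^{-1}$, so the number of contributing $\tau_k$ is $\lesssim\Theta\lambda_k R_k+1\lesssim C_\epsilon R^\delta\lambda_k/r+1$. The pigeonholed relation $\lambda_k\leq 2R^\epsilon\lambda_{k+1}\leq 2R^\epsilon r$ from \eqref{eqn: lambda k relative range} together with the eccentricity $\lambda_k R_k\geq R^{2\epsilon}$ from Lemma \ref{lemma: tau k eccentricity} yields the claim.

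With $M$ in hand, writing $\widehat{\tilde g}_k(\xi)=\sum_{\tau_k:\xi\in\tau_k^\sharp-\tau_k^\sharp}\widehat{|\tilde f_{\tau_k}|^2}(\xi)\widehat{\omega}_{\tau_k}(\xi)$, Cauchy--Schwarz on the sum (restricted to the contributing boxes) together with $|\widehat{\omega}_{\tau_k}|\leq\|\omega_{\tau_k}\|_{L^1}\lesssim R^{2\delta}$ from Lemma \ref{lemma: L^1 norm of omega} yields $|\widehat{\tilde g}_k(\xi)|^2\lesssim_\epsilon R^{\epsilon+5\delta}\sum_{\tau_k:\xi\in\tau_k^\sharp-\tau_k^\sharp}|\widehat{|\tilde f_{\tau_k}|^2}(\xi)|^2$. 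Integrating and applying Plancherel,
\begin{equation*}
\int_{|\xi|\geq\lambda_{k+1}}|\widehat{\tilde g}_k(\xi)|^2\,\dd{\xi}\lesssim_\epsilon R^{\epsilon+5\delta}\sum_{\tau_k\in\calt_k}\|\tilde f_{\tau_k}\|_{L^4(\R^2)}^4.
\end{equation*}
Since $\delta=\epsilon^{10}$, $R^{\epsilon+5\delta}\leq R^{2\epsilon}$; transferring back from $\tilde f_{\tau_k}$ to $f_{k+1,\tau_k}$ using Lemma \ref{lemma: properties of f_k}(iii) introduces another $R^{-90}$-size error and completes the proof.

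The main obstacle is the multiplicity count: without the pigeonholed length comparison $\lambda_k\lesssim R^\epsilon\lambda_{k+1}$, the overlap at a given frequency could depend acutely on the actual lengths of individual boxes, and without the direction separation Lemma \ref{lemma: direction separation tau_k} (itself a substantive consequence of convexity) no angular control would be available at all. Everything else -- the Plancherel setup, the wave-packet tail handling, and the transfer between $f_{k+1,\tau_k}$ and its Fourier-compact surrogate -- is routine bookkeeping.
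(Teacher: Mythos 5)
Your argument is correct and is essentially the paper's own proof: both hinge on the fact that $g_k^{\mathrm{hi}}$ has Fourier support off $B(0,\lambda_{k+1})$, that each $|f_{k+1,\tau_k}*\rho_{\tau_k}|^2$ is Fourier supported in a thin tube $\approx R^{O(\delta)}(\tau_k-\tau_k)$ through the origin, and that Lemma \ref{lemma: direction separation tau_k} together with $\lambda_{k+1}\geq\tfrac12 R^{-\epsilon}\lambda_k$ gives overlap $\lesssim R^{\epsilon+O(\delta)}$ of these tubes outside that ball, after which Cauchy--Schwarz and Plancherel yield the $\ell^4L^4$ expression. The only differences are cosmetic (you use global Plancherel and a pointwise multiplicity count in place of the $\phi_{Q_R}$ cutoff and the local orthogonality lemma) plus two harmless bookkeeping slips absorbed by the $R^{-90}$ budget: $\|f_{k+1,\tau_k}\|_{L^\infty}\lesssim R^{\epsilon}G$ rather than $9G$, and $G\lesssim_\epsilon R^{1+5\epsilon}$ rather than $R^{5\epsilon}$ since $\beta/\alpha$ can be as large as $R$ (and the passage from $\|\tilde f_{\tau_k}\|_{L^4}$ back to $\|f_{k+1,\tau_k}\|_{L^4}$ is cleanest via Young's inequality, with no error term at all).
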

The \textit{High Lemma} yields an $\ell^4L^4$ decoupling estimate by exploiting the geometry of the high-frequency part of $g_k$, which is essentially Fourier supported in a union of tubes having small overlap (see Figure \ref{fig: high/low}).
\begin{figure}
    \centering
    \includegraphics[width=12cm]{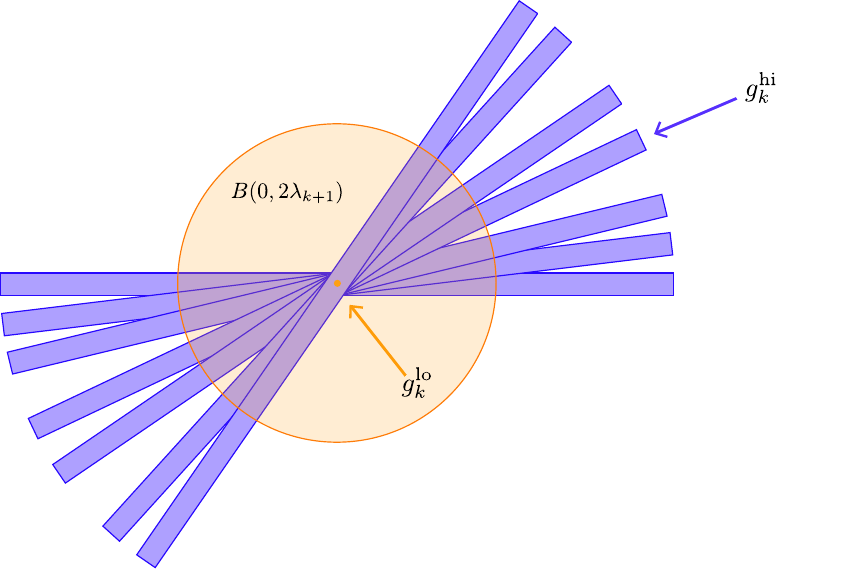}
    \caption{The high/low decomposition of $g_k$. The square function $g_k$ is essentially Fourier supported in a union of tubes that forms a bush centred at the origin. The low part $g_k^{\mathrm{lo}}$ is Fourier supported in $B(0,2\lambda_{k+1})$. The high part $g_k^{\mathrm{hi}}$ is essentially Fourier supported outside this ball, where these tubes have small overlap, as shown in this diagram. As a result, the terms $|f_{k+1,\tau_k}|^2*\omega_k$ are essentially orthogonal in this region.}
    \label{fig: high/low}
\end{figure}
\begin{lemma}
    [Low Lemma]\label{lemma: low lemma} There exists an absolute constant $C_{\mathrm{lo\,}}\geq 1$ such that
    $$|g_k^{\mathrm{lo}}|\leq C_{\mathrm{lo\,}}R^{2\delta} g_{k+1}+R^{-99}.$$
\end{lemma}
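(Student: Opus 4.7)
The plan is to perform a bilinear Fourier support analysis in the spirit of the GMW high/low argument. I first expand $g_k^{\mathrm{lo}}$ via \eqref{eqn: f_{tau_k} representation}:
\begin{equation*}
g_k * \eta_{k+1} = \sum_{\tau_k \in \calt_k} \sum_{\tau_{k+1}, \tau_{k+1}' \in \calt_{k+1}(\tau_k)} \big(f_{k+1,\tau_{k+1}} \overline{f_{k+1,\tau_{k+1}'}}\big) * \omega_{\tau_k} * \eta_{k+1}.
\end{equation*}
By Lemma \ref{lemma: properties of f_k}(iii), $f_{k+1,\tau_{k+1}}$ is essentially Fourier-supported in $\tau_{k+1}$ with $L^\infty$ error $R^{-100}$, so the bilinear product $f_{k+1,\tau_{k+1}} \overline{f_{k+1,\tau_{k+1}'}}$ has essential Fourier support in the Minkowski difference $\tau_{k+1} - \tau_{k+1}'$. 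Since $\widehat{\eta}_{k+1}$ is supported in $B(0, 2\lambda_{k+1})$, convolution by $\eta_{k+1}$ restricts attention to pairs with $(\tau_{k+1} - \tau_{k+1}') \cap B(0, 2\lambda_{k+1}) \neq \emptyset$, with the remaining terms contributing only $O(R^{-100})$ each in $L^\infty$.

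The next step is the geometric claim that for each $\tau_{k+1}$, only $O(1)$ boxes $\tau_{k+1}' \in \calt_{k+1}(\tau_k)$ satisfy this intersection condition. Since each box has Euclidean diameter $\lesssim \lambda_{k+1}$, the difference $\tau_{k+1} - \tau_{k+1}'$ lies within distance $\lesssim \lambda_{k+1}$ of $\Gamma(t_{k+1}) - \Gamma(t_{k+1}')$; by \eqref{eqn: admissible derivative bound} we have $|\Gamma(t_{k+1}) - \Gamma(t_{k+1}')| \geq |t_{k+1} - t_{k+1}'|$, and the pigeonholing \eqref{eqn: lambda k relative range} combined with $|I\vert_{\tau_{k+1}}| \approx \len(\tau_{k+1}) \in [\lambda_{k+1}, 2\lambda_{k+1}]$ forces $|t_{k+1} - t_{k+1}'| \gtrsim n\lambda_{k+1}$ whenever $\tau_{k+1}$ and $\tau_{k+1}'$ are separated by $n$ positions in $\calt_{k+1}(\tau_k)$ (ordered left to right along the curve). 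Hence the intersection condition $|\Gamma(t_{k+1}) - \Gamma(t_{k+1}')| \lesssim \lambda_{k+1}$ forces $n = O(1)$.

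For the surviving pairs, AM-GM gives $|f_{k+1,\tau_{k+1}} \overline{f_{k+1,\tau_{k+1}'}}| \leq \tfrac{1}{2}(|f_{k+1,\tau_{k+1}}|^2 + |f_{k+1,\tau_{k+1}'}|^2)$, reducing the estimate to a diagonal sum $\sum_{\tau_{k+1}} |f_{k+1,\tau_{k+1}}|^2 * \omega_{\tau_k} * |\eta_{k+1}|$. Applying the pruning bound $|f_{k+1,\tau_{k+1}}| \leq |f_{k+2,\tau_{k+1}}|$ from Lemma \ref{lemma: properties of f_k}(i), one is reduced to showing $\omega_{\tau_k} * |\eta_{k+1}| \lesssim R^{2\delta} \omega_{\tau_{k+1}}$ pointwise. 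Since $\omega_{\tau_{k+1}} \geq \omega_{\tau_k} * |\rho_{\tau_k}| * |\eta_{k+1}|$ by Definition \ref{def: w,omega}, this further reduces to the local-constancy estimate $\omega_{\tau_k}(x) \lesssim R^{2\delta} (\omega_{\tau_k} * |\rho_{\tau_k}|)(x)$, which follows from the fact that $\rho_{\tau_k}$ is concentrated on a spatial scale strictly finer than the scale on which $\omega_{\tau_k}$ varies.

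The principal obstacle is the geometric claim in the second paragraph. Unlike the parabolic case, where boxes at each scale have a fixed uniform length, here box lengths vary, and the argument relies crucially on the pigeonholing \eqref{eqn: lambda k relative range} together with the separation properties of Lemma \ref{lemma: direction separation tau_k} to rule out distant pairs. The aggregate error is controlled at $R^{-99}$ after summing the per-pair $R^{-100}$ errors over at most $(\#\calt_{k+1})^2 \leq R^{O(1)}$ pairs and absorbing the factor $\|f_{k+1,\tau_{k+1}}\|_{L^\infty} \lesssim R^{O(\epsilon)}$ coming from the normalisation \eqref{eqn: normalisation in sup norm} and Lemma \ref{lemma: properties of f_k}(ii).
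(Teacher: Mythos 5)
The first two-thirds of your argument is essentially the paper's proof in bilinear clothing: expanding the square, using the Fourier support of $\widehat{\eta}_{k+1}$ to kill far-apart pairs, and returning to the diagonal is exactly what the paper packages as local $L^2$-orthogonality (Lemma \ref{lemma: local orthogonality}) applied to the weight $\tilde\eta_{k+1}=\omega_{\tau_k}*\eta_{k+1}$, with the near-diagonal overlap count playing the role of the $O(R^{2\delta})$-overlap of the sets $2R^{2\delta}\cdot\tau_{k+1}+B(0,2\lambda_{k+1})$. That part is fine.

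The endgame, however, has a genuine gap. You are left with the weight $\omega_{\tau_k}*|\eta_{k+1}|$ and try to dominate it by $R^{2\delta}\omega_{\tau_{k+1}}$ via the ``local constancy'' claim $\omega_{\tau_k}\lesssim R^{2\delta}\,\omega_{\tau_k}*|\rho_{\tau_k}|$. This intermediate inequality is not established anywhere and is in fact false for $k\geq 2$: by Definition \ref{def: w,omega}, $\omega_{\tau_k}$ dominates $w_{\tau_j}$ for every ancestor $\tau_j$ of $\tau_k$, and $w_{\tau_j}$ is a bump of height $\sim R^{2\delta}|\tau_j|$ concentrated on the dual box $\tau_j^*$, which (since $\tau_j\supseteq$ a constant dilate of $\tau_k$) lives on a spatial scale \emph{finer} than the spread $\sim R^{-\delta}\tau_k^*$ of $\rho_{\tau_k}$. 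At a point $x$ on such a spike one has $\omega_{\tau_k}(x)\gtrsim R^{2\delta}|\tau_1|$, while $\omega_{\tau_k}*|\rho_{\tau_k}|(x)\lesssim\|\omega_{\tau_k}\|_{L^1}\|\rho_{\tau_k}\|_{L^\infty}\lesssim R^{4\delta}|\tau_k|$, and $|\tau_1|/|\tau_k|\gtrsim R^{(k-1)\epsilon}\gg R^{O(\delta)}$, so convolving with $|\rho_{\tau_k}|$ dilutes the spike by a power of $R$ far exceeding $R^{2\delta}$. The paper never needs any such reverse inequality for the weights: it first writes $f_{k+1,\tau_k}=\sum_{\tau_{k+1}}f_{k+1,\tau_{k+1}}*\rho_{\tau_{k+1}}+O(R^{-100})$ using Lemma \ref{lemma: properties of f_k}(iii), applies local orthogonality, and then Cauchy--Schwarz on the \emph{mollified} pieces, $|f_{k+1,\tau_{k+1}}*\rho|^2\lesssim|f_{k+1,\tau_{k+1}}|^2*|\rho|$, which generates the extra factor $|\rho_{\tau_k}|$ on the weight side for free; the resulting weight $\omega_{\tau_k}*|\rho_{\tau_k}|*|\eta_{k+1}|$ is then $\leq\omega_{\tau_{k+1}}$ \emph{by definition} — this is precisely why that third term is built into the recursive definition of $\omega_{\tau_{k+1}}$. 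To repair your argument, keep the mollifiers attached to the functions before squaring (or run the AM--GM step on $f_{k+1,\tau_{k+1}}*\rho_{\tau_{k+1}}$) rather than trying to prove a pointwise reverse-convolution inequality for $\omega_{\tau_k}$.
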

The \textit{Low Lemma} uses local $L^2$-orthogonality (Lemma \ref{lemma: local orthogonality}) to pointwise dominate the low-frequency part $g_k^{\mathrm{lo}}$ by the square function $g_{k+1}$.  

The idea is to combine the High and Low Lemmas in an iterative scheme to prove Theorem \ref{thm: weak type}. The square function at each scale is either high-dominated or low-dominated, in the sense of \eqref{eqn: hi+lo}. We apply the High Lemma to the high-dominated case leading to an $\ell^4L^4$ decoupled expression, which will be upgraded to the desired $\ell^2L^6$ decoupling estimate. In the low-dominated case, we apply the Low Lemma, which allows us to pass onto the next scale, at which point we repeat the whole process. The iteration scheme will continue until we hit the terminal scale. If the square functions are low-dominated at every scale, we will be able to prove a pointwise square function estimate, which trivially implies decoupling.

We now provide the proofs of these two lemmas.
\begin{proof}
    [Proof of the High Lemma]
    We start by smoothing out the sharp cut-off on the left-hand integral using Lemma \ref{lemma: phi} as follows $$\|g_k^{\mathrm{hi}}\|_{L^2(Q_R)}\lesssim\|g_k^{\mathrm{hi}}\cdot\phi_{Q_R}\|_{L^2(\R^2)}.$$ Next, we write $$g_k^{\mathrm{hi}}\cdot\phi_{Q_R}=\sum_{\tau_k\in\calt_k}h_{\tau_k},\qquad\text{where}\qquad h_{\tau_k}:=(|f_{k+1,\tau_k}|^2*\omega_{\tau_k})\cdot\phi_{Q_R}-(|f_{k+1,\tau_k}|^2*\omega_{\tau_k}*\eta_{k+1})\cdot\phi_{Q_R}.$$ 
    For $\mathcal{E}:=|f_{k+1,\tau_k}|^2-|f_{k+1,\tau_k}*\rho_{\tau_k}|^2$, we have $h_{\tau_k}=\tilde h_{\tau_k}+(\mathcal{E}*\omega_{\tau_k}-\mathcal{E}*\omega_{\tau_k}*\eta_{k+1})\cdot\phi_{Q_R}$, where $$\tilde h_{\tau_k}:=(|f_{k+1,\tau_k}*\rho_{\tau_k}|^2*\omega_{\tau_k})\cdot\phi_{Q_R}-(|f_{k+1,\tau_k}*\rho_{\tau_k}|^2*\omega_{\tau_k}*\eta_{k+1})\cdot\phi_{Q_R}.$$
    \medskip\noindent\underline{Fourier support:}  By Lemma \ref{lemma: rho}, we know that the function $$(|f_{k+1,\tau_k}*\rho_{\tau_k}|^2)\;\widehat{}\;=\big(\widehat{f}_{k+1,\tau_k}\cdot\widehat{\rho}_{\tau_k}\big)*\big(\widehat{f}_{k+1,\tau_k}\cdot\widehat{\rho}_{\tau_k}(\,-\,\cdot\,)\big)$$ is supported on $R^{2\delta}(\tau_k-\tau_k)$ for $R\gg_\epsilon 1$; and $\widehat\phi_{Q_R}$ is supported on $B(0,R^{-1})$. Hence, $\tilde h_{\tau_k}$ is Fourier supported in the box $R^{2\delta}(\tau_k-\tau_k)+B(0,R^{-1})$ centred at the origin. On the other hand by Lemma \ref{lemma: eta}, we know that $\widehat{\eta}_{k+1}(\xi)=1$ for all $\xi\in B(0,\lambda_{k+1})$. From here it follows that $$(\tilde h_{\tau_k})\;\widehat{}\;(\xi)=0\quad\text{for all}\quad\xi\in B(0,\lambda_{k+1}).$$ 
     By \eqref{eqn: lambda k relative range} we have $\lambda_{k+1}\geq (1/2)R^{-\epsilon}\lambda_k$.
    As such, we get $B(0,\lambda_{k+1})\supseteq B(0,\lambda_kR^{-\epsilon}/2)$. Combining the above we find that $\tilde h_{\tau_k}$ is Fourier supported in the set $\big[R^{2\delta}(\tau_k-\tau_k)+B(0,R^{-1})\big]\,\setminus\, B(0,\lambda_kR^{-\epsilon}/2)$. 
    
    \medskip\noindent\underline{Orthogonality:} 
    The set $R^{2\delta}(\tau_k-\tau_k)+B(0,R^{-1})$ is contained in a $16R^{2\delta}\lambda_k\times 16R^{{2\delta}-k\epsilon}$ tube $\bar\tau_k$, centred at the origin. By \eqref{eqn: tau k are eccentric}, we have $$\len(\tau_k)\geq R^{2\epsilon}\wid(\tau_k),\quad\text{so that}\quad \lambda_k\geq (1/2)R^{-(k-2)\epsilon},$$ where the last step follows from the pigeonholing. It follows that $$\lambda_k\geq 32R^{2\delta} R^{-(k-1)\epsilon},\quad\text{whenever}\quad R\geq 2^{\frac{5}{\epsilon-{2\delta}}}.$$ Thus for all $R\gg_\epsilon 1$, we get $\lambda_kR^{-\epsilon}/2\geq 16R^{{2\delta}-k\epsilon},$ which is the width of the tube $\bar\tau_k$. Hence, the set $\bar\tau_k^\circ:=\bar\tau_k\,\setminus\, B(0,\lambda_kR^{-\epsilon}/2)$ is roughly a $16R^{2\delta}\lambda_k\times 16R^{{2\delta}-k\epsilon}$ tube centred at the origin, with $(1/32)R^{-\epsilon-{2\delta}}$ proportion of its length removed from the middle. 
    It is clear that the long side of $\bar\tau_k^\circ$ is parallel to $\tau_k$. By an application of Lemma \ref{lemma: direction separation tau_k}, and simple trigonometry, it follows (see Figure \ref{fig: high/low}) that $$\#\{\tau_{k}^\circ\ni\xi\}\lesssim R^{\epsilon+{2\delta}}\quad\text{for all}\quad\xi\in\R^2.$$

    \medskip\noindent\underline{Error terms:} By triangle inequality and Lemma \ref{lemma: number of boxes}, we have $$\|\sum_{\tau_k\in\calt_k}(\mathcal{E}*\omega_{\tau_k})\cdot\phi_{Q_R}\|_{L^2(\R^2)}\lesssim R^{1/2}\max_{\tau_k}\|(\mathcal{E}*\omega_{\tau_k})\cdot\phi_{Q_R}\|_{L^2(\R^2)}.$$
    For each $\tau_k$ we have $$\|(\mathcal{E}*\omega_{\tau_k})\cdot\phi_{Q_R}\|_{L^2(\R^2)}\leq\|\mathcal{E}*\omega_{\tau_k}\|_{L^\infty(\R^2)}\cdot\|\phi_{Q_R}\|_{L^2(\R^2)}\lesssim\|\mathcal{E}\|_{L^\infty(\R^2)}\|\phi_{Q_R}\|_{L^2(\R^2)}.$$ Now $$|\mathcal{E}(x)|\lesssim \#\calt_N(\tau_k)\max_{\theta\in\calt_N(\tau_k)}\|f_\theta\|_{L^\infty{(\R^2)}}\cdot|f_{k+1,\tau_k}(x)-f_{k+1,\tau_k}*\rho_{\tau_k}(x)|\quad\text{for all}\quad x\in\R^2.$$ By \eqref{eqn: local constancy use 1}, it follows that $$\|\mathcal{E}\|_{L^\infty(\R^2)}\lesssim R^{-100+1/2}\max_\theta\|f_\theta\|_{L^\infty(\R^2)}.$$ Due to the normalisation \eqref{eqn: normalisation in sup norm}, we find that the error is of the order $O(R^{-99})$.
    
    \medskip\noindent In light of the Fourier support, orthogonality relations, and the bound on the error obtained above, we find that $$\|\sum_{\tau_{k}\in\calt_k} h_{\tau_{k}}\|_{L^2}^2\lesssim R^{\epsilon+2\delta}\sum_{\tau_{k}\in\calt_k}\|\tilde h_{\tau_{k}}\|_{L^2}^2+R^{-198}.$$
    By the properties of the functions $
\omega_{\tau_k},\eta_{\lambda_k+1}, \phi_{Q_R}$ and by Young's convolution inequality, we have $$\|\tilde h_{\tau_{k}}\|^2_{L^2}\lesssim  R^{8\delta}\|f_{k+1,\tau_k}\|^4_{L^4},$$  from which the desired conclusion follows. 
\end{proof}
\begin{proof}
    [Proof of the Low Lemma]
       By Lemma \ref{lemma: properties of f_k} (iii) and Lemma \ref{lemma: number of boxes}, we have $$f_{k+1,\tau_k}=\sum_{\tau_{k+1}\in\calt_{k+1}(\tau_k)}f_{k+1,\tau_{k+1}}*\rho_{\tau_{k+1}}+O(R^{-100+\epsilon}).$$ By Lemma \ref{lemma: eta} and Lemma \ref{lemma: L^1 norm of omega}, we then find
    $$|g_k^{\mathrm{lo}}(x)|\lesssim\big|\sum_{\tau_k\in\calt_k}|\sum_{\tau_{k+1}\in\calt_{k+1}(\tau_k)}f_{k+1,\tau_{k+1}}*\rho_{\tau_{k+1}}|^2*\tilde\eta_{k+1}(x)\big|+O(R^{-99}),$$ where $\tilde\eta_{k+1}:=\omega_{\tau_k}*\eta_{k+1}$. Now $$\bigg||\sum_{\tau_{k+1}\in\calt_{k+1}(\tau_k)}f_{k+1,\tau_k+1}*\rho_{\tau_{k+1}}|^2*\tilde\eta_{k+1}(x)\bigg|=\bigg|\int_{\R^2}|\sum_{\tau_{k+1}\in\calt_{k+1}(\tau_k)}f_{k+1,\tau_k+1}*\rho_{\tau_k}(y)|^2\tilde\eta_{k+1}(x-y)\dd{y}\bigg|,$$ where the function $\tilde\eta_{k+1}(x-\cdot)$ is Fourier supported in $B(0,2\lambda_{k+1})$, and each $f_{k+1,\tau_k+1}*\rho_{\tau_{k+1}}$ is Fourier supported in $2R^{2\delta}\cdot\tau_{k+1}$ for $R\gg_\epsilon 1$. By the pigeonholing in \S \ref{sec: pigeonholing}, each $\tau_{k+1}$ is approximately a box of size $\lambda_{k+1}\times R_{k+1}^{-1}$. Geometrically, it is evident that the sets $2R^{2\delta}\cdot\tau_{k+1}+B(0,2\lambda_{k+1})$ are only $O(R^{2\delta})$-overlapping. Thus, by local $L^2$-orthogonality (Lemma \ref{lemma: local orthogonality}),
    $$\bigg|\int_{\R^2}|\sum_{\tau_{k+1}\in\calt_{k+1}(\tau_k)}f_{k+1,\tau_k+1}*\rho_{\tau_k}(y)|^2\tilde\eta_{k+1}(x-y)\dd{y}\bigg|\lesssim R^{2\delta}\sum_{\tau_{k+1}\in\calt_{k+1}(\tau_k)}\int_{\R^2}|f_{k+1,\tau_{k+1}}*\rho_{\tau_k}(y)|^2|\tilde\eta_{k+1}(x-y)|\dd{y}.$$
    By Cauchy--Schwarz, triangle inequality, and the definition of $\tilde\eta_{k+1}$, we have $$\int_{\R^2}|f_{k+1,\tau_k+1}*\rho_{\tau_k}(y)|^2|\tilde\eta_{k+1}(x-y)|\dd{y}\lesssim|f_{k+1,\tau_{k+1}}|^2*\omega_{\tau_k}*|\rho_{\tau_k}|*|\eta_{k+1}|(x).$$ By Definition \ref{def: w,omega}, we have $$\omega_{\tau_{k+1}}\geq \omega_{\tau_k}*|\rho_{\tau_k}|*|\eta_{k+1}|,$$ and as such $$\int_{\R^2}|f_{k+1,\tau_k+1}*\rho_{\tau_k}(y)|^2|\tilde\eta_{k+1}(x-y)|\dd{y}\lesssim |f_{k+1,\tau_{k+1}}|^2*\omega_{\tau_{k+1}}(x).$$ Combining everything obtained so far, we have $$|g_k^{\mathrm{lo}}(x)|\lesssim R^{2\delta}\sum_{\tau_k\in\calt_k}\sum_{\tau_{k+1}\in\calt_{k+1}(\tau_k)}|f_{k+1,\tau_{k+1}}|^2*\omega_{\tau_{k+1}}(x)+O(R^{-99}).$$
    By Lemma \ref{lemma: properties of f_k} (i), we have $|f_{k+1,\tau_{k+1}}|\leq |f_{k+2,\tau_{k+1}}|$. In light of \eqref{eqn: calt k+1 representation}, the desired conclusion follows upon choosing the constant $C_{\mathrm{lo}}$ sufficiently large.
\end{proof}
\begin{remark}
    In the work of Guth--Maldague--Wang \cite{gmw}, the constant appearing on the  right-hand side of the Low Lemma is $1$. That is, their version of the Low Lemma is the following: $$|g_k^{\mathrm{lo}}|\leq g_{k+1}+R^{-99}.$$
    They achieve this by reducing the problem to a `well-spaced' case, where the Low Lemma holds with the improved constant. This is critical in obtaining the decoupling estimate for the parabola with $(\log R)^c$-loss. For the purposes of this paper, we shall work with the weaker version of the lemma.
\end{remark}
\subsection{A partition of the spatial domain}
In order to prove the weak-type estimate \eqref{eqn: main broad}, we partition the spatial domain $B_R$ into certain subsets $\Omega_k$ similar to \cite[\S3.5]{gmw}. On the set $\Omega_k$, the square function $g_k$ will be dominated by its high part $g_k^{\mathrm{hi}}$ (Lemma \ref{lemma: high dom}), and the bilinearized versions of $f$ and $f_{k}$ will be essentially same (Lemma \ref{lemma: replacing f}).
\begin{definition}
    [Definition of $\Omega_k$]
    \label{def: Omega sets}
    Let $\Q_{N}$ be a covering of $B_R$ by finitely-overlapping $\lambda_{N}^{-1}\times\lambda_{N}^{-1}$ squares $Q_{N}$, each having non-empty intersections with $B_R$. Define $$\Omega_N:=\bigsqcup_{Q_N\in\Q_N}Q_N.$$
     The side-length of the squares $Q_{N}$ have been chosen to respect the scale of $g_{N}$. Next, we obtain a covering $\widetilde\Q_{{N-1}}$ of $B_R$, by partitioning each $Q_{N}\in\Q_{N}$ into finitely-overlapping $\lambda_{N-1}^{-1}\times\lambda_{N-1}^{-1}$ squares $Q_{{N-1}}$, noting that $\lambda_{N-1}^{-1}\leq \lambda_{N}^{-1}$. Recall the parameter $\beta$ from Notation \ref{not: G}. Define the subcollection of squares $$\Q_{{N-1}}:=\big\{Q_{{N-1}}\in\widetilde\Q_{{N-1}}: \|g_{n_{N-1}}\|_{L^\infty(R^\delta\cdot Q_{{N-1}})}\geq (2R^{2\delta} C_{\mathrm{lo}\,})^2 \beta\big\},\quad\text{and define}\quad \Omega_{N-1}:=\bigsqcup_{Q_{N-1}\in\Q_{N-1}}Q_{N-1}.$$
    Having defined the collection of squares $\Q_N,\widetilde\Q_N,\Q_{N-1},\widetilde\Q_{N-1},\dots,\Q_{k+1},\widetilde\Q_{k+1}$, and the partitioning sets $\Omega_N,\Omega_{N-1},\dots,\Omega_{k+1}$, we define $\Omega_k$ as follows. We obtain a covering $\widetilde\Q_{k}$ of the set $B_R\setminus(\Omega_{N-1}\sqcup\dots\sqcup\Omega_{k+1})$ by partitioning each $Q_{k+1}\in\widetilde\Q_{k+1}\setminus\Q_{k+1}$ into finitely-overlapping $\lambda_{k}^{-1}\times\lambda_{k}^{-1}$ squares $Q_{k}$, and define $$\Q_{k}:=\big\{Q_{k}\in\widetilde\Q_{k}: \|g_k\|_{L^\infty(R^\delta\cdot Q_{k})}\geq (2R^{2\delta} C_{\mathrm{lo}\,})^{N-k+1}\beta\big\},\quad\text{and}\quad \Omega_k:=\bigsqcup_{Q_{k}\in\Q_{k}}Q_{k}.$$ This process terminates at $k=1$. Finally, we define the remainder set 
    \begin{equation}
        \label{eqn: omega 0 def}
        \Omega_0:=B_R\setminus(\Omega_{N-1}\sqcup\dots\sqcup\Omega_1).
    \end{equation}
\end{definition}
    The following containment is an immediate consequence of the definition of the sets $\Omega_k$:
    \begin{equation}
        \label{eqn: omega kappa and omega mu}
        \Omega_k\subseteq\bigsqcup_{Q_j\in\widetilde\Q_j\setminus\Q_j}Q_j\quad\text{for all}\quad k+1\leq j\leq N-1.
    \end{equation}
 Indeed, for all $k\geq 1$ we have $$B_R\setminus(\Omega_{N-1}\sqcup\dots\sqcup\Omega_{k+1})=\bigsqcup_{Q_k\in\widetilde\Q_{k}}Q_k=\big(\bigsqcup_{Q_k\in\widetilde\Q_k\setminus\Q_k}Q_k\big)\sqcup\Omega_k,$$ which implies 
 \begin{equation}
     \label{eqn: discarded cubes identity}
     B_R\setminus(\Omega_{N-1}\sqcup\dots\sqcup\Omega_k)=\bigsqcup_{Q_k\in\widetilde\Q_k\setminus\Q_k}Q_k,
 \end{equation}
 and 
 \begin{equation}
     \label{eqn: omega kappa containment}
     \Omega_k\subseteq B_R\setminus(\Omega_{N-1}\sqcup\dots\sqcup\Omega_j),\quad\text{for all}\quad k+1\leq j\leq N-1.
 \end{equation}
 But \eqref{eqn: discarded cubes identity} holds for all $k\geq 1$, and so for $k=j\geq 1$ we get $$B_R\setminus(\Omega_{N-1}\sqcup\dots\sqcup\Omega_{j})=\bigsqcup_{Q_j\in\widetilde\Q_j\setminus\Q_j}Q_j.$$ 
 On the other hand, when $k=0$, \eqref{eqn: omega kappa containment} holds with equality by \eqref{eqn: omega 0 def}.
 Combining the above with \eqref{eqn: omega kappa containment} (for $k\geq 1$) and \eqref{eqn: omega 0 def} (for $k=0$), yields \eqref{eqn: omega kappa and omega mu}.
\begin{lemma}
\label{lemma: replacing f}
   Let $0\leq k\leq N$. For each $\tau_1\in\calt_1$, we have $$|f_{\tau_1}(x)-f_{k+1,\tau_1}(x)|\leq (10K^2R^{4\epsilon})^{-1}\alpha\quad\text{for all}\quad x\in \Omega_k\cap U^{\mathrm{br}}_{\alpha,\beta},$$ where we adopt the notation $f_{j,\tau_1}:=\sum_{\tau_{j}\in\calt_{j}(\tau_1)}f_{j,\tau_j}.$
\end{lemma}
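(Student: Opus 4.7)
The plan is to telescope the difference $f_{\tau_1}-f_{k+1,\tau_1}$ across the pruning levels $j=N,N-1,\dots,k+1$, bound each telescoping increment using Lemma \ref{lemma: bad wave packets}, and then control the resulting square functions $g_j(x)$ using the partition structure of $\Omega_k$ together with the defining conditions of $U^{\mathrm{br}}_{\alpha,\beta}$.

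With the convention $f_{N+1,\tau_1}:=f_{\tau_1}$, the definition of the pruning process at each level gives, for $x\in B_R$,
\begin{equation*}
    f_{j+1,\tau_1}(x)-f_{j,\tau_1}(x)=\sum_{\tau_j\in\calt_j(\tau_1)}\sum_{T\in\T^{\mathrm{b}}_{\tau_j}}\psi_T(x)\,f_{j+1,\tau_j}(x),
\end{equation*}
since any $T\in\T_{\tau_j}\setminus\T^{\mathrm{cl}}_{\tau_j}$ has $2T\cap B_R=\emptyset$ (every wave packet has diameter at most $R$, so $T\cap 10B_R=\emptyset$ forces the dilate $2T$ to miss $B_R$). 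Applying Lemma \ref{lemma: bad wave packets} at every level $j$ and telescoping then yields
\begin{equation*}
    |f_{\tau_1}(x)-f_{k+1,\tau_1}(x)|\le G^{-1}\sum_{j=k+1}^{N}g_j(x)+NR^{-100}.
\end{equation*}

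The next step is to bound each $g_j(x)$ for $x\in\Omega_k\cap U^{\mathrm{br}}_{\alpha,\beta}$. For $k+1\le j\le N-1$, the containment \eqref{eqn: omega kappa and omega mu} places $x$ in some $Q_j\in\widetilde{\Q}_j\setminus\Q_j$, and by the threshold defining $\Q_j$,
\begin{equation*}
    g_j(x)\le\|g_j\|_{L^\infty(R^\delta\cdot Q_j)}<(2R^{2\delta}C_{\mathrm{lo}})^{N-j+1}\beta.
\end{equation*}
For the missing level $j=N$, the $\Omega_k$-structure gives no direct information, so the bound $g_N(x)\lesssim R^{O(\delta)}\beta$ must be extracted instead from the pointwise condition $\sum_\theta|f_\theta(x)|^2\le 2\beta$ coming with $x\in U^{\mathrm{br}}_{\alpha,\beta}$. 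This is done via a local-constancy argument: each $|f_\theta|^2*\omega_\theta(x)$ is essentially an average of $|f_\theta|^2$ over $x+\theta^*$, on which $|f_\theta|$ is constant up to a factor $R^{O(\delta)}$ by the essential Fourier localisation in Lemma \ref{lemma: properties of f_k}(iii), so summing over $\theta$ transfers the pointwise estimate to the convolved version.

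Substituting $G=A_\epsilon R^{5\epsilon}\beta/\alpha$, and using $N\le 1/\epsilon$ together with $\delta=\epsilon^{10}$ to bound $(2R^{2\delta}C_{\mathrm{lo}})^{N}\le(2C_{\mathrm{lo}})^{1/\epsilon}R^{2\epsilon^{9}}$, the above inequalities combine to give
\begin{equation*}
    G^{-1}\sum_{j=k+1}^{N}g_j(x)\le A_\epsilon^{-1}N(2C_{\mathrm{lo}})^{1/\epsilon}R^{-5\epsilon+2\epsilon^{9}}\alpha.
\end{equation*}
Choosing $A_\epsilon\ge 10K^2N(2C_{\mathrm{lo}})^{1/\epsilon}$ absorbs all constants and pins the exponent of $R$ down to at most $-4\epsilon$, while the tail $NR^{-100}$ is dwarfed by $\alpha\ge R^{-1/2}$. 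The main obstacle is the bound on $g_N$: unlike the other levels it cannot be obtained from the partition structure, and it requires the local-constancy / uncertainty-principle argument that transfers the pointwise broad condition on $U^{\mathrm{br}}_{\alpha,\beta}$ to a $\theta^*$-neighborhood of $x$.
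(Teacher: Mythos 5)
Your argument follows the paper's proof almost step for step: the telescoping decomposition $f_{\tau_1}-f_{k+1,\tau_1}=\sum_{j=k+1}^{N}(f_{j+1,\tau_1}-f_{j,\tau_1})$, the application of Lemma \ref{lemma: bad wave packets} at each level, the use of \eqref{eqn: omega kappa and omega mu} together with the threshold defining $\Q_j$ to get $g_j(x)<(2R^{2\delta}C_{\mathrm{lo}})^{N-j+1}\beta$ for $k+1\le j\le N-1$, and the final choice $A_\epsilon\gtrsim K^2\epsilon^{-1}(2C_{\mathrm{lo}})^{1/\epsilon}$ are exactly the paper's steps.

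The one place you deviate is the level $j=N$, and that is where your write-up has a gap. The paper disposes of this level in one line: it reads $g_N(x)\le 2\beta$ for $x\in U^{\mathrm{br}}_{\alpha,\beta}$ off the second defining condition in \eqref{eqn: U set definition} (this is \eqref{eqn: g_n_upnu control}); no local-constancy argument appears and none of the later estimates need a sharper statement. Your proposed substitute --- that $|f_\theta|^2*\omega_\theta(x)\lesssim R^{O(\delta)}|f_\theta(x)|^2$ because $|f_\theta|$ is ``constant up to $R^{O(\delta)}$ on $x+\theta^*$'' --- cannot be made rigorous in that pointwise form: the rigorous versions of local constancy bound a pointwise value by an average over a dual box, never an average by the value at a single point. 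Indeed $f_\theta$ may vanish at $x$ while carrying large mass on the rest of $x+\theta^*$, and the weight $\omega_\theta$ is in any case spread over scales coarser than $\theta^*$ by its inductive definition, so $g_N(x)\lesssim R^{O(\delta)}\beta$ does not follow from the single-point condition $\sum_\theta|f_\theta(x)|^2\le 2\beta$. You have correctly noticed the mismatch between the convolved square function $g_N$ and the unconvolved one in \eqref{eqn: U set definition}, but the resolution is structural rather than pointwise: either take the paper's reading of the definition, or incorporate the weighted square function $\sum_\theta|f_\theta|^2*\omega_\theta$ into the set on which one pigeonholes (as in Guth--Maldague--Wang), so that the level-$N$ bound holds by construction. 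With that repair the remainder of your argument, including the constant bookkeeping, coincides with the paper's proof.
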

\begin{proof}
Following the notation from Definition \ref{def: g_k}, we denote $f_{N+1,\tau_1}=f_{\tau_1}$, and $f_{N+1,\tau_N}=f_{\tau_N}$. We start by writing the difference $f_{\tau_1}(x)-f_{k+1,\tau_1}(x)$ as a telescoping sum $$f_{\tau_1}(x)-f_{k+1,\tau_1}(x)=\sum_{j=k+1}^N[f_{j+1,\tau_1}(x)-f_{j,\tau_1}(x)].$$ 
By the notation introduced in the statement and Lemma \ref{lemma: pigeonholed boxes representation}, we have $$f_{j+1,\tau_1}=\sum_{\tau_{j+1}\in\calt_{j+1}(\tau_1)}f_{j+1,\tau_{j+1}}=\sum_{\tau_j\in\calt_j(\tau_1)}\sum_{\tau_{j+1}\in\calt_{j+1}(\tau_j)}f_{j+1,\tau_{j+1}}.$$ But the inner sum on the right-hand side above is defined precisely to be the pruned function $f_{j+1,\tau_j}$ in \S\ref{sec: pruning}, and so
$$f_{j+1,\tau_1}=\sum_{\tau_j\in\calt_j(\tau_1)}f_{j+1,\tau_j}.$$ On the other hand, using the same notation and the definition of the pruned functions, we have $$f_{j,\tau_1}=\sum_{\tau_j\in\calt_j(\tau_1)}f_{j,\tau_j}=\sum_{\tau_j\in\calt_j(\tau_1)}\sum_{T\in\T_{\tau_j}^{\mathrm{g}}}\psi_Tf_{j+1,\tau_j}.$$ Combining the last two identities with Lemma \ref{lemma: bad wave packets}, we have $$|f_{j+1,\tau_1}(x)-f_{j,\tau_1}(x)|=|\sum_{\tau_j\in\calt_j(\tau_1)}\sum_{T\in\T_{\tau_j}^{\mathrm{b}}}\psi_Tf_{j+1,\tau_j}(x)|\leq G^{-1}g_j(x)+R^{-100}.$$ Summing over $j$ we get 
\begin{equation}
    \label{eqn: telescope difference bound}
    |f_{\tau_1}(x)-f_{k+1,\tau_1}(x)|\leq G^{-1}\sum_{j=k+1}^Ng_j(x)+(N-k)R^{-100}.
\end{equation} 
Since $x\in\Omega_k$, by \eqref{eqn: omega kappa and omega mu} we get $x\in Q_j$ for some $Q_j\in\widetilde Q_j\setminus\Q_j$ for all $k+1\leq j\leq N-1$. By definition of the family $\Q_j$, we then find 
\begin{equation}
    \label{eqn: g_n_mu control}
    g_{j}(x)\leq\|g_{j}\|_{L^\infty(R^\delta\cdot Q_j)}< (2R^{2\delta} C_{\mathrm{lo}})^{N-j+1}\beta.
\end{equation}
On the other hand when $j=N$, 
\begin{equation}
    \label{eqn: g_n_upnu control}
   g_j(x)=g_N(x)\leq 2\beta,\quad\text{for all}\quad x\in U^{\mathrm{br}}_{\alpha,\beta},
\end{equation}
by the definition of the set $U^{\mathrm{br}}_{\alpha,\beta}$. 
Combining \eqref{eqn: g_n_mu control} and \eqref{eqn: g_n_upnu control}, we get 
\begin{equation}
    \label{eqn: g_j control}
    g_j(x)\leq (2R^{2\delta} C_{\mathrm{lo}})^{N-k}\beta,\quad\text{for all}\quad x\in\Omega_k\cap U^{\mathrm{br}}_{\alpha,\beta},\quad\text{and for all}\quad k+1\leq j\leq N.
\end{equation}
Using \eqref{eqn: g_j control} in \eqref{eqn: telescope difference bound}, we get $$|f_{\tau_1}(x)-f_{k+1,\tau_1}(x)|\leq G^{-1}(N-k)(2R^{2\delta} C_{\mathrm{lo}})^{N-k}\beta+(N-k)R^{-100}.$$
By the definition of $\delta$ from Notation \ref{not: delta}, we have $R^{2\delta/\epsilon}=R^{2\epsilon^9}\leq R^{\epsilon},$ since $\epsilon<1/2$. Using the definition of $G$ from Notation \ref{not: G}, the right-hand side above can then be bounded as follows:
$$G^{-1}(N-k)(2R^{2\delta} C_{\mathrm{lo}})^{N-k}\beta+(N-k)R^{-100}\leq (\epsilon A_\epsilon R^{4\epsilon})^{-1}(2C_{\mathrm{lo}})^{1/\epsilon}\alpha+\epsilon^{-1}R^{-100}\leq 2(\epsilon A_\epsilon R^{4\epsilon})^{-1}(2C_{\mathrm{lo}})^{1/\epsilon}\alpha,$$ where in the last step we recall from Notation \ref{not: G} that $\alpha\geq R^{-1/2}\gg R^{-100}$.
If we choose the constant $A_\epsilon$ large enough to satisfy $$A_\epsilon\geq 20K^2\epsilon^{-1}(2C_{\mathrm{lo\,}})^{1/\epsilon},$$ then the conclusion of this lemma follows immediately.
\end{proof}
By definition of the set $\U$ defined in \eqref{eqn: U set definition}, we have $$|f(x)|\lesssim R^{2\epsilon}\max_{\substack{\tau_1,\tau_1'\in\calt_1\\\tau_1\nsim\tau_1'}}|f_{\tau_1}(x)|^{1/2}|f_{\tau_1'}(x)|^{1/2}\quad\text{for all}\quad x\in\U.$$ 
In light of Lemma \ref{lemma: replacing f}, when $x\in\Omega_k\cap\U$, we can say the following.
\begin{corollary}
    \label{cor: replacing f}
    Let $k\in\{0,1,\dots,N-1\}$. For each $x\in\Omega_k\cap\U$, we have $$|f(x)|\lesssim R^{2\epsilon}\max_{\substack{\tau_1,\tau_1'\in\calt_1\\\tau_1\nsim\tau_1'}}|f_{k+1,\tau_1}(x)|^{1/2}|f_{k+1,\tau_1'}(x)|^{1/2}.$$
\end{corollary}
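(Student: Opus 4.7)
The proof will be a short application of Lemma~\ref{lemma: replacing f} to the bilinear inequality built into the definition of $\U$. Write $\epsilon_0:=\alpha/(10K^2R^{4\epsilon})$ for the tail error supplied by that lemma, and let $(\bar\tau_1,\bar\tau_1')$ with $\bar\tau_1\nsim\bar\tau_1'$ be a pair attaining the bilinear maximum $\max_{\tau_1\nsim\tau_1'}|f_{\tau_1}(x)|^{1/2}|f_{\tau_1'}(x)|^{1/2}$. The key point is that Lemma~\ref{lemma: replacing f} only allows us to substitute $f_{\tau_1}\to f_{k+1,\tau_1}$ with negligible loss when $|f_{\tau_1}(x)|$ is comfortably larger than $\epsilon_0$. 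The plan is therefore to establish individual lower bounds $|f_{\bar\tau_1}(x)|,|f_{\bar\tau_1'}(x)|\geq \tfrac{5}{2}\epsilon_0$, and then perform the substitution on both factors.

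To produce those individual lower bounds, I would first note that $|f(x)|\geq\alpha$ on $\U$, so the bilinear inequality in the definition of $\U$ immediately yields the product estimate
\[|f_{\bar\tau_1}(x)|\,|f_{\bar\tau_1'}(x)|\geq \alpha^2/(4K^2R^{4\epsilon}).\]
This by itself does not prevent the smaller factor from dropping below $\epsilon_0$. To rule that out I would invoke the fourth ($\ell^6$) condition in the definition of $\U$: specialising it to $\tau_1=\bar\tau_1$ together with the trivial bound $|f_{\bar\tau_1}(x)|\leq(\sum_{\tau_1}|f_{\tau_1}(x)|^6)^{1/6}$ gives the squeeze $|f_{\bar\tau_1}(x)|\leq 4K^2R^{4\epsilon}|f_{\bar\tau_1'}(x)|$. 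Assuming without loss of generality $|f_{\bar\tau_1}(x)|\geq|f_{\bar\tau_1'}(x)|$ and inserting this squeeze into the product lower bound forces $|f_{\bar\tau_1'}(x)|\geq\alpha/(4K^2R^{4\epsilon})\geq\tfrac{5}{2}\epsilon_0$, and the same bound for $\bar\tau_1$ follows a fortiori.

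With both factors comfortably above $\tfrac{5}{2}\epsilon_0$, Lemma~\ref{lemma: replacing f} yields $|f_{k+1,\bar\tau_1}(x)|\geq|f_{\bar\tau_1}(x)|-\epsilon_0\geq \tfrac{3}{5}|f_{\bar\tau_1}(x)|$ and likewise for $\bar\tau_1'$. Multiplying the two inequalities, substituting into the bilinear bound $|f(x)|\leq 2KR^{2\epsilon}|f_{\bar\tau_1}(x)|^{1/2}|f_{\bar\tau_1'}(x)|^{1/2}$, and then majorising $(\bar\tau_1,\bar\tau_1')$ by the maximum over all non-neighbouring pairs in $\calt_1$ delivers the corollary with an implicit constant of order $K$. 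The only genuinely conceptual step is the use of the $\ell^6$ condition to squeeze the two factors together and prevent one of them from falling below the error scale $\epsilon_0$; once this is secured, the remainder is a short string of arithmetic manipulations and I do not anticipate any technical obstacle.
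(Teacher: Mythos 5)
Your argument is correct and uses essentially the same ingredients as the paper's proof: Lemma \ref{lemma: replacing f} together with the bilinear and $\ell^6$ conditions in the definition of $\U$ and the lower bound $\alpha\leq|f(x)|$, which is exactly how the paper proceeds. The only difference is bookkeeping — the paper expands the product $|f_{\tau_1}||f_{\tau_1'}|$ and absorbs the three error terms back into it, whereas you first derive the per-factor lower bounds $|f_{\bar\tau_1}(x)|,|f_{\bar\tau_1'}(x)|\geq\tfrac{5}{2}\epsilon_0$ via the squeeze from the $\ell^6$ condition and then replace each factor separately; both routes yield the stated bound with a constant of order $K$, which the implicit constant absorbs.
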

\begin{proof}
    Let $x\in \Omega_k\cap U_{\alpha,\beta}^{\text{br}}$. Since $x\in U_{\alpha,\beta}^{\text{br}}$, we have $$|f(x)|\leq 2KR^{2\epsilon}\max_{\substack{\tau_1,\tau_1'\in\calt_1\\\tau_1\nsim\tau_1'}}|f_{\tau_1}(x)|^{1/2}|f_{\tau_1'}(x)|^{1/2}=2KR^{2\epsilon}|f_{\tau_1}(x)|^{1/2}|f_{\tau_1'}(x)|^{1/2},$$ for some non-adjacent pair $(\tau_1,\tau_1')\in\calt_1\times\calt_1$ attaining the maximum.
Since $x\in \Omega_k\cap U_{\alpha,\beta}^{\text{br}}$, by Lemma \ref{lemma: replacing f} we also have $$|f_{\tau_1}(x)||f_{\tau_1'}(x)|\leq |f_{k+1,\tau_1}(x)||f_{k+1,\tau_1'}(x)|+(10K^2R^{4\epsilon})^{-1}\alpha|f_{\tau_1}(x)|+(10K^2R^{4\epsilon})^{-1}\alpha|f_{\tau_1'}(x)|+(10K^2R^{4\epsilon})^{-2}\alpha^2.$$ 
By definition of the set $U_{\alpha,\beta}^{\text{br}}$, we have $$|f_{\tau}(x)|\leq \big(\sum_{\tau_1''\in\calt_1}|f_{\tau_1''}(x)|^6\big)^{1/6}\leq 2KR^{2\epsilon}|f_{\tau_1}(x)|^{1/2}|f_{\tau_1'}(x)|^{1/2}\quad\text{for}\quad \tau=\tau_1,\tau_1',$$ as well as $$\alpha\leq|f(x)|\leq 2KR^{2\epsilon}|f_{\tau_1}(x)|^{1/2}|f_{\tau_1'}(x)|^{1/2}.$$
Combining everything so far, we get $$|f(x)|\lesssim R^{2\epsilon}\max_{\substack{\tau_1,\tau_1'\in\calt_1\\\tau_1\nsim\tau_1'}}|f_{k+1,\tau_1}(x)|^{1/2}|f_{k+1,\tau_1'}(x)|^{1/2}\quad\text{for all}\quad x\in\Omega_k\cap\U,$$ as required.
\end{proof}
\begin{lemma}
    [High-domination on $\Omega_k$]
    \label{lemma: high dom}
    For all $k\in\{1,\dots,N\}$, and each $Q_k\in\Q_k$, we have $$\|g_k\|_{L^\infty(R^\delta\cdot Q_{k})}\leq 2\|g_k^{\mathrm{hi}}\|_{L^\infty(R^\delta\cdot Q_{k})}+R^{-98}.$$
\end{lemma}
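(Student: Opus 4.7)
The plan is to combine the pointwise decomposition $g_k \leq |g_k^{\mathrm{hi}}| + |g_k^{\mathrm{lo}}|$ from \eqref{eqn: hi+lo} with the Low Lemma (Lemma \ref{lemma: low lemma}), and to exploit the fact that $Q_k$ was selected only after \emph{rejecting} a coarser cube at the previous stage of Definition \ref{def: Omega sets}. The rejection yields an a priori upper bound on $g_{k+1}$ on a neighbourhood containing $R^\delta\cdot Q_k$, which in turn controls $g_k^{\mathrm{lo}}$ via the Low Lemma, leaving $g_k^{\mathrm{hi}}$ as the dominant term.

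More precisely, I would first apply Lemma \ref{lemma: low lemma} pointwise, reducing the task to bounding $\|g_{k+1}\|_{L^\infty(R^\delta\cdot Q_k)}$. By the recursive construction in Definition \ref{def: Omega sets}, the square $Q_k \in \Q_k \subseteq \widetilde\Q_k$ is a $\lambda_k^{-1}$-subsquare of some $Q_{k+1} \in \widetilde\Q_{k+1}\setminus\Q_{k+1}$; a short geometric check using $Q_k \subseteq Q_{k+1}$ and the controlled scale ratio $\lambda_k/\lambda_{k+1}$ coming from \eqref{eqn: lambda k relative range} shows $R^\delta \cdot Q_k \subseteq R^\delta \cdot Q_{k+1}$ whenever $R \gg_\epsilon 1$. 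The defining property of the rejected family $\widetilde\Q_{k+1}\setminus\Q_{k+1}$ then gives $\|g_{k+1}\|_{L^\infty(R^\delta\cdot Q_k)} < (2R^{2\delta}C_{\mathrm{lo}})^{N-k}\beta$, so that
$$\|g_k^{\mathrm{lo}}\|_{L^\infty(R^\delta\cdot Q_k)} \leq \tfrac{1}{2}(2R^{2\delta}C_{\mathrm{lo}})^{N-k+1}\beta + R^{-99}.$$

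To close the argument, I would observe that the defining condition $Q_k \in \Q_k$ gives directly $\|g_k\|_{L^\infty(R^\delta\cdot Q_k)} \geq (2R^{2\delta}C_{\mathrm{lo}})^{N-k+1}\beta$, so the preceding display reads $\|g_k^{\mathrm{lo}}\|_{L^\infty(R^\delta\cdot Q_k)} \leq \tfrac{1}{2}\|g_k\|_{L^\infty(R^\delta\cdot Q_k)} + R^{-99}$. Substituting into $g_k \leq |g_k^{\mathrm{hi}}|+|g_k^{\mathrm{lo}}|$, taking $L^\infty$ norms over $R^\delta\cdot Q_k$, rearranging, and multiplying by $2$ (noting $2R^{-99}\leq R^{-98}$) yields the claim.

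No substantive obstacle is anticipated: the argument follows the scheme in \cite[\S 3]{gmw}. The one mildly subtle point is the geometric nesting $R^\delta \cdot Q_k \subseteq R^\delta\cdot Q_{k+1}$ of the dilated squares, which in the uniform parabolic setting is automatic from self-similarity but in our non-uniform multiscale setting must invoke the pigeonholing consequence \eqref{eqn: lambda k relative range}. (A boundary case where $g_{k+1}$ is not defined would need separate treatment via the pointwise bound on $g_N$ valid on $\U$, but otherwise the proof is a direct application of the machinery already in place.)
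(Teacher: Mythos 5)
Your proposal is correct and follows essentially the same route as the paper's proof: split $g_k$ into high and low parts, apply the Low Lemma, use the rejection of the parent square $Q_{k+1}\in\widetilde\Q_{k+1}\setminus\Q_{k+1}$ to bound $\|g_{k+1}\|_{L^\infty(R^\delta\cdot Q_k)}$ by $(2R^{2\delta}C_{\mathrm{lo}})^{N-k}\beta$, and absorb the low part using the threshold condition defining $Q_k\in\Q_k$. The nesting of the dilated squares that you flag is indeed used implicitly (and silently) in the paper's proof as well, so making it explicit is a reasonable addition rather than a deviation.
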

\begin{proof}
    Let $Q_k\in\Q_k$. By \eqref{eqn: hi+lo}, we clearly have 
    \begin{equation}
        \label{eqn: hi+lo 2}
        \|g_k\|_{L^\infty(R^\delta\cdot Q_{k})}\leq \|g_k^{\mathrm{lo}}\|_{L^\infty(R^\delta\cdot Q_{k})}+\|g_k^{\mathrm{hi}}\|_{L^\infty(R^\delta\cdot Q_{k})},
    \end{equation}
     and by the Low Lemma, we have 
     \begin{equation}
         \label{eqn: low lemma application 1}
         \|g_k^{\mathrm{lo}}\|_{L^\infty(R^\delta\cdot Q_{k})}\leq R^{2\delta} C_{\mathrm{lo}\,}\|g_{k+1}\|_{L^\infty(R^\delta\cdot Q_{k})}+R^{-99}.
     \end{equation}
    Recall from Definition \ref{def: Omega sets} that $\Q_k\subseteq\widetilde\Q_k$, and $\widetilde\Q_k$ was obtained by partitioning each $Q_{k+1}\in\widetilde\Q_{k+1}\setminus\Q_{k+1}$ into finitely-overlapping $\lambda_{k}^{-1}\times\lambda_{k}^{-1}$-squares $Q_k$. Thus, we have $Q_k\subseteq Q_{k+1}$ for some $Q_{k+1}\in\widetilde\Q_{k+1}\setminus\Q_{k+1}$. 
    By the definition of the collection $\Q_{k+1}$, it then follows that  
    \begin{equation}
        \label{eqn: g_n_k+1 bound}
        \|g_{k+1}\|_{L^\infty(R^\delta\cdot Q_k)}\leq (2R^{2\delta} C_{\mathrm{lo}\,})^{N-k}\beta.
    \end{equation}
    Using \eqref{eqn: g_n_k+1 bound} in \eqref{eqn: low lemma application 1}, we obtain $$\|g_k^{\mathrm{lo}}\|_{L^\infty(R^\delta\cdot Q_{k})}\leq (1/2)(2R^{2\delta} C_{\mathrm{lo}\,})^{N-k+1}\beta+R^{-99}.$$ Since $Q_k\in\Q_{k}$, Definition \ref{def: Omega sets} also tells us that $\|g_k\|_{L^\infty(R^\delta\cdot Q_{k})}\geq (2R^{2\delta} C_{\mathrm{lo}\,})^{N-k+1}\beta$, which when used in the display above yields 
    \begin{equation}
        \label{eqn: g_n_k lo part bound}
        \|g_k^{\mathrm{lo}}\|_{L^\infty(R^\delta\cdot Q_{k})}\leq (1/2)\|g_k\|_{L^\infty(R^\delta\cdot Q_{k})}+R^{-99}.
    \end{equation}
     Combining \eqref{eqn: g_n_k lo part bound} with \eqref{eqn: hi+lo 2} yields the desired result.
\end{proof}
\section{Main estimates}
In the first half of this section, we establish the weak-type estimate of Theorem \ref{thm: weak type}. The setup introduced in \S\ref{sec: pruning, hi/low, omega sets} plays a key role in this proof. In the second half, we leverage the weak-type estimate to derive Theorem \ref{thm: convex decoupling ver 2 pigeonholed}, employing standard techniques such as pigeonholing and broad/narrow analysis.
\subsection{Proof of Theorem \ref{thm: weak type}}
 Recall the definition of the set $\U$ from \eqref{eqn: U set definition}. 
 In view of \eqref{eqn: omega 0 def}, the sets $\{\Omega_k\}_{k=0}^{N-1}$ form a partition of the spatial domain $B_R$. Hence, it is sufficient to prove the following:
 \begin{equation}
    \label{eqn: omega k estimate}
    \alpha^6|\Omega_k\cap U^{\mathrm{br}}_{\alpha,\beta}|\lesssim_\epsilon R^{20\epsilon}\big(\sum_{\theta\in\Theta}\|f_\theta\|_{L^\infty}^2\big)^2\big(\sum_{\theta\in\Theta}\|f_\theta\|_{L^2}^2\big)\quad\text{for each}\quad k\in\{0,1,\dots,N-1\}.
\end{equation}
Since the number of partitioning sets is $N\leq1/\epsilon$, this will imply Theorem \ref{thm: weak type}. We first treat the set $\Omega_0$ separately.
\subsection*{\texorpdfstring{Contribution from $\Omega_0$}{Contribution from Omega 0}}
Here we prove \eqref{eqn: omega k estimate} for the special case $k=0$. For that, we will need the following lemma, which allows the passage from coarser to finer scales in $L^2$. 
  \begin{lemma}\label{lemma: unwinding}
    For all $k\geq 1$ we have $$\sum_{\tau_{k}\in\calt_k}\|f_{k+1,\tau_{k}}\|_{L^2(\R^2)}^2\lesssim_\epsilon R^\epsilon\sum_{\theta\in\Theta}\|f_\theta\|_{L^2(\R^2)}^2+R^{-99}.$$ 
\end{lemma}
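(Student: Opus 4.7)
The plan is a downward induction on $k$, starting from the trivial base case $k=N$ (where $f_{N+1,\tau_N}=f_\theta$ by convention, so the bound holds with constant $1$ and no error) and reducing the level by one at each step. To pass from level $k$ to level $k+1$ I need two ingredients: approximate $L^2$-orthogonality of the $\tau_{k+1}$-pieces inside each $\tau_k$, and the pointwise monotonicity $|f_{k+1,\tau_{k+1}}|\leq|f_{k+2,\tau_{k+1}}|$ provided by Lemma \ref{lemma: properties of f_k}(i).

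First I would expand
$$f_{k+1,\tau_k} \;=\; \sum_{\tau_{k+1}\in\calt_{k+1}(\tau_k)} f_{k+1,\tau_{k+1}}.$$
By Lemma \ref{lemma: properties of f_k}(iii)--(iv), replacing each summand by $f_{k+1,\tau_{k+1}}*\rho_{\tau_{k+1}}$ introduces only an $L^2$-error of size $R^{-99}$ (the $L^\infty$-error $R^{-100}$ integrated against the spatial support in $10B_R$, summed over the $O(R^{O(\epsilon)})$ boxes). The convolved pieces are exactly Fourier-supported in $2R^\delta\cdot\tau_{k+1}$ by Lemma \ref{lemma: rho}. An overlap computation, identical in spirit to the one appearing in the proof of the High Lemma and relying on the direction-separation estimate of Lemma \ref{lemma: direction separation tau_k}, shows that these slightly dilated frequency boxes have overlap at most $R^{O(\delta)}$ pointwise in $\xi$. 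Cauchy--Schwarz in frequency, Plancherel, and Young's inequality for the convolution with $\rho_{\tau_{k+1}}$ (which has $L^1$-norm $O(1)$ by Lemma \ref{lemma: rho}) then yield, for each fixed $\tau_k$,
$$\|f_{k+1,\tau_k}\|_{L^2(\R^2)}^2 \;\lesssim\; R^{O(\delta)} \sum_{\tau_{k+1}\in\calt_{k+1}(\tau_k)} \|f_{k+1,\tau_{k+1}}\|_{L^2(\R^2)}^2 \;+\; R^{-98}.$$

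Next I would sum over $\tau_k\in\calt_k$, use the partition identity \eqref{eqn: calt k+1 representation} to recombine the double sum into a single sum over $\calt_{k+1}$, and apply Lemma \ref{lemma: properties of f_k}(i) pointwise to upgrade $f_{k+1,\tau_{k+1}}$ to $f_{k+2,\tau_{k+1}}$. This produces
$$\sum_{\tau_k\in\calt_k}\|f_{k+1,\tau_k}\|_{L^2(\R^2)}^2 \;\lesssim\; R^{O(\delta)}\sum_{\tau_{k+1}\in\calt_{k+1}}\|f_{k+2,\tau_{k+1}}\|_{L^2(\R^2)}^2 \;+\; R^{-95},$$
which is exactly the desired inductive step. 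Iterating this at most $N-k\leq 1/\epsilon$ times, the multiplicative losses compound to $R^{O(\delta)/\epsilon}=R^{O(\epsilon^9)}$ (recalling $\delta=\epsilon^{10}$), which is comfortably absorbed into $R^\epsilon$ for $R\gg_\epsilon 1$; the additive errors compound to at most $R^{-O(1)}$, easily below $R^{-99}$.

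The main technical point I expect is establishing the $R^{O(\delta)}$-overlap of the dilated Fourier boxes $\{2R^\delta\cdot\tau_{k+1}:\tau_{k+1}\in\calt_{k+1}(\tau_k)\}$, which is where the pigeonholing in $\Lambda$ (via Lemma \ref{lemma: direction separation tau_k}) is genuinely used; this is morally the same geometric input as in the High Lemma's overlap computation. Once this overlap bound is in hand, the rest of the argument reduces to routine applications of Plancherel, Young's convolution inequality, and the pruning properties catalogued in Lemma \ref{lemma: properties of f_k}.
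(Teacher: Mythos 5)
Your proposal follows essentially the same route as the paper's proof: replace each piece by $f_{k+1,\tau_{k+1}}*\rho_{\tau_{k+1}}$ via Lemma \ref{lemma: properties of f_k}(iii), use the $O(R^{O(\delta)})$-overlap of the dilated frequency boxes together with Young's inequality and Lemma \ref{lemma: properties of f_k}(i) to pass to level $k+1$, and iterate at most $1/\epsilon$ times so the per-step loss $R^{O(\delta)}$ compounds to at most $R^\epsilon$. The only cosmetic difference is that the paper simply asserts the finite overlap (which really rests on the pigeonholed, hence comparable, lengths of the $\tau_{k+1}\in\calt_{k+1}(\tau_k)$) rather than routing it through Lemma \ref{lemma: direction separation tau_k}, but this does not change the argument.
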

If we replace $f_{k+1,\tau_k}$ in the above by $f_{\tau_k}$, then Lemma \ref{lemma: unwinding} is a simple consequence of Plancherel's theorem. However, the pruning process alters the Fourier supports of these functions, thus requiring some additional consideration.
\begin{proof}
By Lemma \ref{lemma: properties of f_k} (iii), we can write 
\begin{equation}
    \label{eqn: unwinding uncertainty use}
    f_{k+1,\tau_{k+1}}=f_{k+1,\tau_{k+1}}*\rho_{\tau_{k+1}}+O(R^{-100}).
\end{equation}
 Each $f_{k+1,\tau_{k+1}}*\rho_{\tau_{k+1}}$ is Fourier supported in the box $2R^{2\delta}\cdot\tau_{k+1}$ for $R\gg_\epsilon 1$, and these boxes are $O(R^{2\delta})$-overlapping. In light of this observation, Lemma \ref{lemma: number of boxes}, and \eqref{eqn: unwinding uncertainty use}, we get
$$\|f_{k+1,\tau_k}\|^2_{L^2(\R^2)}=\big\|\sum_{\tau_{k+1}\in\calt_{k+1}(\tau_k)}f_{k+1,\tau_{k+1}}\big\|^2_{L^2(\R^2)}\lesssim_\epsilon R^{4\delta}\sum_{\tau_{k+1}\in\calt_{k+1}(\tau_k)}\|f_{k+1,\tau_{k+1}}*\rho_{\tau_{k+1}}\|_{L^2(\R^2)}^2+O(R^{-100+\epsilon}).$$ By Young's convolution inequality, Lemma \ref{lemma: rho}, and Lemma \ref{lemma: properties of f_k} (i), we have $$\|f_{k+1,\tau_{k+1}}*\rho_{\tau_{k+1}}\|_{L^2(\R^2)}^2\lesssim\|f_{k+1,\tau_{k+1}}\|_{L^2(\R^2)}^2\leq\|f_{k+2,\tau_{k+1}}\|_{L^2(\R^2)}^2\quad\text{for all}\quad \tau_{k+1}\in\calt_{k+1}.$$
 Combining everything so far, we get 
 \begin{align*}
     \sum_{\tau_k\in\calt_k}\|f_{k+1,\tau_k}\|^2_{L^2(\R^2)}&\lesssim_\epsilon R^{4\delta}\sum_{\tau_k\in\calt_k}\sum_{\tau_{k+1}\in\calt_{k+1}(\tau_k)}\|f_{k+2,\tau_{k+1}}\|_{L^2(\R^2)}^2+O(R^{-100+\epsilon})\\&=R^{4\delta}\sum_{\tau_{k+1}\in\calt_{k+1}}\|f_{k+2,\tau_{k+1}}\|_{L^2(\R^2)}^2+O(R^{-100+\epsilon}),
 \end{align*}
 where in the last step we have used \eqref{eqn: calt k+1 representation}.
    Next, we work with the functions $f_{k+2,\tau_{k+1}}$ and repeat the argument above. After $N-k$ iterations of the argument, we reach the following estimate $$\sum_{\tau_k\in\calt_k}\|f_{k+1,\tau_k}\|^2_{L^2(\R^2)}\lesssim_\epsilon R^{4(N-k)\delta}\sum_{\tau_N\in\calt_N}\|f_{N+1,\tau_N}\|_{L^2(\R^2)}^2+(N-k)O(R^{-100+\epsilon}),$$ where $f_{N+1,\tau_N}$ denotes $f_{\tau_N}$. Since $N-k\leq1/\epsilon$, we have $(N-k)O(R^{-100+\epsilon})\lesssim_\epsilon R^{-99}$, and by the definition of $\delta$ from Notation \ref{not: delta}, we have $R^{4(N-k)\delta}\leq R^{\epsilon}.$ This completes the proof.
\end{proof}
\begin{proof}
    [Proof of \eqref{eqn: omega k estimate} for $k=0$]
By Corollary \ref{cor: replacing f} applied to the case $k=0$, we have 
\begin{equation*}
    |f(x)|\lesssim R^{2\epsilon}\max_{\substack{\tau_1,\tau_1'\in\calt_1\\\tau_1\nsim\tau_1'}}|f_{1,\tau_1}(x)|^{1/2}|f_{1,\tau_1'}(x)|^{1/2}\quad\text{for all}\quad x\in\Omega_0\cap\U.
\end{equation*}
Taking sixth powers and integrating, we get 
\begin{equation}
    \label{eqn: omega_0 bound first step}
    \alpha^6|\Omega_0\cap\U|\lesssim R^{12\epsilon}\int_{\Omega_0} \max_{\substack{\tau_1,\tau_1'\in\calt_1\\\tau_1\nsim\tau_1'}}|f_{1,\tau_1}|^3|f_{1,\tau_1'}|^3\leq R^{12\epsilon}\int_{\Omega_0}\big(\sum_{\tau_1\in\calt_1}|f_{1,\tau_1}|\big)^6,
\end{equation} 
where we have used that fact that $|f(x)|\geq\alpha$ for all $x\in\U$.
By Lemma \ref{lemma: number of boxes}, and an application of Cauchy--Schwarz, we have $$\big(\sum_{\tau_1\in\calt_1}|f_{1,\tau_1}|\big)^6\lesssim R^{3\epsilon/2}\big(\sum_{\tau_1\in\calt_1}|f_{1,\tau_1}|^2\big)^3.$$
By Lemma \ref{lemma: properties of f_k} (iii), the definition of $\omega_{\tau_1}$ from Definition \ref{def: w,omega}, and the Cauchy--Schwarz inequality, we get $$|f_{1,\tau_1}|^2=|f_{1,\tau_1}*\rho_{\tau_1}|^2+O(R^{-100})\lesssim|f_{1,\tau_1}|^2*|\rho_{\tau_1}|+R^{-100}\lesssim|f_{1,\tau_1}|^2*\omega_{\tau_1}+R^{-100},$$ which when used in the display above it gives us 
\begin{equation}
    \label{eqn: sum to square function}
    \big(\sum_{\tau_1\in\calt_1}|f_{1,\tau_1}|\big)^6\lesssim R^{3\epsilon/2}g_1^2\big(\sum_{\tau_1\in\calt_1}|f_{1,\tau_1}|^2\big)+R^{-100}\leq R^{3\epsilon/2}g_1^2\big(\sum_{\tau_1\in\calt_1}|f_{2,\tau_1}|^2\big)+R^{-100},
\end{equation}
where the final step follows from Lemma \ref{lemma: properties of f_k} (i). Using \eqref{eqn: sum to square function} in \eqref{eqn: omega_0 bound first step} gives us 
\begin{equation}
    \label{eqn: omega_0 bound second step}
    \alpha^6|\Omega_0\cap\U|\lesssim R^{14\epsilon} \int_{\Omega_0}g_1^2\big(\sum_{\tau_1\in\calt_1}|f_{2,\tau_1}|^2\big)+R^{-50}.
\end{equation}
From Definition \ref{def: Omega sets}, we recall that $\Omega_0=\bigsqcup_{Q_1\in\widetilde\Q_1\setminus\Q_1}Q_1$, and also that  
\begin{equation}
    \label{eqn: g_1 sup norm bound}
    \|g_{1}\|_{L^\infty(R^\delta\cdot Q_1)}\leq (2R^{2\delta} C_{\mathrm{lo}\,})^{N}\beta\quad\text{for all}\quad Q_1\in\widetilde\Q_1\setminus\Q_1.
\end{equation}
Decomposing $\Omega_0$ into the finitely-overlapping squares $Q_1\in\widetilde\Q_1\setminus\Q_1$ in \eqref{eqn: omega_0 bound second step}, and applying the above gives us 
 \begin{align}\label{eqn: omega_0 bound third step}
    \nonumber \alpha^6|\Omega_0\cap\U|&\lesssim (2R^{2\delta} C_{\mathrm{lo}\,})^{2N}\beta^2 R^{14\epsilon} \sum_{Q_1\in\widetilde\Q_1\setminus\Q_1}\int_{Q_1}\sum_{\tau_1\in\calt_1}|f_{2,\tau_1}|^2+R^{-50}\\&\lesssim (2R^{2\delta} C_{\mathrm{lo}\,})^{2N}\beta^2 R^{14\epsilon} \sum_{\tau_1\in\calt_1}\|f_{2,\tau_1}\|^2_{L^2(\R^2)}+R^{-50}.
 \end{align} 
By Lemma \ref{lemma: unwinding} we have $$\sum_{\tau_{1}\in\calt_1}\|f_{2,\tau_{1}}\|_{L^2(\R^2)}^2\lesssim_\epsilon R^\epsilon\sum_{\theta\in\Theta}\|f_\theta\|_{L^2(\R^2)}^2+R^{-99},$$ and by the definition of the set $\U$ from \eqref{eqn: U set definition}, we have $$\beta\leq \sum_{\theta\in\Theta}\|f_\theta\|_{L^\infty(\U)}^2\leq\sum_{\theta\in\Theta}\|f_\theta\|_{L^\infty(\R^2)}^2.$$  Using the above in \eqref{eqn: omega_0 bound third step}, we get $$\alpha^6|\Omega_0\cap\U|\lesssim _\epsilon R^{16\epsilon} \big(\sum_{\theta\in\Theta}\|f_\theta\|_{L^\infty(\R^2)}^2\big)^2\big(\sum_{\theta\in\Theta}\|f_\theta\|_{L^2(\R^2)}^2\big)+R^{-50},$$ where we also used $R^{4N\delta}\leq R^\epsilon$.
In light of the normalisation in \S\ref{sec: normalising f}, the error term $R^{-50}$ above is negligible.
\end{proof}

\subsection*{\texorpdfstring{Contribution from $\Omega_k$}{Contribution from Omega kappa}}
Now we prove \eqref{eqn: omega k estimate} for all $1\leq k\leq N-1$. In order to do so, we first need the following lemma. Recall the definition of transversality from Definition \ref{def: transversality}.
\begin{lemma}
    [Local bilinear square function]\label{lemma: local bilinear square function} Let $k\in\{1,\dots,N\}$, and let $\tau_1,\tau_1'\in\calt_1$ satisfy $\tau_1\nsim\tau_1'$.
     For each $\lambda_k^{-1}\times\lambda_k^{-1}$-square $Q_k$, we have 
    \begin{align*}
        \big\||f_{k+1,\tau_1}|^{1/2}|f_{k+1,\tau_1'}|^{1/2}\big\|_{L^4(Q_{k})}&\lesssim R^{\epsilon}\big\|\big(\sum_{\tau_k\prec\tau_1}|f_{k+1,\tau_k}|^2*\omega_{\tau_k}\big)^{1/2}\big\|_{L^4(R^\delta\cdot Q_{k})}^{1/2}\big\|\big(\sum_{\tau_k'\prec\tau_1'}|f_{k+1,\tau_k'}|^2*\omega_{\tau_k'}\big)^{1/2}\big\|_{L^4(R^\delta\cdot Q_{k})}^{1/2}\\&+R^{-98}.
    \end{align*}
\end{lemma}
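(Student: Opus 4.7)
The plan mirrors the bilinear $L^4$ square function estimate at the heart of \cite{gmw}, transplanted to the convex setting. Raise both sides to the fourth power to reduce to bounding $\int_{Q_k}|f_{k+1,\tau_1}|^2|f_{k+1,\tau_1'}|^2$. Replace $\mathbf 1_{Q_k}$ by $\phi_{Q_k}^4$ (where $\phi_{Q_k}\gtrsim 1$ on $Q_k$ and $\widehat\phi_{Q_k}$ is supported in a ball of radius $\sim\lambda_k$, the dual scale of $Q_k$) and use Lemma \ref{lemma: properties of f_k} (iii) to rewrite each $f_{k+1,\tau_k}=f_{k+1,\tau_k}*\rho_{\tau_k}$ modulo $O(R^{-100})$. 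This reduces the task to estimating $\|F\|_{L^2}^2$, where $F:=(\phi_{Q_k}f_{k+1,\tau_1})(\phi_{Q_k}f_{k+1,\tau_1'})$.

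The core of the argument is bilinear Fourier orthogonality. Decompose $F=\sum_{(\tau_k,\tau_k')}F_{\tau_k,\tau_k'}$ over pairs with $\tau_k\prec\tau_1$ and $\tau_k'\prec\tau_1'$, where each $F_{\tau_k,\tau_k'}:=(\phi_{Q_k}f_{k+1,\tau_k})(\phi_{Q_k}f_{k+1,\tau_k'})$ is essentially Fourier-supported in the ``sum-box'' $R^{2\delta}(\tau_k+\tau_k')+O(\lambda_k)\B$. Using Lemma \ref{lemma: direction separation tau_k} at $k=1$ (and the transversality hypothesis $\tau_1\nsim\tau_1'$, meaning $\geq K/4$ intermediate neighbours) together with Lemma \ref{lemma: wid over len} which controls the intrinsic direction variation inside each $\tau_j$, the directions $\mathbf t(t_k)$ (for $\tau_k\prec\tau_1$) and $\mathbf t(t_k')$ (for $\tau_k'\prec\tau_1'$) are well-separated. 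After pigeonholing the lengths to $[\lambda_k,2\lambda_k]$ and using $R_k^{-1}\leq\lambda_k$ (Lemma \ref{lemma: tau k eccentricity}), the thickened sum-boxes are essentially parallelograms of sides $\lambda_k\mathbf t(\tau_1)$ and $\lambda_k\mathbf t(\tau_1')$, and their centres $\Gamma(t_k)+\Gamma(t_k')$ sit on a parallelogram lattice of matching spacing. A direct count then gives overlap $O(R^{O(\delta)})$, and Plancherel yields
\begin{equation*}
\int|F|^2 \lesssim R^{O(\delta)}\int \phi_{Q_k}^4\Big(\sum_{\tau_k\prec\tau_1}|f_{k+1,\tau_k}|^2\Big)\Big(\sum_{\tau_k'\prec\tau_1'}|f_{k+1,\tau_k'}|^2\Big)+O(R^{-98}).
\end{equation*}

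Cauchy--Schwarz then separates the two square functions; the rapid decay of $\phi_{Q_k}^4$ outside $R^\delta\cdot Q_k$ allows the weight to be absorbed into $L^4(R^\delta\cdot Q_k)$ with a Schwartz-tail error. Finally, the upgrade from $|f_{k+1,\tau_k}|^2$ to $|f_{k+1,\tau_k}|^2*\omega_{\tau_k}$ is the pointwise bound $|f_{k+1,\tau_k}(x)|^2\leq(|f_{k+1,\tau_k}|^2*\omega_{\tau_k})(x)+O(R^{-100})$, which follows from Lemma \ref{lemma: properties of f_k} (iii), Cauchy--Schwarz on the convolution $f_{k+1,\tau_k}*\rho_{\tau_k}$, and the pointwise domination $|\rho_{\tau_k}|\leq\omega_{\tau_k}$ together with $\|\rho_{\tau_k}\|_{L^1}\lesssim 1$ from Lemma \ref{lemma: rho}. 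Taking fourth roots and using $\delta=\epsilon^{10}$ so that $R^{O(\delta)}\leq R^\epsilon$ yields the stated bound; all $O(R^{-100})$-type errors are harmless after invoking \eqref{eqn: error term bound}.

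The main obstacle is verifying the $O(R^{O(\delta)})$ overlap of the sum-boxes. In the parabolic setting the $\tau_k\prec\tau_1$ are mutually parallel and identically sized, so the sum-boxes tile a parallelogram with $O(1)$ overlap automatically; neither property survives in the convex setting, where directions vary by up to $\sim R^{-\epsilon}\lambda_1^{-1}$ within $\tau_1$ and lengths only after pigeonholing lie in $[\lambda_k,2\lambda_k]$. The pigeonholing in $\Lambda$ (\S\ref{sec: pigeonholing}) is essential here: a length spread of $R^\epsilon$ would destroy the lattice picture. One must also check that $K$ is large enough so that the inter-family direction separation $\gtrsim K/(\lambda_1 R^\epsilon)$ dominates the intra-family variation $\lesssim R^{-\epsilon}/\lambda_1$, ensuring the parallelogram lattice is genuinely two-dimensional; this is the role of the (absolute) constant $K$ from Definition \ref{def: transversality}.
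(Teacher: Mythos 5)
Your proposal follows essentially the same route as the paper: reduce to $\int_{Q_k}|f_{k+1,\tau_1}|^2|f_{k+1,\tau_1'}|^2$, replace the sharp cutoff by $\phi_{Q_k}$, use Lemma \ref{lemma: properties of f_k} (iii) to pass to $f_{k+1,\tau_k}*\rho_{\tau_k}$, exploit orthogonality of the sumsets $\tau_k+\tau_k'$ with $\tau_k\prec\tau_1$, $\tau_k'\prec\tau_1'$, and then upgrade to the $\omega_{\tau_k}$-convolved square functions via Cauchy--Schwarz and $|\rho_{\tau_k}|\leq\omega_{\tau_k}$. The one step that is quantitatively wrong is your overlap count. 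You claim the thickened sum-boxes $R^{2\delta}\cdot\tau_k+R^{2\delta}\cdot\tau_k'+B(0,\lambda_k)$ overlap only $O(R^{O(\delta)})$ times via a lattice picture. This cannot hold: by \eqref{eqn: admissible derivative bound} all slopes on an admissible curve are at most $R^{-2\epsilon}$, so the angle between the long directions of $\tau_1$ and $\tau_1'$ is at most $\sim R^{-2\epsilon}$ (and, by Lemma \ref{lemma: direction separation tau_k} together with \eqref{eqn: lambda k absolute range}, only bounded below by $\sim R^{-3\epsilon}$). Hence each sumset $\tau_k+\tau_k'$ is roughly a $\lambda_k\times(\text{angle})\cdot\lambda_k$ parallelogram whose short side is at most $R^{-2\epsilon}\lambda_k$, which is much smaller than the radius $\lambda_k$ of the Fourier support of $\phi_{Q_k}$; the thickening therefore smears the family in the thin direction and forces overlap of order (angle)$^{-1}$, i.e.\ up to $\sim R^{3\epsilon}$, not $R^{O(\delta)}$. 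This is exactly what the paper computes: the overlap is $O(R^{3\epsilon+4\delta})$, giving $R^{4\epsilon}$ in the $L^2$ estimate via Lemma \ref{lemma: local orthogonality}, and the stated $R^{\epsilon}$ only after taking fourth roots. Your final numerology survives once you substitute the correct overlap bound, since $R^{(3\epsilon+O(\delta))/4}\leq R^{\epsilon}$, so the lemma as stated is unaffected.

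A smaller point: the finite overlap of the \emph{unthickened} sumsets $\{\tau_k+\tau_k'\}$ (essential injectivity of $(\tau_k,\tau_k')\mapsto\tau_k+\tau_k'$) is not a routine lattice count in the convex setting, precisely because lengths are only pinned to within a factor of $2$ and directions drift inside each $\tau_1$; the paper delegates this to a modification of the Seeger--Ziesler biorthogonality argument \cite[Lemma 2.4]{SZ}, which is where the transversality parameter $K$ genuinely enters. Your remarks on the role of $K$ and of the length pigeonholing identify the right issues, but the ``direct count'' would need to be fleshed out along those lines (or by citing \cite{SZ}) to be complete.
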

The global version of this estimate was obtained by Seeger--Ziesler \cite{SZ}, who used a variant of the classical biorthogonality argument of C\'ordoba--Fefferman for the parabolic square function estimate in the plane. The local version presented here follows directly from their biorthogonality result \cite[Lemma 2.4]{SZ}.
\begin{proof}
    By Lemma \ref{lemma: properties of f_k} (iii) we have $$\int_{Q_{k}}|f_{k+1,\tau_1}|^2|f_{k+1,\tau_1'}|^2\lesssim\int_{Q_{k}}|\sum_{\tau_k\prec\tau_1,\tau_k'\prec\tau_1'}(f_{k+1,\tau_k}*\rho_{\tau_k})\cdot (f_{k+1,\tau_k'}*\rho_{\tau_k'})|^2+O(R^{-98}).$$ By Lemma \ref{lemma: phi}, we may bound the right-hand integral above (up to a constant multiple) by the following weighted version $$\int_{\R^2}|\sum_{\tau_k\prec\tau_1,\tau_k'\prec\tau_1'}(f_{k+1,\tau_k}*\rho_{\tau_k})\cdot (f_{k+1,\tau_k'}*\rho_{\tau_k'})|^2\phi_{Q_{k}}.$$
   
    Each function $(f_{k+1,\tau_k}*\rho_{\tau_k})\cdot (f_{k+1,\tau_k'}*\rho_{\tau_k'})$ is Fourier supported on the sumset $R^{2\delta}\cdot\tau_k+R^{2\delta}\cdot\tau_k'$, while the weight $\phi_{Q_{k}}$ is Fourier supported on $B(0,\lambda_k)$.
    A minor modification of the argument presented in \cite[Lemma 2.4]{SZ} shows that if the constant $K$ is chosen sufficiently large, then the sumsets $\{\tau_k+\tau_k'\}_{\tau_k,\tau_k'}$ are finitely-overlapping. In particular, we will choose $K\geq 4$. Since $\tau_1\nsim\tau_1'$, Lemma \ref{lemma: direction separation tau_k}, dictates that the angle between the long directions of $\tau_1,\tau_1'$ is  $\gtrsim(R_1\lambda_1)^{-1}\gtrsim R^{-3\epsilon}$, where the last step follows from \eqref{eqn: lambda k absolute range}. Hence, each sumset $\tau_k+\tau_k'$ is roughly a $\lambda_k\times R^{-3\epsilon}\lambda_k$ box (see Figure \ref{fig:transverse sumset}),
    \begin{figure}[hh]
        \centering
        \includegraphics[width=9cm]{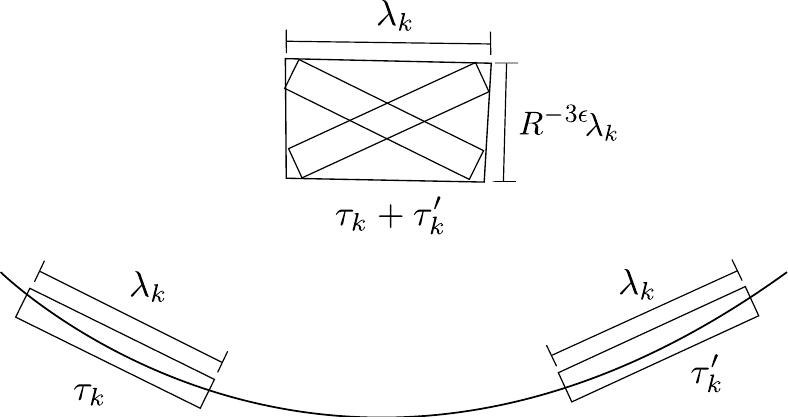}
        \caption{Sumsets of transversal boxs $\tau_k\in\calt_k(\tau_1)$ and $\tau_k'\in\calt_k(\tau_1')$.}
        \label{fig:transverse sumset}
    \end{figure}
    and so the sets $R^{2\delta}\cdot\tau_k+R^{2\delta}\cdot\tau_k'+B(0,\lambda_k)$ are at most $O(R^{3\epsilon+4\delta})$-overlapping. By local $L^2$-orthogonality (Lemma \ref{lemma: local orthogonality}), we get $$\int_{\R^2}|\sum_{\tau_k\prec\tau_1,\tau_k'\prec\tau_1'}(f_{k+1,\tau_k}*\rho_{\tau_k})\cdot (f_{k+1,\tau_k'}*\rho_{\tau_k'})|^2\phi_{Q_{k}}\lesssim R^{4\epsilon}\int_{\R^2}\sum_{\tau_k\prec\tau_1,\tau_k'\prec\tau_1'}|(f_{k+1,\tau_k}*\rho_{\tau_k})\cdot (f_{k+1,\tau_k'}*\rho_{\tau_k'})|^2|\phi_{Q_{k}}|,$$ where we used that $R^{4\delta}\leq R^\epsilon$ since $\epsilon<1/2$.
    Now $$\sum_{\tau_k\prec\tau_1,\tau_k'\prec\tau_1'}|(f_{k+1,\tau_k}*\rho_{\tau_k})\cdot (f_{k+1,\tau_k'}*\rho_{\tau_k'})|^2=\big(\sum_{\tau_k\in\calt_k(\tau_1)}|f_{k+1,\tau_k}*\rho_{\tau_k}|^2\big)\cdot\big(\sum_{\tau_k'\in\calt_k(\tau_1')} |f_{k+1,\tau_k'}*\rho_{\tau_k'}|^2\big),$$
     so, by the rapid decay of $\phi_{Q_{k}}$ away from $Q_{k}$ and Cauchy--Schwarz, we get 
     \begin{align*}
         \int\big(\sum_{\tau_k\in\calt_k}|f_{k+1,\tau_k}*\rho_{\tau_k}|^2\big)\big(\sum_{\tau_k'\in\calt_k} |f_{k+1,\tau_k'}*\rho_{\tau_k'}|^2\big)\phi_{Q_{k}}&\leq \|\sum_{\tau_k\in\calt_k}|f_{k+1,\tau_k}*\rho_{\tau_k}|^2\|_{L^2(R^\delta\cdot Q_{k})}\|\sum_{\tau_k'\in\calt_k}|f_{k+1,\tau_k'}*\rho_{\tau_k'}|^2\|_{L^2(R^\delta\cdot Q_{k})}\\&+R^{-100}.
     \end{align*}
    By another application of Cauchy--Schwarz, Lemma \ref{lemma: rho}, and the construction of $\omega_{\tau_k}$ from Definition \ref{def: w,omega}, we have $$\sum_{\tau_k\in\calt_k(\tau_1)}|f_{k+1,\tau_k}*\rho_{\tau_k}|^2\lesssim\sum_{\tau_k\in\calt_k(\tau_1)}|f_{k+1,\tau_k}|^2*|\rho_{\tau_k}|\leq\sum_{\tau_k\in\calt_k(\tau_1)}|f_{k+1,\tau_k}|^2*\omega_{\tau_k}.$$ Arguing similarly for the other summand concludes the proof.
\end{proof}
\begin{proof}
    [Proof of \eqref{eqn: omega k estimate} for $k\geq 1$]
By Corollary \ref{cor: replacing f}, we have
 $$|f(x)|\lesssim R^{2\epsilon}\max_{\substack{\tau_1,\tau_1'\in\calt_1\\\tau_1\nsim\tau_1'}}|f_{k+1,\tau_1}(x)|^{1/2}|f_{k+1,\tau_1'}(x)|^{1/2}\quad\text{for all}\quad x\in\Omega_k\cap\U.$$
Taking fourth powers and integrating, we get $$\alpha^4|\Omega_k\cap\U|\lesssim R^{8\epsilon}\int_{\Omega_k}\max_{\substack{\tau_1,\tau_1'\in\calt_1\\\tau_1\nsim\tau_1'}}|f_{k+1,\tau_1}(x)|^2|f_{k+1,\tau_1'}(x)|^2\dd{x}\lesssim R^{8\epsilon}\sum_{\substack{\tau_1,\tau_1'\in\calt_1\\\tau_1\nsim\tau_1'}}\int_{\Omega_k}|f_{k+1,\tau_1}(x)|^2|f_{k+1,\tau_1'}(x)|^2\dd{x},$$ where we have used the fact $|f(x)|\geq \alpha$ for all $x\in\U$.
We bound each of the above integrals as $$\int_{\Omega_k}|f_{k+1,\tau_1}(x)|^2|f_{k+1,\tau_1'}(x)|^2\dd{x}\leq\sum_{Q_{k}\in\Q_{k}}\int_{Q_{k}}|f_{k+1,\tau_1}(x)|^2|f_{k+1,\tau_1'}(x)|^2\dd{x},$$
and estimate the right-hand integrals using Lemma \ref{lemma: local bilinear square function} applied to the squares $Q_k\in\Q_k$.
This leads to $$\alpha^4|\Omega_k\cap\U|\lesssim R^{12\epsilon}\sum_{\tau_1\nsim\tau_1'}\sum_{Q_{k}\in\Q_{k}}\|\sum_{\tau_{k}\in\calt_{k}(\tau_1)}|f_{k+1,\tau_{k}}|^2*\omega_{\tau_{k}}\|_{L^2(R^\delta\cdot Q_{k})}\|\sum_{\tau_{k}'\in\calt_{k}(\tau_1')}|f_{k+1,\tau_{k}'}|^2*\omega_{\tau_{k}'}\|_{L^2(R^\delta\cdot Q_{k})}+R^{-10},$$
    and upon relaxing the ranges of summation above, and using Lemma \ref{lemma: number of boxes}, and the definition of $g_k$ from Definition \ref{def: g_k}, we get 
    \begin{equation}
        \label{eqn: post-bilinear square function}
        \alpha^4|\Omega_k\cap\U|\lesssim R^{14\epsilon}\sum_{Q_{k}\in\Q_{k}}\|g_k\|_{L^2(R^\delta\cdot Q_{k})}^2+R^{-10}.
    \end{equation}
    By Lemma \ref{lemma: high dom} we have 
    \begin{equation}
        \label{eqn: g_k to g_k hi}
        g_k^2(x)\leq\|g_k\|_{L^\infty(R^\delta\cdot Q_{k})}^2\leq 2\|g_k^{\mathrm{hi}}\|_{L^\infty(R^\delta\cdot Q_{k})}^2+R^{-100},\quad\text{for all}\quad x\in R^\delta\cdot Q_{k}.
    \end{equation}
    Similar to the proof of Lemma \ref{lemma: unwinding}, we would like to invoke the essential Fourier support of the square functions $g_k^{\mathrm{hi}}$. For this, we would like to use the following square-function analogue of \eqref{eqn: unwinding uncertainty use} 
    \begin{equation}
        \label{eqn: square function uncertainty}
        g_{k}^{\mathrm{hi}}=g_{k}^{\mathrm{hi}}*\rho_{k}+O(R^{-100}),
    \end{equation}
    which can be proved similarly. Using \eqref{eqn: square function uncertainty} followed by an application of
     Cauchy--Schwarz and Lemma \ref{lemma: rho} gives us $$|g_k^{\mathrm{hi}}(y)|^2\lesssim|g_k^{\mathrm{hi}}*\rho_{{k}}(y)|^2+R^{-100
}\lesssim|g_k^{\mathrm{hi}}|^2*|\rho_{k}|(y)+R^{-100}\quad\text{for all}\quad y\in R^\delta\cdot Q_{k}.$$
Now for all $x,y\in R^\delta\cdot Q_k$, we have $y-x\in 2R^\delta\lambda_{k}^{-1}\B$. Using the above, and the definition of $w_{k}$ from Definition \ref{def: w,omega} in \eqref{eqn: g_k to g_k hi}, we find $$g_k(x)^2\lesssim|g_k^{\mathrm{hi}}|^2*w_{k}(x)+R^{-100}\quad\text{for all}\quad x\in R^\delta\cdot Q_{k},$$
so that
\begin{equation}
    \label{eqn: g_k L^2 norm bound}
    \|g_k\|_{L^2(R^\delta\cdot Q_{k})}^2\lesssim \int_{R^\delta\cdot Q_{k}} |g_k^{\mathrm{hi}}|^2*w_{k}(x)\dd{x}+R^{-90}.
\end{equation}
Let us split the right-hand integral above as follows. 
\begin{equation}
    \label{eqn: g_k hi integral decomp}
    \int_{R^\delta\cdot Q_{k}} |g_k^{\mathrm{hi}}|^2*w_{k}(x)\dd{x}=\int_{R^\delta\cdot Q_{k}} \int_{\R^2\setminus  8R^\delta \cdot Q_{k}}|g_k^{\mathrm{hi}}(y)|^2w_{k}(x-y)\dd{y}\dd{x}+\int_{R^\delta\cdot Q_{k}} \int_{8R^\delta\cdot Q_{k}}|g_k^{\mathrm{hi}}(y)|^2w_{k}(x-y)\dd{y}\dd{x}.
\end{equation}
In the first region, we have $x\in R^\delta\cdot Q_{k}$, and $y\notin 8R^\delta\cdot Q_{k}$, so that $x-y\notin 5R^\delta\lambda_{k}^{-1}\B$, and $|A_{k}^{-t}(x-y)|\geq 5R^\delta32^{N-k}$. Thus, by \eqref{eqn: w_k rapid decay}, we have $$|w_{k}(x-y)|\lesssim_M R^{2\delta}|\det A_{k}|^{-1}(1+R^\delta(|A_{k}^{-t}(x-y)|-4R^\delta\cdot 32^{N-k}))^{-M}\quad\text{for all}\quad M\in\mathbb{N}.$$
Choosing $M=2+50/\delta$, we then get 
\begin{equation}
    \label{eqn: 1st integral first bound}
    \int_{R^\delta\cdot Q_{k}} \int_{\R^2\setminus 8R^\delta\cdot Q_{k}}|g_k^{\mathrm{hi}}(y)|^2w_{k}(x-y)\dd{y}\dd{x}\lesssim R^{-100}\|g_k^{\mathrm{hi}}\|_{L^2(\R^2)}^2.
\end{equation}
By Definition \ref{def: hi lo} and Lemma \ref{lemma: eta}, we have 
\begin{equation}
    \label{eqn: g_k hi norm less than g_k norm}
    \|g_k^{\mathrm{hi}}\|_{L^2(\R^2)}^2\lesssim\|g_k\|_{L^2(\R^2)}^2.
\end{equation}
By Lemma \ref{lemma: number of boxes} and Cauchy--Schwarz, we have the following crude estimate 
\begin{align*}
    \|g_k\|_{L^2(\R^2)}^2=\|\sum_{\tau_{k}\in\calt_{k}}|f_{k+1,\tau_{k}}|^2*\omega_{\tau_{k}}\|_{L^2(\R^2)}^2&\lesssim R^{1/2}\sum_{\tau_{k}\in\calt_{k}}\|f_{k+1,\tau_{k}}*\omega_{\tau_{k}}\|_{L^2(\R^2)}^2\\&\lesssim R^{1/2+4\delta}\sum_{\tau_{k}\in\calt_{k}}\|f_{k+1,\tau_{k}}\|^2_{L^2(\R^2)},
\end{align*}
 where we used Young's convolution inequality and Lemma \ref{lemma: L^1 norm of omega} in the last step. Now for each $\tau_{k}\in\calt_{k}$, we have $$\|f_{k+1,\tau_{k}}\|^2_{L^2(\R^2)}=\|\sum_{\tau_{k+1}\in\calt_{k+1}(\tau_{k})}f_{k+1,\tau_{k+1}}\|_{L^2(\R^2)}^2\lesssim R^{\epsilon}\sup_{\tau_{k+1}\in\calt_{k+1}(\tau_{k})}\|f_{k+1,\tau_{k+1}}\|_{L^2(\R^2)}^2,$$ where we have again used Lemma \ref{lemma: number of boxes}. By Lemma \ref{lemma: properties of f_k} (iv) and \eqref{eqn: normalisation in sup norm}, we have $$\|f_{k+1,\tau_{k+1}}\|_{L^2(\R^2)}^2\lesssim R^2\quad\text{for all}\quad \tau_{k+1}\in\calt_{k+1}(\tau_{k}).$$ Using the above estimates in \eqref{eqn: g_k hi norm less than g_k norm}, we get $$\|g_k^{\mathrm{hi}}\|_{L^2(\R^2)}\lesssim R^3,$$ and using this in \eqref{eqn: 1st integral first bound}, we obtain 
\begin{equation}
    \label{eqn: 1st integral final bound}
     \int_{R^\delta\cdot Q_{k}} \int_{\R^2\setminus 8R^\delta\cdot Q_{k}}|g_k^{\mathrm{hi}}(y)|^2w_{k}(x-y)\dd{y}\dd{x}\lesssim R^{-97},
\end{equation}
 so that the contribution from the first integral in \eqref{eqn: g_k hi integral decomp} is negligible.
 The second integral in \eqref{eqn: g_k hi integral decomp} can be treated as follows $$\int_{R^\delta\cdot Q_{k}} \int_{8R^\delta\cdot Q_{k}}|g_k^{\mathrm{hi}}(y)|^2w_{k}(x-y)\dd{y}\dd{x}\leq \|g_k^{\mathrm{hi}}\|_{L^2(8R^\delta\cdot Q_{k})}^2\cdot\|w_{k}\|_{L^1(\R^2)}\lesssim _\epsilon R^{2\delta}\|g_k^{\mathrm{hi}}\|_{L^2(8R^\delta\cdot Q_{k})}^2,$$ upon using Lemma \ref{lemma: L^1 norm of omega}. Using this and \eqref{eqn: 1st integral final bound} in \eqref{eqn: g_k hi integral decomp}, we get $$ \int_{R^\delta\cdot Q_{k}} |g_k^{\mathrm{hi}}|^2*w_{k}(x)\dd{x}\lesssim_\epsilon R^{2\delta}\|g_k^{\mathrm{hi}}\|_{L^2(8R^\delta\cdot Q_k)}^2+R^{-90},$$ which when used in \eqref{eqn: g_k L^2 norm bound}, gives $$\|g_k\|_{L^2(R^\delta\cdot Q_{k})}^2\lesssim_\epsilon R^{2\delta}\|g_k^{\mathrm{hi}}\|_{L^2(8R^\delta\cdot Q_k)}+R^{-90}.$$
Using the above in \eqref{eqn: post-bilinear square function} yields
\begin{equation}
    \label{eqn: pre high lemma est}
    \alpha^4|\Omega_k\cap\U|\lesssim_\epsilon R^{14\epsilon+2\delta}\sum_{Q_{k}\in\Q_{k}}\|g_k^{\mathrm{hi}}\|_{L^2(8R^\delta\cdot Q_{k})}^2+R^{-10}\lesssim_\epsilon R^{14\epsilon+2\delta}\|g_k^{\mathrm{hi}}\|_{L^2(8R^\delta B_R)}+R^{-10},
\end{equation} since $\bigsqcup_{Q_{k}\in\Q_{k}}Q_{k}=\Omega_k\subseteq B_R$, and the squares $R^\delta\cdot Q_{k}$ are at most $O(R^{2\delta})$-overlapping. Now $8R^\delta \cdot B_R$ can be covered by $O(R^{2\delta})$ many balls $Q_R$ of radius $R$. Applying the High Lemma to each $Q_R$ and adding the estimates together we get
$$\|g_k^{\mathrm{hi}}\|_{L^2(8R^{\delta}\cdot B_R)}^2\lesssim_\epsilon  R^{2\epsilon+2\delta}\sum_{\tau_{k}\in\calt_{k}}\|f_{k+1,\tau_{k}}\|_{L^4(\R^2)}^4+R^{-10},$$ which when combined with \eqref{eqn: pre high lemma est} gives \begin{equation}
    \label{eqn: omega_k penultimate bound}
    \alpha^4|\Omega_k\cap\U|\lesssim R^{16\epsilon+4\delta}\sum_{\tau_{k}\in\calt_{k}}\|f_{k+1,\tau_{k}}\|_{L^4(\R^2)}^4+R^{-10}.
\end{equation}
 By Lemma \ref{lemma: number of boxes} and  Lemma \ref{lemma: properties of f_k} (ii), we have  $$\|f_{k+1,\tau_{k}}\|_{L^\infty}=\|\sum_{\tau_{k+1}\in\calt_{k+1}(\tau_{k})}f_{k+1,\tau_{k+1}}\|_{L^\infty}\lesssim R^{\epsilon}\sup_{\tau_{k+1}\in\calt_{k+1}(\tau_{k})}\|f_{k+1,\tau_{k+1}}\|_{L^\infty}\lesssim R^\epsilon G.$$ Using the above in \eqref{eqn: omega_k penultimate bound} we find
 $$\alpha^4|\Omega_k\cap\U|\lesssim G^2R^{18\epsilon+4\delta}\sum_{\tau_{k}\in\calt_{k}}\|f_{k+1,\tau_{k}}\|_{L^2}^2+R^{-10}.$$
 Since we have an $L^2$ expression on the right-hand side, we may apply Lemma \ref{lemma: unwinding}, which leads to the following 
 \begin{equation}
     \label{eqn: omega_k ultimate bound}
     \alpha^4|\Omega_k\cap\U|\lesssim G^2R^{19\epsilon+4\delta}\sum_{\theta\in\Theta}\|f_\theta\|_{L^2(\R^2)}^2+R^{-10}\lesssim_\epsilon R^{19\epsilon+4\delta}(\beta^2/\alpha^2)\sum_{\theta\in\Theta}\|f_\theta\|_{L^2(\R^2)}^2+R^{-10},
 \end{equation} by using the definition of $G$ from Notation \ref{not: G}. By the definition of the set $\U$ from \eqref{eqn: U set definition}, we have $$\beta\leq \sum_{\theta\in\Theta}\|f_\theta\|_{L^\infty(\U)}^2\leq\sum_{\theta\in\Theta}\|f_\theta\|_{L^\infty(\R^2)}^2.$$ Rearranging the terms in \eqref{eqn: omega_k ultimate bound} and using the above bound on $\beta$, we obtain 
 \begin{equation}
     \label{eqn: omega_k contribution}
     \alpha^6|\Omega_k\cap\U|\lesssim_\epsilon R^{20\epsilon}\big(\sum_{\theta\in\Theta}\|f_\theta\|_{L^\infty(\R^2)}^2\big)^2\big(\sum_{\theta\in\Theta}\|f_\theta\|_{L^2(\R^2)}^2\big)+R^{-10}.
 \end{equation}
 In light of the normalisation in \S\ref{sec: normalising f}, the error term $R^{-10}$ above is negligible.
  \end{proof}
\subsection{Proof of Theorem \ref{thm: convex decoupling ver 2 pigeonholed}}\label{sec: broad/narrow} From here on, we revert back to the notation being used before Notation \ref{not: lambda gamma}. Let us recall the theorem statement. \pigeonholed*
This will follow as a consequence of Theorem \ref{thm: weak type} after pigeonholing and a broad/narrow argument.
\subsubsection{Removing $\alpha$ and $\beta$}\label{subsec: alpha and beta} Recall the parameter $K$ appearing in Definition \ref{def: transversality}. Define the \textit{broad set}
\begin{align*}
    B_{\tau_0}:=\bigg\{x\in B_R: |{f^\Lambda}(x)|\leq 2KR^{2\epsilon}&\max_{\substack{\tau_1,\tau_1'\in\calt^\Lambda_1(\Gamma)\\\tau_1\nsim\tau_1'}}|f^\Lambda_{\tau_1}(x)|^{1/2}|f^\Lambda_{\tau_1'}(x)|^{1/2},\\&\big(\sum_{\tau_1\in\calt^\Lambda_1(\Gamma)}|f^\Lambda_{\tau_1}(x)|^6\big)^{1/6}\leq 2KR^{2\epsilon}\max_{\substack{\tau_1,\tau_1'\in\calt^\Lambda_1(\Gamma)\\\tau_1\nsim\tau_1'}}|f^\Lambda_{\tau_1}(x)|^{1/2}|f^\Lambda_{\tau_1'}(x)|^{1/2}\bigg\}.
\end{align*}
 Here we prove the following 
 \begin{prop}\label{prop: broad estimate}
     There exists a constant $C^{\mathrm{br}}_\epsilon\geq 1$ such that for each $\Lambda\in\Z(f,R,\epsilon)$ and each $0\leq N\leq 1/\epsilon$, the following estimate holds
     \begin{equation}
    \label{eqn: broad set bound}
    \|{f^\Lambda}\|_{L^6(B_{\tau_0})}^6\leq C_\epsilon^{\mathrm{br}}R^{20\epsilon}\big(\sum_{\tau_N\in\calt^\Lambda_N(\Gamma)}\|f^\Lambda_{\tau_N}\|_{L^\infty}^2\big)^2\big(\sum_{\tau_k\in\calt^\Lambda_N(\Gamma)}\|f^\Lambda_{\tau_N}\|_{L^2}^2\big).
\end{equation} 
 \end{prop}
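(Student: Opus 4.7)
The plan is to pass from the weak-type bound of Theorem \ref{thm: weak type} to the $L^6$ estimate on $B_{\tau_0}$ via dyadic pigeonholing in the level sets of $|f^\Lambda|$ and of the square function $\sum_{\tau_N}|f^\Lambda_{\tau_N}|^2$. Since $B_{\tau_0}\subseteq B_R$ by the very definition given above, the proposition is directly comparable in scale to Theorem \ref{thm: weak type}, and by homogeneity I may impose the normalization $\max_{\tau_N\in\calt^\Lambda_N(\Gamma)}\|f^\Lambda_{\tau_N}\|_{L^\infty(\R^2)}=1$ of \S\ref{sec: normalising f}.

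For each pair of dyadic positive reals $(\alpha,\beta)$ define
\begin{equation*}
V_{\alpha,\beta}:=\big\{x\in B_{\tau_0}:\ \alpha\leq|f^\Lambda(x)|\leq 2\alpha,\ \ \beta\leq \textstyle\sum_{\tau_N\in\calt^\Lambda_N(\Gamma)}|f^\Lambda_{\tau_N}(x)|^2\leq 2\beta\big\}.
\end{equation*}
Because $B_{\tau_0}$ already enforces the two broad conditions appearing in \eqref{eqn: U set definition}, the set $V_{\alpha,\beta}$ coincides with $U_{\alpha,\beta}^{\mathrm{br}}$. On $V_{\alpha,\beta}$ one has $|f^\Lambda|^6\leq 64\alpha^6$, so $\|f^\Lambda\|_{L^6(V_{\alpha,\beta})}^6\leq 64\alpha^6|U_{\alpha,\beta}^{\mathrm{br}}|$. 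As the dyadic family $\{V_{\alpha,\beta}\}$ covers $B_{\tau_0}$, the estimate reduces to controlling the sum $\sum_{(\alpha,\beta)}\alpha^6|U_{\alpha,\beta}^{\mathrm{br}}|$.

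For dyadic pairs in the main range $R^{-1/2}\leq\alpha\leq R^{1/2}$ and $R^{-1/2}\alpha^2\leq\beta\leq R^{1/2}$, Theorem \ref{thm: weak type} bounds each summand by the RHS of \eqref{eqn: broad set bound}; the number of such pairs is $O((\log R)^2)$, which is absorbed into $R^{20\epsilon}$ for $R\gg_\epsilon 1$. The tail ranges are essentially vacuous. Indeed, the normalization combined with Lemma \ref{lemma: number of boxes} gives $\#\Theta\lesssim_\epsilon R^{1/2}$, so $|f^\Lambda|\leq \sum|f^\Lambda_{\tau_N}|\lesssim_\epsilon R^{1/2}$ and $\sum|f^\Lambda_{\tau_N}|^2\lesssim_\epsilon R^{1/2}$; hence only $O_\epsilon(1)$ dyadic scales lie above $R^{1/2}$ in either parameter. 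Cauchy--Schwarz $\alpha^2\leq\#\Theta\cdot\beta\lesssim_\epsilon R^{1/2}\beta$ similarly leaves only $O_\epsilon(1)$ dyadic scales violating $\beta\geq R^{-1/2}\alpha^2$. The remaining tail $\alpha<R^{-1/2}$ contributes $\alpha^6|U_{\alpha,\beta}^{\mathrm{br}}|\leq R^{-3}|B_R|\lesssim R^{-1}$, which is negligible since \eqref{eqn: error term bound} gives $R^{-2}$ as a lower bound for the RHS of \eqref{eqn: broad set bound}.

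The main obstacle is the bookkeeping needed to absorb every tail contribution cleanly; it is not substantive, but relies crucially on the number-of-boxes bound from Lemma \ref{lemma: number of boxes}, the wave-packet normalization, and the lower bound \eqref{eqn: error term bound}. Once these are in place, summing the $O((\log R)^2)$ dyadic contributions and invoking $\log R\lesssim_\epsilon R^{\epsilon}$ yields \eqref{eqn: broad set bound}.
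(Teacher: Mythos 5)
Your proposal is correct and follows essentially the same route as the paper: normalise via \eqref{eqn: normalisation in sup norm}, decompose $B_{\tau_0}$ dyadically in the level sets of $|f^\Lambda|$ and of $\sum_{\tau_N}|f^\Lambda_{\tau_N}|^2$, identify the resulting pieces with $U^{\mathrm{br}}_{\alpha,\beta}$, invoke Theorem \ref{thm: weak type} on the admissible range, and dispose of the tails using the box count, the $L^\infty$ normalisation, and \eqref{eqn: error term bound}. The only (cosmetic) difference is that you sum over all $O((\log R)^2)$ dyadic pairs rather than pigeonholing to a single one; both incur the same logarithmic loss, and the few dyadic scales with $\beta$ a bounded factor below $R^{-1/2}\alpha^2$ that you flag are handled exactly as in the paper, by absorbing the constant from Cauchy--Schwarz and Lemma \ref{lemma: number of boxes} into the statement of the weak-type bound.
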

\begin{proof}For $\alpha>0$, define $B_{\tau_0,\alpha}:=B_{\tau_0}\cap\{x\in B_R:\alpha\leq |{f^\Lambda}(x)|\leq 2\alpha\}$. In light of the normalisation \eqref{eqn: normalisation in sup norm}, it follows that $$|{f^\Lambda}(x)|=|\sum_{\tau_N} f^\Lambda_{\tau_N}(x)|\leq\sum_{\tau_N}\|f^\Lambda_{\tau_N}\|_{L^\infty(\R^2)}\lesssim R^{1/2}\quad\text{for all}\quad x\in\R^2.$$  On the other hand, $$\int_{B_R\cap\{|{f^\Lambda}|\leq R^{-1/2}\}}|{f^\Lambda}|^6\leq R^{-3}|B_R|\lesssim R^{-1}\lesssim R^{-1}.$$ 
By dyadic pigeonholing, it follows that  
\begin{equation}
    \label{eqn: alpha removal}
   \|{f^\Lambda}\|^6_{L^6(B_{\tau_0})}\lesssim(\log R)\|{f^\Lambda}\|_{L^6(B_{\tau_0,\alpha})}^6+R^{-1},\quad\text{for some dyadic}\quad\alpha\in[R^{-1/2},R^{1/2}].
\end{equation}
For this value of $\alpha$, and $\beta>0$ define $B_{\tau_0,\alpha,\beta}:=B_{\tau_0,\alpha}\cap\{x\in B_R:\beta\leq \sum_{{\tau_N}\in\Theta}|f^\Lambda_{\tau_N}(x)|^2\leq 2\beta\}$. By Cauchy--Schwarz, $$\alpha\leq\sum_{\tau_N}|f^\Lambda_{\tau_N}(x)|\leq R^{1/4}\big(\sum_{\tau_N}|f^\Lambda_{\tau_N}(x)|^2\big)^{1/2}\quad\text{for all}\quad x\in B_{\tau_0,\alpha,\beta}.$$
On the other hand, $$\sum_{\tau_N}|f^\Lambda_{\tau_N}(x)|^2\lesssim R^{1/2}\max_{\tau_N}\|f^\Lambda_{\tau_N}\|_{L^\infty(\R^2)}^2=R^{1/2}\quad\text{for all}\quad x\in\R^2.$$ Again, by dyadic pigeonholing, we get that \begin{equation}
    \label{eqn: beta removal}
    \|{f^\Lambda}\|^6_{L^6(B_{\tau_0,\alpha})}\lesssim (\log R)\|{f^\Lambda}\|_{L^6(B_{\tau_0,\alpha,\beta})}^6,\quad\text{for some dyadic}\quad \beta\in[R^{-1/2}\alpha^2,R^{1/2}].
\end{equation}
But the set $B_{\tau_0,\alpha,\beta}$ is equal to the set $U^{\text{br}}_{\alpha,\beta}$. Thus, by Theorem \ref{thm: weak type} we get $$\|{f^\Lambda}\|^6_{L^6(B_{\tau_0,\alpha,\beta})}\lesssim\alpha^6|B_{\tau_0,\alpha,\beta}|\leq C_\epsilon R^{20\epsilon}\big(\sum_{\tau_N}\|f^\Lambda_{\tau_N}\|_{L^\infty}^2\big)^2\big(\sum_{\tau_N}\|f^\Lambda_{\tau_N}\|_{L^2}^2\big).$$
Using this with \eqref{eqn: beta removal} and \eqref{eqn: alpha removal}, we get the desired bound \eqref{eqn: broad set bound} upon choosing $C_{\epsilon}^{\mathrm{br}}$ sufficiently large.
\end{proof}
\subsubsection{Removing the broad hypothesis}\label{sec: broad hypothesis}
Here we obtain the bound \eqref{eqn: convex decoupling ver 2 pigeonholed}.
Set $B'_{\tau_0}:=B_{\tau_0}$, and for each $0\leq k\leq N$ and each $\tau_k\in\calt^\Lambda_k(\Gamma)$, define the corresponding broad sets \begin{align*}
    B'_{\tau_k}:=\bigg\{x\in B_R: |f^\Lambda_{\tau_k}(x)|&\leq 2KR^{2\epsilon}\max_{\substack{\tau_{k+1},\tau_{k+1}'\in\calt_{k+1}(\Gamma;\tau_k)\\\tau_{k+1}\nsim\tau_{k+1}'}}|f^\Lambda_{\tau_{k+1}}(x)|^{1/2}|f^\Lambda_{\tau_{k+1}'}(x)|^{1/2},\\&\big(\sum_{\tau_{k+1}\in\calt_{k+1}(\Gamma;\tau_k)}|f^\Lambda_{\tau_{k+1}}(x)|^6\big)^{1/6}\leq 2KR^{2\epsilon}\max_{\substack{\tau_{k+1},\tau_{k+1}'\in\calt_{k+1}(\Gamma;\tau_k)\\\tau_{k+1}\nsim\tau_{k+1}'}}|f^\Lambda_{\tau_{k+1}}(x)|^{1/2}|f^\Lambda_{\tau_{k+1}'}(x)|^{1/2}\bigg\}.
\end{align*}
By applying Proposition \ref{prop: broad estimate} to each rescaled curve $\Gamma_{\tau_k}$, we obtain the following broad set estimates.
\begin{prop}\label{prop: k-broad bound}
    Let $0\leq N\leq 1/\epsilon$ and $0\leq k\leq N$. For each $\tau_k\in\calt_k^\Lambda(\Gamma)$, we have the broad-set bound
    \begin{equation}
        \label{eqn: k-broad bound}
        \|f^\Lambda_{\tau_k}\|_{L^6(B'_{\tau_k})}^6\leq C_\epsilon^{\mathrm{br}}R^{20\epsilon}\big(\sum_{\tau_N\in\calt_N^\Lambda(\Gamma;\tau_k)}\|f^\Lambda_{\tau_N}\|_{L^\infty}^2\big)^2\big(\sum_{\tau_N\in\calt_N^\Lambda(\Gamma;\tau_k)}\|f^\Lambda_{\tau_N}\|_{L^2}^2\big).
    \end{equation}
\end{prop}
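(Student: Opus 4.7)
The plan is a direct rescaling reduction to Proposition \ref{prop: broad estimate}. Fix $\tau_k\in\calt_k^\Lambda(\Gamma)$ and introduce the rescaled function $\tilde f(x):=f^\Lambda_{\tau_k}(\cal_{\tau_k}^{-1}(x))$ on the rescaled curve $\Gamma_{\tau_k}$, which is admissible by Lemma \ref{lemma: rescaled curves}. The natural rescaled pigeonhole sequence is $\Lambda':=(R_k\lambda_{k+1},R_k\lambda_{k+2},\dots,R_k\lambda_N)$, and by Lemma \ref{lemma: rescaling}, the similarity $\cal_{\tau_k}$ induces a bijection between $\calt^\Lambda_{k+j}(\Gamma;\tau_k)$ and $\calt^{\Lambda'}_j(\Gamma_{\tau_k})$ for all $0\le j\le N-k$, mapping each $\tau_{k+j}\prec\tau_k$ to $\tilde\tau_j:=\cal_{\tau_k}(\tau_{k+j})$.

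Using this bijection, I will verify inductively (via \eqref{eqn: f^Lambda_{tau_k} def} and the fact that $\cal_{\tau_k}$ is a similarity, so that Fourier supports of pieces of $f^\Lambda_{\tau_k}$ localised to $\tau_{k+j}$ correspond to Fourier supports localised to $\tilde\tau_j$) that $\tilde f$ is precisely the branch associated with $\Lambda'$ on $\Gamma_{\tau_k}$, with $\tilde f_{\tilde\tau_j}=f^\Lambda_{\tau_{k+j}}\circ\cal_{\tau_k}^{-1}$. Since $\cal_{\tau_k}$ preserves left-to-right ordering and adjacency of boxes, transversality in the sense of Definition \ref{def: transversality} is preserved, and hence the broad set $B'_{\tau_k}$ for $f^\Lambda_{\tau_k}$ maps onto the level-$0$ broad set $B'_{\tilde\tau_0}$ for $\tilde f$ on $\Gamma_{\tau_k}$. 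Applying Proposition \ref{prop: broad estimate} to $\tilde f$ with the admissible curve $\Gamma_{\tau_k}$, sequence $\Lambda'\in\Z(\tilde f,R,\epsilon)$, and depth parameter $N-k\le 1/\epsilon$, yields
\begin{equation*}
\|\tilde f\|_{L^6(B'_{\tilde\tau_0})}^6\le C_\epsilon^{\mathrm{br}}R^{20\epsilon}\Big(\sum_{\tilde\tau_{N-k}}\|\tilde f_{\tilde\tau_{N-k}}\|_{L^\infty}^2\Big)^2\Big(\sum_{\tilde\tau_{N-k}}\|\tilde f_{\tilde\tau_{N-k}}\|_{L^2}^2\Big).
\end{equation*}

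It remains to undo the rescaling. Since $\cal_{\tau_k}$ is a similarity of ratio $R_k$, the change of variables $x=\cal_{\tau_k}(y)$ has Jacobian $R_k^2$, so $\|\tilde f\|_{L^6(B'_{\tilde\tau_0})}^6=R_k^2\|f^\Lambda_{\tau_k}\|_{L^6(B'_{\tau_k})}^6$, while $\|\tilde f_{\tilde\tau_{N-k}}\|_{L^\infty}=\|f^\Lambda_{\tau_N}\|_{L^\infty}$ and $\|\tilde f_{\tilde\tau_{N-k}}\|_{L^2}^2=R_k^2\|f^\Lambda_{\tau_N}\|_{L^2}^2$. Both sides of the displayed estimate pick up the same factor $R_k^2$, which cancels, delivering \eqref{eqn: k-broad bound} with the same constant $C_\epsilon^{\mathrm{br}}R^{20\epsilon}$. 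The only substantive work is the inductive identification of $\tilde f$ with a branch on $\Gamma_{\tau_k}$ and the corresponding matching of broad sets; this is essentially bookkeeping, and the main obstacle is not conceptual but notational, namely threading the pigeonholed structure through $\cal_{\tau_k}$, which has already been packaged into Lemma \ref{lemma: rescaling}.
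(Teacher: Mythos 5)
Your overall strategy is exactly the paper's: rescale by the similarity attached to $\tau_k$, use Lemma \ref{lemma: rescaling} to identify the pigeonholed collections $\calt^\Lambda_{k+j}(\Gamma;\tau_k)$ with $\calt^{\Lambda'}_j(\Gamma_{\tau_k})$, observe that transversality and the broad set transfer, and then invoke Proposition \ref{prop: broad estimate} for the admissible curve $\Gamma_{\tau_k}$ at depth $N-k$. However, the implementation of the rescaling has a genuine error: you change variables on the \emph{spatial} side by $\cal_{\tau_k}^{-1}$, setting $\tilde f:=f^\Lambda_{\tau_k}\circ\cal_{\tau_k}^{-1}$. Under a spatial composition $x\mapsto Ax+b$ the Fourier support transforms by $A^{t}$, so with the linear part of $\cal_{\tau_k}$ equal to $R_k$ times a rotation, the Fourier support of $\tilde f$ is the support of $\widehat{f^\Lambda_{\tau_k}}$ \emph{contracted} by $R_k$ (and rotated), not dilated by $R_k$. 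In particular it is nowhere near the unit-scale admissible curve $\Gamma_{\tau_k}=\cal_{\tau_k}(\Gamma\vert_{\tau_k})$, and the central identification $\tilde f_{\tilde\tau_j}=f^\Lambda_{\tau_{k+j}}\circ\cal_{\tau_k}^{-1}$ with $\tilde\tau_j=\cal_{\tau_k}(\tau_{k+j})$ is false: those boxes sit at scale $R_k\lambda_{k+j}\times R_{j}^{-1}$, while the actual Fourier pieces of $\tilde f$ sit at scale $R_k^{-1}\lambda_{k+j}\times R_k^{-1}R_{k+j}^{-1}$. The same directional mistake appears on the physical side: your change of variables sends $B'_{\tau_k}\subseteq B_R$ to a set of diameter about $R_kR$, whereas Proposition \ref{prop: broad estimate} (via Theorem \ref{thm: weak type}) is only available on $R\times R$ squares; covering by $\approx R_k^2$ such squares would cost a factor as large as $R^{2}$ and destroy the bound.

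The fix is what the paper does: compose $f$ spatially with the \emph{adjoint} map $\cal_{\tau_k}^{t}$ (so the frequency variable transforms by $\cal_{\tau_k}$ itself, dilating by $R_k$, and the spatial domain contracts by $R_k$ into an $R$-ball). Then $f^\Lambda_{\tau_k}\circ\cal_{\tau_k}^t=g^{\Lambda'}$ for $g:=f\circ\cal_{\tau_k}^t$, the image of $B'_{\tau_k}$ is contained in the level-$0$ broad set of $g^{\Lambda'}$ over an $R$-ball, and the Jacobian factors cancel exactly as in your computation. One further point you omit: Proposition \ref{prop: broad estimate} rests on the normalisation argument of \S\ref{sec: normalising f}, which uses the volume bound \eqref{eqn: theta size}; to apply it to $\Gamma_{\tau_k}$ you must check $|\cal_{\tau_k}(\tau_N)|\leq R^2$, which is \eqref{eqn: rescaled curve normalisation} in the paper and follows from $\lambda_N\leq R^{2\epsilon}$, $|\det\cal_{\tau_k}|=R_k^2$, and $N\leq 1/\epsilon$.
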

In order to prove the above, we need to relate the estimate \eqref{eqn: k-broad bound} to a scaled version of \eqref{eqn: broad set bound} for the admissible curve $\Gamma_{\tau_k}$ (recall Lemma \ref{lemma: rescaled curves}). Lemma \ref{lemma: rescaling} will be used for this. Since Proposition \ref{prop: broad estimate} is proved using Theorem \ref{thm: weak type}, which relied on the bound \eqref{eqn: theta size} to achieve the normalisation in \S\ref{sec: normalising f}, we need the following analogue for the rescaled curves:
\begin{equation}
    \label{eqn: rescaled curve normalisation}
    |\cal_{\tau_k}(\tau_N)|\leq R^2\quad\text{for all}\quad\tau_k\in\calt_k^\Lambda(\Gamma)\quad\text{and all}\quad\tau_N\in\calt_N^\Lambda(\Gamma;\tau_k).
\end{equation}
To see why this is true, recall each $\tau_N$ is contained in a $2\lambda_N\times R_N^{-1}$ rectangle. By \eqref{eqn: lambda k absolute range} we have $\lambda_N\leq R^{2\epsilon}$, and by \eqref{eqn: cal tau k def}, we have that $|\det\cal_{\tau_k}|=R_k^2$. It follows that $$|\cal_{\tau_k}(\tau_N)|=R_k^2|\tau_N|\leq 2R^{(2+N)\epsilon}.$$ Then \eqref{eqn: rescaled curve normalisation} follows from the fact that $N\leq 1/\epsilon$. 
\begin{proof}[Proof of Proposition \ref{prop: k-broad bound}]
    The idea is to relate \eqref{eqn: k-broad bound} to a scaled version of \eqref{eqn: broad set bound}. We do this by applying the affine transformation $\cal_{\tau_k}^t$  (defined in \S\ref{sec: box definition}) to the spatial domain, and observing its effect on the quantities in \eqref{eqn: k-broad bound}. Let the image of $f$ under this transformation be denoted by $f\circ\cal_{\tau_k}^t=:g$. Since $f_{\tau_N}=f*\widecheck{\chi}_{\tau_N}$, it follows that $f_{\tau_N}\circ\cal_{\tau_k}^t=g_{\cal_{\tau_k}(\tau_N)}$. Then by \eqref{eqn: f^Lambda_tau_k representation 2} we have
    \begin{equation*}
f^\Lambda_{\tau_{k}}\circ\cal_{\tau_k}^t=\sum_{\tau_N\in\calt_N^\Lambda(\Gamma;\tau_{k})}f_{\tau_N}\circ\cal_{\tau_k}^t=\sum_{\tau_N\in\calt_N^\Lambda(\Gamma;\tau_{k})}g_{\cal_{\tau_k}(\tau_N)}.
    \end{equation*}
    By Lemma \ref{lemma: rescaling}, $\cal_{\tau_k}$ maps the collection $\calt_N^\Lambda(\Gamma;\tau_k)$ onto $\calt_{N-k}^{\Lambda'}(\Gamma_{\tau_k})$. Thus,
    \begin{equation}
        \label{eqn: f tau j image}
        f^\Lambda_{\tau_{k}}\circ\cal_{\tau_k}^t=\sum_{\tilde\tau_{N-k}\in\calt^{\Lambda'}_{N-k}(\Gamma_{\tau_k})}g_{\tilde\tau_{N-k}}=g^{\Lambda'},
    \end{equation}
    where the last equality follows from \eqref{eqn: f^Lambda representation 2}.
    The change of variables $y=\cal_{\tau_k}^{-t}x$ then reduces \eqref{eqn: k-broad bound} to proving the following bound
    \begin{equation}
        \label{eqn: k-broad bound transformed}
          \|g^{\Lambda'}\|_{L^6(D_{\tau_k})}^6\leq C_\epsilon^{\mathrm{br}}R^{20\epsilon}\big(\sum_{\tilde\tau_{N-k}\in\calt_{N-k}^{\Lambda'}(\Gamma_{\tau_k})}\|g_{\tilde\tau_{N-k}}\|_{L^\infty}^2\big)^2\big(\sum_{\tilde\tau_{N-k}\in\calt_{N-k}^{\Lambda'}(\Gamma_{\tau_k})}\|g_{\tilde\tau_{N-k}}\|_{L^2}^2\big),
    \end{equation}
    where $D_{\tau_k}:=\cal_{\tau_k}^t(B_{\tau_k}').$ Let $\tau_{k+1}\in\calt^\Lambda_{k+1}(\Gamma;\tau_k)$. By another application of Lemma \ref{lemma: rescaling} and \eqref{eqn: f^Lambda_tau_k representation 2}, we have $$f^\Lambda_{\tau_{k+1}}\circ\cal_{\tau_k}^t=\sum_{\tilde\tau_{N-k}\in\calt^{\Lambda'}_{N-k}(\Gamma_{\tau_k};\tilde\tau_1)}g_{\tilde\tau_{N-k}}=g_{\tilde\tau_1}^{\Lambda'},$$ where $\tilde\tau_1:=\cal_{\tau_k}(\tau_{k+1})\in\calt_1^{\Lambda'}(\Gamma_{\tau_k})$. Note that the transversality relation $\nsim$ is preserved under $\cal_{\tau_k}$. We can then express the set $D_{\tau_k}$ as follows
    \begin{align*}
        D_{\tau_k}=\bigg\{x\in \cal_{\tau_k}^{-t}(B_R): &|g^{\Lambda'}(x)|\leq 2KR^{2\epsilon}\max_{\substack{\tilde\tau_{1},\tilde\tau_{1}'\in\calt^{\Lambda'}_{1}(\Gamma_{\tau_k})\\\tilde\tau_{1}\nsim\tilde\tau_{1}'}}|g^{\Lambda'}_{\tilde\tau_{1}}(x)|^{1/2}|g^{\Lambda'}_{\tilde\tau_{1}'}(x)|^{1/2},\\&\big(\sum_{\tilde\tau_{1}\in\calt^{\Lambda'}_{1}(\Gamma_{\tau_k})}|g^{\Lambda'}_{\tilde\tau_{1}}(x)|^6\big)^{1/6}\leq 2KR^{2\epsilon}\max_{\substack{\tilde\tau_{1},\tilde\tau_{1}'\in\calt^{\Lambda'}_{1}(\Gamma_{\tau_k})\\\tilde\tau_{1}\nsim\tilde\tau_{1}'}}|g^{\Lambda'}_{\tilde\tau_{1}}(x)|^{1/2}|g^{\Lambda'}_{\tilde\tau_{1}'}(x)|^{1/2}\bigg\}.
    \end{align*}
    Observe that the set $\cal_{\tau_k}^{-t}(B_R)$ is contained in some $R$-ball, say, $Q_R$.
    Then 
    \begin{align*}
        D_{\tau_k}\subseteq \tilde B_{\tilde\tau_0}:=\bigg\{x\in Q_R: |g^{\Lambda'}(x)|&\leq 2KR^{2\epsilon}\max_{\substack{\tilde\tau_{1},\tilde\tau_{1}'\in\calt^{\Lambda'}_{1}(\Gamma_{\tau_k})\\\tilde\tau_{1}\nsim\tilde\tau_{1}'}}|g^{\Lambda'}_{\tilde\tau_{1}}(x)|^{1/2}|g^{\Lambda'}_{\tilde\tau_{1}'}(x)|^{1/2},\\&\big(\sum_{\tilde\tau_{1}\in\calt^{\Lambda'}_{1}(\Gamma_{\tau_k})}|g^{\Lambda'}_{\tilde\tau_{1}}(x)|^6\big)^{1/6}\leq 2KR^{2\epsilon}\max_{\substack{\tilde\tau_{1},\tilde\tau_{1}'\in\calt^{\Lambda'}_{1}(\Gamma_{\tau_k})\\\tilde\tau_{1}\nsim\tilde\tau_{1}'}}|g^{\Lambda'}_{\tilde\tau_{1}}(x)|^{1/2}|g^{\Lambda'}_{\tilde\tau_{1}'}(x)|^{1/2}\bigg\}.
    \end{align*}
    Now $\|g^{\Lambda'}\|_{L^6(D_{\tau_k})}\leq\|g^{\Lambda'}\|_{L^6(\tilde B_{\tilde\tau_0})}$, and from Lemma \ref{lemma: rescaled curves} we know that $\Gamma_{\tau_k}$ is an admissible curve. Additionally, we have $$|\tilde\tau_{N-k}|\leq R^2\quad\text{for all}\quad \tilde\tau_{N-k}\in\calt^{\Lambda'}_{N-k}(\Gamma_{\tau_k}),$$ which is just a reformulation of \eqref{eqn: rescaled curve normalisation}. Then \eqref{eqn: k-broad bound transformed} is a direct consequence of Proposition \ref{prop: broad estimate} applied to the function $g^{\Lambda'}$.
\end{proof}
The key ingredient to removing the broad hypothesis is the broad/narrow inequality, which is free from any Fourier analytic considerations.
\begin{lemma}
    [Broad/narrow inequality]
    \label{lemma: broad narrow} Recall the parameter $K$ appearing in Definition \ref{def: transversality}, and let $f^\Lambda_{\tau_0}:=f^\Lambda$. Then for all $k\in\{0,\dots,N\}$ and all $\tau_k\in\calt_k^\Lambda(\Gamma)$ we have 
    \begin{equation}
        \label{eqn: broad narrow}
        |f^\Lambda_{\tau_k}(x)|\leq K\big(\sum_{\tau_{k+1}\in\calt^\Lambda_{k+1}(\tau_k)}|f^\Lambda_{\tau_{k+1}}(x)|^6\big)^{1/6}+\sum_{\substack{\tau_{k+1},\tau_{k+1}'\in\calt^\Lambda_{k+1}(\tau_k)\\\tau_{k+1}\nsim\tau_{k+1}'}}|f^\Lambda_{\tau_{k+1}}(x)|^{1/2}|f^\Lambda_{\tau_{k+1}'}(x)|^{1/2},
    \end{equation} for all $x\in\R^2$.
\end{lemma}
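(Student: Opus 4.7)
The plan is a purely pointwise broad/narrow dichotomy with no Fourier-analytic input. Fix $x \in \R^2$ and $\tau_k \in \calt^\Lambda_k(\Gamma)$, and expand $f^\Lambda_{\tau_k} = \sum_{\tau_{k+1}\in\calt^\Lambda_{k+1}(\tau_k)} f^\Lambda_{\tau_{k+1}}$ via \eqref{eqn: f^Lambda_{tau_k} def}. I would pick $\tau^{\ast}\in\calt^\Lambda_{k+1}(\tau_k)$ achieving $\max_{\tau_{k+1}}|f^\Lambda_{\tau_{k+1}}(x)|$ and split the triangle-inequality bound according to whether $\tau_{k+1}$ is adjacent to $\tau^{\ast}$ in the one-dimensional ordering on $\calt^\Lambda_{k+1}(\tau_k)$:
\[
|f^\Lambda_{\tau_k}(x)| \;\leq\; \sum_{\tau_{k+1}\sim\tau^{\ast}}\! |f^\Lambda_{\tau_{k+1}}(x)| \;+\; \sum_{\tau_{k+1}\nsim\tau^{\ast}}\! |f^\Lambda_{\tau_{k+1}}(x)|.
\]

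For the narrow sum, Definition \ref{def: transversality} forces the $\sim$-neighbourhood of any single $\tau^{\ast}$ inside $\calt^\Lambda_{k+1}(\tau_k)$ to contain at most $2\lceil K/4\rceil-1\leq K$ elements. Applying Hölder's inequality with exponents $(6/5,6)$ over this finite index set gives
\[
\sum_{\tau_{k+1}\sim\tau^{\ast}}|f^\Lambda_{\tau_{k+1}}(x)| \;\leq\; K^{5/6}\Big(\sum_{\tau_{k+1}\sim\tau^{\ast}}|f^\Lambda_{\tau_{k+1}}(x)|^6\Big)^{1/6} \;\leq\; K\Big(\sum_{\tau_{k+1}\in\calt^\Lambda_{k+1}(\tau_k)}|f^\Lambda_{\tau_{k+1}}(x)|^6\Big)^{1/6},
\]
which is exactly the first term on the right of \eqref{eqn: broad narrow}. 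For the broad sum, the maximality of $\tau^{\ast}$ yields $|f^\Lambda_{\tau_{k+1}}(x)| \leq |f^\Lambda_{\tau^{\ast}}(x)|$, hence the trivial factorisation
\[
|f^\Lambda_{\tau_{k+1}}(x)| \;=\; |f^\Lambda_{\tau_{k+1}}(x)|^{1/2}|f^\Lambda_{\tau_{k+1}}(x)|^{1/2} \;\leq\; |f^\Lambda_{\tau_{k+1}}(x)|^{1/2}|f^\Lambda_{\tau^{\ast}}(x)|^{1/2}.
\]
Since each $\tau_{k+1}\nsim\tau^{\ast}$ gives a legitimate transverse pair $(\tau_{k+1},\tau^{\ast})$, summing over such $\tau_{k+1}$ is dominated by the full broad expression on the right of \eqref{eqn: broad narrow}.

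The whole argument uses only elementary inequalities, so I do not anticipate any real obstacle; the main thing to be careful about is the bookkeeping around the transversality relation and the counting of neighbours within the linearly ordered collection $\calt^\Lambda_{k+1}(\tau_k)$. The only degenerate situation is $\#\calt^\Lambda_{k+1}(\tau_k)=1$ (which can arise when $\tau_k$ is exceptional, cf.~\eqref{eqn: multiscale exceptional box properties}, or with the convention $\calt^\Lambda_{N+1}(\tau_N):=\{\tau_N\}$ at the terminal scale), in which case the broad sum is vacuous, the sole element equals $\tau^{\ast}$, and the narrow bound with constant $K\geq 1$ trivially absorbs $|f^\Lambda_{\tau_k}(x)|$.
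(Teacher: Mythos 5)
Your proof is correct and follows essentially the same route as the paper: split the sum over $\calt^\Lambda_{k+1}(\tau_k)$ according to adjacency to the pointwise maximizer, bound the narrow part by $K$ times the $\ell^6$ expression using the $O(K)$ count of $\sim$-neighbours, and use maximality plus relaxation of the summation to dominate the broad part by the full transverse bilinear sum. The only cosmetic difference is that you invoke Hölder with exponents $(6/5,6)$ where the paper bounds the narrow block by $K$ times the maximal term and then uses $\|\cdot\|_{\ell^\infty}\leq\|\cdot\|_{\ell^6}$; these are interchangeable.
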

The idea is to decompose the function $f^\Lambda_{\tau_k}$ into its \textit{broad} and \textit{narrow} parts. The broad part of the function dominates at the points $x\in\R^2$ where the sequence $\{|f^\Lambda_{\tau_{k+1}}(x)|\}_{\tau_{k+1}\in\calt_{k+1}(\tau_k)}$ has a flat (or broad) distribution. The narrow part dominates at the points $x\in\R^2$ where $\{|f^\Lambda_{\tau_{k+1}}(x)|\}_{\tau_{k+1}\in\calt_{k+1}(\tau_k)}$ has a peaked (or narrow) distribution. The idea of broad/narrow decomposition goes back to the following paper of Bourgain--Guth \cite{BG}, while the exact form of the inequality \eqref{eqn: broad narrow} has appeared in works such as \cite[Lemma 2.8]{lee_ham} and \cite[Lemma 4.1]{KLO}.
\begin{proof}
    Fix $k\in\{0,\dots,N\}$ and $\tau_k\in\calt_k$. Recall from \eqref{eqn: f^Lambda_tau_k representation 2} that $$f^\Lambda_{\tau_k}(x)=\sum_{\tau_{k+1}\in\calt^\Lambda_{k+1}(\Gamma;\tau_k)}f^\Lambda_{\tau_{k+1}}(x)\quad\text{for all}\quad x\in\R^2.$$ We split the above sum into $$f^\Lambda_{\tau_k}(x)=\sum_{\substack{\tau_{k+1}\in\calt^\Lambda_{k+1}(\tau_k)\\\tau_{k+1}\sim\tau_{k+1}(x)}}f^\Lambda_{\tau_{k+1}}(x)+\sum_{\substack{\tau_{k+1}\in\calt^\Lambda_{k+1}(\tau_k)\\\tau_{k+1}\nsim\tau_{k+1}(x)}}f^\Lambda_{\tau_{k+1}}(x),$$ where $\tau_{k+1}(x)$ is defined by the relation $\displaystyle|f^\Lambda_{\tau_{k+1}(x)}(x)|=\max_{\tau_{k+1}\in\calt^\Lambda_{k+1}(\tau_k)}|f^\Lambda_{\tau_{k+1}}(x)|$. Following this definition, we have $$|f^\Lambda_{\tau_k}(x)|\leq K|f^\Lambda_{\tau_{k+1}(x)}(x)|+\sum_{\substack{\tau_{k+1}\in\calt^\Lambda_{k+1}(\tau_k)\\\tau_{k+1}\nsim\tau_{k+1}(x)}}|f^\Lambda_{\tau_{k+1}}(x)|^{1/2}|f^\Lambda_{\tau_{k+1}(x)}(x)|^{1/2},$$ where by definition we have $\#\{\tau_{k+1}\sim\tau_{k+1}(x)\}\leq K$.
    Using the fact $\|\cdot\|_{\ell^\infty}\leq\|\cdot\|_{\ell^6}$ on the first term, and relaxing the summation over the second term we get $$|f^\Lambda_{\tau_k}(x)|\leq K\big(\sum_{\tau_{k+1}\in\calt^\Lambda_{k+1}(\tau_k)}|f^\Lambda_{\tau_{k+1}}(x)|^6\big)^{1/6}+\sum_{\substack{\tau_{k+1},\tau_{k+1}'\in\calt^\Lambda_{k+1}(\tau_k)\\\tau_{k+1}\nsim\tau_{k+1}'}}|f^\Lambda_{\tau_{k+1}}(x)|^{1/2}|f^\Lambda_{\tau_{k+1}'}(x)|^{1/2},$$ proving the claim.
\end{proof}
Equipped with Proposition \ref{prop: k-broad bound} and Lemma \ref{lemma: broad narrow}, we are now ready to prove Theorem \ref{thm: convex decoupling ver 2 pigeonholed}. 
\begin{proof}[Proof of Theorem \ref{thm: convex decoupling ver 2 pigeonholed}] 
Let us recall the definition of the broad-set $B_{\tau_0}$ given by \begin{align*}
    B_{\tau_0}=\bigg\{x\in B_R: |{f^\Lambda}(x)|\leq 2KR^{2\epsilon}&\max_{\substack{\tau_1,\tau_1'\in\calt^\Lambda_1(\Gamma)\\\tau_1\nsim\tau_1'}}|f^\Lambda_{\tau_1}(x)|^{1/2}|f^\Lambda_{\tau_1'}(x)|^{1/2},\\&\big(\sum_{\tau_1\in\calt^\Lambda_1(\Gamma)}|f^\Lambda_{\tau_1}(x)|^6\big)^{1/6}\leq 2KR^{2\epsilon}\max_{\substack{\tau_1,\tau_1'\in\calt^\Lambda_1(\Gamma)\\\tau_1\nsim\tau_1'}}|f^\Lambda_{\tau_1}(x)|^{1/2}|f^\Lambda_{\tau_1'}(x)|^{1/2}\bigg\}.
\end{align*}
Set
$N_{\tau_0}:=B_R\setminus B_{\tau_0}$, so that for all $x\in N_{\tau_0}$, one of the following must hold:
\begin{itemize}
    \item  $\displaystyle|f^\Lambda(x)|> 2KR^{2\epsilon}\max_{\substack{\tau_1,\tau_1'\in\calt^\Lambda_1(\Gamma)\\\tau_1\nsim\tau_1'}}|f^\Lambda_{\tau_1}(x)|^{1/2}|f^\Lambda_{\tau_1'}(x)|^{1/2}$,
    \item  $\displaystyle|f^\Lambda(x)|\leq \big(\sum_{\tau_1\in\calt^\Lambda_1(\Gamma)}|f^\Lambda_{\tau_1}(x)|^6\big)^{1/6}$.
\end{itemize}
In the former case, the broad/narrow inequality \eqref{eqn: broad narrow} gives  $$|f^\Lambda(x)|\leq 2K\big(\sum_{\tau_1\in\calt^\Lambda_1(\Gamma)}|f^\Lambda_{\tau_1}(x)|^6\big)^{1/6}.$$ 
Hence, in either case we have
\begin{equation}
    \label{eqn: narrow step 0}
   \int_{B_R}|f^\Lambda|^6\leq \int_{B_{\tau_0}}|f^\Lambda|^6+(2K)^6\sum_{\tau_1\in\calt^\Lambda_1(\Gamma)}\int_{N_{\tau_0}}|f^\Lambda_{\tau_1}(x)|^6\dd{x}.
\end{equation}
Suppose at the end of $k$ iterations, we have the following inequality 
\begin{align}
    \label{eqn: narrow step k}
    \nonumber\int_{B_R}|f^\Lambda|^6\leq \int_{B_{\tau_0}}|f^\Lambda|^6+(2K)^6\sum_{\tau_1\in\calt^\Lambda_1(\Gamma)}\int_{B_{\tau_1}}|f^\Lambda_{\tau_1}(x)|^6\dd{x}&+\dots+(2K)^{6(k-1)}\sum_{\tau_{k-1}\in\calt_{k-1}^\Lambda(\Gamma)}\int_{B_{\tau_{k-1}}}|f^\Lambda_{\tau_{k-1}}(x)|^6\dd{x}\\&+(2K)^{6k}\sum_{\tau_k\in\calt^\Lambda_k(\Gamma)}\int_{N_{\tau_{k-1}}}|f^\Lambda_{\tau_k}(x)|^6\dd{x}.
\end{align}
Let us recall the definitions of the broad-sets $B_{\tau_k}'$, given by \begin{align*}
    B'_{\tau_k}=\bigg\{x\in B_R: |f^\Lambda_{\tau_k}(x)|&\leq 2KR^{2\epsilon}\max_{\substack{\tau_{k+1},\tau_{k+1}'\in\calt^\Lambda_{k+1}(\Gamma;\tau_k)\\\tau_{k+1}\nsim\tau_{k+1}'}}|f^\Lambda_{\tau_{k+1}}(x)|^{1/2}|f^\Lambda_{\tau_{k+1}'}(x)|^{1/2},\\&\big(\sum_{\tau_{k+1}\in\calt^\Lambda_{k+1}(\Gamma;\tau_k)}|f^\Lambda_{\tau_{k+1}}(x)|^6\big)^{1/6}\leq 2KR^{2\epsilon}\max_{\substack{\tau_{k+1},\tau_{k+1}'\in\calt^\Lambda_{k+1}(\Gamma;\tau_k)\\\tau_{k+1}\nsim\tau_{k+1}'}}|f^\Lambda_{\tau_{k+1}}(x)|^{1/2}|f^\Lambda_{\tau_{k+1}'}(x)|^{1/2}\bigg\}.
\end{align*}
For each $\tau_k$, we decompose $N_{\tau_{k-1}}:=B_{\tau_k}\cup N_{\tau_k},$ where $$B_{\tau_k}:=N_{\tau_{k-1}}\cap B_{\tau_k}'\quad\text{and}\quad N_{\tau_k}:=N_{\tau_{k-1}}\setminus B_{\tau_k}'.$$
For all $x\in N_{\tau_k}$ one of the following must hold:
\begin{itemize}
    \item $\displaystyle|f^\Lambda_{\tau_k}(x)|> 2KR^{2\epsilon}\max_{\substack{\tau_{k+1},\tau_{k+1}'\in\calt^\Lambda_{k+1}(\tau_k)\\\tau_{k+1}\nsim\tau_{k+1}'}}|f^\Lambda_{\tau_{k+1}}(x)|^{1/2}|f^\Lambda_{\tau_{k+1}'}(x)|^{1/2}$,
    \item $\displaystyle|f^\Lambda_{\tau_k}(x)|\leq \big(\sum_{\tau_{k+1}\in\calt^\Lambda_{k+1}(\tau_k)}|f^\Lambda_{\tau_{k+1}}(x)|^6\big)^{1/6}$.
\end{itemize}
Using the broad/narrow inequality \eqref{eqn: broad narrow} in the former case, we get $$|f^\Lambda_{\tau_k}(x)|\leq 2K\big(\sum_{\tau_{k+1}\in\calt^\Lambda_{k+1}(\tau_k)}|f^\Lambda_{\tau_{k+1}}(x)|^6\big)^{1/6}.$$ Hence, we update \eqref{eqn: narrow step k} to the following 
\begin{align}
    \label{eqn: narrow step k+1}
    \nonumber \int_{B_R}|f^\Lambda|^6\leq \int_{B_{\tau_0}}|f^\Lambda|^6+(2K)^6\sum_{\tau_1\in\calt^\Lambda_1(\Gamma)}\int_{B_{\tau_1}}|f^\Lambda_{\tau_1}(x)|^6\dd{x}&+\dots+(2K)^{6k}\sum_{\tau_{k}\in\calt^\Lambda_k(\Gamma)}\int_{B_{\tau_{k}}}|f^\Lambda_{\tau_{k}}(x)|^6\dd{x}\\&+(2K)^{6(k+1)}\sum_{\tau_{k+1}\in\calt_{k+1}^\Lambda(\Gamma)}\int_{N_{\tau_{k}}}|f^\Lambda_{\tau_{k+1}}(x)|^6\dd{x}.
\end{align}
The process terminates after $N$ steps, where our final inequality is the following 
\begin{align}
    \label{eqn: narrow step N}
     \nonumber\int_{B_R}|f^\Lambda|^6\leq \int_{B_{\tau_0}}|f^\Lambda|^6+(2K)^6\sum_{\tau_1\in\calt^\Lambda_1(\Gamma)}\int_{B_{\tau_1}}|f^\Lambda_{\tau_1}(x)|^6\dd{x}&+\dots+(2K)^{6(N-1)}\sum_{\tau_{N-1}\in\calt^\Lambda_{N-1}(\Gamma)}\int_{B_{\tau_{N-1}}}|f^\Lambda_{\tau_{N-1}}(x)|^6\dd{x}\\&+(2K)^{6N}\sum_{\tau_{N}\in\calt^\Lambda_N(\Gamma)}\int_{N_{\tau_{N-1}}}|f^\Lambda_{\tau_{N}}(x)|^6\dd{x}.
\end{align}
Since $B_{\tau_k}\subseteq B_{\tau_k'}$, the integrals over the sets $B_{\tau_k}$ ($k=0,\dots,N-1$) can be treated using Proposition \ref{prop: k-broad bound} as shown below
\begin{equation}
    \label{eqn: broad term bound}
    \sum_{\tau_k\in\calt^\Lambda_k(\Gamma)}\int_{B_{\tau_k}}|f^\Lambda_{\tau_k}(x)|^6\dd{x}\leq C_{\epsilon}^{\text{br}} R^{20\epsilon}\big(\sum_{{\tau_N}\in{\calt^\Lambda_N(\Gamma)}}\|f_{\tau_N}\|_{L^\infty}^2\big)^2\big(\sum_{{\tau_N}\in{\calt^\Lambda_N(\Gamma)}}\|f_{\tau_N}\|_{L^2}^2\big).
\end{equation}
For the remaining integral over the narrow set $N_{\tau_{N-1}}$, an application of H\"older's inequality gives us 
\begin{align}
    \label{eqn: narrow term bound}
    \sum_{\tau_{N}\in\calt^\Lambda_{N}(\Gamma)}\int_{N_{\tau_{N-1}}}|f^\Lambda_{\tau_{N}}(x)|^6\dd{x}&\leq\sum_{{\tau_N}\in{\calt^\Lambda_N(\Gamma)}}\|f^\Lambda_{\tau_N}\|_{L^6(B_R)}^6\leq\sum_{{\tau_N}\in{\calt^\Lambda_N(\Gamma)}}\|f^\Lambda_{\tau_N}\|_{L^\infty}^4\|f^\Lambda_{\tau_N}\|_{L^2}^2\nonumber\\&\leq\big(\sum_{{\tau_N}\in{\calt^\Lambda_N(\Gamma)}}\|f^\Lambda_{\tau_N}\|_{L^\infty}^2\big)^2\big(\sum_{{\tau_N}\in{\calt^\Lambda_N(\Gamma)}}\|f^\Lambda_{\tau_N}\|_{L^2}^2\big).
\end{align}
Combining \eqref{eqn: broad term bound} with \eqref{eqn: narrow term bound} in \eqref{eqn: narrow step N}, we obtain 
\begin{align*}
     \int_{B_R}|f^\Lambda|^6&\leq N(2K)^{6N}C_{\epsilon}^{\text{br}}R^{20\epsilon}\big(\sum_{{\tau_N}\in{\calt^\Lambda_N(\Gamma)}}\|f^\Lambda_{\tau_N}\|_{L^\infty}^2\big)^2\big(\sum_{{\tau_N}\in{\calt^\Lambda_N(\Gamma)}}\|f^\Lambda_{\tau_N}\|_{L^2}^2\big)\\&\lesssim_\epsilon R^{20\epsilon}\big(\sum_{{\tau_N}\in{\calt^\Lambda_N(\Gamma)}}\|f^\Lambda_{\tau_N}\|_{L^\infty}^2\big)^2\big(\sum_{{\tau_N}\in{\calt^\Lambda_N(\Gamma)}}\|f_{\tau_N}\|_{L^2}^2\big),
\end{align*} which concludes the proof.
\end{proof}
\section{Special examples}\label{sec: applications and examples}
By interpolation, Theorem \ref{thm: convex decoupling} implies that $$\|f\|_{L^q(\R^2)}\lesssim_\epsilon R^\epsilon\big(\sum_{\theta}\|f_\theta\|_{L^q(\R^2)}^2\big)^{1/2}\quad\text{for all}\quad 2\leq q\leq 6,$$ for a canonical box-decomposition $\{\theta\}$ of $\mathcal{N}_{R^{-1}}(\Gamma)$. It is well-known that the range of exponents $2\leq q\leq 6$ cannot be improved when $\Gamma$ has non-vanishing curvature. However, for general curves, this range is not sharp. A trivial example is when the affine dimension $\kappa_\Gamma=0$. In this case, the Cauchy--Schwarz inequality gives $$ \|f\|_{L^q(\R^2)}\lesssim_{\epsilon,\Gamma} R^\epsilon \big(\sum_{\theta}\|f_\theta\|_{L^q(\R^2)}^2\big)^{1/2}\quad\text{for all}\quad 2\leq q\leq \infty.$$
The following provides non-trivial examples of curves where the range can be improved.
\begin{prop}
    [Non-sharp examples, \cite{roy}]\label{prop: lambda(p) decoupling}
    For each $p>2$ and $\epsilon>0$, there exists a convex curve $\Gamma=\Gamma(p,\epsilon)$ satisfying $\kappa_\Gamma=1/p$, such that 
    \begin{equation}
        \label{eqn: lambda(p) decoupling}
         \|f\|_{L^q(\R^2)}\lesssim_{\epsilon,p} R^\epsilon\big(\sum_{\theta}\|f_\theta\|_{L^q(\R^2)}^2\big)^{1/2} \quad\text{for all}\quad 2\leq q\leq 3p.
    \end{equation}
\end{prop}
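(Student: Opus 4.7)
The plan is to construct $\Gamma$ as the graph of a primitive of a devil's staircase function associated with a self-similar fractal measure, and then invoke the decoupling theorem of Chang et al.\ \cite{chang} combined with a standard interpolation with the trivial $L^2$ estimate. The construction closely mirrors Example \ref{example: devil's staircase}, but with the measure $\mu$ chosen carefully so that its dimension is precisely $1/p$ and so that the hypotheses needed to apply the self-similar fractal decoupling theorem are satisfied.

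First, I would fix a base $n$ and a digit set $D \subseteq \{0,1,\dots,n-1\}$ with $|D| = m$ chosen so that $\log m / \log n = 1/p$, and let $\mu$ be the natural self-similar probability measure on the associated Cantor set $K \subset [0,1]$. Setting $f_\mu(x) := \mu([0,x])$ and $\gamma_\mu(x) := \int_0^x f_\mu(t)\,\dd{t}$, the graph $\Gamma := \{(t,\gamma_\mu(t)) : t \in [0,1]\}$ is convex since $f_\mu$ is non-decreasing. A direct computation at scales $R = n^{2K}$ (generalizing the analysis in Example 1.10) shows that the canonical $R^{-1}$-cap covering of $\Gamma$ consists of roughly $m^K \approx R^{1/p}$ caps in the range where the relevant box has nontrivial length, so $\kappa_\Gamma = 1/p$.

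Next, I would invoke \cite[Theorem 1.1]{chang}, which yields a decoupling estimate at the critical exponent $q = 3p$ for such self-similar fractal curves with loss $C_{\epsilon,p}R^\epsilon$. The exponent $3p$ is forced by a $\Lambda(p)$-type orthogonality: the number of caps at scale $R^{-1}$ grows like $R^{1/p}$, and a Tomas–Stein style restriction calculation on the self-similar Cantor set saturates exactly at the convolution level $3$, giving the threshold $3p$. This supplies
$$\|f\|_{L^{3p}(\R^2)} \lesssim_{\epsilon,p} R^\epsilon \bigl(\sum_\theta \|f_\theta\|_{L^{3p}(\R^2)}^2\bigr)^{1/2}.$$
On the other hand, Plancherel and the disjointness of the canonical boxes give the unconditional bound at $q = 2$. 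Interpolating these two endpoints using Minkowski's integral inequality in the standard way produces \eqref{eqn: lambda(p) decoupling} for the full range $2 \leq q \leq 3p$.

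The main obstacle will be verifying that the specific self-similar structure I construct satisfies the precise hypotheses of the Chang et al.\ framework — in particular, their decoupling theorem is stated for fractal curves built from iterated function systems of a specific form, and one must check that the graph of $\gamma_\mu$ fits into that framework (not just the Cantor set itself). This may require either tailoring the IFS so that the primitive curve inherits the required self-similarity between dyadic scales, or else supplementing the construction with an additional pigeonholing across scales to match the hypotheses. Once this compatibility is established, the remaining steps are essentially bookkeeping.
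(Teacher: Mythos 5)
Your proposal misses the essential ingredient that makes the paper's examples work: genericity via Bourgain's $\Lambda(p)$ theorem. The route taken in \cite{roy} (and summarised in \S\ref{sec: applications and examples}) is: (i) use Bourgain's theorem to select a \emph{generic} family of frequency intervals, which supplies a one-dimensional $\ell^2L^q$ decoupling estimate for the resulting generalized Cantor set in the full range $2\leq q\leq p$; (ii) build $\Gamma$ as the convex hull of the lift of that Cantor set to the parabola, so that it is literally a fractal subset of the parabola; (iii) feed the one-dimensional estimate into \cite[Theorem 1.1]{chang}, whose role is precisely to upgrade 1D decoupling at exponent $p$ to 2D decoupling at exponent $3p$ for the lift. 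Your construction replaces step (i) by a \emph{deterministic} digit-restricted self-similar measure and asserts that \cite{chang} alone, plus a ``Tomas--Stein style'' count of caps, yields the $3p$ range. This is where the argument fails: the theorem of Chang et al.\ does not produce a range of exponents from the affine dimension alone; its output is governed by the one-dimensional decoupling constant of the underlying set, and structured self-similar sets (e.g.\ digit-restricted Cantor sets) have additive structure that destroys the $\Lambda(p)$-type square-root cancellation. Bourgain's result is probabilistic, and it is exactly the randomness of the selection that gives the critical 1D input; no deterministic base-$n$, $m$-digit construction of the kind you describe is known to satisfy it, and typical ones do not. So the claimed bound at $q=3p$ has no proof in your scheme, and the obstacle you flag at the end (fitting the curve into the Chang et al.\ framework) is secondary to this missing input.

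There are also two quantitative problems. First, your dimension bookkeeping is off: for the devil's-staircase curve attached to a base-$n$, $m$-digit self-similar measure, a cap at scale $R^{-1}$ corresponds to a generation-$k$ interval with $(nm)^{-k}\approx R^{-1}$, so $N(\Gamma,R^{-1})\approx m^{k}\approx R^{\log m/\log(nm)}$ and your choice $\log m/\log n=1/p$ gives $\kappa_\Gamma=1/(p+1)$, not $1/p$; in the paper's construction the Cantor set has dimension $2/p$ and it is the parabolic lift that has affine dimension $1/p$. Second, the $\epsilon$-dependence of $\Gamma(p,\epsilon)$ in the statement is not decorative: in \cite{roy} the first-generation parameter $N$ is chosen large depending on $\epsilon$ so that the constant lost per generation, when iterated over the $\approx\log R/\log N$ self-similar scales, accumulates to only $R^{\epsilon}$; your proposal has no mechanism playing this role. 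The interpolation down to $q=2$ is the only step that survives as stated.
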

The details of the construction can be found in \cite[\S 3.1]{roy}, and the proof of the decoupling bound \eqref{eqn: lambda(p) decoupling} can be found in \cite[Corollary 4.3]{roy}. Here, we briefly discuss the main ideas.

Decoupling for the parabola exploits curvature. The curvature gives the parabola a non-linear structure, which is favourable for cancellation. Contrastingly, decoupling along a line can be far more restrictive. To illustrate this, let $N$ be a large integer, and define the intervals $J_k:=[k/N,(k+1)/N]$ for $k=0,\dots, N-1$. By taking $f$ to be a sum of bump functions adapted to $J_k$, we see that $$\|f\|_{L^q(\R)}\lesssim_\epsilon N^\epsilon\big(\sum_{k=0}^{N-1}\|f_{J_k}\|_{L^q(\R)}^2\big)^{1/2}\quad\text{fails for all}\quad q>2.$$ In fact, the best universal decoupling bound over the real line is Plancherel's theorem ($\ell^2 L^2$ decoupling). In \cite{guo}, the authors give an alternative proof of the classical $\ell^2 L^6$ decoupling for the parabola by `lifting' the one-dimensional $\ell^2 L^2$ decoupling over the real line. In some sense, this quantifies the role of curvature in decoupling. A similar heuristic is true for general convex curves, where we `mod out' the flat parts of the curve by encoding the curvature into the definition of the canonical boxes, which allows us to prove Theorem \ref{thm: convex decoupling}. However, the universal estimate of Theorem \ref{thm: convex decoupling} does not take into account any special arithmetic structure of the curve, which is another source of cancellation. The examples constructed in Proposition \ref{prop: lambda(p) decoupling} exhibit such properties. 

To construct these examples, we first use Bourgain's $\Lambda(p)$ sets to obtain a family $\{I_k\}_{k=0}^{N-1}$ of $N$ sub-intervals of $[-1/2,1/2]$, each of length $N^{-2/p}$, such that the following decoupling estimate holds 
\begin{equation}
    \label{eqn: 1st level decoupling}
    \|f\|_{L^q(\R)}\lesssim_p \big(\sum_{k=0}^{N-1}\|f_{I_k}\|_{L^q(\R)}^2\big)^{1/2}\quad\text{for all}\quad 2\leq q\leq p.
\end{equation} 
In fact, even a generic family satisfying the same conditions would also lead to the same estimate. We then construct a \textit{generalized Cantor set} $C$ from these intervals, and the curve $\Gamma$ is obtained by lifting $C$ to the parabola and taking its convex hull, whose affine dimension will be $1/p$. For each canonical box $\theta$, denote its projection onto the horizontal coordinate axis by $J_\theta$. By choosing the parameter $N$ sufficiently large (depending on $\epsilon$), the self-similarity of $C$ and \eqref{eqn: 1st level decoupling} implies the following one-dimensional decoupling estimate 
\begin{equation}
    \label{eqn: Cantor set decoupling}
      \|f\|_{L^q(\R)}\lesssim_{\epsilon,p} R^\epsilon\big(\sum_{\theta}\|f_{J_\theta}\|_{L^q(\R)}^2\big)^{1/2}\quad\text{for all}\quad 2\leq q\leq p.
\end{equation}
By the lifting heuristic discussed above, we expect the lift to the parabola to satisfy $\ell^2$-decoupling on a wider range of exponents. This is made precise by \cite[Theorem 1.1]{chang}, using which we obtain 
\begin{equation*}
      \|f\|_{L^q(\R^2)}\lesssim_{\epsilon,p} R^\epsilon\big(\sum_{\theta}\|f_{\theta}\|_{L^q(\R^2)}^2\big)^{1/2}\quad\text{for all}\quad 2\leq q\leq 3p,
\end{equation*}
which is precisely the estimate appearing in Proposition \ref{prop: lambda(p) decoupling}.
\label{sec: examples}
\appendix
\section{}
\subsection{Essential Fourier support} Here we give a proof of Lemma \ref{lemma: properties of f_k} (iii). As the arguments for both inequalities are very similar, we prove only the former.
\begin{lemma}
    \label{lemma: rapid decay induction}
    For all $k\in\{1,\dots,N\}$ and each $\tau_{k}$ we have $$|f_{k+1,\tau_{k}}(x)-f_{k+1,\tau_{k}}*\rho_{\tau_{k}}(x)|\lesssim R^{-100(k+1)}\quad\text{for all}\quad x\in\R^2.$$
\end{lemma}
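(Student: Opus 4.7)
The plan is to proceed by downward induction on $k$, beginning with the base case $k = N$. Here $f_{N+1,\tau_N} = f_{\tau_N}$ has Fourier transform exactly supported in $\tau_N$, and since $\widehat{\rho}_{\tau_N} \equiv 1$ on $R^\delta 32^0 \cdot \tau_N \supseteq \tau_N$ by Lemma \ref{lemma: rho}, we get $\widehat{f}_{\tau_N} \cdot \widehat{\rho}_{\tau_N} = \widehat{f}_{\tau_N}$, so $f_{\tau_N} * \rho_{\tau_N} = f_{\tau_N}$ identically and the difference vanishes.

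For the inductive step, we expand
$$f_{k+1,\tau_k} = \sum_{\tau_{k+1} \in \calt_{k+1}(\tau_k)} \sum_{T \in \mathcal{T}^{\mathrm{g}}_{\tau_{k+1}}} \psi_T \, f_{k+2,\tau_{k+1}},$$
and apply the inductive hypothesis at level $k+2$, replacing each $f_{k+2,\tau_{k+1}}$ by $f_{k+2,\tau_{k+1}} * \rho_{\tau_{k+1}}$ with pointwise error $O(R^{-100(k+2)})$. The combinatorial cost of this replacement is $O(R^{\epsilon/2})$ by Lemma \ref{lemma: number of boxes} (only $O(1)$ wave packets $T$ contribute at each $x$ due to the finite overlap of the supports of the $\psi_T$), which is easily absorbed. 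Thus it suffices to estimate the Fourier tail of each replacement term $\psi_T \cdot (f_{k+2,\tau_{k+1}} * \rho_{\tau_{k+1}})$ outside the region where $\widehat{\rho}_{\tau_k}$ equals $1$.

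Taking Fourier transforms,
$$\widehat{\psi_T \cdot (f_{k+2,\tau_{k+1}} * \rho_{\tau_{k+1}})} = \widehat{\psi}_T * \bigl(\widehat{f}_{k+2,\tau_{k+1}} \cdot \widehat{\rho}_{\tau_{k+1}}\bigr).$$
The second factor is compactly supported in $2R^\delta 32^{N-k-1} \cdot \tau_{k+1}$ by Lemma \ref{lemma: rho}. Although $\widehat{\psi}_T$ is not compactly supported, it is a Schwarz function decaying at scale $T^* \simeq \tau_{k+1}$ (since $T$ is congruent to $\tau_{k+1}^*$), so outside of the slightly larger set $3R^\delta 32^{N-k-1} \cdot \tau_{k+1}$, the convolution is bounded by $C_M R^{-M\delta}$ for any $M \geq 1$. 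By the containment \eqref{eqn: tau k+1 containment amendment}, we have $\tau_{k+1} \subseteq 3 \cdot \tau_k$, hence $3R^\delta 32^{N-k-1} \cdot \tau_{k+1} \subseteq R^\delta 32^{N-k} \cdot \tau_k$, which is precisely the region where $\widehat{\rho}_{\tau_k} \equiv 1$. Consequently, the $(1 - \widehat{\rho}_{\tau_k})$-truncation of the Fourier transform inherits the Schwarz decay, and its $L^1$ norm is bounded by $R^{-M\delta}$ times polynomial factors in $R$.

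The main obstacle is quantitative: one must choose the Schwarz decay exponent $M$ large enough to beat $R^{-100(k+1)}$ at every level of the recursion while absorbing the combinatorial losses $R^{\epsilon/2}$ from Lemma \ref{lemma: number of boxes} and the bounded normalization factors of $\psi_T$. Since $N \leq 1/\epsilon$, choosing $M$ of order $1/\epsilon^{12}$ (depending only on $\epsilon$) suffices. Combining the inductive error $O(R^{-100(k+2) + \epsilon/2})$ with this Schwarz tail gives the claimed pointwise bound $\lesssim R^{-100(k+1)}$, closing the induction.
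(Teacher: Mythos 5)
Your proposal is correct and follows essentially the same route as the paper's proof in Appendix A.1: downward induction on $k$ with the exact base case $f_{\tau_N}=f_{\tau_N}*\rho_{\tau_N}$, splitting the inductive step into the error term handled by the hypothesis at level $k+2$ and a main term whose Fourier transform $\widehat{\psi}_T*(\widehat{f}_{k+2,\tau_{k+1}}\cdot\widehat{\rho}_{\tau_{k+1}})$ is concentrated, via the containment \eqref{eqn: tau k+1 containment amendment}, inside the region where $\widehat{\rho}_{\tau_k}\equiv 1$, with the tail controlled by the rapid decay of $\widehat{\psi}_T$ at the dual scale so that the gain $R^{-M\delta}$ absorbs all polynomial losses. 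The only differences are presentational (you bound the $L^1$ Fourier tail directly rather than interchanging the $\xi$- and $\eta$-integrals as the paper does), so the argument is sound.
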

\begin{proof}
    We prove the claim recursively, starting from $k=N$.
    Recall that $f_{N+1,\tau_N}=f_{\tau_N}$ is compactly Fourier supported in $\tau_N$, by definition. By Lemma \ref{lemma: rho}, it follows that $$\widehat f_{\tau_N}=\widehat f_{\tau_N}\cdot\widehat\rho_{\tau_N}\quad\text{so that}\quad f_{\tau_N}=f_{\tau_N}*\rho_{\tau_N},$$ establishing the base case.
    
    Now assume the conclusion holds for some $k>1$. We have $$f_{k+1,\tau_{k}}(x)-f_{k+1,\tau_{k}}*\rho_{\tau_{k}}(x)=\sum_{\tau_{k+1}\in\calt_{k+1}(\tau_k)}f_{k+1,\tau_{k+1}}(x)-f_{k+1,\tau_{k+1}}*\rho_{\tau_{k}}(x).$$ Denote $\mathcal{E}_{\tau_{k+1}}:=f_{k+2,\tau_{k+1}}-f_{k+2,\tau_{k+1}}*\rho_{\tau_{k+1}}$. By definition, $$f_{k+1,\tau_{k+1}}(x)-f_{k+1,\tau_{k+1}}*\rho_{\tau_{k}}(x)=\sum_{T\in\T^{\mathrm{g}}_{\tau_{k+1}}}(\psi_T\cdot f_{k+2,\tau_{k+1}})(x)-(\psi_T\cdot f_{k+2,\tau_{k+1}})*\rho_{\tau_k}(x),$$ which we split into two sums $S_1(x;\tau_{k+1})+S_2(x;\tau_{k+1})$, where $$S_1(x;\tau_{k+1}):=\sum_{T\in\T^{\mathrm{g}}_{\tau_{k+1}}}(\psi_T\cdot( f_{k+2,\tau_{k+1}}*\rho_{\tau_{k+1}}))(x)-(\psi_T\cdot(f_{k+2,\tau_{k+1}}*\rho_{\tau_{k+1}}))*\rho_{\tau_k}(x),$$ and $$S_2(x;\tau_{k+1})=\sum_{T\in\T^{\mathrm{g}}_{\tau_{k+1}}}(\psi_T\cdot\mathcal{E}_{\tau_{k+1}})(x)-(\psi_T\cdot\mathcal{E}_{\tau_{k+1}})*\rho_{\tau_k}(x).$$
    By the induction hypothesis, we have $|\mathcal{E}_{\tau_{k+1}}(x)|\lesssim_\epsilon R^{-100(k+2)}$, so that $$|(\psi_T\cdot\mathcal{E}_{\tau_{k+1}})(x)|\lesssim_\epsilon R^{-100(k+2)},$$ and  $$|(\psi_T\cdot\mathcal{E}_{\tau_{k+1}})*\rho_{\tau_k}(x)|\leq \int_{\R^2}|\psi_T(y)\mathcal{E}_{\tau_{k+1}}(y)||\rho_{\tau_k}(x-y)|\dd{y}\lesssim_\epsilon R^{-100(k+2)},$$ where we have used the fact that $\|\psi_T\|_{L^\infty}\leq 1$, and $\|\rho_{\tau_k}\|_{L^1}\lesssim 1$. Since $\#\T^{\mathrm{g}}_{\tau_{k+1}}\lesssim R^{20}$, this gives $$|S_2(x;\tau_{k+1})|\lesssim R^{-100(k+1)-1}.$$ For $S_1(x;\tau_{k+1})$, we use the Fourier inversion formula and write $$S_1(x;\tau_{k+1})=\sum_{T\in\T^{\mathrm{g}}_{\tau_{k+1}}}\int_{\xi\notin R^\delta 32^{N-k}\cdot\tau_k}e^{2\pi ix\cdot\xi}\big(\widehat{\psi}_T*(\widehat{f}_{k+2,\tau_{k+1}}\cdot\widehat{\rho}_{\tau_{k+1}})\big)(\xi)(1-\widehat{\rho}_{\tau_k}(\xi))\dd{\xi},$$ noting that $\widehat{\rho}_{\tau_k}(\xi)=1$ for $\xi\in R^\delta32^{N-k}\cdot\tau_k$ (Lemma \ref{lemma: rho}). Expanding the convolution, we can express the integral as $$\int_{\xi\notin R^\delta32^{N-k}\cdot\tau_k}e^{2\pi ix\cdot\xi}\bigg(\int_{\xi-\eta\in 2R^\delta32^{N-k-1}\cdot\tau_{k+1}}\widehat{\psi}_T(\eta)\widehat{f}_{k+2,\tau_{k+1}}(\xi-\eta)\widehat{\rho}_{\tau_{k+1}}(\xi-\eta)\dd{\eta}\bigg)(1-\widehat{\rho}_{\tau_k}(\xi))\dd{\xi},$$ where $\widehat{\rho}_{\tau_{k+1}}(\xi-\eta)=0$ for $\xi-\eta\notin 2R^\delta32^{N-k-1}\cdot\tau_{k+1}$ (Lemma \ref{lemma: rho}). This forces the integral to vanish in the region $\eta\in R^\delta\cdot\tau_k^{**}$. To see why, let $\eta\in R^\delta\cdot\tau_k^{**}$, so that in particular $\eta\in 16R^\delta32^{N-k-1}\cdot\tau_k^{**}$. In light of \eqref{eqn: tau k+1 containment amendment}, a simple computation reveals that $$2R^\delta32^{N-k-1}\cdot\tau_{k+1}\subseteq 16R^\delta32^{N-k-1}\cdot\tau_k.$$ 
    Combining with the above, this would imply that $\xi=(\xi-\eta)+\eta\in R^\delta 32^{N-k}\cdot\tau_k$, and so the integral vanishes.
    Interchanging the order of integration we get $$\int_{\eta\notin R^\delta\cdot\tau_k^{**}}\widehat{\psi}_T(\eta)\bigg(\int_{\R^2}e^{2\pi ix\cdot\xi}\widehat{f}_{k+2,\tau_{k+1}}(\xi-\eta)\widehat{\rho}_{\tau_{k+1}}(\xi-\eta)(1-\widehat{\rho}_{\tau_k}(\xi))\dd{\xi}\bigg)\dd{\eta}.$$ The inner integral is equal to $$e^{2\pi ix\cdot\eta}f_{k+2,\tau_{k+1}}*\rho_{\tau_{k+1}}(x)-\int_{\R^2}e^{2\pi i\eta\cdot(x-y)}(f_{k+2,\tau_{k+1}}*\rho_{\tau_{k+1}})(x-y)\rho_{\tau_k}(y)\dd{y},$$ the absolute value of which is controlled by $|f_{k+2,\tau_{k+1}}*\rho_{\tau_{k+1}}(x)|+|(f_{k+2,\tau_{k+1}}*\rho_{\tau_{k+1}})*\rho_{\tau_k}(x)|$. 
    Recall that $T\in\T^{\mathrm{g}}_{\tau_{k+1}}$ is a translate of $\tau_{k+1}^*$, and $\psi_T$ is a bump function adapted to $2T$. Hence, $\widehat{\psi}_T$ is rapidly decaying away from $\tau_{k+1}^{**}$. Another application of \eqref{eqn: tau k+1 containment amendment} shows that $\tau_{k+1}^{**}\subseteq 8\cdot\tau_k^{**}$. Using this, and the fact that $\#\T^{\mathrm{g}}_{\tau_{k+1}}\lesssim R^{20}$, we get $$|S_1(x;\tau_{k+1})|\lesssim R^{-100(k+1)-10}(|f_{k+2,\tau_{k+1}}*\rho_{\tau_{k+1}}(x)|+|(f_{k+2,\tau_{k+1}}*\rho_{\tau_{k+1}})*\rho_{\tau_k}(x)|).$$
    By Lemma \ref{lemma: rho}, we have $$|f_{k+2,\tau_{k+1}}*\rho_{\tau_{k+1}}(x)|+|(f_{k+2,\tau_{k+1}}*\rho_{\tau_{k+1}})*\rho_{\tau_k}(x)|\lesssim\|f_{k+2,\tau_{k+1}}\|_{L^\infty}.$$ By the definition of the pruned functions, we have $f_{k+1,\tau_{k+1}}=\sum_{\tau_{k+2}\in\calt_{k+2}(\tau_{k+1})}f_{k+2,\tau_{k+2}}$, so that by Lemma \ref{lemma: properties of f_k} (ii) and Lemma \ref{lemma: number of boxes}, we get $$\|f_{k+2,\tau_{k+1}}\|_{L^\infty}\lesssim R^{\epsilon/2}G.$$ By the definition of $G$ from Notation \ref{not: G}, we have that $G\lesssim R^2.$  Combining everything, we get $$|S_1(x;\tau_{k+1})|\lesssim R^{-100(k+1)-1}.$$
     The conclusion follows from the bounds on $S_1,S_2$, and Lemma \ref{lemma: number of boxes}.
\end{proof}
The bound \eqref{eqn: local constancy use 1} then follows if we have the additional normalisation \eqref{eqn: normalisation in sup norm}. Let us also record the following bound, which was obtained during the proof of the Lemma above. 
\begin{corollary}
    For all $k\in\{1,\dots,N\}$ and each $\tau_{k+1}\in\calt_{k+1}(\tau_k)$, we have $$|f_{k+1,\tau_{k+1}}(x)-f_{k+1,\tau_{k+1}}*\rho_{\tau_{k}}(x)|\lesssim R^{-100}\sup_\theta\|f_\theta\|_{L^\infty}\quad\text{for all}\quad x\in\R^2.$$
\end{corollary}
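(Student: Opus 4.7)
The plan is to prove the inequality by downward induction on $k$, running from $k=N$ to $k=1$. The base case $k=N$ is immediate: $f_{N+1,\tau_N}=f_{\tau_N}$ is already compactly Fourier supported in $\tau_N$, and by Lemma \ref{lemma: rho} we have $\widehat{\rho}_{\tau_N}\equiv 1$ on $R^\delta 32^{0}\cdot\tau_N\supseteq\tau_N$, so $f_{\tau_N}=f_{\tau_N}*\rho_{\tau_N}$ pointwise and the difference vanishes identically.

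For the inductive step, assume the bound at level $k+1$. Using the representation $f_{k+1,\tau_k}=\sum_{\tau_{k+1}\in\calt_{k+1}(\tau_k)}f_{k+1,\tau_{k+1}}$ and the fact that $\#\calt_{k+1}(\tau_k)\lesssim R^{\epsilon/2}$ by Lemma \ref{lemma: number of boxes}, it suffices to show $|f_{k+1,\tau_{k+1}}(x)-f_{k+1,\tau_{k+1}}*\rho_{\tau_k}(x)|\lesssim R^{-100(k+1)-1}$ for each fixed $\tau_{k+1}\in\calt_{k+1}(\tau_k)$. Expand $f_{k+1,\tau_{k+1}}=\sum_{T\in\T^{\mathrm{g}}_{\tau_{k+1}}}\psi_T\cdot f_{k+2,\tau_{k+1}}$ and write $f_{k+2,\tau_{k+1}}=f_{k+2,\tau_{k+1}}*\rho_{\tau_{k+1}}+\mathcal{E}_{\tau_{k+1}}$, where the error term satisfies $|\mathcal{E}_{\tau_{k+1}}|\lesssim R^{-100(k+2)}$ by the induction hypothesis. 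This splits the target expression into two pieces $S_1+S_2$, with $S_2$ involving the pointwise-small error.

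The term $S_2$ is handled trivially: the pointwise bound on $\mathcal{E}_{\tau_{k+1}}$, the bound $\|\psi_T\|_{L^\infty}\leq 1$, the $L^1$ bound $\|\rho_{\tau_k}\|_{L^1}\lesssim 1$ from Lemma \ref{lemma: rho}, and the polynomial bound on $\#\T^{\mathrm{g}}_{\tau_{k+1}}$ (from the fact that the wave-packets are close to $B_R$) together give $|S_2|\lesssim R^{-100(k+1)-5}$, say. The main term $S_1$ is where the genuine Fourier-analytic argument enters. On the Fourier side, $S_1$ is an integral over $\xi\notin R^\delta 32^{N-k}\cdot\tau_k$ (since $1-\widehat{\rho}_{\tau_k}$ vanishes on this set) of $\widehat{\psi}_T*(\widehat{f}_{k+2,\tau_{k+1}}\cdot\widehat{\rho}_{\tau_{k+1}})(\xi)$, and the inner convolution forces $\xi-\eta\in 2R^\delta 32^{N-k-1}\cdot\tau_{k+1}$.

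The key geometric fact, which I expect to be the main obstacle, is the containment $2R^\delta 32^{N-k-1}\cdot\tau_{k+1}\subseteq R^\delta 32^{N-k}\cdot\tau_k$. This should follow from the amended containment $\tau_{k+1}\subseteq 3\cdot\tau_k$ in \eqref{eqn: tau k+1 containment amendment} together with the matched dilation factor of $32$ between successive scales (the factor $32^{N-k}$ in the definition of $\rho_{\tau_k}$ is precisely designed to absorb this mismatch, with room to spare). Granting this containment, for the integral to be nonzero we must have $\eta\notin R^\delta\cdot\tau_k^{**}$ (otherwise $\xi=(\xi-\eta)+\eta$ would lie in $R^\delta 32^{N-k}\cdot\tau_k$ and the integrand would vanish). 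Since $\widehat{\psi}_T$ is a rescaled Schwartz bump adapted to $\tau_{k+1}^{**}$, and since $\tau_{k+1}^{**}\subseteq c\cdot\tau_k^{**}$ for some absolute constant $c$ by another application of the containment, $\widehat{\psi}_T$ decays faster than any polynomial outside $R^\delta\cdot\tau_k^{**}$, producing a gain of $R^{-M\delta}$ for any $M$. Choosing $M$ large enough (depending on $k$, but bounded by $N\leq 1/\epsilon$) closes the induction after absorbing the sub-polynomial factors $\#\T^{\mathrm{g}}_{\tau_{k+1}}$ and $\|f_{k+2,\tau_{k+1}}\|_{L^\infty}\lesssim R^{\epsilon/2}G\lesssim R^{3}$.
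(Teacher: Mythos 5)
Your argument is essentially the paper's own: the corollary is recorded as a byproduct of the inductive proof of Lemma \ref{lemma: rapid decay induction}, which runs exactly your downward induction with the $S_1+S_2$ splitting, the containment $2R^\delta 32^{N-k-1}\cdot\tau_{k+1}\subseteq 16R^\delta 32^{N-k-1}\cdot\tau_k$ coming from \eqref{eqn: tau k+1 containment amendment}, the rapid decay of $\widehat{\psi}_T$ off $\tau_{k+1}^{**}\subseteq 8\cdot\tau_k^{**}$, and the bounds on $\#\T^{\mathrm{g}}_{\tau_{k+1}}$ and $\|f_{k+2,\tau_{k+1}}\|_{L^\infty}\lesssim R^{\epsilon/2}G$. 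The only caveat, shared with the paper's own write-up, is that bounding $\|f_{k+2,\tau_{k+1}}\|_{L^\infty}$ via $G\lesssim R^{O(1)}$ yields the absolute bound $R^{-100}$, so the stated factor $\sup_\theta\|f_\theta\|_{L^\infty}$ enters only through the normalisation \eqref{eqn: normalisation in sup norm} (or by instead bounding $\|f_{k+2,\tau_{k+1}}\|_{L^\infty}\lesssim R^{1/2}\sup_\theta\|f_\theta\|_{L^\infty}$ and carrying that factor through the induction).
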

\subsection{Local orthogonality} 
Here we provide a proof of the local $L^2$-orthogonality heuristic used in the argument.
\begin{lemma}
    [Local $L^2$-orthogonality]\label{lemma: local orthogonality}
    Let $h_k:\R^2\to\mathbb{C}$ be Schwarz functions having Fourier support in $X_k\subset\R^2$, and let $W_U$ be a function with Fourier support in a set $U\subset\R^2$ containing the origin. If the maximum overlap of the sets $X_k+U$ is $L$, then $$\bigg|\int_{\R^2}|\sum_kh_k|^2W_U\bigg|\lesssim L\sum_k\int_{\R^2}|h_k|^2|W_U|.$$
\end{lemma}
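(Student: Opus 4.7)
The plan is a standard Plancherel-and-Cauchy--Schwarz argument of the type pervasive in decoupling theory. First, I would expand
\[
\int_{\R^2}\Big|\sum_k h_k\Big|^2 W_U = \sum_{k,j}\int_{\R^2}h_k\,\overline{h_j}\,W_U.
\]
Since $\widehat{h_k}$ is supported in $X_k$, $\widehat{\overline{h_j}}$ in $-X_j$, and $\widehat{W_U}$ in $U$, the product $h_k\overline{h_j}W_U$ has Fourier transform $\widehat{h_k}*\widehat{\overline{h_j}}*\widehat{W_U}$ supported in $X_k-X_j+U$. Its integral over $\R^2$ equals this Fourier transform evaluated at the origin, and therefore vanishes unless $0\in X_k-X_j+U$, i.e.\ unless $X_j\cap(X_k+U)\neq\emptyset$. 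After symmetrising $U$ if necessary (harmlessly, since the overlap hypothesis is robust under replacing $U$ by $U\cup(-U)$) and using that $0\in U$, this condition is equivalent to $(X_j+U)\cap(X_k+U)\neq\emptyset$.

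Introducing the symmetric relation $k\sim j$ for the surviving pairs, I would apply the pointwise Cauchy--Schwarz inequality $|h_k\overline{h_j}|\leq\tfrac12(|h_k|^2+|h_j|^2)$ termwise, yielding
\[
\Big|\int_{\R^2}\Big|\sum_k h_k\Big|^2 W_U\Big|\;\leq\;\sum_{k\sim j}\int_{\R^2}|h_k|^2|W_U|\;=\;\sum_k\nu(k)\int_{\R^2}|h_k|^2|W_U|,
\]
where $\nu(k):=\#\{j:k\sim j\}$ (using the symmetry of $\sim$ to collect terms).

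The remaining task is to show that $\nu(k)\lesssim L$ uniformly in $k$. Since $k\sim j$ produces a point in $(X_j+U)\cap(X_k+U)$, one picks a witness point for each such $j$; combining this with the pointwise overlap bound $\sum_j\chi_{X_j+U}\leq L$ and integrating $\chi_{X_k+U}$ against $\sum_{j\sim k}\chi_{X_j+U}$ gives $\nu(k)\,|X_k+U|\lesssim L\,|X_k+U|$, hence $\nu(k)\lesssim L$. This is the one step where some mild geometric regularity of the sets $X_k+U$ is needed—specifically, that they have comparable measures—which is the main obstacle in the fully abstract statement. In every invocation of the lemma in this paper (Lemmas~\ref{lemma: high lemma}, \ref{lemma: low lemma}, and \ref{lemma: local bilinear square function}) the sets $X_k+U$ arise as thickenings of canonical boxes of essentially fixed shape after our pigeonholing, so this step introduces only a harmless constant loss. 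Combining the three steps yields the claimed estimate.
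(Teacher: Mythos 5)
Your argument is essentially the paper's: expand the square, use Plancherel and the Fourier supports to see that the cross term for the pair $(k,j)$ vanishes unless $X_j\cap(X_k+U)\neq\emptyset$ (hence, since $0\in U$, unless $(X_j+U)\cap(X_k+U)\neq\emptyset$), and then bound the number of interacting pairs by $O(L)$ and finish with Cauchy--Schwarz; your pointwise bound $|h_k\overline{h_j}|\leq\tfrac12(|h_k|^2+|h_j|^2)$ plays the same role as the paper's Cauchy--Schwarz at the last step, and the symmetrisation of $U$ you mention is not needed since the implication only goes one way. The one place you diverge is the neighbour count: the paper simply reads $\#\{j:(X_k+U)\cap(X_j+U)\neq\emptyset\}\leq L$ off the overlap hypothesis, whereas you try to derive $\nu(k)\lesssim L$ by integrating $\chi_{X_k+U}$ against $\sum_{j\sim k}\chi_{X_j+U}$. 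As written that derivation does not follow from ``comparable measures'': the integral equals $\sum_{j\sim k}|(X_k+U)\cap(X_j+U)|$, and a nonempty intersection can have measure far smaller than $|X_k+U|$, so the lower bound $\gtrsim\nu(k)|X_k+U|$ needs each nonempty intersection to be of size comparable to the sets themselves, not merely the sets to have comparable measure. You are right that in every application in the paper the sets $X_k+U$ are thickenings of the pigeonholed canonical boxes, for which the bounded-overlap hypothesis does control the number of neighbours up to a constant, so the substance of the lemma and of your proof is fine; just state that geometric input correctly (or assert the neighbour bound as the paper does) rather than appealing to comparable measures.
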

\begin{proof}
   First we write $$\int_{\R^2}|\sum_kh_k|^2W_U=\sum_l\sum_k\int_{\R^2}\bar h_kh_lW_U,$$ and then by Plancherel's theorem we have $$\int_{\R^2}\bar h_k\cdot(h_lW_U)=\int_{\R^2}\overline{{\widehat h}_k}\cdot(\widehat h_l*\widehat W_U).$$ Now $\overline{{\widehat h}_k}$ is supported in $X_k\subset X_k+U$, and $\widehat h_l*\widehat W_U$ is supported in $X_l+U$, and so the integral above is non-zero only when $X_k+U$ and $X_l+U$ have a non-empty intersection. By the hypothesis of the lemma, it follows that $$\#\{l:(X_k+U)\cap(X_l+U)\neq\emptyset\}\leq L\quad\text{for all}\quad k.$$ 
   Thus for all $k$, there exists indices $k_1,\dots,k_L$ such that  $$\int_{\R^2}\bar h_kh_lW_U\neq 0\quad\text{only if}\quad l=k_1,\dots,k_L.$$
   Thus, $$\bigg|\int_{\R^2}|\sum_kh_k|^2W_U\bigg|=\bigg|\sum_{j=1}^L\sum_k\int_{\R^2}\bar h_kh_{k_j}W_U\bigg|\leq\int_{\R^2}\sum_{j=1}^L\sum_k|\bar h_{k}h_{k_j}||W_U|,$$ where the last inequality follows from the triangle inequality and Fubini's theorem.
    Finally, applying Cauchy--Schwarz, we get $$\int_{\R^2}\sum_{j=1}^L\sum_k|\bar h_{k}h_{k_j}||W_U|\leq  L\int_{\R^2}\sum_k|h_k|^2|W_U|,$$ which leads us to the desired bound.
\end{proof}
\subsection{A commutative diagram}
Here we give a proof of the claim from Lemma \ref{lemma: rescaling}.
\begin{lemma}\label{lemma: commutative diagram}
    Let $1\leq k\leq N$ and $\tau_k\in\calt_k^\Lambda(\Gamma)$. For $1\leq l\leq N-k$ let $\tau_{k+l}\in\calt_{k+l}^\Lambda(\Gamma;\tau_k)$. By the diagonal case of Lemma \ref{lemma: rescaling}, $\tau^k_l:=\cal_{\tau_k}(\tau_{k+l})\in\calt_l^{\Lambda'}(\Gamma_{\tau_k})$. The following diagram commutes.
    \begin{figure}[H]
        \centering
        \includegraphics[width=0.35\linewidth]{commutative_diagram_l.pdf}
    \end{figure}
\end{lemma}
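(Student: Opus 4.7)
The strategy is to induct on $l$. The statement to prove is the identity
\begin{equation*}
    \cal_{\tau_{k+l}} = \cal_{\tau^k_l} \circ \cal_{\tau_k},
\end{equation*}
where $\tau^k_l := \cal_{\tau_k}(\tau_{k+l}) \in \calt_l^{\Lambda'}(\Gamma_{\tau_k})$. Commutativity of the diagram in Lemma \ref{lemma: commutative diagram} is equivalent to this identity together with the observation (verified by \eqref{eqn: Gamma vert tau k def}, \eqref{eqn: Gamma tau k def}, and the naturality of restrictions under similarities) that $\cal_{\tau_k}(\Gamma\vert_{\tau_{k+l}}) = \Gamma_{\tau_k}\vert_{\tau^k_l}$.

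The base case $l = 1$ is precisely Lemma \ref{lemma: cal composition law} applied to the pair $\tau_{k+1} \prec \tau_k$, with $\sigma_{k+1} := \cal_{\tau_k}(\tau_{k+1}) = \tau^k_1$. For the inductive step, assume the identity holds at level $l$, and consider $\tau_{k+l+1} \prec \tau_{k+l}$. Applying Lemma \ref{lemma: cal composition law} (the $l=1$ case of the present lemma) to this pair yields
\begin{equation*}
    \cal_{\tau_{k+l+1}} = \cal_{\sigma_{k+l+1}} \circ \cal_{\tau_{k+l}}, \qquad \sigma_{k+l+1} := \cal_{\tau_{k+l}}(\tau_{k+l+1}).
\end{equation*}
Substituting the inductive hypothesis $\cal_{\tau_{k+l}} = \cal_{\tau^k_l} \circ \cal_{\tau_k}$ on the right, we obtain
\begin{equation*}
    \cal_{\tau_{k+l+1}} = \cal_{\sigma_{k+l+1}} \circ \cal_{\tau^k_l} \circ \cal_{\tau_k}, \qquad \sigma_{k+l+1} = \cal_{\tau^k_l}\big(\cal_{\tau_k}(\tau_{k+l+1})\big) = \cal_{\tau^k_l}(\tau^k_{l+1}),
\end{equation*}
where we set $\tau^k_{l+1} := \cal_{\tau_k}(\tau_{k+l+1})$.

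To finish, we invoke Lemma \ref{lemma: cal composition law} a second time, now on the rescaled admissible curve $\Gamma_{\tau_k}$, applied to the pair $\tau^k_{l+1} \prec \tau^k_l$ (which live in $\calt_{l+1}^{\Lambda'}(\Gamma_{\tau_k})$ and $\calt_l^{\Lambda'}(\Gamma_{\tau_k})$ respectively, by the diagonal case of Lemma \ref{lemma: rescaling}). This gives
\begin{equation*}
    \cal_{\tau^k_{l+1}} = \cal_{\cal_{\tau^k_l}(\tau^k_{l+1})} \circ \cal_{\tau^k_l} = \cal_{\sigma_{k+l+1}} \circ \cal_{\tau^k_l}.
\end{equation*}
Combining with the previous display yields $\cal_{\tau_{k+l+1}} = \cal_{\tau^k_{l+1}} \circ \cal_{\tau_k}$, completing the induction. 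The main thing to check is the compatibility of the two applications of Lemma \ref{lemma: cal composition law}, which amounts to verifying that $\Gamma_{\tau_k}$ is itself an admissible curve so that the multi-scale construction (and hence Lemma \ref{lemma: cal composition law}) is applicable there; this is exactly the content of Lemma \ref{lemma: rescaled curves}. No significant obstacle is expected beyond careful bookkeeping of the similarity ratios, which is handled transparently by the recursive structure of the algorithm.
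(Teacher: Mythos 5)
Your induction runs in a genuinely different direction from the paper's: the paper factors the chain as $\tau_{k+l+1}\prec\tau_{k+1}\prec\tau_k$ through the \emph{first}-level child $\tau_{k+1}$ and assembles four sub-diagrams (including a final one where it must rule out that two similarities agree only up to a reflection or $180^\circ$ rotation), whereas you factor through the \emph{last}-level parent $\tau_{k+l}$ and compose Lemma \ref{lemma: cal composition law} twice at the level of the maps themselves. This is an attractive simplification, because working directly with the identity $\cal_{\tau_{k+1}}=\cal_{\sigma_{k+1}}\circ\cal_{\tau_k}$ sidesteps the symmetry ambiguity the paper has to address.

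However, as written there is a gap in the second application of Lemma \ref{lemma: cal composition law}. That lemma, transported to the curve $\Gamma_{\tau_k}$, requires the pair to be in the parent--child relation of the multi-scale tree of $\Gamma_{\tau_k}$, i.e.\ you need $\tau^k_{l+1}\in\calt_{l+1}(\Gamma_{\tau_k};\tau^k_l)$, not merely $\tau^k_{l+1}\in\calt^{\Lambda'}_{l+1}(\Gamma_{\tau_k})$ and $\tau^k_l\in\calt^{\Lambda'}_l(\Gamma_{\tau_k})$. The diagonal case of Lemma \ref{lemma: rescaling}, which is all you cite, gives only the level-wise memberships; the statement that $\cal_{\tau_k}$ preserves parentage is precisely the sub-diagonal case of Lemma \ref{lemma: rescaling}, and in the paper that case is proved \emph{using} the present commutative diagram, so invoking it here would be circular. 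The gap is fixable from your own induction hypothesis: assuming $\cal_{\tau_{k+l}}=\cal_{\tau^k_l}\circ\cal_{\tau_k}$ and $(\Gamma_{\tau_k})_{\tau^k_l}=\Gamma_{\tau_{k+l}}$, the definition \eqref{eqn: calt k+1 def} gives
\begin{equation*}
\cal_{\tau_k}\big(\calt_{k+l+1}(\Gamma;\tau_{k+l})\big)=\cal_{\tau_k}\circ\cal_{\tau_{k+l}}^{-1}\big(\calt_1(\Gamma_{\tau_{k+l}})\big)=\cal_{\tau^k_l}^{-1}\big(\calt_1\big((\Gamma_{\tau_k})_{\tau^k_l}\big)\big)=\calt_{l+1}(\Gamma_{\tau_k};\tau^k_l),
\end{equation*}
so $\tau^k_{l+1}$ is indeed a child of $\tau^k_l$ and the second application of Lemma \ref{lemma: cal composition law} becomes legitimate; this also identifies $\cal_{\tau^k_l}(\tau^k_{l+1})$ with $\sigma_{k+l+1}$ as elements of the same family $\calt_1(\Gamma_{\tau_{k+l}})=\calt_1\big((\Gamma_{\tau_k})_{\tau^k_l}\big)$, which you need for $\cal_{\sigma_{k+l+1}}$ to denote the same map in both applications. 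Relatedly, the arc identity $\cal_{\tau_k}(\Gamma\vert_{\tau_{k+l}})=\Gamma_{\tau_k}\vert_{\tau^k_l}$ is not a free ``naturality'' observation: it must be carried as part of the induction (via the endpoint correspondences as in \eqref{eqn: t k+1 v k+1 def}), exactly as the paper does for $l=1$ in \eqref{eqn: Gamma tau k vert sigma k+1}. With these points made explicit, your shorter induction goes through.
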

\begin{proof}
    We argue by inducting on $l$. For $l=1$, this is follows immediately from Lemma \ref{lemma: cal composition law}. Suppose the claim is true for some $l$. Let $\tau_{k+l+1}\in\calt^\Lambda_{k+l+1}(\Gamma;\tau_k)$ and $\tau^k_{l+1}:=\cal_{\tau_k}(\tau_{k+l+1})\in\calt^{\Lambda'}_{l+1}(\Gamma_{\tau_k})$, which holds by the diagonal case of Lemma \ref{lemma: rescaling}. By the definition given in \eqref{eqn: pigeonholed boxes}, there exists some $\tau_{k+1}\in\calt^\Lambda_{k+1}(\Gamma;\tau_k)$ such that $\tau_{k+l+1}\in\calt^\Lambda_{k+l+1}(\Gamma;\tau_{k+1})$. By the definition in \eqref{eqn: calt k+1 def}, $\sigma_{k+1}:=\cal_{\tau_k}(\tau_{k+1})\in\calt_1(\Gamma_{\tau_k})$. In fact, $\sigma_{k+1}\in\calt_1^{\Lambda'}(\Gamma_{\tau_k})$. Again, by the diagonal case of Lemma \ref{lemma: rescaling}, we know that $\tau^{k+1}_l:=\cal_{\tau_{k+1}}(\tau_{k+l+1})\in\calt^{\Lambda'}_l(\Gamma_{\tau_{k+1}})$. The $l+1$ case of the lemma follows from the commutative diagrams below:    \begin{figure}[H]
        \begin{tabular}{cc}
           \includegraphics[width=0.48\linewidth]{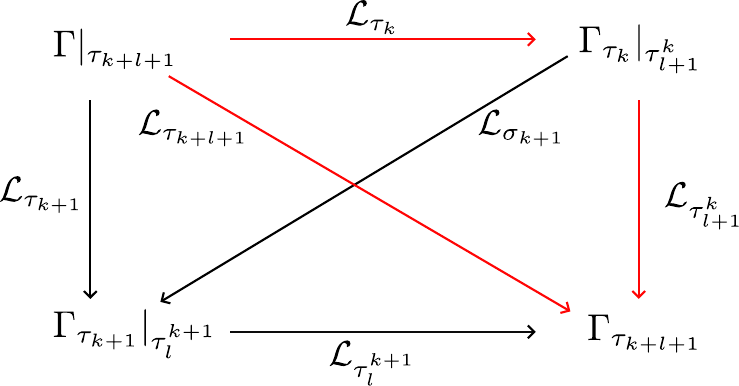} & \includegraphics[width=0.48\linewidth]{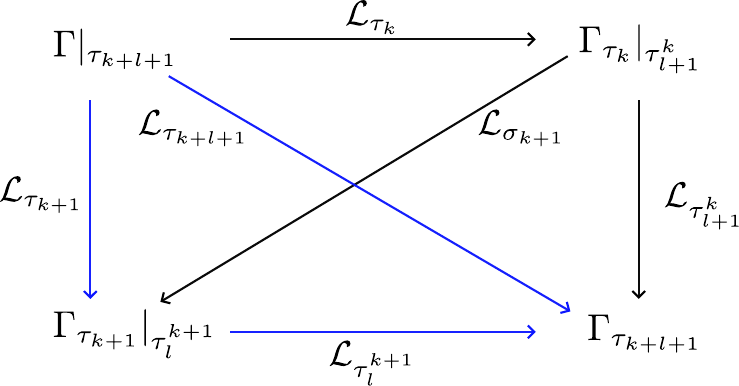} \\
           Diagram (a) & Diagram (b)\\\\\\
            \includegraphics[width=0.48\linewidth]{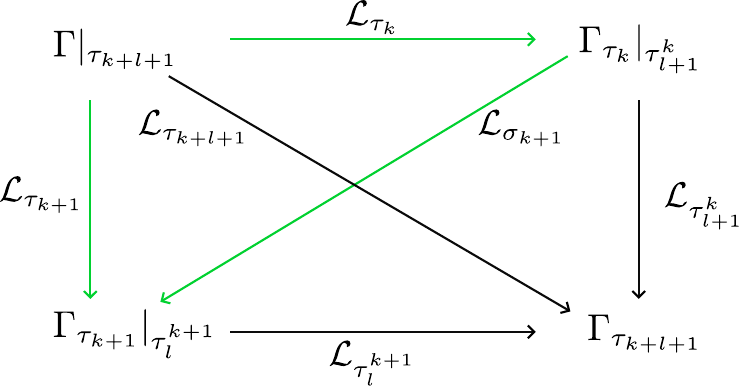} & \includegraphics[width=0.48\linewidth]{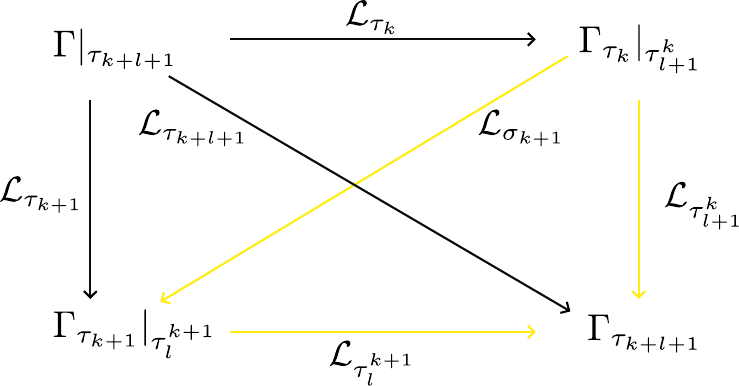} \\
            Diagram (c) & Diagram (d)
        \end{tabular}\end{figure} 
        \noindent The commutativity of Diagram (a) is the desired result of the $l+1$ case, and it follows from the commutativity of Diagram (b), (c), and (d). To see that Diagram (b) commutes, we write the index $k+l+1$ as $(k+1)+l$, and apply the induction hypothesis. To see why Diagram (c) commutes, we recall from above that 
        \begin{equation}
            \label{eqn: key observation for diagram c}
            \cal_{\tau_k}(\tau_{k+l+1})=\tau^k_{l+1},\quad\text{and}\quad \cal_{\tau_{k+1}}(\tau_{k+l+1})=\tau^{k+1}_l.
        \end{equation}
         Now we apply Lemma \ref{lemma: cal composition law}, and the rest follows from the relation above. As a consequence, we have that 
        \begin{equation}
            \label{eqn: diagram c corollary}
            \cal_{\sigma_{k+1}}(\tau^k_{l+1})=\tau^{k+1}_l.
        \end{equation}
        For Diagram (d), we only need to show that
    \begin{equation}
        \label{eqn: commutative diagram key observation}
        \cal_{\tau^k_{l+1}}=\cal_{\tau^{k+1}_l}\circ\cal_{\sigma_{k+1}}.
    \end{equation}
    This can be seen by applying the composition on the right-hand side of \eqref{eqn: commutative diagram key observation} to the box $\tau^k_{l+1}$. By \eqref{eqn: diagram c corollary} and the definition of the similarity transformations, we get $$\cal_{\tau^{k+1}_l}\circ\cal_{\sigma_{k+1}}(\tau^{k}_{l+1})=\cal_{\tau^{k+1}_l}(\tau^{k+1}_l)=J_{\tau^{k+1}_l}\times [-1,1],$$ where $J_{\tau}:=[0,\len(\tau)/\wid(\tau)]$. Now \eqref{eqn: key observation for diagram c} tells us that $\tau^k_{l+1}$ and $\tau^{k+1}_l$ are similar, and so $$\len(\tau^{k+1}_l)/\wid(\tau^{k+1}_l)=\len(\tau^k_{l+1})/\wid(\tau^k_{l+1}),\quad\text{so that}\quad J_{\tau^{k+1}_l}=J_{\tau^k_{l+1}}.$$ Using this in the display above, we find $$\cal_{\tau^{k+1}_l}\circ\cal_{\sigma_{k+1}}(\tau^{k}_{l+1})=J_{\tau^k_{l+1}}\times [-1,1]=\cal_{\tau^k_{l+1}}(\tau^k_{l+1}),$$ where the last equality follows from the definition of $\cal_{\tau^k_{l+1}}$. Thus $\cal_{\tau^{k+1}_l}\circ\cal_{\sigma_{k+1}}=\cal_{\tau^k_{l+1}}$, up to a symmetry of $\tau^k_{l+1}$. The similarities cannot differ by a single reflection, since they all have positive determinants, by definition \eqref{eqn: cal tau k def}. They also cannot differ by a $180^\degree$ rotation, as the angles used to define the similarities in \eqref{eqn: cal tau k def} are all acute angles (so their sum is strictly less than $180^\degree$). Therefore, \eqref{eqn: commutative diagram key observation} holds and Diagram (d) commutes. Combining everything, we get that Diagram (a) is commutative, which establishes the $l+1$ case of the lemma.
\end{proof} 
\bibliographystyle{plain}
\bibliography{Citation}

\begin{thebibliography}{10}

\bibitem{Bourgain_lambdap}
J.~Bourgain.
\newblock Bounded orthogonal systems and the {$\Lambda(p)$}-set problem.
\newblock {\em Acta Math.}, 162(3-4):227--245, 1989.

\bibitem{BD}
Jean Bourgain and Ciprian Demeter.
\newblock The proof of the {$l^2$} decoupling conjecture.
\newblock {\em Ann. of Math. (2)}, 182(1):351--389, 2015.

\bibitem{BG}
Jean Bourgain and Larry Guth.
\newblock Bounds on oscillatory integral operators based on multilinear estimates.
\newblock {\em Geom. Funct. Anal.}, 21(6):1239--1295, 2011.

\bibitem{BNW}
Joaquim Bruna, Alexander Nagel, and Stephen Wainger.
\newblock Convex hypersurfaces and {F}ourier transforms.
\newblock {\em Ann. of Math. (2)}, 127(2):333--365, 1988.

\bibitem{chang}
Alan Chang, Jaume de~Dios~Pont, Rachel Greenfeld, Asgar Jamneshan, Zane~Kun Li, and Jos\'{e} Madrid.
\newblock Decoupling for fractal subsets of the parabola.
\newblock {\em Math. Z.}, 301(2):1851--1879, 2022.

\bibitem{smallcapdec}
Ciprian Demeter, Larry Guth, and Hong Wang.
\newblock Small cap decouplings.
\newblock {\em Geom. Funct. Anal.}, 30(4):989--1062, 2020.
\newblock With an appendix by D. R. Heath-Brown.

\bibitem{falconer}
Kenneth~J. Falconer.
\newblock One-sided multifractal analysis and points of non-differentiability of devil's staircases.
\newblock {\em Math. Proc. Cambridge Philos. Soc.}, 136(1):167--174, 2004.

\bibitem{GGGHMW}
Shengwen Gan, Shaoming Guo, Larry Guth, Terence L.~J. Harris, Dominique Maldague, and Hong Wang.
\newblock On restricted projections to planes in $\mathbb{R}^3$.
\newblock Preprint: \verb+ arXiv:2207.13844+, 2024.

\bibitem{gan_wu}
Shengwen Gan and Shukun Wu.
\newblock Square function estimates for conical regions.
\newblock Preprint: \verb+ arXiv:2203.12155+, 2022.

\bibitem{improved_discrete_restriction}
Shaoming Guo, Zane~Kun Li, and Po-Lam Yung.
\newblock Improved discrete restriction for the parabola.
\newblock {\em Math. Res. Lett.}, 30(5):1375--1409, 2023.

\bibitem{guo}
Shaoming Guo, Zane~Kun Li, Po-Lam Yung, and Pavel Zorin-Kranich.
\newblock A short proof of {$\ell^2$} decoupling for the moment curve.
\newblock {\em Amer. J. Math.}, 143(6):1983--1998, 2021.

\bibitem{gmw}
Larry Guth, Dominique Maldague, and Hong Wang.
\newblock Improved decoupling for the parabola.
\newblock {\em J. Eur. Math. Soc. (JEMS)}, 26(3):875--917, 2024.

\bibitem{gsw}
Larry Guth, Noam Solomon, and Hong Wang.
\newblock Incidence estimates for well spaced tubes.
\newblock {\em Geom. Funct. Anal.}, 29(6):1844--1863, 2019.

\bibitem{GWZ}
Larry Guth, Hong Wang, and Ruixiang Zhang.
\newblock A sharp square function estimate for the cone in {$\mathbb {R}^3$}.
\newblock {\em Ann. of Math. (2)}, 192(2):551--581, 2020.

\bibitem{lee_ham}
Seheon Ham and Sanghyuk Lee.
\newblock Restriction estimates for space curves with respect to general measures.
\newblock {\em Adv. Math.}, 254:251--279, 2014.

\bibitem{john}
Fritz John.
\newblock Extremum problems with inequalities as subsidiary conditions.
\newblock In {\em Studies and {E}ssays {P}resented to {R}. {C}ourant on his 60th {B}irthday, {J}anuary 8, 1948}, pages 187--204. Interscience Publishers, New York, 1948.

\bibitem{KLO}
Hyerim Ko, Sanghyuk Lee, and Sewook Oh.
\newblock Maximal estimates for averages over space curves.
\newblock Preprint: \verb+arXiv:2105.01628+, 2021.

\bibitem{li1}
Zane~Kun Li.
\newblock Effective {$l^2$} decoupling for the parabola.
\newblock {\em Mathematika}, 66(3):681--712, 2020.
\newblock With an appendix by Jean Bourgain and Li.

\bibitem{li2}
Zane~Kun Li.
\newblock An {$l^2$} decoupling interpretation of efficient congruencing: the parabola.
\newblock {\em Rev. Mat. Iberoam.}, 37(5):1761--1802, 2021.

\bibitem{roy}
Hrit Roy.
\newblock Improved {$L^p$} bounds for {B}ochner-{R}iesz operators associated with rough convex domains in the plane.
\newblock {\em Math. Z.}, 310(1):Paper No. 14, 2025.

\bibitem{SZ}
Andreas Seeger and Sarah Ziesler.
\newblock Riesz means associated with convex domains in the plane.
\newblock {\em Math. Z.}, 236(4):643--676, 2001.

\bibitem{yang}
Tongou Yang.
\newblock Uniform {$l^2$}-decoupling in {$\mathbb{R}^2$} for polynomials.
\newblock {\em J. Geom. Anal.}, 31(11):10846--10867, 2021.

\end{thebibliography}
\end{document}